\providecommand \@dotsep{5} \def\listtodoname{List of Todos} \def\listoftodos{\@starttoc{tdo}\listtodoname} 
\patchcmd{\@startsection}{\@afterindenttrue}{\@afterindentfalse}{}{}             
\patchcmd{\part}{\bfseries}{\bfseries\LARGE}{}{}
\patchcmd{\section}{\scshape}{\bfseries}{}{}\renewcommand{\@secnumfont}{\bfseries} 
\patchcmd{\@settitle}{\uppercasenonmath\@title}{\large}{}{}
\patchcmd{\@setauthors}{\MakeUppercase}{}{}{}
\theoremstyle{plain}
\newtheorem{thm}{Theorem}[section]
\newtheorem{cor}[thm]{Corollary}
\newtheorem{lemma}[thm]{Lemma}
\newtheorem{prop}[thm]{Proposition}
\newtheorem{thmA}{Theorem}  
\newtheorem*{claim}{Claim}
\theoremstyle{definition}
\newtheorem{df}[thm]{Definition}
\newtheorem{rem}[thm]{Remark}
\newtheorem{ex}[thm]{Example}
\theoremstyle{remark}
\DeclareRobustCommand{\gobblefour}[5]{}    
\DeclareFontFamily{OT1}{pzc}{}                                
\DeclareFontShape{OT1}{pzc}{m}{it}{<-> s * [1.10] pzcmi7t}{}
\DeclareMathAlphabet{\mathpzc}{OT1}{pzc}{m}{it}
\DeclareSymbolFont{sfoperators}{OT1}{bch}{m}{n} \DeclareSymbolFontAlphabet{\mathsf}{sfoperators} \makeatletter\def\operator@font{\mathgroup\symsfoperators}\makeatother 
\DeclareSymbolFont{cmletters}{OML}{cmm}{m}{it}              
\DeclareSymbolFont{cmsymbols}{OMS}{cmsy}{m}{n}
\DeclareSymbolFont{cmlargesymbols}{OMX}{cmex}{m}{n}
\DeclareMathSymbol{\myjmath}{\mathord}{cmletters}{"7C}     \let\jmath\myjmath 
\DeclareMathSymbol{\myamalg}{\mathbin}{cmsymbols}{"71}     \let\amalg\myamalg
\DeclareMathSymbol{\mycoprod}{\mathop}{cmlargesymbols}{"60}\let\coprod\mycoprod
\DeclareMathSymbol{\myalpha}{\mathord}{cmletters}{"0B}     \let\alpha\myalpha 
\DeclareMathSymbol{\mybeta}{\mathord}{cmletters}{"0C}      \let\beta\mybeta
\DeclareMathSymbol{\mygamma}{\mathord}{cmletters}{"0D}     \let\gamma\mygamma
\DeclareMathSymbol{\mydelta}{\mathord}{cmletters}{"0E}     \let\delta\mydelta
\DeclareMathSymbol{\myepsilon}{\mathord}{cmletters}{"0F}   \let\epsilon\myepsilon
\DeclareMathSymbol{\myzeta}{\mathord}{cmletters}{"10}      \let\zeta\myzeta
\DeclareMathSymbol{\myeta}{\mathord}{cmletters}{"11}       \let\eta\myeta
\DeclareMathSymbol{\mytheta}{\mathord}{cmletters}{"12}     \let\theta\mytheta
\DeclareMathSymbol{\myiota}{\mathord}{cmletters}{"13}      \let\iota\myiota
\DeclareMathSymbol{\mykappa}{\mathord}{cmletters}{"14}     \let\kappa\mykappa
\DeclareMathSymbol{\mylambda}{\mathord}{cmletters}{"15}    \let\lambda\mylambda
\DeclareMathSymbol{\mymu}{\mathord}{cmletters}{"16}        \let\mu\mymu
\DeclareMathSymbol{\mynu}{\mathord}{cmletters}{"17}        \let\nu\mynu
\DeclareMathSymbol{\myxi}{\mathord}{cmletters}{"18}        \let\xi\myxi
\DeclareMathSymbol{\mypi}{\mathord}{cmletters}{"19}        \let\pi\mypi
\DeclareMathSymbol{\myrho}{\mathord}{cmletters}{"1A}       \let\rho\myrho
\DeclareMathSymbol{\mysigma}{\mathord}{cmletters}{"1B}     \let\sigma\mysigma
\DeclareMathSymbol{\mytau}{\mathord}{cmletters}{"1C}       \let\tau\mytau
\DeclareMathSymbol{\myupsilon}{\mathord}{cmletters}{"1D}   \let\upsilon\myupsilon
\DeclareMathSymbol{\myphi}{\mathord}{cmletters}{"1E}       \let\phi\myphi
\DeclareMathSymbol{\mychi}{\mathord}{cmletters}{"1F}       \let\chi\mychi
\DeclareMathSymbol{\mypsi}{\mathord}{cmletters}{"20}       \let\psi\mypsi
\DeclareMathSymbol{\myomega}{\mathord}{cmletters}{"21}     \let\omega\myomega
\DeclareMathSymbol{\myvarepsilon}{\mathord}{cmletters}{"22}\let\varepsilon\myvarepsilon
\DeclareMathSymbol{\myvartheta}{\mathord}{cmletters}{"23}  \let\vartheta\myvartheta
\DeclareMathSymbol{\myvarpi}{\mathord}{cmletters}{"24}     \let\varpi\myvarpi
\DeclareMathSymbol{\myvarrho}{\mathord}{cmletters}{"25}    \let\varrho\myvarrho
\DeclareMathSymbol{\myvarsigma}{\mathord}{cmletters}{"26}  \let\varsigma\myvarsigma
\DeclareMathSymbol{\myvarphi}{\mathord}{cmletters}{"27}    \let\varphi\myvarphi
\DeclareMathOperator{\Spec}{Spec}
\DeclareMathOperator{\Sch}{Sch}
\DeclareMathOperator{\OBSch}{OBSch}
\DeclareMathOperator{\OBMod}{OBMod}
\DeclareMathOperator{\Hom}{Hom}
\DeclareMathOperator{\Aut}{Aut}
\DeclareMathOperator{\Gr}{Gr}
\DeclareMathOperator{\Mat}{Mat}
\DeclareMathOperator{\Dress}{Dress}
\DeclareMathOperator{\MacPh}{MacPh}
\DeclareMathOperator{\cMat}{\mathpzc{Mat}}
\DeclareMathOperator{\cPl}{\mathpzc{Pl}}
\DeclareMathOperator{\Proj}{Proj}
\DeclareMathOperator{\Pic}{Pic}
\DeclareMathOperator{\PicGen}{PicGen}
\DeclareMathOperator{\colim}{colim}
\DeclareMathOperator{\rk}{rk}
\DeclareMathOperator{\im}{im}
\DeclareMathOperator{\Cr}{Cr}
\DeclareMathOperator{\OBlpr}{{OBlpr}}
\DeclareMathOperator{\HypRings}{{HypRings}}
\DeclareMathOperator{\HypFields}{{HypFields}}
\DeclareMathOperator{\PartFields}{{PartFields}}
\DeclareMathOperator{\FuzzRings}{{FuzzRings}}
\DeclareMathOperator{\Tracts}{{Tracts}}
\DeclareMathOperator{\Sets}{Sets}
\DeclareMathOperator{\SRings}{{SRings}}
\DeclareMathOperator{\OBlprSp}{{OBlprSp}}
\DeclareMathOperator*{\hypersum}{\,\raisebox{-2.2pt}{\larger[2]{$\boxplus$}}\,}
\renewcommand\emph[1]{\textbf{#1}}         
\newcommand\A{{\mathbb A}}
\newcommand\B{{\mathbb B}}
\newcommand\C{{\mathbb C}}
\newcommand\F{{\mathbb F}}
\newcommand\G{{\mathbb G}}
\newcommand\K{{\mathbb K}}
\newcommand\N{{\mathbb N}}
\renewcommand\P{{\mathbb P}}
\newcommand\R{{\mathbb R}}
\renewcommand\S{{\mathbb S}}
\newcommand\T{{\mathbb T}}
\newcommand\U{{\mathbb U}}
\newcommand\Z{{\mathbb Z}}
\newcommand\cB{{\mathcal B}}
\newcommand\cF{{\mathcal F}}
\newcommand\cG{{\mathcal G}}
\newcommand\cI{{\mathcal I}}
\newcommand\cJ{{\mathcal J}}
\newcommand\cL{{\mathcal L}}
\newcommand\cM{{\mathcal M}}
\newcommand\cO{{\mathcal O}}
\newcommand\cP{{\mathcal P}}
\newcommand\cR{{\mathcal R}}
\newcommand\cX{{\mathcal X}}
\newcommand\fm{{\mathfrak m}}
\newcommand\fp{{\mathfrak p}}
\newcommand\fq{{\mathfrak q}}
\newcommand\un{{\underline{n}}}
\newcommand\Fun{{\F_1}}
\newcommand\Funsq{{\F_{1^2}}}
\newcommand\Funpm{{\F_1^\pm}}
\newcommand\id{\textup{id}}
\newcommand\ppos{{\textup{ppos}}}
\newcommand\res{\textup{res}}
\newcommand\op{\textup{op}}
\newcommand\oblpr{\textup{oblpr}}
\newcommand\blpr{\textup{blpr}}
\newcommand\fuzz{\textup{fuzz}}
\newcommand\tract{\textup{tract}}
\newcommand\found{\textup{found}}
\newcommand\univ{\textup{univ}}
\renewcommand\top{\textup{top}}
\newcommand\smallvee{{\mbox{\larger[-8]$\vee$}}}
\newcommand\sign{\textup{sign}}
\renewcommand\={\equiv}
\renewcommand\geq{\geqslant}
\renewcommand\leq{\leqslant}
\newcommand{\gen}[1]{\langle #1 \rangle}
\newcommand{\biggen}[1]{\big\langle #1 \big\rangle}
\newcommand{\bpquot}[2]{#1\!\sslash\!#2}
\newcommand{\bpgenquot}[2]{#1\!\sslash\!\gen{#2}}
\newcommand{\bpbiggenquot}[2]{#1\big/\!\!\!\!\big/\biggen{#2}}
\newcommand{\hyperplus}{\,\raisebox{-1.1pt}{\larger[-0]{$\boxplus$}}\,}
\newcommand{\arincl}[1]{\ar@{ >->}@<-0,0ex>#1} 
\newcommand{\dashedrightarrow}{\;\tikz\draw[densely dashed,->] (0,0) -- (0.48,0);\;}
\newcommand\NN{{\mathbb N}}
\newcommand\ZZ{{\mathbb Z}}
\def\isomap{{\buildrel \sim\over\longrightarrow}} 
\title{The moduli space of matroids}
\author{Matthew Baker}
\address{\rm Matthew Baker, School of Mathematics, Georgia Institute of Technology, Atlanta, USA}
\email{\href{mailto:mbaker@math.gatech.edu}{mbaker@math.gatech.edu}}
\author{Oliver Lorscheid}
\address{\rm Oliver Lorscheid, University of Groningen, the Netherlands, and IMPA, Rio de Janeiro, Brazil}
\email{\href{mailto:oliver@impa.br}{oliver@impa.br}}
\begin{document}


\begin{abstract} 
In \cite{Baker-Bowler19}, Nathan Bowler and the first author introduced a category of algebraic objects called {\em tracts} and defined the notion of (weak and strong) {\em matroids over a tract}. In the first part of the paper, we summarize and clarify the connections to other algebraic objects which have previously been used in connection with matroid theory.  For example, we show that both partial fields and hyperfields are fuzzy rings, that fuzzy rings are tracts, and that these relations are compatible with previously introduced matroid theories. We also show that fuzzy rings are ordered blueprints in the sense of the second author. Thus fuzzy rings lie in the intersection of tracts with ordered blueprints; we call the objects of this intersection {\em idylls}.

We then turn our attention to constructing moduli spaces for (strong) matroids over idylls.  We show that, for any non-empty finite set $E$, the functor taking an idyll $F$ to the set of isomorphism classes of rank-$r$ strong $F$-matroids on $E$ is representable by an ordered blue scheme $\Mat(r,E)$.  We call  $\Mat(r,E)$ the {\em moduli space of rank-$r$ matroids on $E$}.  The construction of $\Mat(r,E)$ requires some foundational work in the theory of ordered blue schemes; in particular, we provide an analogue for ordered blue schemes of the ``Proj'' construction in algebraic geometry, and we show that line bundles and their global sections control maps to projective spaces, much as in the usual theory of schemes. 

Idylls themselves are field objects in a larger category which we call {\em $\Funpm$-algebras}; roughly speaking, idylls are to $\Funpm$-algebras as hyperfields are to hyperrings.  We define matroid bundles over ordered blue $\Funpm$-schemes and show that $\Mat(r,E)$ represents the functor taking an ordered blue $\Funpm$-scheme $X$ to the set of isomorphism classes of rank-$r$ (strong) matroid bundles on $E$ over $X$.  This characterizes $\Mat(r,E)$ up to (unique) isomorphism.  

Finally, we investigate various connections between the space $\Mat(r,E)$ and known constructions and results in matroid theory.  For example, a classical rank-$r$ matroid $M$ on $E$ corresponds to a morphism $\Spec ({\mathbb K}) \to \Mat(r,E)$, where ${\mathbb K}$ (the ``Krasner hyperfield'') is the final object in the category of idylls.  The image of this morphism is a point of $\Mat(r,E)$ to which we can canonically attach a residue idyll $k_M$, which we call the {\em universal idyll} of $M$.  We show that morphisms from the universal idyll of $M$ to an idyll $F$ are canonically in bijection with strong $F$-matroid structures on $M$.  Although there is no corresponding moduli space in the weak setting, we also define an analogous idyll
 $k_M^w$ which classifies weak $F$-matroid structures on $M$.  We show that the unit group of $k_M^w$ can be canonically identified with the {\em Tutte group} of $M$, originally introduced by Dress and Wenzel.
We also show that the sub-idyll $k_M^f$ of $k_M^w$ generated by ``cross-ratios'', which we call  the {\em foundation} of $M$, parametrizes rescaling classes of weak $F$-matroid structures on $M$, and its unit group is coincides with the {\em inner Tutte group} of $M$.  As sample applications of these considerations, we show that a matroid $M$ is regular if and only if its foundation is the regular partial field (the initial object in the category of idylls), and a non-regular matroid $M$ is binary if and only if its foundation is the field with two elements.  From this, we deduce for example a new proof of the fact that a matroid is regular if and only if it is both binary and orientable.
\end{abstract}

\subsubsection{Pullbacks of matroid bundles}
\label{subsubsection: pullbacks of matroid bundles}

The pullback $\varphi^\ast(\cM)$ of a matroid bundle $\cM$ on $Y$ along a morphism $\varphi:X\to Y$ of ordered blue $\Funpm$-schemes is defined by the following lemma.

\begin{lemma}\label{lemma: definition of the pullback of matroids}
 Let $\varphi:X\to Y$ be a morphism in $\OBSch_\Funpm$ and $\cL$ an invertible sheaf on $Y$. Let $E$ be a non-empty finite ordered set, $r$ a natural number and $\cM$ a matroid bundle over $Y$ represented by a Grassmann-Pl\"ucker function $\Delta:\binom Er\to\Gamma(Y,\cL)$. Then $\varphi^\ast(\Delta)=\varphi_\cL^\#\circ \Delta:\binom Er\to \Gamma(X,\varphi^\ast(\cL))$ is a Grassmann-Pl\"ucker function over $X$ and the matroid bundle $\varphi^\ast(\cM)=[\tilde \Delta]$ over $X$ does not depend on the choice of representative $\Delta$ of $\cM$.
\end{lemma}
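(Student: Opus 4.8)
The plan is to deduce both assertions from the functoriality of the pullback of invertible sheaves developed in Section~\ref{subsection: invertible sheaves}, together with the fact that morphisms of ordered blueprints are order-preserving, multiplicative, and (between pasteurized objects) preserve $\epsilon$. Throughout, write $\varphi^\#=\varphi^\#_{\cO_Y}:\Gamma(Y,\cO_Y)\to\Gamma(X,\cO_X)$ for the induced morphism of ordered blueprints; since $X$ and $Y$ lie in $\OBSch^\pm$, both of these ordered blueprints are pasteurized, and applying the order-preserving map $\varphi^\#$ to $0\leq 1+\epsilon$ gives $0\leq 1+\varphi^\#(\epsilon)$, so by uniqueness of the weak inverse $\varphi^\#(\epsilon)=\epsilon$. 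Recall also that $\varphi^\ast(\cL)$ is again an invertible sheaf on $X$, that the pullback commutes with tensor products (so $\varphi^\ast(\cL^{\otimes2})\isomap(\varphi^\ast\cL)^{\otimes2}$ canonically), and that there is a natural map $\varphi^\#_\cL:\Gamma(Y,\cL)\to\Gamma(X,\varphi^\ast\cL)$, all from Section~\ref{subsubsection: pullbacks of invertible sheaves}.

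First I would check that $\tilde\Delta:=\varphi^\#_\cL\circ\Delta:\binom Er\to\Gamma(X,\varphi^\ast\cL)$ is a Grassmann--Pl\"ucker function over $X$. That the sections $\tilde\Delta(I)$ generate $\varphi^\ast\cL$ is exactly Lemma~\ref{lemma: pullbacks of global sections generate the pullback sheaf}, applied to the generating family $\{\Delta(I)\}_{I\in\binom Er}$ of $\cL$. For the Pl\"ucker relations, the point is that $\varphi^\#_\cL$ (in degree $1$) and $\varphi^\#_{\cL^{\otimes2}}$ (in degree $2$) are compatible with multiplication of global sections and with the $\cO$-module structure: under the identification $\varphi^\ast(\cL^{\otimes2})\cong(\varphi^\ast\cL)^{\otimes2}$ one has $\varphi^\#_{\cL^{\otimes2}}(a\cdot s\cdot s')=\varphi^\#(a)\cdot\varphi^\#_\cL(s)\cdot\varphi^\#_\cL(s')$ for $a\in\Gamma(Y,\cO_Y)$ and $s,s'\in\Gamma(Y,\cL)$. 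Granting this, one applies the order-preserving map $\varphi^\#_{\cL^{\otimes2}}$ to each Pl\"ucker relation
\[
 0 \ \leq \ \sum_{k=1}^{r+1} \ \epsilon^k \ \cdot \ \Delta(I\cup\{i_k\}) \ \cdot \ \Delta(I'-\{i_k\})
\]
holding in $\Gamma(Y,\cL^{\otimes2})$; using $\varphi^\#(\epsilon)=\epsilon$ together with the above compatibility, the image is precisely the Pl\"ucker relation for $\tilde\Delta$ in $\Gamma(X,(\varphi^\ast\cL)^{\otimes2})$. Hence $(\varphi^\ast\cL,\tilde\Delta)$ is a Grassmann--Pl\"ucker function over $X$.

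Next I would show that the isomorphism class of $(\varphi^\ast\cL,\tilde\Delta)$ depends only on the matroid bundle $\cM$, i.e.\ is unchanged if $(\cL,\Delta)$ is replaced by an isomorphic Grassmann--Pl\"ucker function $(\cL',\Delta')$, say via an isomorphism $\psi:\cL\to\cL'$ with $\Delta'=\Gamma\psi\circ\Delta$. Pulling back, $\varphi^\ast\psi:\varphi^\ast\cL\to\varphi^\ast\cL'$ is an isomorphism of invertible sheaves, and naturality of the construction $\cL\mapsto\varphi^\#_\cL$ in the invertible sheaf yields the identity $\Gamma(\varphi^\ast\psi)\circ\varphi^\#_\cL=\varphi^\#_{\cL'}\circ\Gamma\psi$ on $\Gamma(Y,\cL)$. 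Composing with $\Delta$ gives $\Gamma(\varphi^\ast\psi)\circ\tilde\Delta=\varphi^\#_{\cL'}\circ\Delta'=\widetilde{\Delta'}$, so $(\varphi^\ast\cL,\tilde\Delta)$ and $(\varphi^\ast\cL',\widetilde{\Delta'})$ are isomorphic over $X$. Therefore $\varphi^\ast(\cM):=[\tilde\Delta]$ is well-defined.

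The hard part will be the second step, namely establishing rigorously that the pullback map on global sections is monoidal, i.e.\ that $\varphi^\#$ is compatible with products $\Gamma(Y,\cL)\times\Gamma(Y,\cL)\to\Gamma(Y,\cL^{\otimes2})$ and with module actions, under the canonical comparison $\varphi^\ast(\cL^{\otimes2})\cong(\varphi^\ast\cL)^{\otimes2}$. This is conceptually routine but requires unwinding the colimit description of $\varphi^{-1}$, the sheafification in the definition of $\otimes_{\cO_X}$, and the stalkwise base-change formula $(\varphi^\ast\cL)_x\cong\cL_{\varphi(x)}\otimes_{\cO_{Y,\varphi(x)}}\cO_{X,x}$ used in the proof of Lemma~\ref{lemma: pullbacks of global sections generate the pullback sheaf}. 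Once this monoidality and the analogous naturality of $\cL\mapsto\varphi^\#_\cL$ are in hand, the remaining verifications are formal, using only that morphisms of ordered blueprints preserve the order, multiplication, and the weak inverse $\epsilon$.
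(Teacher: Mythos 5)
Your proposal is correct and takes essentially the same approach as the paper: the generating condition via Lemma~\ref{lemma: pullbacks of global sections generate the pullback sheaf}, the Pl\"ucker relations via the identification $\varphi^\ast(\cL^{\otimes2})\cong(\varphi^\ast\cL)^{\otimes2}$ together with the fact that $\varphi^\#_{\cL^{\otimes2}}$ is order-preserving, and well-definedness via pulling back the isomorphism between representatives. You are somewhat more explicit than the paper in flagging that one must know $\varphi^\#$ is monoidal and that $\epsilon$ is preserved under morphisms of pasteurized ordered blueprints, both of which the paper's proof tacitly takes as automatic.
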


\begin{proof}
 As a first step, we verify that $\varphi^\ast(\Delta)= \varphi_\cL^\#\circ \Delta$ is a Grassmann-Pl\"ucker function. Since $\{\Delta(I)\}_{I\in\binom Er}$ generates $\cL$, Lemma \ref{lemma: pullbacks of global sections generate the pullback sheaf} implies that $\{\varphi^\#_\cL(\Delta(I))\}_{I\in\binom Er}$ generates $\varphi^\ast(\cL)$. The identification $\varphi^\ast(\cL^{\otimes 2})=\varphi^\ast(\cL)^{\otimes 2}$ yields a morphism $\varphi^\#_{\cL^{\otimes2}}:\Gamma(Y,\cL^{\otimes2})\to \Gamma(X,\varphi^\ast(\cL)^{\otimes2})$ of ordered blueprints. Thus the validity of the Pl\"ucker relations in $\Gamma(Y,\cL^{\otimes2})$ implies the validity of the Pl\"ucker relations in $\Gamma(X,\varphi^\ast(\cL)^{\otimes2})$. This shows that $\varphi^\ast(\Delta)$ is a Grassmann-Pl\"ucker function.
 
 Next we show independence from the choice of representative $\Delta$. Let $\Delta':\binom Er\to \Gamma(Y,\cL')$ be another Grassmann-Pl\"ucker function representing $\cM$, i.e.\ there is an isomorphism $\eta:\cL\to\cL'$ such that $\Delta'=\Gamma(Y,\eta)\circ \Delta$. This yields an isomorphism $\varphi^\ast(\eta):\varphi^\ast(\cL)\to\varphi^\ast(\cL')$ and $\varphi^\ast(\Delta')=\Gamma(Y,\varphi^\ast(\eta))\circ \varphi^\ast(\Delta)$, as desired. 
\end{proof}


\subsection{The moduli functor of matroids}
\label{subsubsection: the moduli functor of matroids}

Let $E$ be a non-empty finite ordered set and $r$ a natural number. We extend the functor $\cMat(r,E):\OBlpr_\Funpm\to\Sets$ to the functor
\[
  \begin{array}{cccc}
  \cMat(r,E): & \OBSch_\Funpm & \longrightarrow & \Sets \\[5pt]
                         &   X          & \longmapsto     & \big\{\,\text{matroid bundles of rank $r$ on $E$ over $X$}\, \big\} \\[5pt]
                         & \varphi:X\to Y     & \longmapsto     & \varphi^\ast:\cMat(r,E)(Y)\to \cMat(r,E)(X)
 \end{array}
\]
Thanks to Proposition \ref{prop: compatibility of B-matroids with matroid bundles over Spec B}, we have $\cMat(r,E)(\Spec B)=\cMat(r,E)(B)$ for every $\Funpm$-algebra and $\cMat(r,E)(\Spec f)=\cMat(r,E)(f)$ for every morphism $f:B\to C$ in $\OBlpr_\Funpm$.



\subsection{Compatibility with matroids over ordered blueprints}
\label{subsection: compatibility with matroids over ordered blueprints}

In the following, we verify that matroid bundles over $\Spec B$ correspond bijectively to $B$-matroids in a functorial way.

\begin{prop}\label{prop: compatibility of B-matroids with matroid bundles over Spec B}
 Let $B$ be an $\Funpm$-algebra, $E$ a non-empty finite ordered set, $r$ a natural number and $X=\Spec B$. Then the map 
 \[
  \begin{array}{cccc}
   \Phi_B: & \big\{\text{$B$-matroids of rank $r$ on $E$}\big\} & \longrightarrow & \big\{\text{matroid bundles of rank $r$ on $E$ over $X$}\big\} \\[5pt]
         & M=[\Delta:\binom Er\to B]                                 & \longmapsto     & \widetilde{M}=[\iota_B\circ \Delta:\binom Er\to\Gamma(X,\cO_X)]
  \end{array}
 \]
 is a bijection, where $\iota_B:B\to \Gamma(X,\cO_X)$ is the inclusion as constant sections. If $f:B\to C$ is a morphism of $\Funpm$-algebras, $\varphi=f^\ast:\Spec C\to \Spec B$ the induced morphism and $M$ a $B$-matroid, then $\Phi_C(f_\ast(M))=\varphi^\ast(\Phi_B(M))$. In other words, we have a commutative diagram of functors
  \[
  \beginpgfgraphicnamed{tikz/fig10}
   \begin{tikzcd}[row sep=0pt, column sep=80pt]
    \OBlpr_\Funpm \arrow{dr}[above]{\cMat(r,E)} \arrow{dd}[swap]{\Spec} \\
      & \Sets. \\
    \OBSch_\Funpm \arrow{ur}[below]{\cMat(r,E)}
   \end{tikzcd}
  \endpgfgraphicnamed
 \]
\end{prop}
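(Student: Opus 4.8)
The plan is to establish the bijection $\Phi_B$ first, then verify functoriality. For the bijection, the key point is that on an affine ordered blue scheme $X = \Spec B$, every invertible sheaf is trivial. Indeed, $X$ has a unique closed point (the maximal $m$-ideal $\fm = B - B^\times$), so any open cover of $X$ must contain $X$ itself; hence any invertible sheaf $\cL$, being locally isomorphic to $\cO_X$, is isomorphic to $\cO_X$ globally. Therefore a Grassmann-Pl\"ucker function of rank $r$ on $E$ over $X$ — a pair $(\cL, \Delta)$ — is, after choosing a trivialization $\cL \isomap \cO_X$, the same as a map $\Delta : \binom Er \to \Gamma(X, \cO_X) \cong B$ (using $B \isomap \Gamma(\Spec B)$) whose values generate $\cO_X$ and satisfy the Pl\"ucker relations in $\Gamma(X, \cO_X^{\otimes 2}) \cong B$. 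The generation condition for $\cO_X$ means that for every prime $m$-ideal $\fp$, some $\Delta(I)$ is a unit in the stalk $B_\fp$; taking $\fp = \fm$ this says some $\Delta(I) \in B^\times$, and conversely if some $\Delta(I) \in B^\times$ then it is a unit in every stalk. Thus the generation condition is exactly the condition ``$\Delta(I) \in B^\times$ for some $I$'' from the definition of a $B$-matroid. Two such trivialized data differ by the choice of trivialization, which amounts to multiplication by a unit of $\Gamma(X, \cO_X) = B^\times$ (using Lemma~\ref{lemma: local units are global units} to see that global invertibility of the transition section is what matters); this is precisely the equivalence relation defining $B$-matroids. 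Hence $\Phi_B$, sending $[\Delta]$ to the isomorphism class of $(\cO_X, \iota_B \circ \Delta)$, is a well-defined bijection.

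For functoriality, let $f : B \to C$ be a morphism in $\OBlpr^\pm$ and $\varphi = f^\ast : \Spec C \to \Spec B$. Given a $B$-matroid $M = [\Delta]$, on the one hand $\Phi_C(f_\ast M)$ is the class of $(\cO_{\Spec C}, \iota_C \circ f \circ \Delta)$. On the other hand, $\varphi^\ast \Phi_B(M)$ is, by Lemma~\ref{lemma: definition of the pullback of matroids}, the class of $(\varphi^\ast \cO_{\Spec B}, \varphi^\#_{\cO} \circ \iota_B \circ \Delta)$. Since $\varphi^\ast \cO_{\Spec B} \cong \cO_{\Spec C}$ canonically, and since the natural map $\varphi^\#_{\cO} : \Gamma(\Spec B, \cO) \to \Gamma(\Spec C, \varphi^\ast \cO)$ is identified under $B \cong \Gamma(\Spec B)$, $C \cong \Gamma(\Spec C)$ with the morphism $f : B \to C$ itself, the two Grassmann-Pl\"ucker functions agree. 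This gives $\Phi_C \circ (f_\ast) = (\varphi^\ast) \circ \Phi_B$, which is exactly the commutativity of the diagram of functors (both functors land in $\Sets$, $\cMat(r,E)$ on $\OBlpr^\pm$ being covariant and on $\OBSch^\pm$ contravariant, with $\Spec$ contravariant).

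The main obstacle is the careful bookkeeping in the bijection: one must check that the triviality of invertible sheaves on $\Spec B$ interacts correctly with the global-sections identifications $\cL(X) \cong B$ and $\cL^{\otimes 2}(X) \cong B$, so that the Pl\"ucker relations — stated in $\Gamma(X, \cL^{\otimes 2})$ in Definition~\ref{def: matroid bundles} and in $B$ in the affine definition of $\S\ref{subsection: Matroids over ordered blueprints}$ — literally match, and that the notion of ``isomorphic Grassmann-Pl\"ucker functions'' unwinds to multiplication by a global unit rather than something subtler. The tools for this are already in place: the isomorphism $B \isomap \Gamma(\Spec B)$, Lemma~\ref{lemma: local units are global units}, and the compatibility of pullback with tensor products of invertible sheaves. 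Everything else is routine diagram-chasing.
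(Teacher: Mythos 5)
Your proposal is correct and follows essentially the same route as the paper's proof: reduce to the fact that $\Spec B$ has a unique closed point (so invertible sheaves are trivial and automorphisms of $\cO_X$ are exactly $B^\times$), match the generation condition against ``some $\Delta(I) \in B^\times$'' by evaluating at the maximal $m$-ideal, and derive functoriality from the identification $\varphi^\#_{\cO_X} = f$ under $B \cong \Gamma(\Spec B)$. The paper additionally treats the degenerate case $B = \{0\}$ (where $\Spec B = \emptyset$) separately, which your unique-closed-point argument does not cover, but that case is immediate.
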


\begin{proof}
  To begin with, we verify that $\Phi_B$ is well-defined. Let $\Delta:\binom Er\to B$ be a Grassmann-Pl\"ucker function. Then $\Delta(I)\in B^\times$ for some $r$-subset $I$ of $E$.  Therefore $\iota_B(\Delta(I))\in \Gamma(X,\cO_X)^\times$, which shows that $\{\iota_B\circ \Delta(I)\}_{I\in \binom Er}$ generates $\cO_X$. The Pl\"ucker relations for $\Delta$ imply the Pl\"ucker relations for $\iota_B\circ\Delta$. Thus $\iota_B\circ\Delta$ is a Grassmann-Pl\"ucker function over $X$. Since every $a\in B^\times$ defines an automorphism of $\cO_X$, the map $\Phi_B$ is independent of the choice of representative, which shows that $\Phi_B$ is well-defined.
 
 The injectivity of $\Phi_B$ can be verified as follows. The inclusion $\iota_B:B\to \Gamma(X,\cO_X)$ as constant sections is an isomorphism of ordered blueprints, which implies that any two Grassmann-Pl\"ucker functions $\Delta,\Delta':\binom Er\to B$ are different if $\iota_B\circ \Delta$ and $\iota_B\circ \Delta'$ are different. Moreover, this implies that the automorphisms of $\cO_X$ are equal to the automorphisms of $B$ as a $B$-module, which are given by multiplication with a unit, i.e.\ $\Aut(\cO_X)=B^\times$. Thus different $B$-matroids yield different matroid bundles over $X$, which proves the injectivity of $\Phi_B$.
 
 The surjectivity of $\Phi_B$ can be verified as follows. It is obvious if $B$ is trivial. If $B$ is nontrivial, then $B$ has a unique maximal ideal, which is $\fm=B-B^\times$. Therefore the only open subset of $X$ containing $\fm$ is $X$ itself. Thus there are no nontrivial coverings of $X$ and consequently every invertible sheaf on $X$ is isomorphic to $\cO_X$. This shows that we can represent every matroid bundle $\cM$ over $X$ by a Grassmann-Pl\"ucker function $\tilde \Delta:\binom Er\to \Gamma(X,\cO_X)$. Composing $\tilde \Delta$ with the inverse $\iota_B^{-1}$ of $\iota_B$ yields a map $\Delta=\iota_B^{-1}\circ\tilde \Delta:\binom Er\to B$. Since $\{\iota_B\circ \Delta(I)\}_{I\in \binom Er}$ generates $\cO_X$, it generates the stalk $\cO_{X,\fm}=B$ as an ordered blue $B$-module, which means that $\Delta(I)\in B^\times$ for some $r$-subset $I$ of $E$. The Pl\"ucker relations for $\tilde \Delta$ imply the Pl\"ucker relations for $\Delta$. Thus $\Delta$ is a Grassmann-Pl\"ucker function with coefficients in $B$ and $\tilde \Delta=\Phi_B(\Delta)$, as desired. This shows that $\Phi_B$ is bijective.
 
 To verify the final claim of the proposition, let $\Delta:\binom Er\to B$ be a Grassmann-Pl\"ucker function representing $M$. Then $f_\ast(M)$ is represented by the Grassmann-Pl\"ucker function $f\circ \Delta:\binom Er\to C$. The matroid bundle $\widetilde M=\Phi_B(M)$ is represented by the Grassmann-Pl\"ucker function $\iota_B\circ \Delta:\binom Er\to \Gamma(X,\cO_X)$. By Lemma \ref{lemma: definition of the pullback of matroids}, the pullback $\varphi^\ast(\widetilde M)$ is represented by the Grassmann-Pl\"ucker function $\varphi_{\cO_X}^\#\circ \iota_B\circ \Delta:\binom Er\to \Gamma(Y,\varphi^\ast(\cO_Y))$ where $Y=\Spec C$. 
 
 The result now follows from the commutativity of the diagram
 \[
  \beginpgfgraphicnamed{tikz/fig8}
   \begin{tikzcd}[row sep=15pt, column sep=50pt]
    B \arrow{r}{f} \arrow{d}[swap]{\iota_B} & C \arrow{d}{\iota_C} \\
    \Gamma(X,\cO_X) \arrow{r}{\varphi_{\cO_X}^\#} & \Gamma(Y,\varphi^\ast(\cO_Y))
   \end{tikzcd}
  \endpgfgraphicnamed
 \]
 where $\iota_C:C\to \Gamma(Y,\cO_Y)$ is the canonical isomorphism.
\end{proof}

\subsubsection{Example of a matroid bundle over the projective line over $\K$}

In this example, we investigate matroid bundles of rank $2$ on $E=\{1,2,3,4\}$ over the projective line $\P^1_\K=\Proj\big(\K[T_0,T_1]\big)$. We review some general facts that we will use below. 
 
Since $\K^\bullet=\{0,1\}$, the underlying monoid of $\K[T_0,T_1]$ is $\{0\}\cup\{T_0^{e_0}T_1^{e_1}|e_0,e_1\in\N\}$. Thus the homogeneous prime ideals of $\K[T_0,T_1]$ not containing both $T_0$ and $T_1$ are $(0)$, $(T_0)$ and $(T_1)$, cf.\ Figure \ref{figure: projective line and projective plane} for an illustration.
 
As in the classical case, every invertible sheaf on $\P^1_\K$ is isomorphic to a twisted sheaf $\cO(d)$ for some $d\in \Z$ and every automorphism of $\cO(d)$ is given by the multiplication by a unit of $\K$, i.e. $\Aut(\cO(d))=\K^\times=\{1\}$. This means that every matroid bundle $\cM$ of rank $2$ on $E$ over $\P^1_\K$ is represented by a unique Grassmann-Pl\"ucker function of the form $\Delta:\binom E2\to\Gamma(X,\cO(d))$. Note that there is only one Pl\"ucker relation in this case, which is
\[
 0 \ \leq \ \Delta_{1,2}\Delta_{3,4} \ + \ \Delta_{1,3}\Delta_{2,4} \ + \ \Delta_{1,4}\Delta_{2,3}
\]
where we write $\Delta_{i,j}=\Delta(\{i,j\})$.

We have $\Gamma(\P^1_\K,\cO(d))=\{0\}$ for $d<0$, which means that $\cO(d)$ cannot be generated by global sections for $d<0$. For $d\geq0$, we have $\Gamma(\P^1_\K,\cO(d))=\{0\}\cup\{T_0^d,T_0^{d-1}T_1,\dotsc,T_1^d\}$. Since $T_0$ is contained in the maximal ideal of $\K[T_0,T_1]_{(T_0)}$ and $T_1$ is contained in the maximal ideal of $\K[T_0,T_1]_{(T_1)}$, there is a unique minimal set of global sections that generates $\cO(d)$: for $d=0$, this set is $\{1\}$ and for $d>0$, this set is $\{T_0^d,T_1^d\}$.

For every $d\geq0$, there is exists a nonempty set of Grassmann-Pl\"ucker functions $\Delta:\binom E2\to\Gamma(\P^1_\K,\cO(d))$. We classify them for $d=0$ and $d=1$ in the following.

The case $d=0$ ties to $\K$-matroids as follows: the pullback along the structure morphism $\chi:\P^1_\K\to\Spec \K$ yields a bijection 
\[\textstyle
 \chi^\ast: \quad \left\{\begin{array}{c}\text{Grassmann-Pl\"ucker functions}\\\Delta:\binom E2\to\K\end{array}\right\} \quad \longrightarrow \quad \left\{\begin{array}{c}\text{Grassmann-Pl\"ucker functions}\\\Delta:\binom E2\to\Gamma(\P^1_\K,\cO_{\P^1_\K})\end{array}\right\},
\]
which realizes $\K$-matroids as ``constant'' matroid bundles over $\P^1_\K$. The inverse is given by the pullback $\xi^\ast(\Delta)$ along an arbitrary morphism $\xi:\Spec \K\to\P^1_\K$. (Note that there are three such morphisms, which are characterized by their image, which can be each of the three points of $\P^1_\K$.)

The $\K$-matroids of rank $2$ on $E$ correspond to the functions $\Delta:\binom E2\to\{0,1\}$ for which at least two of the products $\Delta_{1,2}\Delta_{3,4}$, $\Delta_{1,3}\Delta_{2,4}$ and $\Delta_{1,4}\Delta_{2,3}$ are equal to $1$, or for which all three products are equal to $0$ but $\Delta_{i,j}=1$ for at least one $2$-subset $\{i,j\}$ of $E$.

The case $d=1$ is more involved and reveals some novel phenomena. We have $\Gamma(\P^1_\K,\cO(1))=\{0,T_0,T_1\}$. Let $\Delta:\binom E2\to\{0,T_0,T_1\}$ be a function. Since $\{\Delta_{i,j}\}_{\{i,j\}\in\binom E2}$ has to generate $\cO(1)$ in order for $\Delta$ to be a Grassmann-Pl\"ucker function, we must have $\Delta_{i,j}=T_0$ and $\Delta_{k,l}=T_1$ for some $2$-subsets $\{i,j\}$ and $\{k,l\}$ of $E$. Moreover, at least two of the products $\Delta_{1,2}\Delta_{3,4}$, $\Delta_{1,3}\Delta_{2,4}$ and $\Delta_{1,4}\Delta_{2,3}$ must be equal to each other, while the third might be equal to the other two or equal to $0$. This allows for the following Grassmann-Pl\"ucker functions:
\begin{itemize}
 \item $\Delta_{1,2}\Delta_{3,4}=\Delta_{1,3}\Delta_{2,4}=\Delta_{1,4}\Delta_{2,3}=T_0T_1$;
 \item $\Delta_{i,j}\Delta_{k,l}=\Delta_{i,k}\Delta_{j,l}=T_0^2$, $\Delta_{i,l}=0$, and $\Delta_{j,k}=T_1$ for some $\{i,j,k,l\}=E$;
 \item $\Delta_{i,j}\Delta_{k,l}=\Delta_{i,k}\Delta_{j,l}=T_1^2$, $\Delta_{i,l}=0$, and $\Delta_{j,k}=T_0$ for some $\{i,j,k,l\}=E$;
 \item $\Delta_{i,j}\Delta_{k,l}=\Delta_{i,k}\Delta_{j,l}=T_0T_1$, $\Delta_{i,l}=0$, and $\Delta_{j,k}\in\{0,T_0,T_1\}$ for some $\{i,j,k,l\}=E$;
 \item $\Delta_{1,2}\Delta_{3,4}=\Delta_{1,3}\Delta_{2,4}=\Delta_{1,4}\Delta_{2,3}=0$, $\Delta_{i,j}=T_0$, and $\Delta_{i,k}=T_1$ for some pairwise distinct $i,j,k$.
\end{itemize}

The cases $d\geq2$ become increasingly more involved.


\subsection{The moduli space of matroids}
\label{subsection: The moduli space of matroids}

We define the \emph{matroid space of rank $r$ on $E$} as the ordered blue scheme
\[ \textstyle
 \Mat(r,E) \quad = \quad \Proj\Big( \, \bpquot{\Funpm\big[ \, x_I \, \big| \, I\in\binom Er \, \big]}{\cPl(r,E)} \, \Big),
\]
where $\cPl(r,E)$ is generated by the Pl\"ucker relations
\[
 0 \quad \leq \quad \sum_{k=1}^{r+1} \ \epsilon^k \ \cdot \ x_{I\cup\{i_k\}} \ \cdot \ x_{I'-\{i_k\}} 
\]
for every $(r-1)$-subset $I$ and every $(r+1)$-subset $I'=\{i_1,\dotsc,i_{r+1}\}$ of $E$. By definition, it comes with a closed immersion into projective space
\[\textstyle
 \iota: \ \Mat(r,n) \ \longrightarrow \ \P^N_{\Funpm} \ = \ \Proj\Big( \, \Funpm\big[ \, x_I \, \big| \, I\in\binom Er \, \big] \, \Big)
\]
where $N=\#\binom Er - 1$. We denote the pullback of the tautological bundle $\cO(1)$ of $\P^N_{\Funpm}$ to $\Mat(r,E)$ by $\cL_\univ=\iota^\ast\cO(1)$. The pullbacks of the canonical sections $x_I$ of $\cO(1)$ define a map
\[
 \begin{array}{cccc}
  \Delta_\univ: & \binom Er & \longrightarrow & \Gamma(\Mat(r,E),\cL_\univ). \\
           & I    & \longmapsto     & \Delta_\univ(I)=\iota^\#_{\cO(1)}(x_I)
 \end{array}
\]
Since $\{x_I\}_{I\in\binom Er}$ generates $\cO(1)$, Lemma \ref{lemma: pullbacks of global sections generate the pullback sheaf} implies that $\{\Delta_\univ(I)\}_{I\in\binom Er}$ generates $\cL_\univ$. Since $\Mat(r,E)$ satisfies the Pl\"ucker relations, the function $\Delta_\univ$ is a Grassmann-Pl\"ucker function on $\Mat(r,E)$. The \emph{universal matroid bundle} is the class $\cM_\univ$ of $\Delta_\univ$, which is a matroid bundle of rank $r$ on $E$ over $\Mat(r,E)$.

\begin{rem}\label{rem: relation of matroid spaces with Grassmannians}
 The matroid space $\Mat(r,E)$ should be thought of as an analogue of the Grassmannian $\Gr(r,E)_R$ over a ring $R$ from usual algebraic geometry. In fact, we can recover the Grassmannian as the scheme $\Mat(r,E)_B^+$ associated to the base extension $\Mat(r,E)_B$ of the matroid space to the ordered blueprint $B$ associated with $R$. The functor $(-)^+$ respects many of the standard `decorations' of the Grassmannian (e.g. its Pl{\"u}cker embedding into $\P^N$)
in an obvious sense, linking them to their respective avatars in classical algebraic geometry.
\end{rem}

The following theorem shows that the pair $(\Mat(r,E),\cM_\univ)$ represents the moduli functor $\cMat(r,E)$:

\begin{thm}\label{thm: moduli space of matroids}
 Let $E$ be a non-empty finite ordered set and let $r$ be a natural number. The ordered blue $\Funpm$-scheme $\Mat(r,E)$, together with its universal matroid bundle $\cM_\univ$, is the fine moduli space of all matroid bundles of rank $r$ on $E$, i.e. the map
 \[
  \begin{array}{cccc}
   \Phi: & \Hom_{\Funpm}\big(X,\Mat(r,E)\big)                 & \longrightarrow & \cMat(r,E)(X) \\[5pt]
         & \varphi:X\to \Mat(r,E) & \longmapsto & \varphi^\ast(\cM_\univ)\\[5pt]
  \end{array}
 \]
 is a bijection for every ordered blue $\Funpm$-scheme $X$.
\end{thm}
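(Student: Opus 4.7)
The plan is to reduce the assertion to Corollary~\ref{cor: morphisms to projective space as invertible sheaves with generators}, which identifies $\Hom_{\F_1^\pm}(X,\P^N_{\F_1^\pm})$ (with $N=\#\binom{E}{r}-1$) with isomorphism classes of pairs $(\cL,(s_I)_{I\in\binom{E}{r}})$, where $\cL$ is an invertible sheaf on $X$ and the family of global sections $s_I$ generates $\cL$. Under this bijection, the postcomposition with the closed immersion $\iota:\Mat(r,E)\hookrightarrow\P^N_{\F_1^\pm}$ sends $\varphi\mapsto\iota\circ\varphi$, and the right-hand side corresponds to $\bigl(\varphi^\ast(\cL_\univ),\varphi^\#_{\cL_\univ}\circ\Delta_\univ\bigr)$, i.e.\ precisely $\Phi(\varphi)$ regarded as an isomorphism class of Grassmann--Pl\"ucker functions. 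Thus the theorem reduces to showing that, for a pair $(\cL,(s_I))$ corresponding to a morphism $\psi:X\to\P^N_{\F_1^\pm}$, the morphism $\psi$ factors (uniquely) through $\iota$ if and only if $(s_I)$ satisfies the Pl\"ucker relations in $\Gamma(X,\cL^{\otimes 2})$.

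First I would set up the trivializing cover. By definition, $\{x_I\}$ generates $\cO(1)$ on $\P^N_{\F_1^\pm}$, so by Lemma~\ref{lemma: pullbacks of global sections generate the pullback sheaf} applied to $\psi$, the sections $s_I=\psi^\#_{\cO(1)}(x_I)$ generate $\cL$ on $X$, and the open subsets $X_I=\{x\in X\mid s_{I,x}\in\cL_x^\times\}$ form an open cover of $X$. Over $X_I$ the choice of the generator $s_I$ trivializes $\cL\vert_{X_I}$ and the restriction $\psi\vert_{X_I}$ lands in the standard affine open $U_{x_I}=\Spec\F_1^\pm[x_J/x_I]_{J\neq I}$ of $\P^N_{\F_1^\pm}$, where it corresponds by Lemma~\ref{lemma: adjunction between ordered blueprints and ordered blue schemes} to the blueprint morphism sending $x_J/x_I\mapsto s_J/s_I\in\Gamma(X_I,\cO_X)$.

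Second I would analyze the closed immersion $\iota$ on this affine cover. By construction of $\Mat(r,E)$ as $\Proj$ of $\bpquot{\F_1^\pm[x_J]}{\cPl(r,E)}$, the preimage $\iota^{-1}(U_{x_I})$ equals $\Spec$ of the blueprint obtained from $\F_1^\pm[x_J/x_I]_{J\neq I}$ by imposing the Pl\"ucker relations divided by $x_I^2$. Therefore $\psi\vert_{X_I}$ factors (uniquely) through $\iota^{-1}(U_{x_I})$ precisely when the blueprint morphism above is order-preserving with respect to these dehomogenized Pl\"ucker relations, which after clearing denominators by $s_I^2$ is exactly the assertion that the Pl\"ucker relations among the $s_J$ hold in $\Gamma(X_I,\cL^{\otimes 2})$. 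Because $\cL^{\otimes 2}$ is a sheaf and the $X_I$ cover $X$, the Pl\"ucker relations hold locally on every $X_I$ if and only if they hold globally in $\Gamma(X,\cL^{\otimes 2})$, so the local factorizations through $\iota^{-1}(U_{x_I})$ glue to a (necessarily unique) global factorization through $\Mat(r,E)$.

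Combining the two steps gives a bijection between $\Hom_{\F_1^\pm}(X,\Mat(r,E))$ and the subset of $\Hom_{\F_1^\pm}(X,\P^N_{\F_1^\pm})$ of morphisms whose associated pair $(\cL,(s_I))$ is a Grassmann--Pl\"ucker function of rank $r$ on $E$ over $X$, i.e.\ $\cMat(r,E)(X)$, and the composition with Corollary~\ref{cor: morphisms to projective space as invertible sheaves with generators} coincides with $\Phi$ by construction of $\cM_\univ=[\Delta_\univ]$. The main technical obstacle is the second step: one must check that imposing homogeneous degree--$2$ relations in the $\Proj$ construction is compatible with pullback along $\psi$ in the sense that it exactly captures the Pl\"ucker relations among the $s_I$ in $\Gamma(X,\cL^{\otimes 2})$; this requires carefully tracking the partial order through localization, through the identification $\cL\vert_{X_I}\simeq\cO_{X_I}$ via $s_I$, and through the sheafification step, all of which are handled by the formalism of subcanonical ordered blue schemes and the preceding lemmas in Section~\ref{section: Projective geometry for ordered blueprints}.
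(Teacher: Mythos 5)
Your proposal is correct and follows essentially the same route as the paper's proof: both reduce the statement to Theorem~\ref{thm: morphisms to projective space} / Corollary~\ref{cor: morphisms to projective space as invertible sheaves with generators} and then identify morphisms factoring through the closed immersion $\iota:\Mat(r,E)\hookrightarrow\P^N_{\F_1^\pm}$ with families of generating sections satisfying the Pl\"ucker relations. The only difference is that you spell out the factorization step via a local analysis on the trivializing cover $X_I$, where the paper simply invokes the universal property of the quotient defining $\Mat(r,E)$; your added detail is consistent with the formalism and introduces no gap.
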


\begin{proof}
 Note that every morphism $\varphi:\Spec B\to \Mat(r,E)$ is automatically $\Funpm$-linear since the morphism $\Funpm\to B$ is unique. Therefore we can omit $\Funpm$-linearity from the notation for the morphism set $\Hom(X,\Mat(r,E))$.

 As a first step, we define a map $\Psi:\cMat(r,E)(X)\to \Hom(X,\Mat(r,E))$ in the opposite direction of $\Phi$. Let $\cM$ be a matroid bundle over $X$ that is represented by a Grassmann-Pl\"ucker function $\Delta:\binom Er\to \Gamma(X,\cL)$ for some invertible sheaf $\cL$ over $X$. Let $N=\#\binom Er - 1$. By Theorem \ref{thm: morphisms to projective space} \eqref{proj2}, there is a unique $\Funpm$-linear morphism $\varphi_0:X\to \P_{\Funpm}^{N}$ such that $\Delta(I)=\varphi^\#_0(x_I)$ for all $I\in\binom Er$. Since $\Delta$ satisfies the Pl\"ucker relations, $\varphi_0$ factors uniquely into a morphism $\varphi:X\to \Mat(r,E)$ followed by the closed immersion $\iota:\Mat(r,E)\to \P_{\Funpm}^{N}$. 
 
 That $\Phi$ and $\Psi$ are mutually inverse bijections follows at once from Corollary \ref{cor: morphisms to projective space as invertible sheaves with generators}.
 
 Consider a morphism $\psi:Y\to X$ in $\OBSch_\Funpm$. Then by the definition of the pullback of a matroid bundle, we have $(\varphi\circ\psi)^\ast(\cM_\univ)=\psi^\ast(\varphi^\ast(\cM_\univ))$, which establishes the functoriality of the bijection $\Phi$. This completes the proof of the theorem.
\end{proof}


\subsection{Duality}
\label{subsection: duality}

One of the fundamental features of matroid theory is that every matroid (with coefficients) comes with a canonical dual matroid. This extends to matroid bundles, and, in fact, the duality is derived from a duality between the moduli spaces.

\begin{thm}\label{thm: duality of matoid spaces}
 Let $E$ be a non-empty finite ordered set, $r\leq\#E$ a natural number and $r^{\smallvee}=\#E-r$. Let $I^c=E-I$ denote the complement of a subset $I$ of $E$. Then the association $x_I\mapsto x_{I^c}$ defines a graded $\Funpm$-linear isomorphism
 \[\textstyle
  \alpha^\smallvee: \ \bpquot{\Funpm\big[ \, x_I \, \big| \, I\in\binom Er \, \big]}{\cPl(r,E)} \quad \stackrel\sim\longrightarrow \quad \bpquot{\Funpm\big[ \, x_I \, \big| \, I\in\binom E{r^\smallvee} \, \big]}{\cPl(r^\smallvee,E)}
 \]
 and thus an isomorphism
 \[
  \varphi^\smallvee: \ \Mat(r^\smallvee,E) \ \stackrel\sim\longrightarrow \ \Mat(r,E)
 \]
 of ordered blue $\Funpm$-schemes.
\end{thm}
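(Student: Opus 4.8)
The plan is to build the graded algebra isomorphism $\alpha^\smallvee$ explicitly from complementation and then apply $\Proj$. Since $r+r^\smallvee=\#E$, the assignment $I\mapsto I^c$ is a bijection $\binom Er\to\binom E{r^\smallvee}$, so $x_I\mapsto x_{I^c}$ extends uniquely to a graded $\Funpm$-linear isomorphism $\widehat\alpha$ of the free ordered blueprints $\Funpm\big[\,x_I\,\big|\,I\in\textstyle\binom Er\,\big]$ and $\Funpm\big[\,x_I\,\big|\,I\in\textstyle\binom E{r^\smallvee}\,\big]$, with inverse the corresponding map for the pair $(r^\smallvee,r)$ (using $(I^c)^c=I$). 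Since $\widehat\alpha$ is an isomorphism of free algebras, it descends to the desired isomorphism $\alpha^\smallvee$ of the quotients $\bpquot{\Funpm[\,x_I\,|\,I\in\binom Er\,]}{\cPl(r,E)}$ as soon as one shows that $0\leq\widehat\alpha(R)$ holds in $\bpquot{\Funpm[\,x_I\,|\,I\in\binom E{r^\smallvee}\,]}{\cPl(r^\smallvee,E)}$ for each generating Pl\"ucker relation $0\leq R$ of $\cPl(r,E)$ — and, by the symmetry $r\leftrightarrow r^\smallvee$, conversely.

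Here is how I would verify this. Recording the identities $(S\cup\{i\})^c=S^c\setminus\{i\}$ and $(S\setminus\{i\})^c=S^c\cup\{i\}$, the image under $\widehat\alpha$ of the Pl\"ucker relation $R_{J,I}=\sum_k\epsilon^k x_{J\cup\{i_k\}}x_{I\setminus\{i_k\}}$ — attached to an $(r-1)$-subset $J$ and an $(r+1)$-subset $I=\{i_1<\dots<i_{r+1}\}$ — is $\sum_k\epsilon^k x_{J^c\setminus\{i_k\}}x_{I^c\cup\{i_k\}}$. A term is annihilated by the convention ``$x_S=0$ if $\#S=r-1$'' exactly when $i_k\in J$, which is exactly when the matching term of the rank-$r^\smallvee$ Pl\"ucker relation $R_{I^c,J^c}$ — attached to the $(r^\smallvee-1)$-subset $I^c$ and the $(r^\smallvee+1)$-subset $J^c$ — is annihilated by the corresponding convention. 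Hence $\widehat\alpha(R_{J,I})$ and $R_{I^c,J^c}$ have the same surviving monomials, and only the exponent of $\epsilon$ on the term indexed by $i_k$ differs, passing from the position of $i_k$ in $I$ to the position of $i_k$ in $J^c$. To show this change is harmless I would use the factorization of the shuffle sign, $\sigma_S=(-1)^{\binom{\#S+1}2}\prod_{s\in S}(-1)^s$: the constant factor is absorbed by rescaling all coordinates by a single unit, and for the toric character $\chi(S)=\prod_{s\in S}(-1)^s$ one has $\chi(J\cup\{i_k\})\,\chi(I\setminus\{i_k\})=\chi(J)\,\chi(I)$ independently of $k$, so $x_S\mapsto\chi(S)x_S$ multiplies each Pl\"ucker relation by a unit and is an automorphism of each Pl\"ucker quotient. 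Together with $\epsilon^2=1$ (Lemma~\ref{lemma: first properties of unique weak inverses}) and the fact that $0\leq\sum a_\ell$ is equivalent to $0\leq\sum\epsilon a_\ell$ in a pasteurized ordered blueprint, this makes $0\leq\widehat\alpha(R_{J,I})$ equivalent to $0\leq R_{I^c,J^c}$; the degenerate relations, where two surviving terms carry the same monomial, are of the shape $0\leq(1+\epsilon)\cdot(\text{monomial})$ and are handled the same way.

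It then remains to pass from the algebra isomorphism to the scheme isomorphism, which is formal: applying the contravariant functor $\Proj$ of Section~\ref{subsection: The Proj-construction} to $\alpha^\smallvee$ and to its inverse gives morphisms $\varphi^\smallvee:\Mat(r^\smallvee,E)\to\Mat(r,E)$ and $\Mat(r,E)\to\Mat(r^\smallvee,E)$ of $\Funpm$-schemes ($\Funpm$-linearity is automatic, as in the proof of Theorem~\ref{thm: moduli space of matroids}), and functoriality of $\Proj$ together with $\Proj(\id)=\id$ shows they are mutually inverse. Optionally, one can identify $\varphi^\smallvee$ via the moduli description of Theorem~\ref{thm: moduli space of matroids}: it sends the universal matroid bundle of $\Mat(r,E)$ to the dual of the universal matroid bundle of $\Mat(r^\smallvee,E)$ up to the toric rescaling above, so $\varphi^\smallvee$ incarnates the Baker--Bowler duality of matroids on the level of moduli spaces.

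The main obstacle is the sign bookkeeping of the second paragraph — matching the $\epsilon$-exponents of $\widehat\alpha(R_{J,I})$ with those of $R_{I^c,J^c}$ and checking that the two cardinality conventions correspond. The shuffle-sign factorization reduces it to the routine toric invariance of the Pl\"ucker relations plus a uniform global sign, but it is exactly the kind of parity accounting where an off-by-one error is easy to make; the other ingredients (bijectivity of complementation, functoriality of $\Proj$, gradedness, and $\Funpm$-linearity) are formal.
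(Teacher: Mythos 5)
You have the right skeleton — define $\widehat\alpha$ on free algebras, check that it carries Pl\"ucker generators for rank $r$ to Pl\"ucker generators for rank $r^\smallvee$, and apply $\Proj$ — and the matching of surviving monomials, the $\Funpm$-linearity, the gradedness, and the formal passage from $\alpha^\smallvee$ to $\varphi^\smallvee$ are all fine. The gap is exactly at the sign-matching step you flag as dangerous, and the proposed fix does not close it.

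Once the surviving monomials of $\widehat\alpha(R_{J,I})$ and of $R_{I^c,J^c}$ are matched, the term indexed by $i\in I-J$ carries $\epsilon^{\sigma(i,I)}$ in the former and $\epsilon^{\sigma(i,J^c)}$ in the latter, with $\sigma(i,S)=\#\{s\in S \,:\, s\leq i\}$ as in the paper's proof. For the two inequalities to be equivalent you need $\sigma(i,I)-\sigma(i,J^c)$ to have the same parity for every $i\in I-J$, so that the relations differ only by a single global unit $\epsilon^c$. That is false: take $E=\{1,\dots,6\}$, $r=2$, $J=\{3\}$, $I=\{1,2,5\}$; then $J^c=\{1,2,4,5,6\}$ and, for $i=1,2,5$, the pairs $(\sigma(i,I),\sigma(i,J^c))$ are $(1,1)$, $(2,2)$, $(3,4)$, so the differences $0,0,-1$ are not all of the same parity, and $\widehat\alpha(R_{J,I})$ is not a unit multiple of $R_{I^c,J^c}$. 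The toric twist $x_S\mapsto\chi(S)x_S$ with $\chi(S)=\prod_{s\in S}(-1)^s$ cannot repair this: precisely because $\chi(J\cup\{i\})\chi(I-\{i\})=\chi(J)\chi(I)$ is independent of $i$, the twist multiplies each Pl\"ucker generator by a single unit and hence — while an automorphism of each Pl\"ucker quotient — leaves the partial order, and the non-proportionality above, unchanged. A twist that acts by a constant on a given relation cannot cancel a non-constant exponent mismatch inside that relation, so the claimed equivalence ``$0\leq\widehat\alpha(R_{J,I})$ iff $0\leq R_{I^c,J^c}$'' does not follow from $\epsilon^2=1$ plus the shuffle-sign factorization.

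What the paper does is different: before applying $\widehat\alpha$, it rewrites the Pl\"ucker relation in a form symmetric in the two indexing subsets, namely $0\leq\sum_{i\in I-J}\epsilon^{\sigma(i,J)+\sigma(i,I)}\,x_{J\cup\{i\}}\,x_{I-\{i\}}$ — the classical Pl\"ucker relation carrying the sorting sign contributed by $J$ as well, which is the version that actually specializes to the $3$-term relation of section~\ref{subsubsection: weak Grassmann-Plucker functions} (the bare $\epsilon^k$ does not; try $E=\{1,2,3,4\}$, $r=2$, $J=\{3\}$, $I=\{1,2,4\}$, where the bare exponents give $\epsilon,1,\epsilon$ against the $3$-term's $\epsilon,1,1$). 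In the symmetric form the exponent is complement-invariant by the one-line count $\sigma(i,J)+\sigma(i,J^c)=\sigma(i,I)+\sigma(i,I^c)=\#\{j\in E : j\leq i\}$, hence $\sigma(i,J)+\sigma(i,I)\equiv\sigma(i,J^c)+\sigma(i,I^c)\pmod 2$; so $\widehat\alpha$ sends the symmetric generator for $(J,I)$ exactly to the one for $(I^c,J^c)$, with no constant to track and no character twist to invoke. That reformulation of the Pl\"ucker exponent — not the shuffle-sign device — is the ingredient your argument is missing.
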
 
 
\begin{proof}
 Clearly $x_I\mapsto x_{I^c}$ defines a graded $\Funpm$-linear isomorphism 
  \[\textstyle
  \tilde\alpha^\smallvee: \ \Funpm\big[ \, x_I \, \big| \, I\in\binom Er \, \big] \ \stackrel\sim\longrightarrow \ \Funpm\big[ \, x_I \, \big| \, I\in\binom E{r^\smallvee} \, \big].
 \]
 Thus we are left with verifying that $\tilde\alpha^\smallvee$ preserves the respective Pl\"ucker relations.
 
 For this verification, we rewrite the Pl\"ucker relations in a form that is more symmetric with respect to duality. For $I\subset E$ and $i\in E$, we define $\sigma(i,I)=\#\{j\in I|j\leq i\}$. Then the Pl\"ucker relation given by an $(r-1)$-subset $I$ and an $(r+1)$-subset $J$ of $E$ is
 \[
  0 \quad \leq \quad \sum_{i\in J-I} \ \epsilon^{\sigma(i,I)+\sigma(i,J)} \ \cdot \ x_{I\cup\{i\}} \ \cdot \ x_{J-\{i\}}.
 \]
 Note that
 \[
  (I\cup\{i\})^c=I^c-\{i\}, \quad (J-\{i\})^c=J^c\cup\{i\} \quad \text{and} \quad J-I=I^c-J^c.
 \]
 The last equality implies that $\sigma(i,J-I)=\sigma(i,I^c-J^c)$. Since $\sigma(i,J-I)=\sigma(i,J)-\sigma(i,I\cap J)$, and likewise for $\sigma(i,I^c-J^c)$, we obtain
 \[
  \sigma(i,J)-\sigma(i,I\cap J) \ = \ \sigma(i,I^c)-\sigma(i,J^c\cap I^c).
 \]
 Exchanging the roles of $I$ and $J$ yields an analogous equation. Adding both equations yields
 \[
  \sigma(i,I)+\sigma(i,J)-2\sigma(i,I\cap J) \ = \ \sigma(i,J^c)+\sigma(i,I^c)-2\sigma(i,J^c\cap I^c).
 \]
 This shows that $\epsilon^{\sigma(i,J^c)+\sigma(i,I^c)}=\epsilon^{\sigma(i,I)+\sigma(i,J)}$. Thus applying $\tilde\alpha^\smallvee$ to the Pl\"ucker relation for $I$ and $J$ yields
 \[
  0 \quad \leq \quad \sum_{i\in I^c-J^c} \ \epsilon^{\sigma(i,J^c)+\sigma(i,I^c)} \ \cdot \ x_{J^c\cup\{i\}} \ \cdot \ x_{I^c-\{i\}},
 \]
 which is the Pl\"ucker relation for the $(r^\smallvee-1)$-subset $J^c$ and the $(r^\smallvee+1)$-subset $I^c$ of $E$. We conclude that $\tilde\alpha^\smallvee$ maps $\cPl(r,E)$ to $\cPl(r^\smallvee,E)$, which completes the proof of the theorem.
\end{proof}

\begin{df}
 Let $X$ be an ordered blue $\Funpm$-scheme endowed with an involution (generalizing the involution on a tract $F$ from Section~\ref{subsubsection: cryptomorphisms for matroids over tracts}) $\iota:X\to X$, $\cL$ a line bundle on $X$, and $\cM$ a matroid bundle on $X$ that is represented by the Grassmann-Pl\"ucker function $\Delta:\binom Er\to\Gamma(X,\cL)$. The \emph{dual of $\Delta$ with respect to $\iota$} is the function
 \[\textstyle
  \begin{array}{cccc}
   \Delta_\iota^\smallvee: & \binom E{r^\smallvee} & \longrightarrow & \Gamma(X,\cL) \\
                      &     I            & \longmapsto     & \iota_\cL^\#\circ\Delta(I^c)
  \end{array}
 \]
 where 
 $\iota_\cL^\#:\Gamma(X,\cL) \to\Gamma(X,\cL)$ is the involution induced by $\iota$. 
 
 The \emph{dual of $\cM$ with respect to $\iota$} is the isomorphism class $\cM_\iota^\smallvee$ of $\Delta_\iota^\smallvee$.
\end{df}

In the following proposition, we verify that $\Delta_\iota^\smallvee$ is a Grassmann-Pl\"ucker function and thus that $\cM_\iota^\smallvee$ is a matroid bundle on $X$. Moreover, we will see that the duality of matroid bundles is compatible with the duality of the moduli spaces of matroids from Theorem \ref{thm: duality of matoid spaces}.

Note that in case that $X=\Spec F$ for an idyll $F$, the duality of the matroid bundle $\cM$ on $\Spec F$ coincides with the duality of the corresponding $F^\tract$-matroid $M$ from \cite[Thm.\ 2.24]{Baker-Bowler19}.

Given a matroid bundle $\cM$ on $X$, we call the morphism $\chi_\cM:X\to\Mat(r,E)$ that corresponds to $\cM$ under the bijection from Theorem \ref{thm: moduli space of matroids} the \emph{characteristic morphism of $\cM$}.

\begin{prop}\label{prop: dual matroid bundle}
 Let $X$ be an ordered blue $\Funpm$-scheme with involution $\iota:X\to X$ and $\cL$ a line bundle on $X$. Let $\Delta:\binom Er\to\Gamma(X,\cL)$ be a Grassmann-Pl\"ucker function that represents a matroid bundle $\cM$ on $X$ with characteristic morphism $\chi_\cM:X\to \Mat(r,E)$. Then the dual $\Delta_\iota^\smallvee$ of $\Delta$ with respect to $\iota$ is a Grassmann-Pl\"ucker function and $\cM_\iota^\smallvee$ is the matroid bundle on $X$ whose characteristic morphism is
 \[
  \chi_{\cM_\iota^\smallvee} \ = \ \varphi^\smallvee\circ\chi_\cM\circ\iota: \ X \ \stackrel{\iota}\longrightarrow \ X \ \stackrel{\chi_\cM}\longrightarrow \ \Mat(r,E) \ \stackrel{\varphi^\smallvee}\longrightarrow \ \Mat(r^\smallvee,E)
 \]
 where $r^\smallvee=\#E-r$ and $\varphi^\smallvee$ is the isomorphism from Theorem \ref{thm: duality of matoid spaces}.
\end{prop}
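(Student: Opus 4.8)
The plan is to deduce both assertions from Theorem \ref{thm: moduli space of matroids} together with the combinatorial identity established in the proof of Theorem \ref{thm: duality of matoid spaces}. The construction $\Delta\mapsto\Delta_\iota^\smallvee$ factors as ``relabel the index set by $I\mapsto I^c$'' followed by ``apply the involution $\iota_\cL^\#$'', and I would treat these two steps in turn; everything reduces to the $\epsilon$-exponent identity already proven in Theorem \ref{thm: duality of matoid spaces} and to the defining property $\chi_\cM^\ast(\cM_\univ)=\cM$ of the characteristic morphism.

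First I would check that $\Delta_\iota^\smallvee$ is a Grassmann--Pl\"ucker function. The family $\{\Delta_\iota^\smallvee(I)\}_{I\in\binom E{r^\smallvee}}$ generates $\cL$ because $\{\Delta_\iota^\smallvee(I)\mid I\in\binom E{r^\smallvee}\}=\{\iota_\cL^\#(\Delta(J))\mid J\in\binom Er\}$, $\iota\colon X\to X$ is a homeomorphism, and $\iota_\cL^\#$ is induced (via the linearization $\iota^\ast\cL\cong\cL$ underlying ``the involution induced by $\iota$'') by stalkwise isomorphisms carrying generators to generators. For the Pl\"ucker relations I would apply $\iota_{\cL^{\otimes2}}^\#$ to the Pl\"ucker relation for $\Delta$ attached to an $(r-1)$-subset $I_0$ and an $(r+1)$-subset $J_0$. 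Since $\iota^\#$ is order-preserving, compatible with the pairing $\cL\times\cL\to\cL^{\otimes2}$, and fixes $\epsilon$ (being a morphism of pasteurized ordered blueprints on global sections, cf.\ Lemma \ref{lemma: first properties of unique weak inverses}), the outcome is a relation of the same shape with each $\Delta(\,\cdot\,)$ replaced by $\iota_\cL^\#(\Delta(\,\cdot\,))$. Using $(I_0\cup\{i\})^c=I_0^c-\{i\}$, $(J_0-\{i\})^c=J_0^c\cup\{i\}$, $J_0-I_0=I_0^c-J_0^c$, and the identity $\epsilon^{\sigma(i,I_0)+\sigma(i,J_0)}=\epsilon^{\sigma(i,J_0^c)+\sigma(i,I_0^c)}$ from the proof of Theorem \ref{thm: duality of matoid spaces}, this is exactly the Pl\"ucker relation for $\Delta_\iota^\smallvee$ attached to the $(r^\smallvee-1)$-subset $J_0^c$ and the $(r^\smallvee+1)$-subset $I_0^c$; as $(I_0,J_0)$ runs over all admissible pairs, so does $(J_0^c,I_0^c)$, so all Pl\"ucker relations for $\Delta_\iota^\smallvee$ are obtained (the cases $r\in\{0,\#E\}$ being vacuous).

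For the statement about characteristic morphisms, by the uniqueness part of Theorem \ref{thm: moduli space of matroids} it suffices to prove $(\varphi^\smallvee\circ\chi_\cM\circ\iota)^\ast(\cM_\univ^{(r^\smallvee)})=\cM_\iota^\smallvee$, where $\cM_\univ^{(r^\smallvee)}$, $\cL_\univ^{(r^\smallvee)}$, $\Delta_\univ^{(r^\smallvee)}$ denote the universal data on $\Mat(r^\smallvee,E)$. I would compute the pullback in three stages using $(\varphi^\smallvee\circ\chi_\cM\circ\iota)^\ast=\iota^\ast\circ\chi_\cM^\ast\circ(\varphi^\smallvee)^\ast$. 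Stage~1: since $\varphi^\smallvee\colon\Mat(r,E)\to\Mat(r^\smallvee,E)$ is, by Theorem \ref{thm: duality of matoid spaces}, induced by the graded $\Funpm$-linear isomorphism sending $x_J$ to $x_{J^c}$, it identifies $(\varphi^\smallvee)^\ast\cL_\univ^{(r^\smallvee)}$ with $\cL_\univ^{(r)}$ and carries $\Delta_\univ^{(r^\smallvee)}(J)$ to $\Delta_\univ^{(r)}(J^c)$; hence $(\varphi^\smallvee)^\ast(\cM_\univ^{(r^\smallvee)})$ is represented by $J\mapsto\Delta_\univ^{(r)}(J^c)$, i.e.\ it is the dual $(\cM_\univ^{(r)})_{\id}^\smallvee$ of the universal bundle with respect to the identity involution. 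Stage~2: since $\cM=\chi_\cM^\ast(\cM_\univ^{(r)})$ and, by Lemma \ref{lemma: definition of the pullback of matroids}, pullback of Grassmann--Pl\"ucker functions commutes with the index relabeling $J\mapsto J^c$, we get $\chi_\cM^\ast\big((\cM_\univ^{(r)})_{\id}^\smallvee\big)=\cM_{\id}^\smallvee$, represented by $J\mapsto\Delta(J^c)$. Stage~3: pulling back along $\iota$ and using the linearization of $\iota$ on $\cL$ turns this representative into $J\mapsto\iota_\cL^\#(\Delta(J^c))=\Delta_\iota^\smallvee(J)$, so $\iota^\ast(\cM_{\id}^\smallvee)=\cM_\iota^\smallvee$. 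Composing the three stages gives the identity $(\varphi^\smallvee\circ\chi_\cM\circ\iota)^\ast(\cM_\univ^{(r^\smallvee)})=\cM_\iota^\smallvee$, hence $\chi_{\cM_\iota^\smallvee}=\varphi^\smallvee\circ\chi_\cM\circ\iota$.

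\textbf{Main obstacle.} The difficulty is bookkeeping rather than conceptual: one must make precise that ``$X$ with involution $\iota$'' carries an implicit linearization $\iota^\ast\cL\cong\cL$ which makes $\iota_\cL^\#$ well defined, and that this linearization is compatible with pullbacks, with tensor powers, and with the multiplication $\cL\times\cL\to\cL^{\otimes2}$ appearing in the Pl\"ucker relations, so that applying $\iota^\#$ to a relation in $\Gamma(X,\cL^{\otimes2})$ genuinely yields the ``conjugated'' relation; once the functoriality of $\iota^\#$ and of pullback is set up, the remaining content is the $\epsilon$-exponent identity of Theorem \ref{thm: duality of matoid spaces}.
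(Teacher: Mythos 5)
Your proof is correct, but it takes a genuinely different route from the paper's. The paper packages both assertions into a single commutative square of graded ordered blueprints: it assembles the auxiliary ordered blueprint $\bigoplus_{i\geq0}\Gamma(X,\cL^{\otimes i})$, shows that $\Delta$ factors as $\xi_\Delta\circ j_{r,E}$ through a graded morphism $\xi_\Delta$ from $\bpquot{\Funpm[x_I\mid I\in\binom Er]}{\cPl(r,E)}$ into this direct sum, observes that precomposing with $(\alpha^\smallvee)^{-1}$ and postcomposing with $\iota$ yields a graded morphism $\xi_{\Delta_\iota^\smallvee}$ through which $\Delta_\iota^\smallvee$ factors (hence satisfies the Pl\"ucker relations), and then applies $\Proj$ to the inner square to produce the identity $\chi_{\cM_\iota^\smallvee}=\varphi^\smallvee\circ\chi_\cM\circ\iota$ in one stroke. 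You instead prove the two assertions separately: the first by the direct calculation the paper only alludes to (applying $\iota^\#_{\cL^{\otimes 2}}$ to the Pl\"ucker relations and invoking the $\epsilon$-exponent identity from Theorem~\ref{thm: duality of matoid spaces}), and the second by appealing to the universal property of $\Mat(r^\smallvee,E)$ from Theorem~\ref{thm: moduli space of matroids} and verifying $(\varphi^\smallvee\circ\chi_\cM\circ\iota)^\ast(\cM_\univ^{(r^\smallvee)})=\cM_\iota^\smallvee$ by the three-stage pullback computation. Your route is more elementary in that it avoids the auxiliary graded blueprint and makes explicit what each morphism does to concrete sections, at the cost of tracking pullbacks more laboriously; the paper's route is tighter bookkeeping once the diagram is set up. Both arguments silently rely on a linearization $\iota^\ast\cL\cong\cL$ to make $\iota^\#_\cL$ a self-map of $\Gamma(X,\cL)$ compatible with tensor powers and the pairing into $\Gamma(X,\cL^{\otimes 2})$; you correctly flag this as the real technical delicacy, whereas the paper elides it.
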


\begin{proof}
 That $\Delta_\iota^\smallvee$ is a Grassmann-Pl\"ucker function can be shown directly by an analogous calculation to that from the proof of Theorem \ref{thm: duality of matoid spaces}. Alternatively, we can show this by applying the result from Theorem \ref{thm: duality of matoid spaces} in the following way. 
 
 The direct sum $\bigoplus_{i\geq0}\Gamma(X,\cL^{\otimes i})$ can be given the structure of an ordered blueprint $(B^\bullet,B^+,\leq)$ as follows: 
 \begin{itemize}
 \item The ambient semiring $B^+$ is the direct sum of the semigroups $\Gamma(X,\cL^{\otimes i})^+$ for all $i\geq0$, which comes with a natural multiplication. 
 \item The monoid $B^\bullet$ is the union of all the subsets $\Gamma(X,\cL^{\otimes i})^\bullet$ in $B^+$. 
 \item The partial order $\leq$ is the smallest additive and multiplicative partial order that contains the partial order of $\Gamma(X,\cL^{\otimes i})^+$ for every $i\geq0$. 
 \end{itemize}
 
 Since $\Delta$ is a Grassmann-Pl\"ucker function, the association $x_I\mapsto \Delta(I)$ defines a morphism
 \[\textstyle
  \xi_\Delta: \ \bpquot{\Funpm\big[ \, x_I \, \big| \, I\in\binom Er \, \big]}{\cPl(r,E)} \ \longrightarrow \ \bigoplus_{i\geq0}\Gamma(X,\cL^{\otimes i}).
 \]
 Composing this morphism with the map $j_{r,E}:\binom Er\to \bpquot{\Funpm\big[x_I\big|I\in\binom Er\big]}{\cPl(r,E)}$ that sends $I$ to $x_I$ gives the Grassmann-Pl\"ucker function $\Delta:\binom Er\to\Gamma(X,\cL)$, where we consider $\Gamma(X,\cL)$ as a subset of $\bigoplus_{i\geq0}\Gamma(X,\cL^{\otimes i})$. 
 
 Precomposing $\xi_\Delta$ with the inverse of the isomorphism $\alpha^\smallvee$ from Theorem \ref{thm: duality of matoid spaces} yields a morphism $\xi_{\Delta_\iota^\smallvee}$, whose composition with $j_{r^\smallvee,E}$ is the dual $\Delta_\iota^\smallvee$ of $\Delta$. This means that we obtain a commutative diagram
 \[\textstyle
  \beginpgfgraphicnamed{tikz/fig11}
   \begin{tikzcd}[row sep=0pt, column sep=50pt]
                                                                                                                               & \binom Er \arrow{dl}[swap]{j_{r,E}}\arrow{rrd}{\Delta} \\
    \bpquot{\Funpm\big[ \, x_I \, \big| \, I\in\binom Er \, \big]}{\cPl(r,E)} \arrow{ddd}[swap]{\alpha^\smallvee} \arrow{rrr}[swap]{\xi_\Delta} &                                          & & \bigoplus_{i\geq0}\Gamma(X,\cL^{\otimes i})\arrow{ddd}{\iota} \\
    \ \\
    \ \\
    \bpquot{\Funpm\big[ \, x_I \, \big| \, I\in\binom E{r^\smallvee} \, \big]}{\cPl(r^\smallvee,E)} \arrow{rrr}{\xi_{\Delta_\iota^\smallvee}} &                                                      & & \bigoplus_{i\geq0}\Gamma(X,\cL^{\otimes i}).  \\
                                                                                                                               & \binom Er \arrow{ul}{j_{r^\smallvee,E}}\arrow{rru}[swap]{\Delta_\iota^\smallvee}
   \end{tikzcd}
  \endpgfgraphicnamed
 \]
 Since $\Delta_\iota^\smallvee$ factors through $\xi_{\Delta_\iota^\smallvee}$, it satisfies the Pl\"ucker relations. Thus $\Delta_\iota^\smallvee$ is a Grassmann-Pl\"ucker function, which verifies the first part of the proposition.
 
 Note that the characteristic morphism $\chi_\cM:X\to \Mat(r,E)$ is induced from the graded morphism $\xi_\Delta$. Thus applying the $\Proj$-functor to inner square of the above diagram yields a commutative diagram
 \[\textstyle
  \beginpgfgraphicnamed{tikz/fig12}
   \begin{tikzcd}[row sep=15pt, column sep=80pt]
    X \arrow{r}{\chi_\cM} \arrow{d}[swap]{\iota} & \Mat(r,E) \arrow{d}{\varphi^\smallvee} \\
    X \arrow{r}{\chi_{\cM_\iota^\smallvee}} & \Mat(r^\smallvee,E),
   \end{tikzcd}
  \endpgfgraphicnamed
 \]
 which verifies the second part of the proposition.
\end{proof}


\subsection{Contraction and deletion}
\label{subsection: contraction and deletion}

The operations of contracting and deleting an element $e$ of the ground set $E$ of an $F$-matroid $M$, as introduced in \cite[section 3.9]{Baker-Bowler19}, are defined on the level of maps between subvarieties of the matroid spaces for $E$ and $E'=E-\{e\}$ and appropriate ranks. To this end we define the closed subschemes
\[\textstyle
 V_{/e} \ = \ \Proj\Big(\bpgenquot{\Funpm[T_I\mid I\in\binom Er]}{\cPl(r,E)\cup\{T_I\mid e\in I\}}\Big)
\]
and 
\[\textstyle
 V_{\backslash e} \ = \ \Proj\Big(\bpgenquot{\Funpm[T_I\mid I\in\binom Er]}{\cPl(r,E)\cup\{T_I\mid e\notin I\}}\Big)
\]
of $\Mat(r,E)$, as well as their respective set-theoretic complements
\[
 U_{/e} \ = \ \Mat(r,E) - V_{/e} \qquad \text{and} \qquad U_{\backslash e} \ = \ \Mat(r,E) - V_{\backslash e},
\]
which we consider as open subschemes of $\Mat(r,E)$. 

The graded morphism
\[
 \begin{array}{ccc}
  \bpquot{\Funpm[T_J\mid J\in\binom{E'}{r-1}]}{\cPl(r-1,E')} & \longrightarrow & \bpquot{\Funpm[T_I\mid I\in\binom Er]}{\cPl(r,E)} \\
  T_J                                                                  & \longmapsto     & T_{J\cup\{e\}}
 \end{array}
\]
defines a rational map $\Mat(r,E)\dashedrightarrow\Mat(r-1,E')$ whose domain is $U_{/e}$, since the inverse image of a homogeneous prime ideal $\gen{T_I\mid I\in\cI}$ is relevant if and only if $\cI$ contains an $I$ with $e\notin I$. This yields a morphism $\Psi^o_{/e}: U_{/e}\to\Mat(r-1,E')$. The graded morphism
\[
 \begin{array}{ccc}
  \bpquot{\Funpm[T_J\mid J\in\binom{E'}{r}]}{\cPl(r,E')} & \longrightarrow & \bpgenquot{\Funpm[T_I\mid I\in\binom Er]}{\cPl(r,E)\cup\{T_I\mid e\in I\}} \\
  T_J                                                              & \longmapsto     & T_{J}
 \end{array}
\]
is an isomorphism and defines an isomorphism $\Psi^c_{/e}: V_{/e}\to\Mat(r,E')$ of ordered blue schemes. Combining these morphisms, we obtain the diagram
\[
 \Mat(r,E) \ \stackrel{\iota_{/e}}{\longleftarrow} \ U_{/e} \, \amalg \, V_{/e} \ \stackrel{\Psi_{/e}}{\longrightarrow} \ \Mat(r-1,E') \, \amalg \, \Mat(r,E'),
\]
where $\iota_{/e}$ is the disjoint union of the inclusions of the subschemes $U_{/e}$ and $V_{/e}$ into $\Mat(r,E)$ and $\Psi_{/e}$ is the disjoint union of $\Psi^o_{/e}$ and $\Psi^c_{/e}$.

Similarly, the graded morphism
\[
 \begin{array}{ccc}
  \bpquot{\Funpm[T_J\mid J\in\binom{E'}{r}]}{\cPl(r,E')} & \longrightarrow & \bpquot{\Funpm[T_I\mid I\in\binom Er]}{\cPl(r,E)} \\
  T_J                                                                  & \longmapsto     & T_{J}
 \end{array}
\]
and the graded isomorphism
\[
 \begin{array}{ccc}
  \bpquot{\Funpm[T_J\mid J\in\binom{E'}{r-1}]}{\cPl(r-1,E')} & \longrightarrow & \bpgenquot{\Funpm[T_I\mid I\in\binom Er]}{\cPl(r,E)\cup\{T_I\mid e\in I\}} \\
  T_J                                                              & \longmapsto     & T_{J\cup\{e\}}
 \end{array}
\]
define morphisms $\Psi^o_{\backslash e}: U_{\backslash e}\to\Mat(r,E')$ and $\Psi^c_{\backslash e}: V_{\backslash e}\to\Mat(r-1,E')$
of ordered blue schemes, and combining these yields the diagram
\[
 \Mat(r,E) \ \stackrel{\iota_{\backslash e}}{\longleftarrow} \ U_{\backslash e} \, \amalg \, V_{\backslash e} \ \stackrel{\Psi_{\backslash e}}{\longrightarrow} \ \Mat(r,E') \, \amalg \, \Mat(r-1,E').
\]

The following theorem explains how these morphisms extend the usual operations of contraction and deletion to the level of moduli spaces. Since we will not use this result in the paper, we omit a proof.

\begin{thm}\label{thm: contraction and contraction}
 Let $F$ be an idyll and $M$ an $F$-matroid of rank $r$ on $E$ with characteristic morphism $\chi_M:\Spec F\to \Mat(r,E)$. Let $e\in E$ and $E'=E-\{e\}$. Define $r_{/e}=r$ if $e$ is a loop and $r_{/e}=r-1$ if not, and define $r_{\backslash e}=r-1$ if $e$ is a coloop and $r_{\backslash e}=r$ if not. Let $\chi_{M/e}:\Spec F\to\Mat(r_{/e},E')$ and $\chi_{M\backslash e}:\Spec F\to\Mat(r_{\backslash e},E')$ be the characteristic morphisms of the contraction $M/e$ and the deletion $M\backslash e$, respectively. Then the following holds:
 \begin{enumerate}
  \item The morphism $\chi_M$ factors into a uniquely determined morphism $\chi_{M,/e}:\Spec F\to U_{/e}\amalg V_{/e}$ composed with $\iota_{/e}$, as well as into a uniquely determined morphism $\chi_{M,\backslash e}:\Spec F\to U_{\backslash e}\amalg V_{\backslash e}$ composed with $\iota_{\backslash e}$. 
  \item The morphism $\chi_{M\backslash e}$ is the unique morphism from $\Spec F$ to $\Mat(r_{/e},E')$ that makes the diagram
  \[
   \beginpgfgraphicnamed{tikz/fig14}
   \begin{tikzcd}[column sep=1cm]
               & \Spec F \ar[dl,"\chi_M"'] \ar[d,"\chi_{M,/e}"] \ar[r,"\chi_{M/e}"] & \Mat(r_{/e},E') \ar[d] \\
    \Mat(r,E) & U_{/e} \ \amalg \ V_{/e} \ar[l,"\iota_{/e}"'] \ar[r,"\Psi_{/e}"] & \Mat(r-1,E') \ \amalg \ \Mat(r,E')
   \end{tikzcd}
   \endpgfgraphicnamed
  \]
  commute, and $\chi_{M/e}$ is the unique morphism from $\Spec F$ to $\Mat(r_{\backslash e},E')$ that makes the diagram
  \[
   \beginpgfgraphicnamed{tikz/fig15}
   \begin{tikzcd}[column sep=1cm]
               & \Spec F \ar[dl,"\chi_M"'] \ar[d,"\chi_{M,\backslash e}"] \ar[r,"\chi_{M\backslash e}"] & \Mat(r_{\backslash e},E') \ar[d] \\
    \Mat(r,E) & U_{\backslash e} \ \amalg \ V_{\backslash e} \ar[l,"\iota_{\backslash e}"'] \ar[r,"\Psi_{\backslash e}"] & \Mat(r,E') \ \amalg \ \Mat(r-1,E')
   \end{tikzcd}
   \endpgfgraphicnamed
  \]
  commute, where the vertical arrows on the right hand side are the respective canonical inclusions into the coproduct.
  \item Let $r^\vee=\# E-r$ and let $U_{\backslash e}^\vee$ and $V^\vee_{\backslash e}$ be the obvious respective variants of $U_{\backslash e}$ and $V_{\backslash e}$ for $\Mat(r^\vee,E)$. Then there are unique isomorphisms $\varphi^{o,\vee}:U_{/e}\to U_{\backslash e}^\vee$ and $\varphi^{c,\vee}:V_{/e}\to V_{\backslash e}^\vee$ that make the diagram
  \[
   \beginpgfgraphicnamed{tikz/fig16}
   \begin{tikzcd}[column sep=1cm]
    \Mat(r,E) \ar[d,"\varphi^\vee"',"\sim"] & U_{/e} \ \amalg \ V_{/e} \ar[l,"\iota_{/e}"'] \ar[r,"\Psi_{/e}"] \ar[d,"\varphi^{o,\vee}\amalg\varphi^{c,\vee}","\sim"'] & \Mat(r-1,E') \ \amalg \ \Mat(r,E') \ar[d,"\varphi^\vee\amalg\varphi^\vee","\sim"'] \\
    \Mat(r^\vee,E) & U_{\backslash e}^\vee \ \amalg \ V^\vee_{\backslash e} \ar[l,"\iota_{\backslash e}^\vee"'] \ar[r,"\Psi_{\backslash e}^\vee"] & \Mat(r^\vee,E') \ \amalg \ \Mat(r^\vee-1,E')
   \end{tikzcd}
   \endpgfgraphicnamed
  \]
  commute, where $\iota_{\backslash e}^\vee$ and $\Psi_{\backslash e}^\vee$ are the obvious variants of $\iota_{\backslash e}$ and $\Psi_{\backslash e}$, respectively, and where the vertical morphisms denoted by $\varphi^\vee$ are the isomorphisms from Theorem \ref{thm: duality of matoid spaces}.
 \end{enumerate}
 
\end{thm}


\subsection{Rational point sets}
\label{subsection: rational point sets}

In this section, we explain how the matroid space recovers classical objects like the Grassmannian, the Dressian and the MacPhersonian as rational point sets. 

Let $B$ be an $\Funpm$-algebra. By the universal property of the matroid space, $\Mat(r,E)(B)$ corresponds to the set of $B$-matroids of rank $r$ on $E$. If $B$ carries a topology, then $\Mat(r,E)(B)$ inherits the so-called \emph{fine topology} from $B$. The fine topology is defined by a general categorical construction, which has been exhibited first in \cite{Lorscheid-Salgado16} and which has been transferred to rational point sets of ordered blue schemes in \cite{Lorscheid15}. Instead of recalling the definition of the fine topology in full generality, we will provide an equivalent characterization in Theorem \ref{thm :fine topology for topological idylls}. 

A \emph{topological idyll} is an idyll $F$ together with a topology such that the multiplication $F\times F\to F$  (where $F\times F$ carries the product topology) is a continuous map, and such that $F^\times$ is an open subset of $F$ and the inversion map $F^\times\to F^\times$, sending $a$ to $a^{-1}$, is continuous. 

\begin{rem}
 It might appear strange at first sight that the definition of a topological idyll does not involve any continuity condition for addition. Thus a topological idyll that is a field is not necessarily a topological field.  However, our definition is guided by properties of the fine topology on rational point sets, as described in Theorem \ref{thm :fine topology for topological idylls} below. The proof of these properties does not require any continuity conditions for addition, in contrast to the corresponding proof for topological fields. This difference in the proofs can be traced back to the fact that free algebras in the world of ordered blueprints consist of monomials (as opposed to more general polynomials).
 \end{rem}

Given an ordered blue scheme $X$ and a topological idyll $F$, the fine topology for the rational point set $X(F)$ is determined in terms of the following theorem.

\begin{thm}\label{thm :fine topology for topological idylls} 
 Let $F$ be a topological idyll. Then there is a unique way to endow the rational point sets $X(F)$ for all ordered blue schemes $X$ with a topology such that the following properties hold true:
 \begin{enumerate}
  \item the canonical bijection $F\to \A_F^1(F)$ is a homeomorphism;
  \item the canonical bijection $(X\times Y)(F)\to X(F)\times Y(F)$ is a homeomorphism;
  \item for every morphism $Y \to X$, the canonical map $Y(F)\to X(F)$ is continuous;
  \item for every open / closed immersion $Y\to X$, the canonical inclusion $Y(F)\to X(F)$ is an open / closed topological embedding;
  \item for every covering of $X$ by ordered blue open subschemes $U_i$, a subset $W$ of $X(F)$ is open if and only if $W\cap U_i(F)$ is open in $U_i(F)$ for every $i$.
 \end{enumerate}
 Moreover, if $F\to F'$ is a continuous morphism of idylls and $X$ an ordered blue scheme, the induced map $X(F)\to X(F')$ is continuous.
\end{thm}

\begin{proof}
 This is a special case of Theorem 5.2 in \cite{Lorscheid15}.
\end{proof}

\begin{ex}
 Every topological field is a topological idyll in a tautological way. Anderson and Davis have extended this notion to hyperfields in \cite{Anderson-Davis19}. It turns out that a topological hyperfield is the same as a topological idyll if identified with the associated idyll via the functor $(-)^\oblpr:\HypFields\to\OBlpr^\pm$. In the following, we consider the following topological idylls:
 \begin{itemize}
  \item the reals $\R$ with the usual topology;
  \item the Krasner hyperfield $\K$ together with the topology that consists of the open subsets $\emptyset$, $\{1\}$, $\K$;
  \item the sign hyperfield $\S$ together with the topology that consists of the open subsets $\emptyset$, $\{1\}$, $\{-1\}$, $\{\pm 1\}$, $\S$;
  \item the tropical hyperfield $\T$ together with the topology coming from the identification of $\T$ with $\R_{\geq0}$ and its embedding into $\R$;
  \item the regular partial field $\Funpm$ together with the topology that consists of the open subsets $\emptyset$, $\{1\}$, $\{-1\}$, $\{\pm 1\}$, $\Funpm$.
 \end{itemize}
 Note that \cite{Anderson-Davis19} contains reasons why it might be better to exclude all neighborhoods of $0$ in the topology of $\T$; we refer to section 2.3.2 of {\it loc.~cit.}, but ignore this issue in the following. The topological spaces $X(F)$ appearing in Theorem \ref{thm :fine topology for topological idylls} are also closely related to Jun's considerations in \cite{Jun17}. Namely for $F=\K$, $F=\T$ or $F=\S$, a blue scheme $X$ and its associated scheme $X^+$, it is not hard to show that the topological space $X(F)$ coincides with the topological space $X^+(F)$ from \cite{Jun17}.
\end{ex}

\subsubsection{Matroids}

A matroid is the same as a $\K$-matroid where $\K=\bpgenquot{\{0,1\}}{0\leq 1+1,0\leq 1+1+1}$ is the Krasner hyperfield. Thus $\Mat(r,E)(\K)$ is the set of all matroids of rank $r$ on $E$. The topology on $\K$ turns $\Mat(r,E)(\K)$ into a contractible topological space, cf.\ \cite[section 6]{Anderson-Davis19} for details.

\subsubsection{Oriented matroids and the MacPhersonian}
\label{subsubsection: oriented matroids and the MacPhersonian}

Note that as an idyll, the sign hyperfield turns into $\S=\bpquot{\{0,1,\epsilon\}}{\cR}$ where $\cR$ is generated by relations $0\leq 1+\dotsb+1+\epsilon+\dotsb+\epsilon$ that contain at least one $1$ and one $\epsilon$.

An oriented matroid is the same thing as a $\S$-matroid. Thus $\Mat(r,E)(\S)$ is the set of all oriented matroids of rank $r$ on $E$. The topology of $\S$ turns $\Mat(r,E)(\S)$ into a topological space, which is, by definition, the MacPhersonian $\MacPh(r,E)$ of rank $r$ on $E$; cf.\ \cite[section 6]{Anderson-Davis19} for details.

\subsubsection{Subspaces and the Grassmannian}

Let $k$ be a field, which we identify with the idyll $\bpgenquot{k^\bullet}{0\leq \sum a_i|\sum a_i=0\text{ in }k}$. (Note that this results from considering $k$ as a partial field and applying the functor $\PartFields\to\OBlpr^\pm$ or, equivalently, from considering $k$ as a hyperfield and applying the functor $\HypFields\to\OBlpr^\pm$. This allows us to consider fields as objects of either subcategory $\PartFields$ and $\HypFields$ of $\OBlpr^\pm$.)

It is immediate that the class of a Grassmann-Pl\"ucker function $\Delta:\binom Er\to k$ corresponds to the point $\big[\Delta(I)\big|I\in\binom Er\big]$ of the Grassmannian $\Gr(r,E)(k)$ and vice-versa. This yields an identification $\Mat(r,E)(k)=\Gr(r,E)(k)$ and shows that a $k$-matroid is the same thing as an $r$-dimensional subspace of $k^E$.


\subsubsection{The oriented matroid of real subspaces}

The topology of $\R$ endows $\Mat(r,E)(\R)$ with a topology that coincides with the usual topology of the real Grassmannian. The hyperfield morphism $\sign:\R\to\S$ is continuous and therefore induces a continuous map 
\[
 \Gr(r,E)(\R) \ = \ \Mat(r,E)(\R) \ \longrightarrow \ \Mat(r,E)(\S) \ = \ \MacPh(r,E).
\]
This map sends an $r$-dimensional subspace $V$ of $\R^E$ to its associated oriented matroid $M_V$, which is the class of the Grassmann Pl\"ucker function $\sign\circ\Delta:\binom Er\to \S$, where $\Delta$ is defined by the Pl\"ucker coordinates $\big[\Delta(I)\big|I\in\binom Er\big]$ of $V$.

This map is closely connected to the MacPhersonian conjecture, as formulated by Mn\"ev and Ziegler in \cite{Mnev-Ziegler93}, which asserts a relation between the homotopy type of $\Gr(r,E)(\R)$ and the MacPhersonian $\MacPh(r,E)$. For more details on these connections, see \cite[section 7]{Anderson-Davis19}. Note that certain cases of this conjecture have recently been disproven by Liu in \cite{Liu17}.

\subsubsection{Valuated matroids and the Dressian}

A valuated matroid is the same thing as a $\T$-matroid, where $\T$ is the tropical hyperfield. Thus $\Mat(r,E)(\T)$ is the set of all valuated matroids of rank $r$ on $E$. An \emph{$r$-dimensional tropical linear space in $\R^E$} is the geometric realization of a valuated matroid as a subspace of $\R^E$, analogous to the Bergman fan of a matroid; cf.\ \cite{Speyer08} for a precise definition. The \emph{Dressian $\Dress(r,E)$} is the set of $r$-dimensional tropical linear spaces in $\R^E$.

By definition, the $r$-dimensional tropical linear spaces in $\R^E$ correspond bijectively to the valuated matroids of rank $r$ on $E$. This yields an identification $\Dress(r,E)=\Mat(r,E)(\T)$ of the Dressian with the $\T$-rational points of the matroid space. Note that the topology of $\T$ endows the Dressian $\Dress(r,E)$ with a natural topology.

\subsubsection{Regular matroids}

It follows from our explanations in section \ref{subsubsection: relation to regular matroids} that the subset of regular matroids in $\Mat(r,E)(\K)$ is equal to the image of the map $\Mat(r,E)(\Funpm)\to\Mat(r,E)(\K)$ induced by the unique morphism $\Funpm\to\K$. Note that the topology of $\Funpm$ endows the set of $\Funpm$-matroids $\Mat(r,E)(\Funpm)$ with a topology. Since the morphism $\Funpm\to\K$ is continuous, the map $\Mat(r,E)(\Funpm)\to\Mat(r,E)(\K)$ is continuous. 

Note that this map is in general not injective, as the following example shows. Let $E=\{1,2\}$. Then the Grassmann-Pl\"ucker functions $\Delta_1:\binom E1\to\Funpm$ and $\Delta_2:\binom E1\to\Funpm$  with
\[
 \Delta_1(\{1\}) \ = \ \Delta_1(\{2\}) \ = \ \Delta_2(\{1\})=1 \quad\text{and}\quad \Delta_2(\{2\}) \ = \ \epsilon
\]
define different $\Funpm$-matroids $M_1=[\Delta_1]$ and $M_2=[\Delta_2]$ with the same underlying matroid.


\bigskip
\part{Applications to matroid theory}
\bigskip

\section{Realization spaces and the Tutte group}
\label{section: Applications to realization spaces}

A new feature that comes along with the matroid space is the universal idyll associated with a matroid. We will introduce this notion and explain how it interacts with questions about the representability of matroids and realization spaces. We will 
also discuss the analogous invariant for weak matroids and its relation to the Tutte group.

Throughout the entire section, we fix a totally ordered non-empty finite set $E$ and a natural number $r\leq\#E$.


\subsection{The universal idyll of a matroid}
\label{subsection: the universal idyll of a matroid}

We can associate with every matroid its universal idyll, which is derived from a certain residue field of the matroid space. We will define the universal idyll and describe its basic properties in this section.

Let $N=\#\binom Er-1$. Recall from section \ref{subsection: The moduli space of matroids} that the matroid space comes with a closed immersion
\[
 \iota: \ \Mat(r,E) \ \longrightarrow \ \P^N_\Funpm
\]
into ordered blue projective space.

\begin{lemma}
 The closed immersion $\iota$ is a homeomorphism between the respective underlying topological spaces.
\end{lemma}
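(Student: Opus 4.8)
The statement is that the closed immersion $\iota:\Mat(r,E)\to\P^N_\Funpm$ induces a homeomorphism onto its image, i.e.\ that the underlying topological space of $\Mat(r,E)$ is (homeomorphic to) a closed subset of the finite topological space $\P^N_\Funpm$. The plan is to work entirely at the level of the $\Proj$-construction and the combinatorics of homogeneous prime $m$-ideals. Recall that $\P^N_\Funpm=\Proj\Funpm[x_I\mid I\in\binom Er]$ and $\Mat(r,E)=\Proj(\Funpm[x_I]/\cPl(r,E))$, and that the quotient map $q:\Funpm[x_I]\to\Funpm[x_I]/\cPl(r,E)=:C$ is a surjective graded morphism of $\Funpm$-algebras, so $\iota=q^\ast$ is the map $\fp\mapsto q^{-1}(\fp)$ on relevant homogeneous prime $m$-ideals. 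Being a closed immersion, $\iota$ is in particular a homeomorphism onto its image, so the content of the lemma is really that $\iota$ is \emph{bijective} onto all of $\P^N_\Funpm$ — equivalently, that the quotient $C$ has ``the same'' relevant homogeneous prime $m$-ideals as the free algebra, the only change being in the partial order (subaddition), not in the monoid of prime $m$-ideals.

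\textbf{Key steps.} First I would invoke the description (from the excerpt, in the discussion around Lemma~\ref{lemma: generators of prime ideals} and the example following it) that since $\Funpm$ is an ordered blue field, the underlying monoid of $\Funpm[x_I\mid I\in\binom Er]$ is generated by $T=\{x_I\}$ over $\Funpm^\times\cup\{0\}$, so every prime $m$-ideal of $\Funpm[x_I]$ is of the form $\fp_J=\gen{x_I\mid I\in J}$ for a uniquely determined subset $J\subseteq\binom Er$, and it is homogeneous; it is relevant iff $J\neq\binom Er$. Thus $\P^N_\Funpm$ as a set is $\{\fp_J\mid J\subsetneq\binom Er\}$. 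Second, I would observe that passing to the quotient $C=\Funpm[x_I]/\cPl(r,E)$ does not change the underlying monoid: the relations in $\cPl(r,E)$ are inequalities $0\leq\sum_k\epsilon^k x_{\cdots}x_{\cdots}$ between elements of the associated semiring, and forming $\bpgenquot{}{\cdot}$ only enlarges the preorder $\leq'$ and quotients the \emph{semiring} $B^+$ by the induced $\=$; since $\Funpm[x_I]$ is an ordered blue field in each graded piece's monomial part and the added relations are of the purely positive form $0\leq\sum a_i$, no two distinct monomials $x_{I_1}\cdots x_{I_k}$ and $x_{I_1'}\cdots x_{I_l'}$ become identified (one should check: $0\leq z$ relations never force $z\=z'$ for distinct monomials $z,z'$). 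Hence $C^\bullet=\Funpm[x_I]^\bullet$ canonically, and the prime $m$-ideals of $C$ are again exactly the $\fp_J$, $J\subseteq\binom Er$, with the same relevance condition. Third, I would note that $q^{-1}(\fp_J)=\fp_J$ under this identification, so $\iota$ is the identity on underlying sets; and since the topology on either $\Proj$ is generated by the principal opens $U_h$ with $h$ ranging over the (common) underlying monoid, $\iota$ is a homeomorphism. One then concludes $\iota$ is a homeomorphism onto $\P^N_\Funpm$ itself (in particular a surjective closed immersion).

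\textbf{Main obstacle.} The one genuinely nontrivial point is the second step: verifying that quotienting $\Funpm[x_I]$ by the Plücker relations does not collapse any monomials, i.e.\ that the map $\Funpm[x_I]^\bullet\to C^\bullet$ is injective on the underlying monoid. This is where the ``purely positive'' nature of the relations matters — all generators of the preorder have the shape $0\leq\sum a_i$, and closing under addition and multiplication produces only further relations $\sum b_j\leq\sum c_k$ in which one side dominates by extra summands, never an equality between two genuinely different monomials. I would make this precise by describing the preorder $\leq'$ on the monoid semiring explicitly (e.g.\ $x\leq' y$ iff $y$ is obtained from $x$ by a finite sequence of moves adding a ``null'' combination), showing $x\=' y$ for monomials $x,y$ forces $x=y$, and hence $C^+=\Funpm[x_I]^+/\='$ restricts to a bijection on monomials. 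Everything else is a direct unwinding of the definitions of $\Proj$, principal opens, and the behaviour of prime $m$-ideals under the surjection $q$, exactly parallel to the analogous statement for $\Funpm$-monoid schemes.
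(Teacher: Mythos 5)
Your proposal is correct and follows essentially the same approach as the paper's proof, which is just three sentences asserting that the Plücker relations, being purely positive inequalities $0 \leq \sum a_i$, do not identify any elements of the underlying monoid, hence the underlying topological spaces of $\Mat(r,E)$ and $\P^N_{\Funpm}$ coincide. You spell out the key non-trivial step that the paper leaves implicit — that closing a family of generators of the form $0 \leq \sum a_i$ under addition, multiplication and transitivity only ever yields relations $x \leq y$ with $y = x + (\text{nonnegative combination})$, so $\='$ cannot identify two distinct monomials in the free $\N$-module $\Funpm[x_I]^+$ — which is indeed the substantive content of the lemma.
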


\begin{proof}
 Since $\iota$ is a closed immersion, it is clearly injective and continuous. 
 Since the Pl\"ucker relations in the definition of $\Mat(r,E)$ are merely inequalities, they do not identify any elements of the underlying monoid of $\Funpm\big[x_I \; \big | \; I\in\binom Er\big]$. As a result, the underlying topological space of $\Mat(r,E)$ is the same as that of its image in $\P_{\Funpm}^N$. 
\end{proof}

In section \ref{subsubsection: projective space}, we have defined the points of $\P_{\Funpm}^N$ as the relevant homogeneous prime ideals $\fp_\cI=(T_I)_{I\in\cI}$ of $\Funpm\big[x_I \; \big | \; I\in\binom Er\big]$, where $\cI$ can be any proper subset of $\binom Er$. This means that the underlying points of $\Mat(r,E)$ are of the form $\fp_\cI$ for $\cI\subset\binom Er$.

Fix a proper subset $\cI$ of $\binom Er$.  The \emph{stalk at $\fp_\cI$} is the ordered blueprint
\[\textstyle
 \cO_{\Mat(r,E),\fp_\cI} \ = \ \big(\bpquot{\Funpm\big[ \, x_I^\pm, x_J \, \big| \, I\in\cI,J\in\cI^c \, \big]}{\cPl(r,E)}\big)_0
\]
where $(-)_0$ refers to the degree-$0$ part of the graded ordered blueprint in brackets and where $\cPl(r,E)$ is generated by the Pl\"ucker relations
\[
 0 \quad \leq \quad \sum_{k=1}^{r+1} \ \epsilon^k \ \cdot \ x_{I\cup\{i_k\}} \ \cdot \ x_{I'-\{i_k\}} 
\]
for every $(r-1)$-subset $I$ and every $(r+1)$-subset $I'=\{i_1,\dotsc,i_{r+1}\}$ of $E$ with $i_1<\dotsb<i_{r+1}$. The \emph{residue field at $\fp_\cI$} is 
\[\textstyle
 k(\fp_\cI) \ = \ \bpgenquot{\cO_{\Mat(r,E),\fp_\cI}}{x_Jx_I^{-1}\=0 \; | \; J\in\cI^c}
\]
where $I\in\cI$ is an arbitrary fixed index that allows us to express the equation $x_J=0$ in terms of elements of $\cO_{\Mat(r,E),\fp_J}$, which have degree $0$.

Note that the residue field $k(\fp_\cI)$ is not a field in the classical sense. For the matroid space, it turns out that residue fields are always ordered blue fields, but in general it happens that some residue fields are the trivial ordered blueprint with $0=1$; cf.\ \cite[section 5.9]{Lorscheid18} for more details.

\begin{df}
 Let $F$ be an idyll and $M$ an $F$-matroid. The \emph{terminal map of $F$} is the unique morphism $t_F:F\to\K$ into the Krasner hyperfield $\K$. The \emph{characteristic morphism of $M$} is the morphism $\chi_M:\Spec F\to\Mat(r,E)$ determined by the bijection in Theorem \ref{thm: moduli space of matroids}. The \emph{underlying matroid of $M$} is the push-forward $t_{F,\ast}(M)$ of $M$ along the terminal map $t_F:F\to\K$.
\end{df}
 
\begin{df}
 Let $M$ be a matroid with characteristic morphism $\chi_M:\Spec\K\to\Mat(r,E)$. The \emph{support of $M$} is the image point $x_M$ of $\chi_M$ and the \emph{universal idyll of $M$} is $k_M=k(x_M)^\pm$.
\end{df}

More explicitly, we have
\[\textstyle
 k_M \ = \ \big(\bpquot{\Funpm\big[ \, x_I^\pm \, \big| \, I\in\cI\, \big]}{\cPl(r,E)}\big)_0^\pm
\]
where $\cPl(r,E)$ is generated by the Pl\"ucker relations
\[
 0 \quad \leq \quad \sum_{k=1}^{r+1} \ \epsilon^k \ \cdot \ x_{I\cup\{i_k\}} \ \cdot \ x_{I'-\{i_k\}} 
\]
for every $(r-1)$-subset $I$ and every $(r+1)$-subset $I'=\{i_1,\dotsc,i_{r+1}\}$ of $E$ with $i_1<\dotsb<i_{r+1}$. 

\begin{rem}\label{rem: the universal idyll is an idyll}
 The universal idyll is indeed an idyll, which can be seen as follows. From the above description, it is clear that $k_M$ is a purely positive ordered blue field with unique weak inverses. The residue field $k(\fp_\cI)$ satisfies the required property $k(\fp_\cI)^+=\N[k(\fp_\cI)^\times]$, and this property is inherited by $k_M$ since the quotient $k_M^+$ of $k(\fp_\cI)^+$ is defined by relations between elements of the underlying monoid of $k(\fp_\cI)$ (elements $a$ and $b$ of $k(\fp_\cI)^\bullet$ become identified in $k_M$ whenever there is a Pl\"ucker relation of the form $0\leq a+b$). 
\end{rem}

We denote the underlying topological space of $\Mat(r,E)$ by $\Mat(r,E)^\top$.

\begin{prop}\label{prop: embedding of matroids into the matroid space and its image}
 The map
 \[
  \begin{array}{cccc}
   \Phi: & \Mat(r,E)(\K) & \longrightarrow & \Mat(r,E)^\top \\
         &     \chi      & \longmapsto     & \im\chi
  \end{array}
 \]
 is injective, where we identify $\im\chi=\{x\}$ with the point $x$. The points $x$ of $\Mat(r,E)^\top$ that are supports of matroids are characterized by the following equivalent assertions:
 \begin{enumerate}
  \item\label{rep0} $x$ is the support of a matroid;
  \item\label{rep1} $x$ is in the image of $\Phi$;
  \item\label{rep2} there is a unique morphism $k(x)^\pm\to\K$;
  \item\label{rep3} $k(x)^\pm\neq\{0\}$;
  \item\label{rep4} $k(x)^\pm$ is an idyll.
 \end{enumerate}
\end{prop}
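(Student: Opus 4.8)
I would prove the cycle of implications $(1)\Rightarrow(2)\Rightarrow(3)\Rightarrow(5)\Rightarrow(4)\Rightarrow(1)$, interspersed with the injectivity statement for $\Phi$, which really just unwinds the definition of the characteristic morphism together with Theorem~\ref{thm: moduli space of matroids}. The implications $(1)\Leftrightarrow(2)$ are formal: a matroid is by definition a $\K$-matroid, i.e.\ an element of $\Mat(r,E)(\K)=\Hom(\Spec\K,\Mat(r,E))$, and the ``support'' of $M$ is precisely the image point of its characteristic morphism, so $x$ is the support of a matroid iff $x$ lies in the image of $\Phi$. For injectivity of $\Phi$: topologically $\Spec\K$ is a point, so a morphism $\chi\colon\Spec\K\to\Mat(r,E)$ with given image point $x=\fp_\cI$ factors through $\Spec k(\fp_\cI)$ by the local nature of morphisms of $\OBlpr$-spaces (a morphism out of $\Spec$ of a field sends the unique point to $x$ and the structure sheaf map factors through the residue field); but $\K$ is the \emph{terminal} object in the category of pastures, hence there is \emph{at most one} morphism $k(x)^\pm\to\K$, so $\chi$ is determined by $x$. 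Thus $(2)\Rightarrow(3)$ as well, since existence of $\chi$ over $x$ gives a morphism $k(x)^\pm\to\K$, and we have just seen it is unique.

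For $(3)\Rightarrow(5)$ and $(5)\Rightarrow(4)$ and $(4)\Rightarrow(1)$, the key structural input is the description of $k(x)^\pm$ given in the excerpt just before the proposition, namely
\[\textstyle
 k(x)^\pm \ = \ \big(\bpquot{\F_1^\pm\big[ \, x_I^\pm \, \big| \, I\in\cI\, \big]}{\cPl(r,E)}\big)_0^\pm,
\]
an ordered blue field whose partial order is purely positive (generated by relations $0\leq\sum a_i$), hence, by the characterization recalled in Section~\ref{subsubsection: the images of the adjunction} and in the Introduction, is either the trivial ordered blueprint $\{0\}$ (when $0=1$ is forced) or else genuinely a pasture. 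This gives $(3)\Leftrightarrow(4)\Leftrightarrow(5)$ immediately: for a purely positive pasteurized ordered blue field there is a morphism to $\K$ exactly when the object is nontrivial, and nontriviality is equivalent to being an honest pasture. For $(3)\Rightarrow(1)$: a morphism $k(x)^\pm\to\K$ composes with the canonical maps $\K=\Gamma(\Spec\K,\cO_{\Spec\K})$ and the localization/residue maps $\Gamma(\Mat(r,E),\cO)\to\cO_{\Mat(r,E),\fp_\cI}\to k(\fp_\cI)\to k(\fp_\cI)^\pm$ to produce, via Lemma~\ref{lemma: adjunction between ordered blueprints and ordered blue schemes} applied locally on a principal open containing $x$, a morphism $\Spec\K\to\Mat(r,E)$ whose image is $x$; by Theorem~\ref{thm: moduli space of matroids} this morphism corresponds to a $\K$-matroid, i.e.\ a matroid $M$, with support $x$.

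**Main obstacle.** The delicate point is verifying that a ring-theoretic-style morphism ``$\Spec\K\to$ (a point of $\Mat(r,E)$)'' really does factor through $\Spec$ of the \emph{pasteurized} residue field $k(x)^\pm$ rather than merely through $k(x)$ or through some intermediate localization. Concretely: given $\chi\colon\Spec\K\to\Mat(r,E)$ with image $x$, one must choose a principal open $U_h\ni x$ of $\Mat(r,E)$, so that $\chi$ restricts to $\Spec\K\to U_h=\Spec\cO(U_h)$, and then chase through the factorization $\cO(U_h)\to\cO_{\Mat(r,E),x}\to k(x)\to k(x)^\pm$ using that $\K$ is a pasteurized ordered blue field (so the map from $\cO(U_h)$ kills all non-units in the stalk and identifies each element with its unique weak inverse image), invoking the left-adjoint property of pasteurization $(-)^\pm$ to pull the $\F_1^\pm$-algebra structure through. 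I expect this to be the one place where I would need to be careful rather than formal, and I would handle it by citing the local structure of $\Spec$ of an ordered blueprint (Proposition~\ref{prop: structure sheaf for the spectrum of an ordered blueprint}, its unique closed point) together with the universal property of pasteurization stated in Section~\ref{subsection: pasteurized ordered blueprints}; everything else is bookkeeping with the explicit presentation of $k(x)^\pm$ and the terminality of $\K$.
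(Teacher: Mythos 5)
Your proof is correct and follows essentially the same route as the paper: the formal equivalence $(1)\Leftrightarrow(2)$, factorization of $\chi$ through $\Spec k(x)$ and then through $\Spec k(x)^\pm$ via pasteurization, and the observation that a nonzero purely positive pasteurized ordered blue field is a pasture, hence admits a unique morphism to the terminal pasture $\K$. The only cosmetic difference is that you derive injectivity of $\Phi$ from the terminality of $\K$ and the residue-field factorization, whereas the paper reconstructs the Grassmann--Pl\"ucker function $\Delta$ directly from the image point $\fp_\cI$ via $\Delta(I)=0\Leftrightarrow I\in\cI$; both arguments rest on the same ingredients and the choice is a matter of taste.
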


\begin{proof}
 By the definition of the support of a matroid, \eqref{rep0} and \eqref{rep1} are equivalent. Thus we are left with proving the equivalence of \eqref{rep1} with the latter affirmations. By Theorem \ref{thm: moduli space of matroids}, a morphism $\chi:\Spec\K\to\Mat(r,E)$ corresponds to a matroid $M$. Let $\Delta:\binom Er\to\K$ be the unique Grassmann-Pl\"ucker function that represents $M$ and $\fp_\cI$ the image point of $\chi$. Then we have $\Delta(I)=0$ if and only if $I\in\cI$. This shows that $\Delta$ and $M$ are determined by $\chi$ and that $\Phi$ is injective. This proves the first part of the theorem.
 
 
 Let $x$ be a point of $\Mat(r,E)^\top$. We begin with \eqref{rep1}$\Rightarrow$\eqref{rep2}. Assume that $x$ is the image point of a morphism $\chi:\K\to\Mat(r,E)$. This means that $\chi$ factors into a uniquely determined morphism $\Spec \K\to\Spec k(x)$ followed by $\Spec k(x)\to\Mat(r,E)$. This yields a morphism $k(x)\to\K$, which extends uniquely to a morphism $k(x)^\pm\to\K$. Thus \eqref{rep2}.
 
 The existence of a morphism $k(x)^\pm\to\K$ implies that $k(x)^\pm\neq\{0\}$, thus \eqref{rep2}$\Rightarrow$\eqref{rep3}.
 
 We continue with \eqref{rep3}$\Rightarrow$\eqref{rep4}. If $k(x)^\pm\neq\{0\}$, then it is an ordered blue field. It is an $\Funpm$-algebra with unique weak inverses by definition, and the partial order of $k(x)^\pm$ is generated by relations of the form $0\leq\sum a_i$, which shows that $k(x)^\pm$ is an idyll. Thus \eqref{rep4}.
 
 We continue with \eqref{rep4}$\Rightarrow$\eqref{rep1}. Assume that $k(x)^\pm$ is an idyll and let $t_{x}:k(x)^\pm\to\K$ be the terminal map. Then we obtain a morphism
 \[
  \Spec\K \ \stackrel{t_{x}^\ast}\longrightarrow \ \Spec k(x)^\pm \ \longrightarrow \ \Spec k(x) \ \longrightarrow \ \Mat(r,E),
 \]
 which shows that $x$ is in the image of $\Phi$ and thus \eqref{rep1}. This concludes the proof.
\end{proof}

\begin{cor}\label{cor: supports of matroids are image points of matroids}
 Let $M$ be a matroid with support $x_M$ and with characteristic morphism $\chi_M:\Spec\K\to\Mat(r,E)$. Let $t_M:k_M\to\K$ be the terminal map. Then $\chi_M$ equals the composition
 \[
  \Spec\K \ \stackrel{t_M^\ast}\longrightarrow \ \Spec k_M \ \longrightarrow \ \Spec k(x_M) \ \longrightarrow \ \Mat(r,E)
 \]
 where the middle morphism is induced by the canonical map $k(x_M)\to k(x_M)^\pm=k_M$ and the last morphism is the canonical inclusion of the spectrum of the residue field.
\end{cor}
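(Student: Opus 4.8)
The plan is to derive this directly from the proof of Proposition~\ref{prop: embedding of matroids into the matroid space and its image}, combined with the universal property of pasteurization and the uniqueness of morphisms into $\K$. The factorization of $\chi_M$ through the spectrum of the residue field $k(x_M)$ is already essentially produced in that proof; what remains is to lift the accompanying blueprint morphism $k(x_M)\to\K$ across the pasteurization $k(x_M)\to k_M$ and to recognize the lift as $t_M$.

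First I would invoke the argument for the implication \eqref{rep1}$\Rightarrow$\eqref{rep2} in the proof of Proposition~\ref{prop: embedding of matroids into the matroid space and its image}: since $x_M$ is by definition the image point of $\chi_M$, the morphism $\chi_M\colon\Spec\K\to\Mat(r,E)$ factors uniquely as $\Spec\K\to\Spec k(x_M)\to\Mat(r,E)$, where the second arrow is the canonical morphism from the spectrum of the residue field and the first arrow is $g^\ast$ for a uniquely determined morphism of ordered blueprints $g\colon k(x_M)\to\K$. This is just the standard fact that a morphism from the spectrum of an ordered blue field into an ordered blue scheme factors through the residue field at the image point, used verbatim as in that proof.

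Second, since $\K$ is a pasture it is in particular pasteurized, so the universal property of pasteurization --- i.e.\ that $(-)^\pm\colon\OBlpr\to\OBlpr^\pm$ is left adjoint to the inclusion of $\OBlpr^\pm$ --- yields a unique morphism $\bar g\colon k_M=k(x_M)^\pm\to\K$ with $\bar g\circ\pi=g$, where $\pi\colon k(x_M)\to k_M$ is the unit of the adjunction, i.e.\ the canonical map appearing in the statement. Since $k_M$ is an ordered blue field, the only morphism $k_M\to\K$ is the terminal map, so $\bar g=t_M$. Applying $\Spec$ to the identity $g=t_M\circ\pi$ gives $g^\ast=\pi^\ast\circ t_M^\ast$ with $\pi^\ast=\Spec\pi$ the ``middle'' morphism $\Spec k_M\to\Spec k(x_M)$, and substituting this into the factorization of the first step gives
\[
 \chi_M \ = \ \Big(\ \Spec\K \ \stackrel{t_M^\ast}\longrightarrow\ \Spec k_M \ \stackrel{\pi^\ast}\longrightarrow\ \Spec k(x_M) \ \longrightarrow\ \Mat(r,E)\ \Big),
\]
which is the claimed composition.

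I do not expect a real obstacle here: every step is a reference to an already-established result or an application of a universal property. The only points deserving a line of care are confirming that the residue-field factorization borrowed from Proposition~\ref{prop: embedding of matroids into the matroid space and its image} is precisely the canonical inclusion $\Spec k(x_M)\to\Mat(r,E)$ named in the statement, and that functoriality of $\Spec$ applied to $g=t_M\circ\pi$ reassembles the displayed three-arrow chain under the identification $k(x_M)^\pm=k_M$ by which $k_M$ is defined.
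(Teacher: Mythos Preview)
Your proof is correct and close in spirit to the paper's, but the logical flow is different. The paper argues ``top-down'': it first observes (via Proposition~\ref{prop: embedding of matroids into the matroid space and its image}) that $k_M$ is a pasture, so $t_M$ exists, and then notes that the displayed composition and $\chi_M$ are two morphisms $\Spec\K\to\Mat(r,E)$ with the same image point $x_M$; injectivity of $\Phi$ from the first part of that proposition forces them to coincide. You instead argue ``bottom-up'': you start from $\chi_M$, factor it through the residue field (as in the proof of \eqref{rep1}$\Rightarrow$\eqref{rep2}), then push the resulting $k(x_M)\to\K$ across pasteurization and identify the result as $t_M$ by terminality. Your route makes the factorization constructive rather than relying on the injectivity statement; the paper's route is a line shorter but less explicit about where each arrow comes from. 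One small wording issue: the claim ``since $k_M$ is an ordered blue field, the only morphism $k_M\to\K$ is the terminal map'' is not quite the right justification---what you need is that $k_M$ is a pasture (Proposition~\ref{prop: embedding of matroids into the matroid space and its image}\eqref{rep4}) and that $\K$ is terminal among pastures.
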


\begin{proof}
 By the latter part of Proposition \ref{prop: embedding of matroids into the matroid space and its image}, $k_M$ is an idyll, thus $k_M$ comes with a terminal map $t_M:k_M\to\K$. By the first part of Proposition \ref{prop: embedding of matroids into the matroid space and its image}, there is at most one morphism $\Spec\K\to\Mat(r,E)$ with given image point. Thus the morphism resulting from $t_M^\ast$ with the canonical morphism $\Spec k_M\to\Mat(r,E)$ must be equal to $\chi_M$.
\end{proof}

\begin{rem}
 As a consequence of Proposition \ref{prop: embedding of matroids into the matroid space and its image} and Corollary \ref{cor: supports of matroids are image points of matroids}, we see that only the points $x$ in the image of $\Phi$ are supports of matroids. 
\end{rem}

\begin{lemma}\label{lemma: number of bases of a point of the matroid space}
 Let $k$ be a field and $N=\#\binom Er-1$. Let $x$ be a point of $\Mat(r,E)$. Then $k(x)^\times$ is the product of $\{1,\epsilon\}$ with a free abelian group whose rank is equal to the dimension of the closed subvariety $\big(\overline{\{x\}}\otimes_{\Funpm}k\big)^+$ of $\P^N_k$, where $\overline{\{x\}}$ is the closure of $x$ in $\Mat(r,E)$. 
\end{lemma}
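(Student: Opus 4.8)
The statement concerns the structure of the residue field $k(x)$ at a point $x = \fp_{\cI}$ of $\Mat(r,E)$, where $\cI \subseteq \binom Er$ is a proper subset. By the explicit description of residue fields given just before the statement, $k(\fp_\cI)$ is obtained from the degree-$0$ part of a localization of the Pl\"ucker algebra, after killing the coordinates indexed by $\cI^c$. The first move is to write down generators: after fixing a reference index $I_0 \in \cI$, the degree-$0$ part of $\F_1^\pm[x_I^\pm \mid I \in \cI]/\cPl(r,E)$ is generated as a monoid by the Laurent monomials $x_I x_{I_0}^{-1}$ for $I \in \cI$, together with $\epsilon$ and $0$, so that $k(x)^\bullet = k(x)^\times \cup \{0\}$ (it is an ordered blue field, as noted). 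Thus $k(x)^\times$ is generated by $\epsilon$ and the classes of the elements $t_I := x_I x_{I_0}^{-1}$, subject only to the multiplicative relations forced by $\cPl(r,E)$ (since the Pl\"ucker relations are inequalities $0 \le \sum \cdots$, they impose no multiplicative identifications among monomials — this is exactly the content of the preceding ``closed immersion is a homeomorphism'' lemma). Since $\epsilon^2 = 1$ (Lemma~\ref{lemma: first properties of unique weak inverses}) and $\epsilon \ne 1$ in a nontrivial pasture, $k(x)^\times \cong \{1,\epsilon\} \times A$ for some abelian group $A$, and $A$ is generated by the $t_I$'s; because these are monomials in a free ordered blueprint over $\F_1^\pm$ and no monomial relations are imposed, $A$ is in fact \emph{free} abelian.

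The remaining, and main, task is to identify the rank of $A$ with the dimension of $Y := \big(\overline{\{x\}} \otimes_{\F_1^\pm} k\big)^+$ inside $\P^N_k$. The plan is to observe that $\overline{\{x\}}$, the closure of $x = \fp_\cI$ in $\Mat(r,E) \hookrightarrow \P^N_{\F_1^\pm}$, is cut out (topologically) by $x_J = 0$ for $J \in \cI^c$, i.e.\ it is the closed ordered blue subscheme $\Proj$ of $\bpquot{\F_1^\pm[x_I \mid I \in \cI]}{\cPl(r,E)}$ (the Pl\"ucker relations restricted to the surviving variables). Base changing to $k$ and passing to $(-)^+$ turns this into the honest projective scheme $Y = \Proj\big(k[x_I \mid I \in \cI]/J_{\cPl}\big)$, where $J_{\cPl}$ is the ideal generated by the Pl\"ucker polynomials supported on $\cI$. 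Now $x$ is the generic point of $\overline{\{x\}}$, so $k(x)$ (before pasteurization) maps to the degree-$0$ part of the homogeneous localization of this same algebra at the relevant prime, and one checks that $k(x)^\times$, modulo $\{1,\epsilon\}$, is precisely the group of degree-$0$ Laurent monomials on $Y$ that are invertible at the generic point — which is the same thing as the character lattice describing the torus orbit through the generic point of $Y$, whose rank is $\dim Y$. More concretely: $\dim Y$ equals the transcendence degree over $k$ of the function field of $Y$, and since $Y$ is (the closure of a torus orbit in) a toric-type variety whose generic point has residue field a purely transcendental extension of $k$ generated by the ratios $t_I$, this transcendence degree equals the rank of the free group generated by the $t_I$ modulo multiplicative dependencies — i.e.\ $\operatorname{rk} A$. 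I would make this rigorous by using $(X \times_{\F_1^\pm} k)^+ $ has the same underlying (topological) structure controlled by the combinatorics, invoking that $U_{T_{I_0}} \cap \overline{\{x\}}$ is affine with coordinate blueprint $\big(\bpquot{\F_1^\pm[x_I^\pm, x_J \mid I \in \cI, J \in \cI^c]}{\cPl}\big)_0$ before killing the $x_J$, and then the base change $\otimes_{\F_1^\pm} k$ followed by $(-)^+$ commutes with these affine-chart and degree-$0$ constructions.

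The hard part will be the last identification — relating the abstract rank of the multiplicative group $A$ to the \emph{Krull dimension} of the base-changed scheme $Y$ over $k$. The subtlety is that $(-)^+$ and base change to $k$ can, a priori, introduce or collapse dimensions that are invisible at the level of ordered blueprints (the Pl\"ucker \emph{equalities} that hold in $k$ but are only \emph{inequalities} in $\F_1^\pm$ do cut down the dimension of $Y$). So one cannot simply say ``$\operatorname{rk} A = \#\cI - 1$''; rather, $\operatorname{rk} A$ already ``knows'' about the Pl\"ucker relations because $k(x)^\times$ is the unit group \emph{after} imposing $\cPl(r,E)$, and I expect the key lemma to be: the group of units of $\big(\bpquot{\F_1^\pm[x_I^\pm \mid I \in \cI]}{\cPl(r,E)}\big)_0^\pm$, modulo torsion, is canonically the free abelian group on the monomials $t_I$ modulo the subgroup of monomial relations forced by $\cPl$, and this coincides with the lattice of monomial functions on the toric variety $Y$, whose rank is $\dim Y$. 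Establishing that the \emph{relations} imposed on units by a system of purely positive relations $0 \le \sum (\text{monomials})$ are exactly the monomial relations that hold in the $k$-algebra — no more, no less — is where the real work lies; I would prove it by exhibiting an explicit identification between the unit group and $\mathrm{Pic}$-type data, or more directly by reducing to the affine chart $U_{T_{I_0}}$ and computing the integral closure / function field there.
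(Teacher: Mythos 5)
Your first half is essentially correct and matches the paper: $k(x)^\times$ is generated by $\epsilon$ and the degree-zero Laurent monomials in the Pl\"ucker coordinates that do not vanish at $x$, and since the Pl\"ucker relations are only inequalities $0 \le \sum a_i$ (as you note, invoking the preceding homeomorphism lemma), they impose no multiplicative identifications; hence $k(x)^\times \cong \{1,\epsilon\}\times\Z^{m}$, where $m+1$ is the number of non-vanishing coordinates. There is, however, a small slip: the lemma is about $k(x)$, \emph{not} about the pasteurization $k(x)^\pm$, and your later discussion of ``monomial relations forced by $\cPl$'' on the units of $\bigl(\cdots\bigr)_0^\pm$ pertains to the wrong object. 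For $k(x)$ itself there simply are no relations, so $\rk A = m$ without further argument; it is only after pasteurization that the rank can drop (as the paper remarks, with explicit examples in the rank-$2$ classification).

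The main gap is in your identification of $Y := \bigl(\overline{\{x\}}\otimes_{\Funpm}k\bigr)^+$. You take $Y = \Proj\bigl(k[x_I]/J_{\cPl}\bigr)$ with $J_{\cPl}$ the Pl\"ucker \emph{ideal}, and then plan an involved toric/transcendence-degree argument to match $\dim Y$ with $m$. But the Pl\"ucker inequalities do not become equalities under $(-)^+$ or under base change to $k$ --- this is the same phenomenon that gave you freeness of $k(x)^\times$. The functor $(-)^+$ extracts the ambient semiring and forgets the partial order, and tensoring with $k$ identifies $\epsilon$ with $-1$ but cannot promote an inequality $0 \le P$ to the equality $P = 0$. (The $k$-\emph{rational points} $\Mat(r,E)(k)$ do see the Pl\"ucker equations, because a morphism into the pasture $k$ must respect the partial order --- this is why the $k$-points form the Grassmannian --- but the base-changed ambient scheme does not.) So in fact $Y$ is the coordinate linear subspace $\P^{m}_k \subseteq \P^N_k$ cut out by setting to zero exactly the coordinates that already vanish at $x$, and $\dim Y = m$ immediately. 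The lemma then reduces to the tautology $m = m$, which is precisely the paper's two-line proof. Your proposed route is not merely longer --- it would fail: if $Y$ were the Pl\"ucker variety, its dimension would in general be strictly smaller than $m$ (for the generic point, $Y$ would be the Grassmannian, of dimension $r(n-r) < \binom nr - 1$), so the identity you set out to prove would be false.
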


\begin{proof}
 Let $\cI$ be the subset of $\binom Er$ such that $x=\fp_\cI$ and $\cI^c$ its complement in $\binom Er$. Then 
 \[\textstyle
  k(x)^\times \ = \ \{1,\epsilon\} \ \times \ \big\{ \, \prod_{I\notin\cI^c}x_I^{e_I} \, \big| \, e_I\in\Z\text{ with }\sum_{I\in\cI^c} e_I=0 \, \big\},
 \]
 whose second factor is a free abelian group of rank $\#\cI^c-1=N-\#\cI$. This rank is equal to the dimension of 
 \[
  \big(\overline{\{x\}}\otimes_{\Funpm}k\big)^+ \ = \ \Proj \big( \, k[ \, x_I \, | \, I\in\cI^c \, ] \, \big),
 \]
 which proves the assertion of the lemma. 
\end{proof}

\begin{rem}
 Note that if $M$ is a matroid with support $x_M=\fp_\cI$, then the complement $\cB=\cI^c$ of $\cI$ in $\binom Er$ is the set of bases of the matroid $M$. Thus the rank of $k(x)^\times$ is one less than the number of bases of the matroid.
 
 Note however that structure of the universal idyll $k_M$ is more complicated. In general, $k_M^\times$ is a proper quotient of $k(x)^\times$, which means that the free rank of the unit group drops when we identify different weak inverses. This happens, for instance, in the cases $\beta(x)=4$ and $\beta(x)=5$ in section \ref{subsection: universal idylls for rank 2 and the four element set}.
\end{rem}

\begin{cor}\label{cor: universal idyll of a closed point}
 A point $x$ of $\Mat(r,E)$ is a closed point if and only if $k(x)=\Funpm$. Thus every closed point is the support of a matroid.
\end{cor}

\begin{proof}
 A point $x$ is closed if and only if $\beta(x)=1$. By Lemma \ref{lemma: number of bases of a point of the matroid space}, this is equivalent to $k(x)^\times=\{1,\epsilon\}$, or $k(x)=\Funpm$, as claimed. Proposition \ref{prop: embedding of matroids into the matroid space and its image} implies that $x$ is the support of a matroid.
\end{proof}

\begin{ex}[Support of the uniform matroid]
 The \emph{uniform matroid of rank $r$ on $E$} is the matroid represented by the Grassmann-Pl\"ucker function $\Delta:\binom Er\to\K$ with $\Delta(I)=1$ for all $r$-subsets $I$ of $E$. The support of the uniform matroid is the generic point of $\Mat(r,E)$.
\end{ex}

To summarize, all closed points and the generic point of $\Mat(r,E)$ are supports of matroids. But if $2\leq r \leq \# E-2$, then there are points of $\Mat(r,E)$ that are not the support of matroids. In other words, the map $\Phi$ from Proposition \ref{prop: embedding of matroids into the matroid space and its image} is not surjective in general. The following section will exhibit points of the matroid space that are not the support of a matroid for $r=2$ and $\# E=4$. This can be easily generalized to any $r$ and $E$ with $2\leq r \leq \# E-2$.

\begin{rem}
 Let $F$ be an idyll and $M$ an $F$-matroid with characteristic morphism $\chi_M:\Spec F\to\Mat(r,E)$. Then the image point $x_M$ of $\chi_M$ is the support of the underlying matroid of $M$. Together with our previous observation about points of $Mat(r,E)$ that are not the support of matroids, this shows that these points are not the image for any morphism $\Spec F\to \Mat(r,E)$ for any idyll $F$. 
 
 But every point $x$ of $\Mat(r,E)$ occurs in the support for some matroid bundle. Namely, let $\cO_{X,x}$ be the stalk of $\Mat(r,E)$ at $x$. Then the canonical inclusion $\Spec\cO_{X,x}\to\Mat(r,E)$ defines an $\cO_{X,x}$-matroid that has support at $x$, together with all more general points of $\Mat(r,E)$.
\end{rem}


\subsection{Universal idylls for rank 2-matroids on the four element set}
\label{subsection: universal idylls for rank 2 and the four element set}

 In the following, we characterize the different universal idylls that can occur for $\Mat(2,E)$ where $E=\{1,2,3,4\}$. Note that $\Mat(2,E)$ is defined by a single Pl\"ucker relation, namely
 \[
  0 \ \leq \ x_{1,2}x_{3,4} \ + \ \epsilon \cdot x_{1,3}x_{2,4} \ + \ x_{1,4}x_{2,3}
 \]
 where we write $x_{i,j}$ for $x_{\{i,j\}}$. We systematically determine the residue field $k(x)$ and $k(x)^\pm$ for every point $x$ of $\Mat(r,E)$, in increasing order of the number $\beta(x)=\rk(k(x)^\times)+1$ where $\rk(k(x)^\times)$ is the free rank of the abelian group $k(x)^\times$. Note that if $M$ is a matroid with support $x_M=\fp_\cI$, then $\beta(x)$ equals the number of bases of $M$; cf.\ Lemma \ref{lemma: number of bases of a point of the matroid space}.
  The complement $\cB=\cI^c$ of $\cI$ in $\binom Er$ is the set of bases of the matroid $M$. 
 
 \subsubsection*{Case $\beta(x)=1$} 
 By Corollary \ref{cor: universal idyll of a closed point}, we have $k(x)^\pm=k(x)=\Funpm$. In particular, $x$ is the support of a matroid.
 
 \subsubsection*{Case $\beta(x)=2$} 
 By Lemma \ref{lemma: number of bases of a point of the matroid space}, we have $x=\fp_\cI$ for a $2$-subset $\cI=\{I,J\}$ of $\binom Er$. There are two cases. If $I$ and $J$ intersect nontrivially, then
 \[
  k(x)^\pm \ = \ k(x) \ = \ \Funpm[x_{I}^{\pm1},x_{J}^{\pm1}]_0
 \]
 and $x$ is the support of a matroid. If $I\cap J=\emptyset$, then the Pl\"ucker relation of $\Mat(r,E)$ yields $0\leq x_{I}x_{J}$ after substituting all other terms by $0$. Multiplication with $x_{I}^{-1}x_{J}^{-1}$ yields $0\leq 1$ and we obtain 
  \[
  k(x) \ = \ \big(\bpgenquot{\Funpm[x_{I}^{\pm1},x_{J}^{\pm1}]}{0\leq 1}\big)_0.
 \]
 Since $0\leq1=1+0$, we see that $0$ is a weak inverse of $1$. We conclude that $k(x)^\pm=\{0\}$ and that $x$ is not the support of a matroid. 

 \subsubsection*{Case $\beta(x)=3$} 
 By Lemma \ref{lemma: number of bases of a point of the matroid space}, we have $x=\fp_\cI$ for a $3$-subset $\cI=\{I,J,K\}$ of $\binom Er$. As in the rank $1$-case, we are are confronted with two cases. If each two of $I$, $J$ and $K$ have nonempty intersection, the Pl\"ucker relation is trivial. Thus we have 
  \[
  k(x)^\pm \ = \ k(x) \ = \ \Funpm[x_{I}^{\pm1},x_{J}^{\pm1},x_K^{\pm1}]_0
 \]
 and $x$ is the support of a matroid. If not---for instance, $I\cap J=\emptyset$--- then 
 \[
  k(x) \ = \ \big(\bpgenquot{\Funpm[x_{I}^{\pm1},x_{J}^{\pm1},x_{K}^{\pm1}]}{0\leq 1}\big)_0,
 \]
 as in the rank $1$-case. Thus $k(x)^\pm=\{0\}$ and $x$ is not the support of a matroid.

 \subsubsection*{Case $\beta(x)=4$} 
 By Lemma \ref{lemma: number of bases of a point of the matroid space}, we have $x=\fp_\cI$ for a $4$-subset $\cI=\{I,J,K,L\}$ of $\binom Er$. There is at least one pair of subsets with empty intersection, say $I\cap J=\emptyset$. We differentiate two cases. If $K$ and $L$ intersect nontrivially, then we have 
 \[
  k(x) \ = \ \big(\bpgenquot{\Funpm[x_{I}^{\pm1},x_{J}^{\pm1},x_{K}^{\pm1},x_{L}^{\pm1}]}{0\leq 1}\big)_0
 \]
 and $k(x)^\pm=\{0\}$, i.e.\ $x$ is not the support of a matroid. If $K\cap L=\emptyset$, then
 \[
  k(x) \ = \ \big(\bpgenquot{\Funpm[x_{I}^{\pm1},x_{J}^{\pm1},x_{K}^{\pm1},x_{L}^{\pm1}]}{0\leq x_Ix_J+\epsilon^ix_Kx_L}\big)_0
 \]
 where $i=0$ or $1$, depending on $I$, $J$, $K$ and $L$. In this case, $k(x)$ has multiple weak inverses, and 
 \[
  k(x)^\pm \ = \ \bpgenquot{k(x)}{\epsilon\= \epsilon^ix_Kx_Lx_I^{-1}x_J^{-1}} \ \simeq \ \Funpm[x_{I}^{\pm1},x_{J}^{\pm1},x_{K}^{\pm1}]_0. 
 \]
 Since $k(x)^\pm\neq\{0\}$, the point $x$ is the support of a matroid.
 
 \subsubsection*{Case $\beta(x)=5$} 
 By Lemma \ref{lemma: number of bases of a point of the matroid space}, we have $x=\fp_\cI$ for a $5$-subset $\cI=\{I,J,K,L,N\}$ of $\binom Er$, which contains two pairs of subsets with empty intersection, say $I\cap J=K\cap L=\emptyset$. Then we have
 \[
  k(x) \ = \ \big(\bpgenquot{\Funpm[x_{I}^{\pm1},x_{J}^{\pm1},x_{K}^{\pm1},x_{L}^{\pm1},x_{N}^{\pm1}]}{0\leq x_Ix_J+\epsilon^ix_Kx_L}\big)_0
 \]
 where $i=0$ or $1$, depending on $I$, $J$, $K$ and $L$. As in the rank $4$-case, we have
 \[
  k(x)^\pm \ = \ \bpgenquot{k(x)}{\epsilon\= \epsilon^ix_Kx_Lx_I^{-1}x_J^{-1}} \ \simeq \ \Funpm[x_{I}^{\pm1},x_{J}^{\pm1},x_{K}^{\pm1},x_{N}^{\pm1}]_0. 
 \]
 Thus $x$ is the support of a matroid.

 \subsubsection*{Case $\beta(x)=6$} 
 By Lemma \ref{lemma: number of bases of a point of the matroid space}, we have $x=\fp_\cI$ for $\cI=\binom Er$ and
 \[\textstyle
  k(x)^\pm \ = \ k(x) \ = \ \big(\bpgenquot{\Funpm[x_{I}^{\pm1}\,|\,I\in\binom Er]}{0\leq x_{1,2}x_{3,4}+\epsilon x_{1,3}x_{2,4} + x_{1,4}x_{2,3}}\big)_0.
 \]
 The point $x$ is the support of the uniform matroid.


\subsection{Realization spaces}
\label{subsection: realization spaces}

Let $k$ be a field. The realization space of a matroid $M$ is the subset of the Grassmannian over $k$ that consists of the subspaces whose associated matroid is $M$. These realization spaces have been used for proving that several moduli spaces, such as Hilbert schemes and moduli spaces of curves, can become arbitrarily complicated, cf.\ \cite{Vakil06}. In this section, we show that realization spaces are the same as morphism sets from universal idylls.

Let $\Delta:\binom Er\to \K$ be a Grassmann-Pl\"ucker function, $M=[\Delta]$ the corresponding matroid and $\chi_M:\K\to\Mat(r,E)$ its characteristic morphism. Let $F$ be an idyll. The terminal map $t_F:F\to\K$ induces a map
\[
 \Phi_{r,E,F}: \ \Mat(r,E)(F) \ \longrightarrow \ \Mat(r,E)(\K)
\]
that sends a morphism $\chi:\Spec F\to\Mat(r,E)$ to $\chi\circ t_F^\ast:\Spec\K\to\Mat(r,E)$. In other words, $\Phi_{r,E,F}$ maps an $F$-matroid to its underlying matroid. 

\begin{df}
 The \emph{realization space of $M$ over $F$} is the fibre 
 \[
  \cX_M(F) \ = \ \Phi_{r,E,F}^{-1}(M) \ = \ \big\{ \, \chi:\Spec F\to\Mat(r,E) \, \big| \, \chi\circ t_F^\ast=\chi_M \, \big\}
 \]
 of $\Phi_{r,E,F}$ over $\chi_M$. 
\end{df}

Note that the realization space of $M$ is functorial in $F$: a morphism $f:F\to F'$ of idylls induces a map
\[
 \begin{array}{ccc}
   \cX_M(F) & \longrightarrow & \cX_M(F'). \\
   \chi    & \longmapsto     & \chi\circ f^\ast
 \end{array}
\]

\begin{ex}
 In the case of a field $k$, $\cX_M(k)$ is the subset of $\Gr(r,E)(k)=\Mat(r,E)(k)$ that consists of all subspaces $V$ of $k^E$ whose associated matroid is $M$, i.e.\ $t_F\circ\Delta_V(I)=\Delta(I)$ for all $I\in\binom Er$, where $\Delta_V(I)$ are the Pl\"ucker coordinates of $V$. Note that $\cX_M(k)$ comes with the structure of a locally closed subvariety of $\Gr(r,E)(k)$ since it is defined by the equations $x_I=0$ whenever $\Delta(I)=0$ and $x_I\neq 0$ whenever $\Delta(I)\neq 0$.
\end{ex}

It turns out that $\cX_M$ is represented by $k_M$ as a functor from idylls to sets. In other words, $\Spec k_M$ is the fine moduli space of realization spaces for $M$. In down-to-earth terms, this means the following:

\begin{thm}\label{thm: realization space as morphism set from the universal idyll}
 Let $M$ be a matroid and $\iota_M:\Spec k_M\to\Mat(r,E)$ the inclusion of the universal idyll $k_M$ of $M$ into the matroid space. Let $F$ be an idyll. The map
 \[
  \begin{array}{cccc}
   \iota_{M,\ast}: & \Hom(k_M,F)  & \longrightarrow & \cX_M(F) \\
         & f            & \longmapsto     & \iota_M\circ f^\ast
  \end{array}
 \]
 is a bijection that is functorial in $F$.
\end{thm}

\begin{proof}
 We will show that every morphism $\chi:\Spec F\to \Mat(r,E)$ in $\cX_M(F)$ factors uniquely through $\iota_M$. As a first step, we observe that the equality $\chi_M=\chi\circ t_F^\ast$ implies that $\im\chi=\im\chi_M=\{x_M\}$ where $\chi_M$ is the characteristic morphism of $M$, $x_M$ its image point and $t_F:F\to \K$ the terminal map.
 
 We conclude that $\chi$ factors uniquely into a morphism $\overline\chi:\Spec F\to\Spec k(x_M)$ followed by the inclusion $\iota:\Spec k(x_M)\to \Mat(r,E)$, where $k(x_M)$ is the residue field of $x_M$. Taking global sections yields a morphism $f=\Gamma\overline\chi:k(x_M)\to F$ with $\overline\chi=f^\ast$. 
 
 Since $F$ is an idyll, $f$ factors into the canonical map $k(x_M)\to k(x_M)^\pm=k_M$ followed by a uniquely determined morphism $f^\pm:k_M\to F$ of idylls. We conclude that $\varphi=(f^\pm)^\ast:\Spec F\to \Spec k_M$ is the unique morphism such that $\chi=\iota_M\circ\varphi$. 
 
 This shows that $\iota_{M,\ast}$ is a bijection for every $F$. The functoriality of $F$ is clear from the definitions. This completes the proof of the theorem.
\end{proof}

\begin{rem}
 Roughly speaking, the universality theorem of Mn{\"e}v says that the realization spaces for oriented matroids $M$ can become arbitrarily complex for varying $M$, cf.\ \cite{Mnev88}. Lafforgue adapts in \cite{Lafforgue03} Mn{\"e}v's proof to realization spaces for matroids. Lee and Vakil explain in \cite{Lee-Vakil13} that arbitrarily complex means, in particular, that every type of singularity can occur in a realization space of a matroid.

 The universality theorem, paired with Theorem \ref{thm: realization space as morphism set from the universal idyll}, implies (loosely speaking) that universal idylls can be ``arbitrarily complex''.  It would be interesting to have a precise formulation of this.
 \end{rem}


\subsection{The weak matroid space}
\label{subsection: the weak matroid space}


There is a variant of the matroid space for weak matroids, which leads to the notion of the universal pasture of a matroid. Although the weak matroid space is not a moduli space for weak matroids, it turns out that the universal idyll is a very useful object for matroid theory because of its connections to the Tutte group and rescaling classes.

Since an idyll $F$ can be considered as a tract, we gain the notion of a weak $F$-matroid, as explained in section \ref{subsubsection: weak Grassmann-Plucker functions}. This means that in contrast to a strong $F$-matroid, which is defined by all Pl\"ucker relations, a weak $F$-matroid is represented by a function $\Delta:\binom Er \to F$ whose support is the set of bases of a matroid and that is only required to satisfy the $3$-term Pl\"ucker relations 
\[
 0 \ \leq \ \Delta(I_{1,2}) \, \Delta(I_{3,4}) \ + \ \epsilon \, \Delta(I_{1,3}) \, \Delta(I_{2,4}) \ + \ \Delta(I_{1,4}) \, \Delta(I_{2,3})
\]
for every $(r-2)$-subset $I$ of $E$ and all $i_1<i_2<i_3<i_4$ with $i_1,i_2,i_3,i_4\notin I$, where $I_{k,l}=I\cup\{i_k,i_l\}$.

\begin{rem}\label{rem: perfect tracts}
 Note that a strong Grassmann-Pl\"ucker function is evidently a weak Grassmann-Pl\"ucker function. Thus every strong $F$-matroid is a weak $F$-matroid. For many tracts of interest, the reverse implication is also true. For instance, this holds for the class of \emph{perfect tracts}, which are tracts for which covectors are orthogonal to vectors, cf.\ \cite[section 3]{Baker-Bowler19} for details. Examples of perfect tracts are $\Funpm$, $\K$, $\S$, $\T$, partial fields and doubly-distributive hyperfields. (A hyperfield $K$ is \emph{doubly-distributive} if $(a\hyperplus b)(c\hyperplus d) \ = \ ac \hyperplus bc \hyperplus ad \hyperplus bd$ for all $a,b,c,d\in K$.)
 Not every tract, or even hyperfield, is perfect. For instance, the phase hyperfield, which is the hyperfield quotient of $\C$ by $\R_{>0}$, admits weak matroids that are not strong. See Example 2.36 in \cite{Baker-Bowler19} for details. 
 
 In the following, we shall call an idyll $F$ \emph{perfect} if the associated tract $F^\tract$ is perfect. Since the matroid theories of $F$ and $F^\tract$ coincide by Proposition \ref{prop: relation between matroids over Funpm-algebras and tracts}, which is also true for weak matroids, we conclude that every weak matroid over a perfect idyll is a strong matroid. This justifies our abuse of terminology.
\end{rem}

\begin{df}
 The \emph{weak matroid space of rank $r$ on $E$} is the ordered blue scheme
\[ \textstyle
 \Mat^w(r,E) \quad = \quad \Proj\Big( \, \bpquot{\Funpm\big[ \, x_I \, \big| \, I\in\binom Er \, \big]}{\cPl^w(r,E)} \, \Big),
\]
where $\cPl^w(r,E)$ is generated by the $3$-term Pl\"ucker relations
 \[
  0 \ \leq \ x_{I,1,2} \, x_{I,3,4} \ + \ \epsilon \, x_{I,1,3} \, x_{I,2,4} \ + \ x_{I,1,4} \, x_{I,2,3}
 \]
 for every $(r-2)$-subset $I$ of $E$ and all $i_1<i_2<i_3<i_4$ with $i_1,i_2,i_3,i_4\notin I$ where $x_{I,k,l}=x_{I\cup\{i_k,i_l\}}$.
\end{df}

By definition, the weak matroid space comes with a closed immersion into projective space
\[\textstyle
 \iota: \ \Mat^w(r,E) \ \longrightarrow \ \P^N_{\Funpm} \ = \ \Proj\Big( \, \Funpm\big[ \, x_I \, \big| \, I\in\binom Er \, \big] \, \Big)
\]
where $N=\#\binom Er-1$. Let $\cL^w_\univ=\iota^\ast(\cO(1))$ be the pullback of the tautological bundle $\cO(1)$ on $\P^N_\Funpm$ to $\Mat^w(r,E)$.

The identity map induces a morphism
\[\textstyle
 \bpquot{\Funpm\big[ \, x_I \, \big| \, I\in\binom Er \, \big]}{\cPl^w(r,E)} \ \longrightarrow \ \bpquot{\Funpm\big[ \, x_I \, \big| \, I\in\binom Er \, \big]}{\cPl(r,E)}
\]
of graded ordered blueprints, which in turn induces a morphism
\[
 \gamma^w: \ \Mat(r,E) \ \longrightarrow \Mat^w(r,E)
\]
of ordered blue schemes. Since the underlying monoids of the graded ordered blueprints above are equal, $\gamma^w$ is a homeomorphism between the respective underlying topological spaces.

In order to capture the analogue of the unversal idyll for the weak matroid space, we introduce the concept of a pasture. Please note that this definition is equivalent with the corresponding notion in the authors' follow-up paper \cite{Baker-Lorscheid20}.

\begin{df} \label{def:pasture}
 A \emph{pasture} is an idyll $B$ whose partial order is generated by relations of the form $0\leq a+b+c$ with $a,b,c\in B^\bullet$.
\end{df}

Note that since a pasture $B$ is an idyll, its partial order is in fact generated by $0\leq 1+(-1)$ and by terms of the form $0\leq a+b+c$ with $a,b,c\in B^\times$.

\begin{df}
 Let $M$ be a matroid with characteristic morphism $\chi_M$ and support $x_M$. The \emph{weak characteristic morphism} is the morphism $\chi_M^w=\gamma^w\circ\chi_M:\Spec\K\to\Mat^w(r,E)$. The \emph{weak support of $M$} is the image $x_M^w=\gamma^w(x_M)$ of $x_M$ in $\Mat^w(r,E)$. The \emph{universal pasture of $M$} is $k_M^w=k(x_M^w)^\pm$ where $k(x^w)$ is the residue field of $x_M^w$. 
\end{df}

More explicitly, we have
\[\textstyle
 k_M^w \ = \ \big(\bpquot{\Funpm\big[ \, x_I^\pm \, \big| \, I\in\cI\, \big]}{\cPl^w(r,E)}\big)_0^\pm
\]
where $\cPl^w(r,E)$ is generated by the $3$-term Pl\"ucker relations
\[
 0 \ \leq \ \Delta(I_{1,2}) \, \Delta(I_{3,4}) \ + \ \epsilon \, \Delta(I_{1,3}) \, \Delta(I_{2,4}) \ + \ \Delta(I_{1,4}) \, \Delta(I_{2,3})
\]
for every $(r-2)$-subset $I$ of $E$ and all $i_1<i_2<i_3<i_4$ with $i_1,i_2,i_3,i_4\notin I$. Note that $k_M^f$ is a pasture for the same reasons that $k_M$ is an idyll; cf.\ Remark \ref{rem: the universal idyll is an idyll}.

\begin{df}
 Let $M$ be a matroid with weak characteristic morphism $\chi_M^w$ and $F$ an idyll with terminal map $t_F:F\to \K$. The \emph{weak realization space of $M$ over $F$} is the set
 \[
  \cX_M^w(F) \ = \ \big\{ \, \chi:\Spec F\to\Mat^w(r,E) \, \big| \, \chi\circ t_F^\ast=\chi_M^w \, \big\}
 \]
 of all weak $F$-matroids that represent $M$.
\end{df}

Recall from Remark \ref{rem: perfect tracts} the definition of a perfect idyll.

\begin{lemma}\label{lemma: weak realization space as morphism set from the universal pasture}
 Let $M$ be a matroid and $F$ an idyll. The map
 \[
  \begin{array}{cccc}
   \gamma^w_\ast: & \cX_M(F) & \longrightarrow & \cX_M^w(F) \\
                  & \chi     & \longmapsto     & \gamma^w\circ\chi
  \end{array}
 \]
 is injective. If $F$ is a perfect idyll then $\gamma^w_\ast$ is bijective.
\end{lemma}

\begin{proof}
 The map $\gamma^w_\ast$ identifies an $F$-matroid with the corresponding weak $F$-matroid and is obviously injective. If $F$ is a perfect idyll, then every weak $F$-matroid is a strong $F$-matroid, cf.\ Remark \ref{rem: perfect tracts}. Thus $\gamma^w_\ast$ is a bijection in this case.
\end{proof}

\begin{prop} \label{prop: weak realization space as morphism set from the universal pasture}
 Let $M$ be a matroid and $\iota_M^w:\Spec k_M^w\to\Mat^w(r,E)$ the inclusion of the universal pasture $k_M^w$ of $M$ into the weak matroid space. Let $F$ be an idyll. Then the map
 \[
  \begin{array}{cccc}
   \iota_{M,\ast}^w: & \Hom(k_M^w,F)  & \longrightarrow & \cX_M^w(F) \\
                   & f              & \longmapsto     & \iota_M^w\circ f^\ast
  \end{array}
 \]
 is a bijection.
\end{prop}

\begin{proof}
 The proof is analogous to that of the corresponding result for strong $F$-matroids, see Theorem \ref{thm: realization space as morphism set from the universal idyll}. For completeness, we outline the idea of the proof.
 
 We need to show that every morphism $\chi:\Spec F\to \Mat^w(r,E)$ in $\cX_M^w(F)$ factors uniquely through $\iota_M^w$. Since $M$ is the underlying matroid of $\chi$, the image point of $\chi$ is the weak support $x_M^w$ of $M$. Thus we obtain a unique morphism $k(x_M^w)\to F$, which extends uniquely to a morphism $k_M^w\to F$. This association provides an inverse bijection to $\iota_{M,\ast}^w$.
\end{proof}

\begin{rem}\label{rem: no moduli space for weak matroids}
 It seems unlikely that the functor $\cMat^w(r,E)$ can be represented by an ordered blue scheme. The obstacle is that in the definition of a weak $F$-matroid $M=[\Delta]$ with representing Grassmann-Pl\"ucker function $\Delta:\binom Er\to F$, it is required that the support of $\Delta$ is the basis set of a matroid, i.e.\ $t_F \circ\Delta$ satisfies all Pl\"ucker relations where $t_F:F\to\K$ is the terminal map. Since the locus of points of $\Mat^w(r,E)$ supporting matroids is not locally closed, but merely constructible in general, this locus does not inherit a scheme structure from $\Mat^w(r,E)$ in an obvious way.
 
 For instance, it is a well-known fact that the $3$-term Pl\"ucker relations do not suffice, in general, to define classical Grassmann varieties. In fact, the same holds true for any idyll, as the following example shows.
 
\end{rem}

\begin{ex}\label{ex: function that satisfies the 3-term relations but not all}
 Let $F$ be an idyll. In this example, we exhibit a function $\Delta:\binom Er\to F$ that satisfies all $3$-term Pl\"ucker relations, but is not a weak Grassmann-Pl\"ucker function since it fails to satisfy all Pl\"ucker relations over $\K$. 

 Let $E=\{1,2,3,4,5,6\}$ and $J$ and $J^c$ a pair of disjoint $3$-subsets of $E$. Let $\Delta:\binom E3\to F$ be the function with $\Delta(J)=\Delta(J^c)=1$ and $\Delta(I)=0$ for all other $3$-subsets $I$ of $E$. Consider the $3$-term Pl\"ucker relation
 \[
  0 \ \leq \ \Delta(I_{1,2}) \, \Delta(I_{3,4}) \ + \ \epsilon \, \Delta(I_{1,3}) \, \Delta(I_{2,4}) \ + \ \Delta(I_{1,4}) \, \Delta(I_{2,3})
 \]
 for $I=\{i_0\}$ and $i_1<i_2<i_3<i_4$ with $i_1,i_2,i_3,i_4\notin \{i_0\}$, where $I_{k,l}=\{i_0,i_k,i_l\}$. In order for some term in this equation to be nonzero, we have to have that $\Delta(I_{k,l})=\Delta(I_{k',l'})=1$ where $\{k,l,k',l'\}=\{1,2,3,4\}$. Since $i_0$ is contained in both $I_{k,l}$ and $I_{k',l'}$, this means that the elements $i_0,\dotsc,i_4$ have to all be contained in $J$ or else all be contained in $J^c$, which is impossible since $\# J=\#J^c=3$. This shows that $\Delta$ satisfies all $3$-term Pl\"ucker relations.

 To show that $\Delta$ is not a weak Grassmann-Pl\"ucker function, let $\overline\Delta=t_F\circ\Delta:\binom E3\to \K$ where $t_F:F\to \K$ is the terminal map. Let $j\in J$ and define $I=J-\{j\}$ and $I'=J^c\cup\{j\}$. Then $I'=\{j_1,j_2,j_3,j_4\}$ for $j_1<j_2<j_3<j_4$ and $j=j_{l}$ for some $l\in\{1,2,3,4\}$. The Pl\"ucker relation for $I$ and $I'$ is
 \[
  0 \ \leq \ \sum_{k=1}^4 \epsilon^k \cdot \overline\Delta(I\cup\{j_k\}) \cdot \overline\Delta(I'-\{j_k\})
 \] 
 where $\epsilon=1$. The sum on the right hand side has precisely one nonzero term, namely 
 \[
  \overline\Delta(I\cup\{j_l\}) \cdot \overline\Delta(I'-\{j_l\}) \ = \ \overline\Delta(J) \cdot \overline\Delta(J^c) \ = \ 1. 
 \]
 But the relation $0\leq 1$ does not hold in $\K$, which shows that $\overline\Delta$ does not satisfy all Pl\"ucker relations. Therefore $\Delta$ is not a weak Grassmann-Pl\"ucker function.

 Let $\cI$ be the complement of $\{J,J^c\}$ in $\binom Er$ and $x^w=\fp_\cI$ the corresponding point of the weak matroid space $\Mat^w(3,E)$. Since all $3$-term Pl\"ucker relations for $\Delta$ are trivial, the residue field of $x$ is $k(x^w)=\Funpm[x_J^{\pm1},x_{J^c}^{\pm1}]_0$. Note that $k(x^w)$ is an idyll, i.e.\ $k(x^w)^\pm=k(x^w)$ is nonzero. This shows that the weak supports of matroids cannot be characterized by the nonvanishing of $k(x^w)^\pm$, in contrast to the corresponding result for (strong) supports of matroids, cf.\ Proposition \ref{prop: embedding of matroids into the matroid space and its image}.
\end{ex}


\subsection{The Tutte group}
\label{subsection: the Tutte group}

The Tutte group is introduced by Dress and Wenzel in \cite{Dress-Wenzel89} and used as a tool to study the representability of matroids and to provide cryptomorphisms for matroids over fuzzy rings, cf.\ \cite{Dress-Wenzel91}. In this section, we show that the Tutte group is precisely the unit group of the universal pasture.

For the following characterization of the Tutte group, see Definition 1.2 and Theorem 1.1 in \cite{Dress-Wenzel89}.

\begin{df}\label{def: Tutte group}
 Let $M$ be a matroid of rank $r$ on $E$ and $\cB$ the set of bases of $M$. Consider the quotient $G_M$ of the free abelian group generated by symbols $\epsilon$ and $X_{(i_1,\dotsc,i_r)}$ for every $(i_1,\dotsc,i_r)\in E^r$ such that $\{i_1,\dotsc,i_r\}\in\cB$ modulo the subgroup generated by 
 \[
  \epsilon^2, \qquad \epsilon^{\sign(\sigma)} X_{(\sigma(i_1),\dotsc,\sigma(i_r))} X_{(i_1,\dotsc,i_r)}^{-1} 
 \]
 for every permutation $\sigma$ of $\{1,\dotsc,r\}$ and
 \[
  X_{(i_1,\dotsc,i_{r-2},k_1,l_1)} X_{(i_1,\dotsc,i_{r-2},k_2,l_2)} X_{(i_1,\dotsc,i_{r-2},k_1,l_2)}^{-1} X_{(i_1,\dotsc,i_{r-2},k_2,l_1)}^{-1}
 \]
 whenever $\{i_1,\dotsc,i_{r-2},k_1,k_2\}\notin\cB$. The \emph{Tutte group $\T_M$ of $M$} is defined as the subgroup of $G_M$ that is generated by $\epsilon$ and elements of the form $X_{(i_1,\dotsc,i_r)}X_{(j_1,\dotsc,j_r)}^{-1}$ with $\{i_1,\dotsc,i_r\},\{j_1,\dotsc,j_r\}\in\cB$.
\end{df}

Let $x_M$ be the support of $M$ and $x_M^w$ its weak support. The natural map $k(x_M^w)\to k(x_M)$ between the respective residue fields is a bijection because the difference between the two is the validity of the higher Pl\"ucker relations in $k(x_M)$, but these relations do not identify any elements. Thus $k(x_M^w)^\times=k(x_M)^\times$. As explained in the proof of Lemma \ref{lemma: number of bases of a point of the matroid space}, we have $x_M=\fp_{\cB^c}$ where $\cB^c$ is the complement of $\cB$ in $\binom Er$, and 
\[
 k(x_M^w)^\times \ = \ k(x_M)^\times \ = \ \Big\{ \, \epsilon^i \, \cdot \, \prod_{I\in\cB} \ x_I^{e_I} \, \Big| \, i\in\{0,1\},e_I\in\Z\text{ and }\sum_{I\in\cB} e_I=0 \, \Big\}.
\]

\begin{thm}\label{thm: the unit group of the universal pasture is the Tutte group}
 Let $M$ be a matroid with universal pasture $k_M^w$ and Tutte group $\T_M$. For a basis $I=\{i_1,\dotsc,i_r\}$ of $M$ with $i_1<\dotsb<i_r$, we define $X_I=X_{(i_1,\dotsc,i_r)}$. Then the association $\epsilon^i\prod x_I^{e_I}\mapsto \epsilon^i\prod X_I^{e_I}$ defines an isomorphism $(k_M^w)^\times\to \T_M$ of groups.
\end{thm}

\begin{proof}
 Let $x_M^w$ be the weak support and $\cB$ the set of bases of $M$. To enable ourselves to work with degree-$0$ elements, we will work with two graded abelian groups $G_M'$ and $H_M$, which contain $\T_M$ and $(k_M^w)^\times$, respectively, as subquotients.
 
 Namely, we define $G_M'$ as the abelian group generated by the symbols $\epsilon$ and $X_{(i_1,\dotsc,i_r)}$ for every $(i_1,\dotsc,i_r)\in E^r$ such that $\{i_1,\dotsc,i_r\}\in\cB$ modulo the subgroup generated by 
 \[
  \epsilon^2 \qquad \text{and} \qquad \epsilon^{\sign(\sigma)} X_{(\sigma(i_1),\dotsc,\sigma(i_r))} X_{(i_1,\dotsc,i_r)}^{-1} 
 \]
 for every permutation $\sigma$ of $\{1,\dotsc,r\}$, where $\epsilon$ is of degree $0$ and $X_{(i_1,\dotsc,i_r)}$ is of degree $1$. 

 The second group is
 \[
  H_M \ = \ \{1,\epsilon\} \times \Big\{ \, \prod_{I\in\cB} x_I^{e_I} \, \Big| \,e_I\in\Z\, \Big\}, 
 \]
 where $\epsilon$ is of degree $0$, $\epsilon^2=1$, and $x_I$ is of degree $1$.
 
 Since $\epsilon^2=1$ in both $H_M$ and $G_M'$ and since $X_{(\sigma(i_1),\dotsc,\sigma(i_r))} =\epsilon^{\sign(\sigma)} X_{(i_1,\dotsc,i_r)}$ for every permutation $\sigma$ of $\{1,\dotsc,r\}$, the association $\epsilon^i\prod x_I^{e_I}\mapsto \epsilon^i\prod X_I^{e_I}$ defines a degree-preserving group isomorphism $f:H_M\to G_M'$.
 
 Let $G_M$ be the group from Definition \ref{def: Tutte group} and $g:G_M'\to G_M$ the quotient map. Then the kernel of $g$ is generated by the elements
 \[
  X_{(i_1,\dotsc,i_{r-2},k_1,l_1)} X_{(i_1,\dotsc,i_{r-2},k_2,l_2)} X_{(i_1,\dotsc,i_{r-2},k_1,l_2)}^{-1} X_{(i_1,\dotsc,i_{r-2},k_2,l_1)}^{-1}
 \]
 for which $\{i_1,\dotsc,i_{r-2},k_1,k_2\}\notin\cB$. Consequently, $\T_M$ is the degree-$0$ subgroup of the quotient group $G_M'/\ker g$.
 
 Let $h:k(x_M^w)^\times\to (k_M^w)^\times$ be the quotient map. Then $(k_M^w)^\times$ is the degree-$0$ subgroup of the quotient group $H_M/\ker h$. Thus the theorem follows if we can show that the isomorphism $f$ identifies $\ker h$ with $\ker g$.
 
 As our next step, we exhibit a set of generators for $\ker h$. The kernel of $h$ consists of all weak inverses of $1$ in $k(x_M^w)$. Such elements must come from the $3$-term Pl\"ucker relation
 \[
  0 \leq x_{I,1,2} \, x_{I,3,4} \ + \ \epsilon \, x_{I,1,3} \, x_{I,2,4} \ + \ x_{I,1,4} \, x_{I,2,3}
 \]
 for an $r-2$-subset $I$ and $i_1<i_2<i_3<i_4$ with $i_1,i_2,i_3,i_4\notin I$, where $x_{I,p,q}=x_{I\cup\{i_p,i_q\}}$.
 
 If none or all of the products $x_{I,p,q}x_{I,p',q'}$ are zero, then the $3$-term Pl\"ucker relation does not define new weak inverses in $k(x_M^w)$. The case where precisely one of the products $x_{I,p,q}x_{I,p',q'}$ is nonzero cannot happen since then $k_M=k_M^w=\{0\}$, which contradicts Proposition \ref{prop: embedding of matroids into the matroid space and its image}.
 
 Thus we are left with the case that precisely two of products $x_{I,p,q}x_{I,p',q'}$ are nonzero. There are three cases:
 \begin{itemize}
 \item If the first two products in the Pl\"ucker relation are nonzero, then the relation $0\leq x_{I,1,2} x_{I,3,4}+\epsilon x_{I,1,3} x_{I,2,4}$ in $k(x_M^w)$ implies that $\epsilon x_{I,1,3}x_{I,2,4}$ is a weak inverse of $x_{I,1,2} x_{I,3,4}$. Thus $x_{I,1,2} x_{I,3,4}x_{I,1,3}^{-1}x_{I,2,4}^{-1}$ is in $\ker h$.
 \item If the first and the third product are nonzero, then $x_{I,1,4}x_{I,2,3}$ is a weak inverse of $x_{I,1,2} x_{I,3,4}$ and $\epsilon x_{I,1,2} x_{I,3,4}x_{I,1,4}^{-1}x_{I,2,3}^{-1}$ is in $\ker h$. 
 \item If the last two products are nonzero, then $x_{I,1,3} x_{I,2,4}x_{I,1,4}^{-1}x_{I,2,3}^{-1}$ is in $\ker h$. 
 \end{itemize}
 
We have thus exhibited a complete set of generators for $\ker h$ in all cases. In the following, we will show that $f$ maps this set to the set of generators for $\ker g$ that we used in the definition of $G_M=G_M'/\ker g$.
 
 Let $j_1,\dotsc,j_{r-2},k_1,k_2,l_1,l_2$ be pairwise different elements of $E$, and define $I=\{j_1,\dotsc,j_{r-2}\}$ and $\{i_1,i_2,i_3,i_4\}=\{k_1,k_2,l_1,l_2\}$ with $i_1<i_2<i_3<i_4$. Then $\{j_1,\dotsc,j_{r-2},k_p,l_q\}\in\cB$ for all $p,q\in\{1,2\}$ and $\{j_1,\dotsc,j_{r-2},k_1,k_2\}\notin\cB$ is equivalent to the fact that $x_{I,k_1,l_1}x_{I,k_2,l_2}$ and $x_{I,k_1,l_2}x_{I,k_2,l_1}$ are nonzero and that $x_{I,k_1,k_2}$ is zero in $k(x_M^w)$. 
 
 To compare both types of generators, we need to relate the terms $X_{(j_1,\dotsc,j_{r-2},k_p,l_q)}$ and $X_{I\cup\{k_p,l_q\}}$ for $p,q\in\{1,2\}$, which differ by the power of $\epsilon$ that arises from permuting the elements $I\cup\{k_p,l_q\}$. For $k,l\in E$, define $\mu(k,l)=0$ if $k<l$ and $\mu(k,l)=1$ if $l<k$. It is easily verified that there is an $N$ such that for all $p,q\in\{1,2\}$ and $p'=3-p$, $q'=3-q$, we have
 \[
  X_{(j_1,\dotsc,j_{r-2},k_p,l_q)} \, X_{(j_1,\dotsc,j_{r-2},k_{p'},l_{q'})} \ = \ \epsilon^{N+\mu(k_p,l_p)+\mu(k_{p'},l_{q'})} \, X_{I\cup\{k_p,l_q\}} \, X_{I\cup\{k_{p'},l_{q'}\}}.
 \]
Indeed, we can define $N$ as the number of transpositions needed to bring all elements into increasing order, up to exchanging $k_p$ with $l_q$ and $k_{p'}$ with $l_{q'}$ if necessary. For instance, if $j_1<\dotsb<j_{r-2}$, then we have
 \[
  N \ = \ \sum_{i\in\{k_1,k_2,l_1,l_2\}} \#\{j\in I|i<j\}.
 \]
From these considerations, we obtain the equality
 \[
  \frac{X_{(j_1,\dotsc,j_{r-2},k_p,l_q)} \, X_{(j_1,\dotsc,j_{r-2},k_{p'},l_{q'})}} {X_{(j_1,\dotsc,j_{r-2},k_p,l_{q'})} \, X_{(j_1,\dotsc,j_{r-2},k_{p'},l_{q})}} \ = \ \epsilon^{\mu(k_1,k_2;l_1,l_2)} \frac{X_{I\cup\{k_p,l_q\}} \, X_{I\cup\{k_{p'},l_{q'}\}}} {X_{I\cup\{k_p,l_{q'}\}} \, X_{I\cup\{k_{p'},l_{q}\}}}
 \]
 where $\mu(k_1,k_2;l_1,l_2)=\mu(k_1,l_2)+\mu(k_1,l_2)+\mu(k_2,l_1)+\mu(k_2,l_2)$. Note that the term $\epsilon^N$ disappears since it appears in both nominator and denominator.
 
 We are led to an inspection of the different possible orderings of $k_1$, $k_2$, $l_1$ and $l_2$, i.e.\ the different identifications $\{i_1,i_2,i_3,i_4\}=\{k_1,k_2,l_1,l_2\}$ where $i_1<i_2<i_3<i_4$. Note that permuting $k_1$ and $k_2$ and permuting $l_1$ and $l_2$ changes neither $\mu(k_1,k_2;l_1,l_2)$ nor the nonzero terms of the $3$-term Pl\"ucker relations. Since $\epsilon^{\mu(k_1,k_2;l_1,l_2)}$ depends only on the parity of $\mu(k_1,k_2,l_1,l_2)$ and both $x_{I,k_1,k_2}$ and $x_{I,l_1,l_2}$ appear in the same product of the $3$-term Pl\"ucker relation, a simultaneous exchange of $k_1$ and $k_2$ with $l_1$ and $l_2$ will leave the validity of our arguments below unchanged. Up to these permutations, we are left with three cases.
 
 We begin with the case $(i_1,i_2,i_3,i_4)=(k_1,k_2,l_1,l_2)$, i.e.\ $k_1<k_2<l_1<l_2$. Then we have $\mu(k_1,k_2;l_1,l_2)=0$ and $x_{I,i_1,i_2}x_{I,i_3,i_4}$ is zero, while the last two terms of the corresponding $3$-Pl\"ucker relation are nonzero. We obtain
 \[
  \frac{X_{(j_1,\dotsc,j_{r-2},k_1,l_1)} \, X_{(j_1,\dotsc,j_{r-2},k_{2},l_{2})}} {X_{(j_1,\dotsc,j_{r-2},k_1,l_{2})} \, X_{(j_1,\dotsc,j_{r-2},k_{2},l_{1})}} \quad = \quad \frac{X_{I\cup\{i_1,i_3\}} \, X_{I\cup\{i_2,i_4\}}} {X_{I\cup\{i_1,i_4\}} \, X_{I\cup\{i_2,i_3\}}}.
 \]
 The inverse image of the right-hand side under $f$ is the element $x_{I,1,3} x_{I,2,4} x_{I,1,4}^{-1} x_{I,2,3}^{-1}$ of $H_M$. We have seen in our discussion of the relations of $\ker h$ that this is the generator in the case that the last two products of the $3$-term Pl\"ucker relations are nonzero.
 
 In the case $(i_1,i_2,i_3,i_4)=(k_1,l_1,k_2,l_2)$, we have $\mu(k_1,k_2;l_1,l_2)=1$ and the zero term of the $3$-term Pl\"ucker relation is $\epsilon x_{I,i_1,i_3}x_{I,i_2,i_4}$. Thus 
 \[
  \frac{X_{(j_1,\dotsc,j_{r-2},k_1,l_1)} \, X_{(j_1,\dotsc,j_{r-2},k_{2},l_{2})}} {X_{(j_1,\dotsc,j_{r-2},k_1,l_{2})} \, X_{(j_1,\dotsc,j_{r-2},k_{2},l_{1})}} \quad = \quad \epsilon \, \cdot \  \frac{X_{I\cup\{i_1,i_2\}} \, X_{I\cup\{i_3,i_4\}}} {X_{I\cup\{i_1,i_4\}} \, X_{I\cup\{i_2,i_3\}}},
 \]
 whose inverse image under $f$ is $\epsilon x_{I,1,2} x_{I,3,4} x_{I,1,4}^{-1} x_{I,3,4}^{-1}$, which coincides with the generator of $\ker h$ exhibited in the case that the first and last product of the $3$-term Pl\"ucker relation are nonzero.
 
 In the case $(i_1,i_2,i_3,i_4)=(k_1,l_1,l_2,k_2)$, we have $\mu(k_1,k_2;l_1,l_2)=2$ and the zero term of the $3$-term Pl\"ucker relation is $\epsilon x_{I,i_1,i_4}x_{I,i_3,i_4}$. Since $\epsilon^2=1$, we have
 \[
  \frac{X_{(j_1,\dotsc,j_{r-2},k_1,l_1)} \, X_{(j_1,\dotsc,j_{r-2},k_{2},l_{2})}} {X_{(j_1,\dotsc,j_{r-2},k_1,l_{2})} \, X_{(j_1,\dotsc,j_{r-2},k_{2},l_{1})}} \quad = \quad \frac{X_{I\cup\{i_1,i_2\}} \, X_{I\cup\{i_3,i_4\}}} {X_{I\cup\{i_1,i_3\}} \, X_{I\cup\{i_2,i_4\}}},
 \]
 whose inverse image under $f$ is $x_{I,1,2} x_{I,3,4} x_{I,1,3}^{-1} x_{I,2,4}^{-1}$, which coincides with the generator of $\ker h$ exhibited in the case that the first two products of the $3$-term Pl\"ucker relation are nonzero.

 This establishes the claimed bijection between the generating sets of $\ker h$ and $\ker g$ and concludes the proof of the theorem. 
\end{proof}


\section{Cross ratios and rescaling classes}
\label{section: cross ratios and rescaling classes}

In this section, we will define and study the properties of the foundation of a matroid, which is a subidyll of the universal pasture that is closely related to the inner Tutte group from \cite{Dress-Wenzel89} and the universal partial field from \cite{Pendavingh-vanZwam10a}. 

The key notions in this section are cross ratios, rescaling classes, fundamental elements, and the foundation of a matroid. We will study their interdependencies and apply our theory to reprove some classical results, e.g. the characterization of regular matroids as matroids which are representable over every field. We refer to section \ref{subsubsection: intro - foundations of binary and regular matroids} of the introduction for a list of such results.

\subsection{Cross ratios}
\label{subsection: cross ratios}

The study of cross ratios of four points on a line belongs to the oldest themes in mathematics and finds its earliest traces in the writings of Pappus of Alexandria (\cite{Pappus7}). Its main property is that it is invariant under projective transformation and that it characterizes the ratios of the pairwise differences between the four points.

Four points on a projective line correspond to a point of the Grassmannian $\Gr(2,4)$, and the cross ratio can be reformulated as an invariant of the Pl\"ucker coordinates of this point. This reinterpretation allows for a generalization of cross ratios to higher Grassmannians and subsequently found its entrance into matroid theory. 
For instance, cf.\ the papers \cite{Dress-Wenzel90}, \cite{Dress-Wenzel91} and \cite{Dress-Wenzel92} of Dress and Wenzel and \cite{Wenzel91} of Wenzel, \cite{Gelfand-Rybnikov-Stone95} by Gelfand, Rybnikov and Stone, \cite{Pendavingh-vanZwam10a} and \cite{Pendavingh18} by Pendavingh--van Zwam and Pendavingh, respectively, and  \cite{Delucchi-Hoessly-Saini15} by Delucchi, Hoessly and Saini. For more details on the developments of cross ratios in general and explanations of their relevance for matroid theory, we refer to the book \cite{Richter-Gebert11} of Richter-Gebert.

Let $F$ be an idyll and $M$ a matroid of rank $r$ on $E$. The cross ratios of $M$ in $F$ are indexed by certain quadrangles or $4$-cycles in the basis exchange graph of $M$.

We formalize these quadrangles as tuples $\cI=(I,i_1,i_2,i_3,i_4)\in\binom E{r-2}\times E^4$ for which $I_{1,3}$, $I_{1,4}$, $I_{2,3}$ and $I_{2,4}$ are bases of $M$, where $I_{k,l}=I\cup\{i_k,i_l\}$. We denote the collection of such tuples $\cI$ by $\Omega_M$. We say that $\cI=(I,i_1,i_2,i_3,i_4)$ is \emph{non-degenerate} if  also $I_{1,2}$ and $I_{3,4}$ are bases of $M$. Otherwise we call $(I,i_1,i_2,i_3,i_4)$ \emph{degenerate}. We define
\[
 \mu(\cI) \ = \ \mu(i_1,i_2;i_3,i_4) \ = \ \#\big\{(k,l)\in\{1,2\}\times\{3,4\} \, \big| \, i_k>i_l \big\},
\]
which is the same function that appears in the proof of Theorem \ref{thm: the unit group of the universal pasture is the Tutte group}. Note that $\epsilon^{\mu(i_1,i_2;i_3,i_4)}=\epsilon$ if and only if
 \[
  \big\{\{1,2\},\{3,4\}\big\} \ = \ \big\{\{\sigma(1),\sigma(3)\},\{\sigma(2),\sigma(4)\}\big\}
 \]
 where $\sigma\in S_4$ is the permutation with $i_{\sigma(1)}<i_{\sigma(2)}<i_{\sigma(3)}<i_{\sigma(4)}$. In particular, we have
 \[
  \mu(1,2;3,4)+\mu(1,3;2,4)+\mu(1,4;2,3) \ \equiv \ 1 \pmod{2}.
 \]

\begin{df}
 Let $F$ be an idyll and $\Delta:\binom Er\to F$ a weak Grassmann-Pl\"ucker function with underlying matroid $M$. The \emph{cross ratio function of $\Delta$} is the function $\Cr_\Delta:\Omega_M\to F^\times$ that sends an element $\cI=(I,i_1,i_2,i_3,i_4)$ of $\Omega_M$ to
 \[
  \Cr_\Delta(\cI) \ = \ \epsilon^{\mu(\cI)} \cdot\frac{\Delta_{I,1,3} \cdot \Delta_{I,2,4}}{\Delta_{I,1,4} \cdot \Delta_{I,2,3}}
 \]
 where $\Delta_{I,k,l}=\Delta(I\cup\{i_k,i_l\})$.
\end{df}

\begin{rem}
 From the perspective of Grassmann-Pl\"ucker functions as functions $\Delta:\binom Er\to F$, the factor $\epsilon^\mu(\cI)$ might appear unmotivated, but it appears naturally if one considers Grassmann-Pl\"ucker functions as alternating functions $\Delta:E^r\to F$ instead; in particular, our definition is compatible with \cite{Dress-Wenzel90}, \cite{Pendavingh-vanZwam10a} and \cite{Baker-Lorscheid20}. The formulas of Lemma~\ref{lemma: relations for cross ratios from the 3-term Pluecker relations} reflect the relevance of this factor.
\end{rem}

The following properties are immediate from the definition. Let $\cI=(I,i_1,i_2,i_3,i_4)\in\Omega_M$. For a permutation $\sigma$ of $\{1,2,3,4\}$, we define $\sigma.\cI=(I,i_{\sigma(1)},i_{\sigma(2)},i_{\sigma(3)},i_{\sigma(4)})$. Then the cross ratio satisfies the relations
\[
 \Cr_\Delta\big(\sigma.\cI\big) \ = \ \Cr_\Delta(\cI) \qquad \text{and} \qquad \Cr_\Delta\big(\tau.\cI\big) \ = \ \Cr_\Delta(\cI)^{-1}
\]
for every $\sigma$ in the Klein four group $V=\big\{e,(12)(34),(13)(24),(14)(23)\big\}$ and every $\tau$ in the coset $V.(34)$. In particular, all of these cross ratios are defined. If $\cI$ is non-degenerate, then $\Cr_\Delta(\sigma.\cI)$ is defined for all permutations $\sigma$ and we have the identity
\[
 \Cr_\Delta(\cI) \ \cdot \ \Cr_\Delta\big((234).\cI\big) \ \cdot \ \Cr_\Delta\big((243).\cI\big) \ = \ \epsilon.
\]
Finally, we observe that $\Cr_{a\Delta}(\cI)=\Cr_\Delta(\cI)$ for every $a\in F^\times$. In other words, the cross ratio depends only on the weak $F$-matroid $[\Delta]$ defined by $\Delta$. 

\begin{lemma}\label{lemma: relations for cross ratios from the 3-term Pluecker relations}
 Let $F$ be an idyll and $\Delta:\binom Er\to F$ a weak Grassmann-Pl\"ucker function with underlying matroid $M$. Let $\cI\in\Omega_M$. If $\cI$ is degenerate, then $\Cr_\Delta(\cI)=1$. If $\cI$ is non-degenerate, then
 \[
  0 \ \leq \ \Cr_\Delta(\cI) \ + \ \Cr_\Delta\big((23).\cI\big) \ + \ \epsilon.
 \]
\end{lemma}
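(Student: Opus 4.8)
The plan is to read the relevant relations directly out of the 3-term Plücker relations satisfied by $\Delta$, exactly as the cross ratios were defined. First I would treat the degenerate case. If $\cI=(I,i_1,i_2,i_3,i_4)$ is degenerate, then one of $I_{1,3}$ or $I_{2,4}$ fails to be a basis, so $\Delta(I_{1,3})=0$ or $\Delta(I_{2,4})=0$; in either case the middle term $\epsilon\,\Delta(I_{1,3})\Delta(I_{2,4})$ of the 3-term Plücker relation
\[
 0 \ \leq \ \Delta(I_{1,2}) \, \Delta(I_{3,4}) \ + \ \epsilon \, \Delta(I_{1,3}) \, \Delta(I_{2,4}) \ + \ \Delta(I_{1,4}) \, \Delta(I_{2,3})
\]
(for the $(r-2)$-set $I$ and the four indices in increasing order — I would first reduce to the case $i_1<i_2<i_3<i_4$ using the $V$- and $V.(1234)$-invariance of $\Cr_\Delta$ noted just above the lemma, absorbing signs) vanishes, leaving $0\leq \Delta(I_{1,2})\Delta(I_{3,4}) + \Delta(I_{1,4})\Delta(I_{2,3})$, with both remaining products in $F^\times$ since $I_{1,2},I_{3,4},I_{1,4},I_{2,3}$ are all bases. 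By Lemma~\ref{lemma: first properties of unique weak inverses}(1) (or the tract-level fact $F\cap N_F=\{0\}$), two units summing to an element $\leq 0$ must be weak inverses of each other, so $\Delta(I_{1,4})\Delta(I_{2,3}) = \epsilon\,\Delta(I_{1,2})\Delta(I_{3,4})$, i.e. $\Cr_\Delta(\cI)=\Delta(I_{1,2})\Delta(I_{3,4})/(\Delta(I_{2,3})\Delta(I_{4,1})) = \epsilon^{\pm1}=\epsilon$; combined with the genuinely non-degenerate possibility that all three products are nonzero (so that a $\{1,\epsilon\}$-value need not occur), this gives $\Cr_\Delta(\cI)\in\{1,\epsilon\}$ — here I should be slightly careful: the cleanest statement is that degeneracy forces at most two nonzero products among the three, and whenever exactly two are nonzero the ratio lies in $\{1,\epsilon\}$; I would check that degeneracy indeed cannot leave only one product nonzero (that would force $k_M^w=\{0\}$, contradicting Proposition~\ref{prop: embedding of matroids into the matroid space and its image}).

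Next, for non-degenerate $\cI$ all six faces $I_{1,2},I_{2,3},I_{3,4},I_{4,1},I_{1,3},I_{2,4}$ are bases, hence all of $\Delta(I_{k,l})\in F^\times$. I would take the 3-term Plücker relation in the symmetric form displayed above and divide through by a single unit, namely $\Delta(I_{2,3})\Delta(I_{4,1})$, which is legitimate since the partial order is closed under multiplication (scaling a relation $0\leq x$ by a unit gives $0\leq ax$). This yields
\[
 0 \ \leq \ \Cr_\Delta(\cI) \ + \ \epsilon\,\frac{\Delta(I_{1,3})\Delta(I_{2,4})}{\Delta(I_{2,3})\Delta(I_{4,1})} \ + \ \frac{\Delta(I_{1,4})\Delta(I_{2,3})}{\Delta(I_{2,3})\Delta(I_{4,1})} .
\]
The third summand is $\Delta(I_{1,4})/\Delta(I_{4,1})$, which equals $\epsilon^q$ for $q\in\{0,1\}$ because $I_{1,4}$ and $I_{4,1}$ are the same basis up to the order of the two added elements, so $\Delta(I_{4,1})=\epsilon^{q}\Delta(I_{1,4})$ by the sign-rule built into the Grassmann–Plücker formalism (the permutation-sign relation, as in Definition~\ref{def: Tutte group} or the rewritten Plücker relations in the duality proof). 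The middle summand I would identify with $\epsilon^p\,\Cr_\Delta(\sigma.\cI)$ for a suitable $\sigma\in V.(23)$: applying $(23)$ to $\cI$ replaces the faces $I_{1,2},I_{3,4},I_{2,3},I_{4,1}$ by $I_{1,3},I_{2,4},I_{3,2},I_{4,1}$, so $\Cr_\Delta((23).\cI) = \Delta(I_{1,3})\Delta(I_{2,4})/(\Delta(I_{3,2})\Delta(I_{4,1}))$, which agrees with $\Delta(I_{1,3})\Delta(I_{2,4})/(\Delta(I_{2,3})\Delta(I_{4,1}))$ up to the factor $\Delta(I_{2,3})/\Delta(I_{3,2})=\epsilon^{p'}$; and the $V$-invariance lets me choose the representative $\sigma$ in the coset $V.(23)$ that the statement allows, while the freedom in $p,q\in\{0,1\}$ absorbs exactly these order-of-insertion signs.

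So the structure is: (i) normalize the index order and invoke the $V$, $V.(1234)$ symmetry relations already recorded; (ii) handle degenerate $\cI$ via vanishing of one term plus the weak-inverse uniqueness; (iii) for non-degenerate $\cI$, scale the 3-term Plücker relation by the unit $\Delta(I_{2,3})\Delta(I_{4,1})$ and bookkeep the insertion signs $\epsilon^p,\epsilon^q$ to match the three summands with $\Cr_\Delta(\cI)$, $\epsilon^p\Cr_\Delta(\sigma.\cI)$, $\epsilon^q$. The main obstacle — really the only delicate point — is the sign bookkeeping: pinning down exactly which coset representative $\sigma\in V.(23)$ and which parities $p,q$ arise from the interaction between the ordering convention $i_1<\cdots<i_4$ in the Plücker relation and the unordered faces $I_{k,l}$, and confirming that the permitted ambiguity ($p,q\in\{0,1\}$, any $\sigma\in V.(23)$) is wide enough to cover every case. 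I expect this to reduce to the same $\sigma(i,I)$-type parity computation as in the proof of Theorem~\ref{thm: the unit group of the weak universal pasture is the Tutte group}, so I would model the argument on that computation rather than redo it from scratch; everything else is a one-line division of an inequality by a unit.
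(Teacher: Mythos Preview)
Your overall approach---divide the $3$-term Pl\"ucker relation by the unit $\Delta(I_{2,3})\Delta(I_{4,1})$---is exactly what the paper does. But your sign bookkeeping contains a genuine error, not just a deferred detail. You write that $\Delta(I_{4,1})=\epsilon^q\Delta(I_{1,4})$ ``by the sign-rule built into the Grassmann--Pl\"ucker formalism''. There is no such rule: $\Delta$ is a function on \emph{unordered} $r$-subsets of $E$, so $I_{4,1}=I\cup\{i_4,i_1\}=I\cup\{i_1,i_4\}=I_{1,4}$ as sets and $\Delta(I_{4,1})=\Delta(I_{1,4})$ identically. The same remark applies to your computation of $\Delta(I_{2,3})/\Delta(I_{3,2})$. (The alternating sign you are remembering belongs to the Tutte-group generators $X_{(i_1,\dots,i_r)}$, which are indexed by \emph{tuples}; here we are working directly with $\Delta$.) Your reduction to $i_1<i_2<i_3<i_4$ via $V$- and $V.(1234)$-invariance also does not work in general: these eight permutations form a subgroup of index $3$ in $S_4$, so not every $\cI$ can be brought into increasing order this way.

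The actual source of $\epsilon^p,\epsilon^q$ is the one the paper uses: the Pl\"ucker relation is only stated for indices in increasing order, so one first applies the permutation $\sigma$ with $i_{\sigma(1)}<\dots<i_{\sigma(4)}$ and writes the relation for $\Delta_{I,\sigma(k),\sigma(l)}$. The three products appearing are then a permutation of $\Delta_{I,1,2}\Delta_{I,3,4}$, $\Delta_{I,1,3}\Delta_{I,2,4}$, $\Delta_{I,1,4}\Delta_{I,2,3}$, so after multiplying through by a suitable power of $\epsilon$ one obtains
\[
0 \ \leq \ \Delta_{I,1,2}\Delta_{I,3,4} + \epsilon^p\,\Delta_{I,1,3}\Delta_{I,2,4} + \epsilon^q\,\Delta_{I,1,4}\Delta_{I,2,3},
\]
and division by $\Delta_{I,1,4}\Delta_{I,2,3}$ gives the claim. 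In the degenerate case this is also what produces both values $1$ and $\epsilon$: depending on $\sigma$, the vanishing product $\Delta_{I,1,3}\Delta_{I,2,4}$ lands in the first, middle, or last slot of the sorted relation, and the surviving two-term relation forces $\Cr_\Delta(\cI)$ to equal $\epsilon^{q}$ for the appropriate $q$. (Your worry about ``only one product nonzero'' is unnecessary: membership $\cI\in\Omega_M$ already forces $\Delta_{I,1,2}\Delta_{I,3,4}$ and $\Delta_{I,1,4}\Delta_{I,2,3}$ to be units.)
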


\begin{proof}
 Let $\cI=(I,i_1,i_2,i_3,i_4)\in\Omega_M$ and let $\sigma\in S_4$ be the permutation with $i_{\sigma(1)}<i_{\sigma(2)}<i_{\sigma(3)}<i_{\sigma(4)}$. Since $\epsilon^{\mu(i_1,i_2;i_3,i_4)}=\epsilon$ if and only if
 \[
  \big\{\{1,2\},\{3,4\}\big\} \ = \ \big\{\{\sigma(1),\sigma(3)\},\{\sigma(2,),\sigma(4)\}\big\},
 \]
 the $3$-term Grassmann-Pl\"ucker relation for $\Delta$ become
 \begin{align*}
   0 \ &\leq \ \Delta_{I,\sigma(1),\sigma(2)} \ \Delta_{I,\sigma(3),\sigma(4)} \ + \ \epsilon \ \Delta_{I,\sigma(1),\sigma(3)} \ \Delta_{I,\sigma(2),\sigma(4)} \ + \ \Delta_{I,\sigma(1),\sigma(4)} \ \Delta_{I,\sigma(2),\sigma(3)} \\
       &= \ \epsilon^{\mu(1,2;3,4)} \Delta_{I,1,2}\Delta_{I,3,4} \ + \ \epsilon^{\mu(1,3;2,4)} \Delta_{I,1,3}\Delta_{I,2,4} \ + \ \epsilon^{\mu(1,4;2,3)} \Delta_{I,1,4}\Delta_{I,2,3}
 \end{align*}
 where $\Delta_{I,k,l}=\Delta\big(I\cup\{i_k,i_l\}\big)$.
 
 If $\cI$ is degenerate, then the term $\Delta_{I,1,2}\Delta_{I,3,4}$ is zero and thus the quantities $\epsilon^{\mu(1,3;2,4)}\Delta_{I,1,3}\Delta_{I,2,4}$ and $\epsilon^{\mu(1,4;2,3)}\Delta_{I,1,4}\Delta_{I,2,3}$ are mutually weakly inverse to each other. Thus
 \[
  \Cr_\Delta(\cI) \ = \ \epsilon^{\mu(1,2;3,4)}\frac{\Delta_{I,1,3}\Delta_{I,2,4}}{\Delta_{I,1,4}\Delta_{I,2,3}} \ = \ \epsilon^{\mu(1,2;3,4)+\mu(1,3;2,4)+\mu(1,4;2,3)+1}\frac{\Delta_{I,1,4}\Delta_{I,2,3}}{\Delta_{I,1,4}\Delta_{I,2,3}} \ = \ 1,
 \]
 where we use that $\mu(1,2;3,4)+\mu(1,3;2,4)+\mu(1,4;2,3)+1$ is even.
 
 If $\cI$ is non-degenerate, then all three terms of the Pl\"ucker relation are nonzero. After dividing by $\epsilon^{\mu(1,4;2,3)+1}\Delta_{I,1,4}\Delta_{I,2,3}$ and interchanging the order of the first two terms, we obtain
 \begin{align*}
   0 \ &\leq \ \epsilon^{\mu(1,3;2,4)+\mu(1,4;2,3)+1} \cdot \frac{\Delta_{I,1,3}\Delta_{I,2,4}}{\Delta_{I,1,4}\Delta_{I,2,3}} \ + \ \epsilon^{\mu(1,2;3,4)+\mu(1,4;2,3)+1} \cdot \frac{\Delta_{I,1,2}\Delta_{I,3,4}}{\Delta_{I,1,4}\Delta_{I,2,3}} \ + \ \epsilon \\
       &= \ \Cr_\Delta(\cI) \ + \ \Cr_\Delta\big((23).\cI\big)  \ + \ \epsilon,
 \end{align*}
 as claimed.
\end{proof}

\begin{rem}
 Let $\Delta:\binom Er\to F$ be a Grassmann-Pl\"ucker function with values in $F$. We can extend the cross ratio $\Cr_\Delta(\cI)$ to tuples $\cI=(I,i_1,i_2,i_3,i_4)$ such that only one of
 \[
  \Delta_{I,1,3} \cdot \Delta_{I,2,4} \qquad \text{and} \qquad \Delta_{I,1,4} \cdot \Delta_{I,2,3}
 \]
 is nonzero, which is the case if $\#I\cup\{i_k,i_l\}=r$ for all distinct $k,l\in\{1,\dotsc,4\}$ and if no three of $i_1,\dotsc,i_4$ are identical. In this case, we define $\Cr_\Delta(\cI)$ to be $0$ if the numerator is $0$ and to be $\infty$ if the denominator is $0$. This extended notion of cross-ratio gives a function with values in $\P^1(F)$, generalizing the cross ratio of four points (no three of which coincide) on a line in classical projective geometry.
\end{rem}


\subsection{Foundations}
\label{subsection: foundations}

Pendavingh and van Zwam exhibit in \cite{Pendavingh-vanZwam10a} the role of fundamental elements for the representability of matroids over partial fields. In this section, we extend this concept to $\Funpm$-algebras, which makes this theory applicable to matroids over all idylls. 

Recall from section \ref{subsubsection: subblueprints} the definition of a subblueprint of $B$ as a submonoid $C^\bullet$ of $B^\bullet$ together with the structure of an ordered blueprint that is induced from $B$. 

\begin{df}
 Let $B$ be an $\Funpm$-algebra. A \emph{fundamental element of $B$} is an element $a\in B$ such that there exists an element $b\in B$ with $0\leq a+b+\epsilon$. The \emph{foundation of $B$} is the subblueprint $B^\found$ of $B$ generated by the fundamental elements over $\Funpm$. We call $B$ a \emph{foundation} if $B^\found=B$.
\end{df}

Note that $B^\found$ is a foundation and that $0$ and $1$ are always fundamental elements since $0\leq 0+1+\epsilon$. Note further that the definition of the foundation is functorial: if $f:B\to C$ is a morphism of $\Funpm$-algebras and $0\leq a+b+\epsilon$ in $B$, then $0\leq f(a)+f(b)+\epsilon$ in $C$. Thus $f$ restricts to a morphism $f^\found:B^\found\to C^\found$. This defines an idempotent endofunctor $(-)^\found:\OBlpr_\Funpm\to\OBlpr_\Funpm$. 

\begin{rem}
If $R$ is an integral domain of characteristic zero which is finitely generated over $\Z$, then for every subidyll $F$ of $R$, the set of fundamental elements of $F$ (which generates the foundation of $F$ as an ordered blueprint) is  {\em finite}.  (This general result applies, for example, to many of partial fields appearing in \cite{vanZwam09}.)

Indeed, Serge Lang proves in \cite{Lang60} that an integral domain $R$ of characteristic zero which is finitely generated over $\Z$ contains only a finite number of pairs of elements $a,b\in R^\times$ with $a+b=1$. 
The original proof in \cite{Lang60} was ineffective, but \cite{Evertse-Gyory13} contains an effective proof. An effective upper bound for the number of fundamental elements, depending only on the rank of $R^\times$ as an abelian group, is given in \cite{Beukers-Schlickewei96}. There is a similar result for characteristic $p>0$ if one counts solutions up to $p$-th powers; cf.\ \cite{Koymans-Pagano17}.
\end{rem}

The relevance of fundamental elements and foundations for matroid theory is that the foundation of $B$ contains all cross ratios of all Grassmann-Pl\"ucker functions in $B$. More precisely, we have the following:

\begin{lemma}\label{lemma: every cross ratio is a fundamental element}
 Let $F$ be an idyll, $M$ a matroid, $\cI\in\Omega_M$ and $\Delta:\binom Er\to F$ a Grassmann-Pl\"ucker function representing $M$. Then $\Cr_\Delta(\cI)$ is a fundamental element in $F$. 
\end{lemma}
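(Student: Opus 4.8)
The statement is essentially a direct consequence of Lemma~\ref{lemma: relations for cross ratios from the 3-term Pluecker relations} together with the definition of a fundamental element. The plan is to split into the degenerate and non-degenerate cases, exactly as in the preceding lemma, and in each case exhibit the element $b\in F$ and the exponent $i\in\{0,1\}$ witnessing that $\Cr_\Delta(\cI)$ is fundamental, i.e.\ that $0\leq \Cr_\Delta(\cI)+b+\epsilon^i$ holds in $F$.

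First I would handle the degenerate case. By Lemma~\ref{lemma: relations for cross ratios from the 3-term Pluecker relations}, if $\cI\in\Omega_M$ is degenerate then $\Cr_\Delta(\cI)\in\{1,\epsilon\}$. But $1$ and $\epsilon$ are fundamental in every pasteurized ordered blueprint: indeed $0\leq 1+\epsilon = 1 + 0 + \epsilon$ (take $b=0$, $i=0$), and likewise $0\leq \epsilon + 1 = \epsilon + 0 + \epsilon^0$. So there is nothing to prove in this case beyond invoking the remark (made just after Definition of foundation) that $0$, $1$, and $\epsilon$ are always fundamental elements.

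Next, for the non-degenerate case, Lemma~\ref{lemma: relations for cross ratios from the 3-term Pluecker relations} gives, for a suitable $\sigma\in V.(23)$ and suitable $p,q\in\{0,1\}$, the relation
\[
 0 \ \leq \ \Cr_\Delta(\cI) \ + \ \epsilon^p\,\Cr_\Delta(\sigma.\cI) \ + \ \epsilon^q
\]
in $F$. Setting $b = \epsilon^p\,\Cr_\Delta(\sigma.\cI)\in F^\times\subseteq F$ and $i=q\in\{0,1\}$, this is precisely a relation of the form $0\leq \Cr_\Delta(\cI)+b+\epsilon^i$, so $\Cr_\Delta(\cI)$ is a fundamental element of $F$ by definition. (One should note that $b$ lies in $F$ because cross ratios take values in $F^\times$ and $\epsilon\in F^\times$, so the product is again a unit, hence an element of the underlying monoid $F^\bullet$ of $F^\oblpr$; this is the only point where one uses that $F$ is a pasture rather than a more general ordered blueprint, though in fact the argument works verbatim for any pasteurized ordered blueprint.)

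There is no real obstacle here: the entire content has already been extracted in Lemma~\ref{lemma: relations for cross ratios from the 3-term Pluecker relations}, and this lemma merely records the observation that the three-term relation satisfied by cross ratios is literally the defining relation for fundamental elements. The only thing to be slightly careful about is bookkeeping of which permutation $\sigma$ and which powers of $\epsilon$ occur, but since we only need \emph{existence} of $b$ and $i$, we can simply quote the preceding lemma without re-deriving the sign exponents.
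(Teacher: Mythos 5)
Your proof is correct and follows exactly the same route as the paper, which simply observes that the claim ``follows at once'' from Lemma~\ref{lemma: relations for cross ratios from the 3-term Pluecker relations}; you have merely spelled out the degenerate/non-degenerate case split and the choices of $b$ and $i$ that the paper leaves implicit.
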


\begin{proof}
 This follows at once from the relations for the cross ratios exhibited in Lemma \ref{lemma: relations for cross ratios from the 3-term Pluecker relations}.
\end{proof}

\begin{df}\label{def: foundation}
 Let $M$ be a matroid and $k_M^w$ its universal pasture. The \emph{foundation of $M$} is the subidyll $k_M^f=(k_M^w)^\found$ of $k_M^w$. 
 
 Let $\Delta:\binom Er\to k_M^w$ be the weak Grassmann-Pl\"ucker function with $\Delta(I)=x_I/x_{I_0}$ for some fixed basis $I_0$ of $M$. The \emph{universal cross ratio function of $M$} is the function $\Cr_M^\univ:\Omega_M\to k_M^f$ that sends $\cI\in\Omega_M$ to the \emph{universal cross ratio $\Cr_M^\univ(\cI)=\Cr_\Delta(\cI)$ of $\cI$}.
\end{df}

Note that multiplying $\Delta$ with a nonzero scalar $a\in k_M^w$ does not change the value of the cross ratio. Thus $\Cr_M^\univ$ is an invariant of the matroid $M$.

\begin{lemma}\label{lemma: the foundation of a matroid is generated by the universal cross ratios}
 Let $M$ be a matroid. Then the foundation $k_M^f$ of $M$ is generated by the universal cross ratios $\Cr_M^\univ(\Omega_M)$ over $\Funpm$. 
\end{lemma}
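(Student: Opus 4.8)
The plan is to recall the explicit description of the weak universal pasture $k_M^w$ from Section~\ref{subsection: the weak matroid space} and then identify its foundation with the subpasture generated by the universal cross ratios. Recall that
\[\textstyle
 k_M^w \ = \ \big(\bpquot{\F_1^\pm\big[ \, x_I^\pm \, \big| \, I\in\cB\, \big]}{\cPl^w(r,E)}\big)_0^\pm,
\]
where $\cB$ is the set of bases of $M$ and $\cPl^w(r,E)$ is generated by the $3$-term Pl\"ucker relations. By Theorem~\ref{thm: the unit group of the weak universal pasture is the Tutte group}, the unit group $(k_M^w)^\times$ consists of all degree-$0$ monomials $\epsilon^i\prod_{I\in\cB} x_I^{e_I}$ (with $\sum e_I = 0$) modulo the relations coming from the $3$-term Pl\"ucker relations; in particular $(k_M^w)^\times$ is generated as a group, over $\{1,\epsilon\}$, by the elements $x_I x_J^{-1}$. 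Since $k_M^f$ is by definition the subblueprint of $k_M^w$ generated by fundamental elements, and since $0,1,\epsilon\in k_M^f$ always, it suffices to show: (a) every universal cross ratio $\Cr_M^\univ(\cI)$ is a fundamental element of $k_M^w$, hence lies in $k_M^f$; and (b) conversely every fundamental element of $k_M^w$ already lies in the subpasture generated by $\Funpm$ and the universal cross ratios.

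Part (a) is immediate from Lemma~\ref{lemma: every cross ratio is a fundamental element} applied to the tautological weak Grassmann-Pl\"ucker function $\Delta(I)=x_I/x_{I_0}$ on $k_M^w$, so the universal cross ratios are fundamental and hence contained in $k_M^f$. The substance of the argument is part (b). Let $G$ denote the subpasture of $k_M^w$ generated over $\Funpm$ by $\Cr_M^\univ(\Omega_M)$; I want $k_M^f\subseteq G$, i.e.\ every fundamental element of $k_M^w$ lies in $G$. First I would note that $G$ is a pasteurized ordered blue field (being generated inside the ordered blue field $k_M^w$), so its underlying monoid $G^\bullet$ is $G^\times\cup\{0\}$, and $G^\times$ is the subgroup of $(k_M^w)^\times$ generated by $\epsilon$ and the cross ratios. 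A fundamental element $a\in k_M^w$ satisfies $0\leq a+b+\epsilon^i$ for some $b\in k_M^w$ and $i\in\{0,1\}$. Since the partial order of $k_M^w$ is, by the $\Proj$/pasteurization construction, generated by the $3$-term Pl\"ucker relations (pushed to degree $0$ and pasteurized), any such relation $0\leq a+b+\epsilon^i$ must, after tracing through the generators, be a consequence of one of the degree-$0$ normalizations of a $3$-term Pl\"ucker relation
\[
 0 \ \leq \ \Delta(I_{1,2}) \, \Delta(I_{3,4}) \ + \ \epsilon \, \Delta(I_{1,3}) \, \Delta(I_{2,4}) \ + \ \Delta(I_{1,4}) \, \Delta(I_{2,3}),
\]
together with the tautological relations $0\le c+\epsilon c$. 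Dividing such a relation through by one of its (necessarily nonzero, else $k_M^w=\{0\}$ contradicting Proposition~\ref{prop: embedding of matroids into the matroid space and its image}) terms exhibits $a$ as $\pm$ a quotient of two products of $\Delta$-values over a common $(r-2)$-set --- that is, $a$ is (up to a power of $\epsilon$) a universal cross ratio $\Cr_M^\univ(\cI)$ or $\Cr_M^\univ(\cI)^{-1}$ for a suitable $\cI\in\Omega_M$, possibly degenerate; in the degenerate case Lemma~\ref{lemma: relations for cross ratios from the 3-term Pluecker relations} tells us the cross ratio is $1$ or $\epsilon$. In every case $a\in G$.

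The main obstacle I anticipate is making the phrase ``any relation $0\leq a+b+\epsilon^i$ must be a consequence of a $3$-term Pl\"ucker relation'' fully rigorous: one needs a clean normal-form statement for the partial order on the degree-$0$ part of a quotient of a free ordered blueprint by homogeneous relations of the form $0\le$ (three-term sum), after localization and pasteurization. Concretely, I would prove a lemma of the following shape: if $B=\bpgenquot{\Funpm[x_I^{\pm1}|I\in\cB]_0}{S}$ where $S$ is a set of relations each of the form $0\le u+\epsilon^j v+\epsilon^k w$ with $u,v,w$ monomials (including the tautological ones), and $B$ is pasteurized and nonzero, then every relation $0\le p+q$ with $p$ a monomial and $q\in B^+$ arbitrary forces $q$ to equal $\epsilon p$, and every relation $0\le p_1+p_2+p_3$ with $p_i$ monomials is, after multiplying by $p_1^{-1}$, one of the generating relations or a tautology --- so that fundamental elements are exactly $\{0,1,\epsilon\}\cup\{\text{monomials }p_2 p_1^{-1}\text{ or }p_3 p_1^{-1}\text{ appearing in some generator}\}$ times units already in $G$. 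This is essentially bookkeeping about which monomial identities are imposed during pasteurization, and it is the same kind of argument used in Section~\ref{subsection: universal pastures for rank 2 and the four element set} and in the proof of Theorem~\ref{thm: the unit group of the weak universal pasture is the Tutte group}; once it is in place, parts (a) and (b) combine to give $k_M^f = G$, proving the lemma.
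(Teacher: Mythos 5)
Your proposal is correct and takes essentially the same approach as the paper's proof: both argue that every additive relation in $k_M^w$ is a scalar multiple of a $3$-term Pl\"ucker relation (possibly degenerated), so that normalizing one term to $\epsilon^i$ exhibits any fundamental element as a universal cross ratio up to a factor of $\epsilon$, with the converse inclusion supplied by Lemma~\ref{lemma: every cross ratio is a fundamental element}. You correctly identify the crux as the normal-form claim about which $3$-term relations survive the $\Proj$/pasteurization construction; the paper asserts this at the same level of informality, so there is no gap in your proposal relative to the paper's own argument.
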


\begin{proof}
 Since all additive relations of $k_M^w$ come from the $3$-term Pl\"ucker relations, every $3$-term relation in $k_M^w$ must be a multiple of a Pl\"ucker relation. If we ask that a specified term of such a multiple is equal to $\epsilon^i$ for $i=0$ or $1$, then this multiple $0\leq a+b+\epsilon^i$ is uniquely determined and must be of the form of the relations occurring in Lemma \ref{lemma: relations for cross ratios from the 3-term Pluecker relations}, up to a factor of $\epsilon$. Thus we conclude that $a$ and $b$ must be zero or a cross ratio, up to a possible factor of $\epsilon$. Since $\epsilon\in\Funpm$, this verifies the claim of the lemma.
\end{proof}

\begin{ex}[Foundations for rank-$2$ matroids on a four element set]\label{ex: foundations for rank 2-matroids on a four element set}
 We calculate all foundations $k_M^f$ of rank-$2$ matroids $M$ on the set $E=\{1,2,3,4\}$, leaning on the classification of the universal idylls in section \ref{subsection: universal idylls for rank 2 and the four element set}. Recall that there is a unique Pl\"ucker relation in this case, which is
 \[
  0 \ \leq \ x_{1,2}x_{3,4} \ + \ \epsilon \cdot x_{1,3}x_{2,4} \ + \ x_{1,4}x_{2,3}
 \]
 where we write $x_{i,j}$ for $x_{\{i,j\}}$. If any of the three terms is zero, then all cross ratios are $1$ by Lemma \ref{lemma: relations for cross ratios from the 3-term Pluecker relations} and thus $k_M^f=\Funpm$. From our results in section \ref{subsection: universal idylls for rank 2 and the four element set}, we see that this is the case for all matroids $M$ except for the uniform matroid. In particular this implies that all these matroids are regular, cf.\ Theorem \ref{thm: characteriztaion of regular matroids}.
 
 In case of the uniform matroid $M$, it is easily seen that its foundation is
 \[
  k_M^f \ = \ \bpgenquot{\Funpm[T_1^\pm,T_2^\pm]}{0\leq T_1+T_2+\epsilon}
 \]
 where $T_1$ and $T_2$ stand for the cross ratios
 \[
  T_1 \ = \ \frac{x_{1,3}x_{2,4}}{x_{1,4}x_{2,3}} \quad \text{and} \quad T_2 \ = \ \epsilon \cdot \frac{x_{1,2}x_{3,4}}{x_{1,4}x_{2,3}},
 \]
 which do not satisfy any multiplicative relation. Note that $k_M^f$ admits a morphism into every field with more than $2$ elements. This shows that the uniform matroid of rank $2$ on $4$ elements is representable over every field but $\F_2$.
\end{ex}


\subsection{The inner Tutte group}
\label{subsection: the inner Tutte group}

Let $M$ be a matroid of rank $r$ on $E$ and $\cB$ the set of bases of $M$. As a consequence of Theorem \ref{thm: the unit group of the universal pasture is the Tutte group}, the Tutte group $\T_M$  of $M$ is isomorphic to the abelian group generated by $\epsilon$ and $\prod_{I\in\cB} X_I^{e_I}$ with $\sum e_I=0$ modulo the relations $\epsilon^2=1$ and 
\[
 \epsilon^{\mu(i_1,i_2;i_3,i_4)} \cdot \frac{X_{I,1,3}\ X_{I,2,4}}{X_{I,1,4}\ X_{I,2,3}} \ = \ 1
\]
for every degenerate $(I,i_1,i_2,i_3,i_4)\in\Omega_M$, where $X_{I,k,l}=X_{I\cup\{i_k,i_l\}}$.
 
We recall the definition of the inner Tutte group from \cite[Def.\ 1.6]{Dress-Wenzel89}. For a subset $I$ of $E$, let $\delta_I:E\to \Z$ be the characteristic function on $I$, i.e.\ $\delta_I(i)=1$ if $i\in I$ and $\delta_I(i)=0$ otherwise. Since 
\[
 \delta_{I\cup\{i_1,i_2\}} \ + \ \delta_{I\cup\{i_3,i_4\}} \ - \ \delta_{I\cup\{i_2,i_3\}} \ - \ \delta_{I\cup\{i_4,i_1\}} \ = \ 0
\]
for every degenerate $(I,i_1,i_2,i_3,i_4)\in\Omega_M$, we obtain a group homomorphism
\[
 \begin{array}{cccc}
  \deg_E: & \T_M           & \longrightarrow & \Z^E. \\
        &\prod X_I^{e_I} & \longmapsto     & \sum e_I\delta_I
 \end{array}
\]

\begin{df}
 The \emph{inner Tutte group $\T_M^{(0)}$} is the kernel of $\deg_E$.
\end{df}

The following result is Proposition 6.4 in \cite{Wenzel91}.  Its proof relies on Tutte's ``fundamental theorem on linear subclasses'' (Theorem~4.34 in \cite{Tutte65}), which is significantly easier to prove than the relatively deeper parts of Tutte's homotopy theory for matroids found in \cite{Tutte58a} and \cite{Tutte58b} (and also exposited in \cite{Tutte65}).

\begin{thm}\label{thm: the inner Tutte group is generated by cross ratios}
 The inner Tutte group $\T_M^{(0)}$ is generated by $\epsilon$ and the elements
 \[
  \epsilon^{\mu(i_1,i_2;i_3,i_4)} \cdot \frac{X_{I,1,3}\ X_{I,2,4}}{X_{I,1,4}\ X_{I,2,3}}
 \]
 for every non-degenerate $(I,i_1,i_2,i_3,i_4)\in\Omega_M$.
\end{thm}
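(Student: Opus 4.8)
The plan is to prove the two inclusions separately; the containment $\supseteq$ is a short computation, and the reverse containment is where Tutte's combinatorics enter.

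\textbf{Easy direction.} Each element $\frac{X_{I,1,2}\,X_{I,3,4}}{X_{I,1,3}\,X_{I,2,4}}$ attached to a non-degenerate tuple $\cI=(I,i_1,i_2,i_3,i_4)\in\Omega_M$ factors as $(X_{I,1,2}X_{I,1,3}^{-1})(X_{I,3,4}X_{I,2,4}^{-1})$, hence lies in $\T_M$ by Definition~\ref{def: Tutte group}; and since for a non-degenerate tuple the elements $i_1,i_2,i_3,i_4$ are pairwise distinct and disjoint from $I$, one has $\delta_{I\cup\{i_k,i_l\}}=\delta_I+\delta_{\{i_k\}}+\delta_{\{i_l\}}$ in $\Z^E$, so that
\[
 \deg_E\!\left(\frac{X_{I,1,2}\,X_{I,3,4}}{X_{I,1,3}\,X_{I,2,4}}\right) \ = \ \delta_{I\cup\{i_1,i_2\}}+\delta_{I\cup\{i_3,i_4\}}-\delta_{I\cup\{i_1,i_3\}}-\delta_{I\cup\{i_2,i_4\}} \ = \ 0 ,
\]
while $\deg_E(\epsilon)=0$ trivially. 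Thus the subgroup $H$ of $\T_M$ generated by $\epsilon$ and the non-degenerate cross ratios is contained in $\T_M^{(0)}=\ker\deg_E$.

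\textbf{Reverse direction.} I would use the explicit presentation of $\T_M$ recorded just after Theorem~\ref{thm: the unit group of the weak universal pasture is the Tutte group}: $\T_M$ is the quotient of the group $A_0$ of all $\epsilon^a\prod_{I\in\cB}X_I^{e_I}$ with $\sum e_I=0$ by the subgroup $N$ generated by $\epsilon^2$ and the degenerate quadrangle relators $X_{I,1,2}X_{I,3,4}X_{I,2,3}^{-1}X_{I,4,1}^{-1}\epsilon^{-p}$. Given $\omega\in\T_M^{(0)}$, lift it to $\widetilde\omega=\epsilon^a\prod X_I^{e_I}\in A_0$; the condition $\deg_E(\omega)=0$ reads $\sum e_I\delta_I=0$ in $\Z^E$. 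Since the basis exchange graph of $M$ (vertices the bases, edges the pairs differing by a single exchange) is connected by the basis exchange property, every factor $X_PX_Q^{-1}$ of $\widetilde\omega$ can be telescoped along a path in this graph into a product of edge elements $X_BX_{B'}^{-1}$ with $B,B'$ adjacent, so that $\widetilde\omega$ is represented by a $1$-chain on the basis exchange graph with prescribed boundary, subject to the constraint coming from $\deg_E(\omega)=0$. The combinatorial heart of the argument is then to rewrite this chain, modulo $N$, as a product of the quadrangle elements $q_{\cI}:=X_{I,1,2}X_{I,3,4}X_{I,2,3}^{-1}X_{I,4,1}^{-1}$ indexed by tuples $\cI\in\Omega_M$ — each $q_{\cI}$ being the lift of an elementary quadrilateral $4$-cycle $I_{1,2}-I_{2,3}-I_{3,4}-I_{4,1}-I_{1,2}$ — and this is exactly where Tutte's fundamental theorem on linear subclasses \cite[Thm.~4.34]{Tutte65} is invoked (it is the content of \cite[Prop.~6.4]{Wenzel91}). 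Once $\widetilde\omega$ is written modulo $N$ as a product of $q_{\cI}$'s: if $\cI$ is degenerate, then $q_{\cI}=\epsilon^{p}$ in $\T_M$ by the defining relations; if $\cI$ is non-degenerate, then the purely formal identities among cross ratios recorded in Section~\ref{subsection: cross ratios} (the product of the three cyclic cross ratios of $\cI$ equals $1$, the Klein four-group fixes each cross ratio, the remaining permutations invert it) together with $\epsilon^2=1$ express $q_{\cI}$ as $\epsilon^{q}$ times a designated generator $X_{I,1,2}X_{I,3,4}X_{I,1,3}^{-1}X_{I,2,4}^{-1}$ of $\cI$ (or of a reindexing of $\cI$), or its inverse. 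Collecting terms gives $\omega\in H$, as desired.

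\textbf{Main obstacle.} The genuinely hard ingredient is the reduction of $\widetilde\omega$ to a product of quadrangle elements modulo $N$: this is precisely Tutte's theorem on linear subclasses, and verifying that closed walks of length greater than $4$ in the basis exchange graph produce no relations beyond the quadrangle ones is the technical core, handled in \cite{Wenzel91}. A secondary, purely bookkeeping, difficulty is keeping track of the powers of $\epsilon$ when a quadrilateral is traversed with a chosen orientation and when passing between the ``consecutive-pair'' element $q_{\cI}$ and the ``opposite-pair'' cross ratio $X_{I,1,2}X_{I,3,4}X_{I,1,3}^{-1}X_{I,2,4}^{-1}$; this runs exactly parallel to the sign analysis already carried out in the proof of Theorem~\ref{thm: the unit group of the weak universal pasture is the Tutte group} and presents no essential obstruction.
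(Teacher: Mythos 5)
The paper does not give its own proof of this statement: it explicitly records the result as Proposition~6.4 of Wenzel, whose proof in turn rests on Tutte's fundamental theorem on linear subclasses, and leaves it at that. Your proposal follows essentially the same route — you correctly establish the easy inclusion by a direct computation with $\deg_E$, and you correctly identify that the reverse inclusion \emph{is} the content of Wenzel's Proposition~6.4 (ultimately Tutte's theorem), deferring to the same citation the paper does. The only thing I would flag is that your geometric framing of the reverse direction via ``$1$-chains with prescribed boundary'' on the basis exchange graph is loose: the condition $\deg_E(\omega)=0$ is a linear condition on the exponent vector $(e_I)$ (namely $\sum_{I\ni j} e_I=0$ for each $j\in E$), which is not the same as a boundary condition for a $1$-chain on the basis exchange graph, and it would take real work to turn your picture into an honest argument. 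But since you, like the paper, ultimately outsource the combinatorial core to Wenzel and Tutte, this imprecision does not affect the substance of your proposal.
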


This theorem has a series of consequences for our theory that we will explain in the following. Recall from Theorem \ref{thm: the unit group of the universal pasture is the Tutte group} that the association $\prod x_I^{e_I}\mapsto\prod X_I^{e_I}$ defines an isomorphism $(k_M^w)^\times\to\T_M$ between the units of the universal pasture and the Tutte group of $M$.

\begin{cor}\label{cor: the inner Tutte group is the unit group of the foundation}
 The isomorphism $(k_M^w)^\times\to\T_M$ restricts to an isomorphism $(k_M^f)^\times\to\T_M^{(0)}$.
\end{cor}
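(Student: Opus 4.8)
The strategy is to combine Theorem~\ref{thm: the unit group of the weak universal pasture is the Tutte group} with Theorem~\ref{thm: the inner Tutte group is generated by cross ratios} and Lemma~\ref{lemma: the foundation of a matroid is generated by the universal cross ratios}, by showing that the isomorphism $\theta\colon(k_M^w)^\times\to\T_M$ of Theorem~\ref{thm: the unit group of the weak universal pasture is the Tutte group} carries the subgroup $(k_M^f)^\times$ onto the subgroup $\T_M^{(0)}$. Since $\theta$ is already known to be a group isomorphism, it suffices to prove the set-theoretic equality $\theta\bigl((k_M^f)^\times\bigr)=\T_M^{(0)}$, and this can be checked on generators of the two sides.

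First I would record the description of $\theta$ on generators: by construction it sends $\epsilon^i\prod_{I\in\cB} x_I^{e_I}$ to $\epsilon^i\prod_{I\in\cB} X_I^{e_I}$ (with $X_I=X_{(i_1,\dots,i_r)}$ for $I=\{i_1<\dots<i_r\}$). Next I would identify the generators of $(k_M^f)^\times$. By Definition~\ref{def: foundation}, $k_M^f=(k_M^w)^\found$, and by Lemma~\ref{lemma: the foundation of a matroid is generated by the universal cross ratios} the pasture $k_M^f$ is generated over $\Funpm$ by the universal cross ratios $\Cr_M^\univ(\cI)=\dfrac{x_{I_{1,2}}\,x_{I_{3,4}}}{x_{I_{2,3}}\,x_{I_{4,1}}}$ for $\cI\in\Omega_M$. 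Hence the unit group $(k_M^f)^\times$ is generated by $\epsilon$ together with these cross-ratio elements: indeed every element of a pasture is a unit or $0$, and a product/quotient of the generators $\Cr_M^\univ(\cI)$ and $\epsilon$ stays in $(k_M^f)^\times$, while $0$ is not a unit. Applying $\theta$, the generator $\Cr_M^\univ(\cI)$ maps to $\dfrac{X_{I,1,2}\,X_{I,3,4}}{X_{I,2,3}\,X_{I,4,1}}\in\T_M$, and $\epsilon$ maps to $\epsilon$.

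Now I would compare with $\T_M^{(0)}$. For a degenerate $\cI\in\Omega_M$, Lemma~\ref{lemma: relations for cross ratios from the 3-term Pluecker relations} gives $\Cr_M^\univ(\cI)\in\{1,\epsilon\}$, so such cross ratios contribute nothing new beyond $\epsilon$. For a non-degenerate $\cI=(I,i_1,i_2,i_3,i_4)$, the identity $\Cr_\Delta(\cI)\cdot\Cr_\Delta((123).\cI)\cdot\Cr_\Delta((321).\cI)=1$ together with the permutation relations lets one pass between the ``$(23)$-normalized'' cross ratio $\dfrac{x_{I,1,3}x_{I,2,4}}{x_{I,1,4}x_{I,2,3}}$ appearing in Theorem~\ref{thm: the inner Tutte group is generated by cross ratios} and the cross ratios $\Cr_M^\univ(\sigma.\cI)$; concretely, $\dfrac{X_{I,1,2}X_{I,3,4}}{X_{I,1,3}X_{I,2,4}}$ is, up to sign and inversion, a product of elements of the form $\theta(\Cr_M^\univ(\sigma.\cI))$. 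Thus the generating set of $\T_M^{(0)}$ from Theorem~\ref{thm: the inner Tutte group is generated by cross ratios} lies in $\theta\bigl((k_M^f)^\times\bigr)$, giving $\T_M^{(0)}\subseteq\theta\bigl((k_M^f)^\times\bigr)$. For the reverse inclusion, I would check that each generator $\theta(\Cr_M^\univ(\cI))$ and $\theta(\epsilon)=\epsilon$ lies in the kernel of $\deg_E\colon\T_M\to\Z^E$: this is immediate for $\epsilon$ (degree $0$), and for $\Cr_M^\univ(\cI)$ it follows from the relation $\delta_{I_{1,2}}+\delta_{I_{3,4}}-\delta_{I_{2,3}}-\delta_{I_{4,1}}=0$ recorded just before the definition of $\T_M^{(0)}$. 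Hence $\theta\bigl((k_M^f)^\times\bigr)\subseteq\T_M^{(0)}$, and equality follows.

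\textbf{Main obstacle.} The bookkeeping I expect to be most delicate is the sign-tracking in the passage between the different normalizations of cross ratios: Theorem~\ref{thm: the inner Tutte group is generated by cross ratios} uses the ``$\{1,2\}\{3,4\}$ over $\{1,3\}\{2,4\}$'' form, Definition~\ref{def: foundation} uses the ``$\{1,2\}\{3,4\}$ over $\{2,3\}\{4,1\}$'' form, and converting between them requires the cocycle identity and the permutation relations, each of which can introduce factors of $\epsilon$. Since $\epsilon$ is anyway a generator of both $(k_M^f)^\times$ and $\T_M^{(0)}$, these sign factors are harmless for the statement, but one must be careful to note that they are absorbed; a clean way to organize this is to observe that both subgroups contain $\epsilon$ and then work in the quotients by $\langle\epsilon\rangle$, where the two normalizations of a non-degenerate cross ratio agree up to inversion and hence generate the same subgroup.
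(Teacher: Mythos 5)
Your proof is correct and follows essentially the same line as the paper's: reduce to matching generating sets via the isomorphism of Theorem~\ref{thm: the unit group of the weak universal pasture is the Tutte group}, using Lemma~\ref{lemma: the foundation of a matroid is generated by the universal cross ratios} and the degenerate case of Lemma~\ref{lemma: relations for cross ratios from the 3-term Pluecker relations} on one side and Theorem~\ref{thm: the inner Tutte group is generated by cross ratios} on the other. One small simplification you could make: the sign-tracking you flag as the main obstacle is in fact painless, because $\dfrac{X_{I,1,2}X_{I,3,4}}{X_{I,1,3}X_{I,2,4}}$ is \emph{exactly} $\theta\bigl(\Cr_M^\univ\bigl((34).\cI\bigr)\bigr)$ with no $\epsilon$ correction, and since the set of non-degenerate tuples in $\Omega_M$ is $S_4$-stable, the two generating families coincide outright (which is what the paper's terse ``the generators agree'' is implicitly invoking); your extra $\deg_E$ verification for the reverse inclusion is a nice redundancy that the paper omits.
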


\begin{proof}
 Let $\Delta:\binom Er\to k_M^w$ be the Grassmann-Pl\"ucker function defined by $\Delta(I)=x_I/x_{I_0}$ for some fixed basis $I_0\in\cB$. By Lemma \ref{lemma: relations for cross ratios from the 3-term Pluecker relations}, we have $\Cr_\Delta(\cI)\in\{1,\epsilon\}$ for degenerate $\cI\in\Omega_M$. Therefore $k_M^f$ is generated by $\epsilon$ and the cross ratios $\Cr_\Delta(\cI)$ for non-degenerate $\cI\in\Omega_M$.
 
 For non-degenerate $I=(I,i_1,i_2,i_3,i_4)$, the image of $\Cr_\Delta(\cI)$ in $\T_M$ is $\epsilon^{\mu(i_1,i_2;i_3,i_4)} \cdot \frac{X_{I,1,3}\ X_{I,2,4}}{X_{I,1,4}\ X_{I,2,3}}$. Thus the generators of $(k_M^f)^\times$ and $\T_M^{(0)}$ agree, which yields the promised isomorphism.
\end{proof}

Let $B$ be an ordered blueprint. Recall from Example \ref{ex: Laurent monomials as localization of monomial blueprints} the definition of $B[T_1^{\pm1},\dotsc,T_s^{\pm1}]$ as the localization of the free algebra $B[T_1,\dotsc,T_s]$ at the multiplicative subset generated by $\{T_1,\dotsc,T_s\}$. The canonical isomorphism
\[
 B[T_1^{\pm1},\dotsc,T_s^{\pm1}] \ \stackrel\sim\longrightarrow \ B\otimes_\Fun\Fun[T_1^{\pm1},\dotsc,T_s^{\pm1}]
\]
makes clear that all additive relations of $B[T_1^{\pm1},\dotsc,T_s^{\pm1}]$ come from $B$. In particular, the ordered blueprint $B[T_1^{\pm1},\dotsc,T_s^{\pm1}]$ is an $\Funpm$-algebra with unique weak inverses if and only if $B$ is so. By the very construction of $B[T_1^{\pm1},\dotsc,T_s^{\pm1}]$, we have $B[T_1^{\pm1},\dotsc,T_s^{\pm1}]^\times\simeq B^\times\times\Z^s$.

\begin{cor}\label{cor: the universal pasture as Laurent seris over the foundation}
 The universal pasture $k_M^w$ is isomorphic to $k_M^f[T_1^{\pm1},\dotsc,T_s^{\pm1}]$ for some $s\geq0$.
\end{cor}

\begin{proof}
 The additive relations of $k_M^w$ are generated by the $3$-term Pl\"ucker relations. We have seen in Lemma \ref{lemma: relations for cross ratios from the 3-term Pluecker relations} that the $3$-term Pl\"ucker relations lead to relations in $k_M^f$. It is clear that we can recover the $3$-term Pl\"ucker relations from these relations between the cross ratios by multiplying with an appropriate element of $k_M^w$.
 
 Since both $k_M^w$ and $k_M^f$ are ordered blue fields, we are done if we can show that $(k_M^w)^\times$ is isomorphic to the product of $(k_M^f)^\times$ with a free abelian group. By Theorem \ref{thm: the unit group of the universal pasture is the Tutte group}, we have $(k_M^w)^\times\simeq\T_M$, and by Corollary \ref{cor: the inner Tutte group is the unit group of the foundation}, this isomorphism restricts to an isomorphism $(k_M^f)^\times\simeq\T_M^{(0)}$. By definition, $\T_M^{(0)}$ is the kernel of $\deg_E:\T_M\to\Z^E$. Thus the quotient $\T_M/\T_M^{(0)}$ is isomorphic to a subgroup of $\Z^E$ and is therefore free abelian. This proves our claim.
\end{proof}

\begin{rem}
 We will see in Corollary \ref{cor: the universal pasture as Laurent seris over the foundation with s=n-c} that the number $s$ of variables in $k_M^f[T_1^{\pm1},\dotsc,T_s^{\pm1}]$ is equal to $n - c$, where $n=\#E$ and $c$ is the number of connected components of $M$.
\end{rem}

\begin{df}
 Let $F$ be an idyll and $t_F:F\to\K$ the terminal map. A matroid $M$ is \emph{weakly (resp. strongly) representable over $F$} if there is a weak (resp. strong) $F$-matroid $M'$ whose pushforward under $t_F$ is $M$.
 \end{df}

Equivalently, $M$ is weakly (resp. strongly) representable over $F$ if and only if $\cX_M^w(F)$ (resp. $\cX_M(F)$) is nonempty. The following result is motivated by a theorem of Pendavingh and van Zwam, cf.\ \cite[Thm.\ 2.27]{Pendavingh-vanZwam10a}.

\begin{thm}\label{thm: weakly representable over F and morphism from the fundament}
 Let $M$ be a matroid and $F$ an idyll. Then the following are equivalent:
 \begin{enumerate}
  \item\label{weakrep1} $M$ is weakly representable over $F$; 
  \item\label{weakrep2} $M$ is weakly representable over $F^\found$; 
  \item\label{weakrep3} there exists a morphism $k_M^f\to F$.
 \end{enumerate}
\end{thm}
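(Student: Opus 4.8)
The strategy is to establish the cycle of implications $(1)\Rightarrow(3)\Rightarrow(2)\Rightarrow(1)$, using the universal property of the weak universal pasture (Proposition~\ref{prop: weak realization space as morphism set from the weak universal pasture}) together with the fact that the foundation is a subblueprint generated by fundamental elements. The implication $(2)\Rightarrow(1)$ is immediate: since $F^\found$ is a subpasture of $F$, the inclusion $F^\found\hookrightarrow F$ is a morphism of pastures, and pushing forward a weak $F^\found$-matroid representing $M$ along this inclusion yields a weak $F$-matroid representing $M$ (the terminal maps are compatible with the inclusion). So the real content is in $(1)\Rightarrow(3)$ and $(3)\Rightarrow(2)$.

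For $(1)\Rightarrow(3)$: if $M$ is weakly representable over $F$, then $\cX_M^w(F)$ is nonempty, so by Proposition~\ref{prop: weak realization space as morphism set from the weak universal pasture} there is a morphism $f:k_M^w\to F$. By functoriality of the foundation construction (the endofunctor $(-)^\found:\OBlpr^\pm\to\OBlpr^\pm$), $f$ restricts to a morphism $f^\found:k_M^f=(k_M^w)^\found\to F^\found\subseteq F$, and composing with the inclusion $F^\found\hookrightarrow F$ gives the desired morphism $k_M^f\to F$. Alternatively, one can simply restrict $f$ to the subpasture $k_M^f$ of $k_M^w$ directly, since the image of a fundamental element under a morphism is fundamental, so $f(k_M^f)\subseteq F^\found\subseteq F$; either way one obtains a morphism $k_M^f\to F$.

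For $(3)\Rightarrow(2)$: suppose $g:k_M^f\to F$ is a morphism of pastures. Since $g$ sends fundamental elements of $k_M^f$ into fundamental elements of $F$, the image lands in $F^\found$, so we may regard $g$ as a morphism $k_M^f\to F^\found$. By Corollary~\ref{cor: the weak universal pasture as Laurent seris over the foundation}, the weak universal pasture is $k_M^w\cong k_M^f[T_1^{\pm1},\dotsc,T_s^{\pm1}]$ for some $s\geq 0$. The key point is that a morphism out of a Laurent-monomial extension $k_M^f[T_1^{\pm1},\dotsc,T_s^{\pm1}]$ to any pasture is determined by the restriction to $k_M^f$ together with an arbitrary choice of invertible images for the $T_i$; since $(F^\found)^\times$ is nonempty (it contains $1$), we can extend $g$ to a morphism $\tilde g:k_M^w\to F^\found$ (for instance by sending every $T_i$ to $1$, which is legitimate because all additive relations of $k_M^w$ already come from $k_M^f$ by Corollary~\ref{cor: the weak universal pasture as Laurent seris over the foundation}). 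Then $\tilde g$ corresponds, via Proposition~\ref{prop: weak realization space as morphism set from the weak universal pasture} applied with the pasture $F^\found$, to a weak $F^\found$-matroid whose underlying matroid is $M$; that is, $M$ is weakly representable over $F^\found$.

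\textbf{Main obstacle.} The one step requiring genuine care is the construction of the extension $\tilde g:k_M^w\to F^\found$ in $(3)\Rightarrow(2)$: one must verify that sending the $T_i$ to $1$ (or to any units) actually respects the partial order of $k_M^w$, i.e.\ that no new additive relations are introduced that fail in $F^\found$. This is exactly what Corollary~\ref{cor: the weak universal pasture as Laurent seris over the foundation} buys us --- all additive relations of $k_M^w$ descend from $k_M^f$ --- so once that corollary is invoked the verification is routine, but it is the crux of the argument and should be spelled out rather than asserted. A secondary point to handle carefully is that the equivalence should hold even in degenerate cases (e.g.\ when $M$ has loops or is disconnected), but since the universal pasture machinery and Corollary~\ref{cor: the weak universal pasture as Laurent seris over the foundation} are stated without such restrictions, no separate treatment is needed.
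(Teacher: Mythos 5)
Your proof is correct and follows essentially the same route as the paper: $(2)\Rightarrow(1)$ via pushforward along the inclusion $F^\found\hookrightarrow F$, $(1)\Rightarrow(3)$ by restricting a morphism $k_M^w\to F$ (obtained from Proposition~\ref{prop: weak realization space as morphism set from the weak universal pasture}) to the subpasture $k_M^f$, and $(3)\Rightarrow(2)$ via the Laurent-algebra structure $k_M^w\cong k_M^f[T_1^{\pm1},\dotsc,T_s^{\pm1}]$ from Corollary~\ref{cor: the weak universal pasture as Laurent seris over the foundation} and the extension $T_i\mapsto 1$. The only cosmetic difference is that the paper first builds a retraction $k_M^w\to k_M^f$ and then composes with $g$, whereas you extend $g$ directly to $k_M^w$; these are the same morphism.
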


\begin{proof}
 The inclusion $F^\found\to F$ induces a map $\cX_M^w(F^\found)\to\cX_M^w(F)$, which shows that if $M$ is representable over $F^\found$, then it is representable over $F$. Thus \eqref{weakrep2}$\Rightarrow$\eqref{weakrep1}.
 
 If $M$ is weakly representable over $F$, then there exists a morphism $k_M^w\to F$ by Proposition \ref{prop: weak realization space as morphism set from the universal pasture}. Composing it with the inclusion $k_M^f\to k_M^w$ yields the desired morphism $k_M^f\to F$. Thus \eqref{weakrep1}$\Rightarrow$\eqref{weakrep3}.
 
 If there is a morphism $k_M^f\to F$, then its image is contained in $F^\found$. By Corollary \ref{cor: the universal pasture as Laurent seris over the foundation}, we have $k_M^w\simeq k_M^f[T_1^{\pm1},\dotsc,T_s^{\pm1}]$, and thus there exists a morphism $k_M^w\to k_M^f$, for instance by extending the identity on $k_M^f$ by $T_i\mapsto 1$. Thus we obtain a morphism $k_M^w\to F^\found$. By Proposition \ref{prop: weak realization space as morphism set from the universal pasture}, $M$ is weakly representable over $F^\found$. Thus \eqref{weakrep3}$\Rightarrow$\eqref{weakrep2}.
\end{proof}


\subsection{Rescaling classes}
\label{subsection: rescaling classes}

Let $B$ be an $\Funpm$-algebra and $T(B)$ the set of functions $t:E\to B^\times$, which comes with the structure of an abelian group with respect to the product $t\cdot t'(i)=t(i)\cdot t'(i)$. For a subset $I$ of $E$, we define $t_I=\prod_{i\in I}t(i)$. For $t\in T(B)$ and a weak Grassmann-Pl\"ucker function $\Delta:\binom Er\to B$, we define
\[
 t.\Delta(I) \ = \ t_I\cdot\Delta(I).
\]
It is evident that $t.\Delta:\binom Er\to B$ is also a weak Grassmann-Pl\"ucker function. This defines an action
\[
 \begin{array}{ccc}
  T(B) \times \cMat^w(r,E)(B) & \longrightarrow & \cMat^w(r,E)(B) \\
         (t,[\Delta])        & \longmapsto     & [t.\Delta]
 \end{array}
\]
of $T(B)$ on the set $\Mat^w(r,E)(B)$ of weak $B$-matroids. 

Note that $t.[\Delta]=[\Delta]$ if $t$ is a constant function, i.e.\ $t(i)=t(j)$ for all $i,j\in E$. Thus the action of $T(B)$ on $\Mat^w(r,E)(B)$ factors through an action of the quotient of $T(B)$ by the subgroup of constant functions. See Corollary \ref{cor: stabilizer and orbit of a matroid under the torus action} for a detailed description of the stabilizer and the orbit of a weak $F$-matroid under this action in case of an idyll $F$.

\begin{df}
 Let $M$ be a $B$-matroid. The \emph{rescaling class of $M$} is the $T(B)$-orbit of $M$ in $\Mat^w(r,E)(B)$. Two Grassmann-Pl\"ucker functions $\Delta$ and $\Delta'$ are \emph{rescaling equivalent} if $[\Delta]$ and $[\Delta']$ lie in the same rescaling class of $M$. 
\end{df}

\begin{rem}
 The ``rescaling'' equivalence relation on $B$-matroids appears under different names in the literature. While we borrow the terminology ``rescaling class'' from \cite{Delucchi-Hoessly-Saini15}, the term ``projective equivalence class'' is used in \cite{Wenzel91}. Rescaling classes of oriented matroids appear as ``reorientation classes'' in \cite{Gelfand-Rybnikov-Stone95}. In \cite{Pendavingh-vanZwam10a}, projective equivalence is called ``strong equivalence'' in the context of partial fields (cf.\ Remark \ref{rem: equivalence with PvZ's definition of the universal partial field}).

In the context of matroid representations over fields, there are additional notions of equivalence which one encounters in the literature.  We briefly explain the connection with the notion of equivalent representations that one finds in Oxley's book \cite{Oxley92}.
Consider a matroid $M$ with Grassmann-Pl\"ucker function $\Delta:\binom Er\to \K$. Recall from section \ref{subsubsection: representations of matroids over a partial field} that a representation of $M$ over a field $k$ is an $r\times E$-matrix $A$ whose $r\times r$-minors vanish for precisely those $r$-subsets $I$ of $E$ for which $\Delta(I)=0$. Equivalence of representations $A$ and $A'$, in the sense of \cite{Oxley92}, is defined in terms of realizations as incidence geometries inside $\P^{r-1}(k)$: $A$ and $A'$ are said to be \emph{equivalent} if there is an automorphism of $\P^{r-1}(k)$ (as an incidence geometry) which identifies the respective realizations of $A$ and $A'$.  

In our language this boils down to the following (using \cite[Cor.\ 6.3.11]{Oxley92}).
The representations $A$ and $A'$ define Grassmann-Pl\"ucker functions $\Delta:\binom Er\to k$ and $\Delta':\binom Er\to k$, respectively. Then $A$ and $A'$ are equivalent in the sense of  \cite{Oxley92} if and only if $r\leq 2$ or there exists a field automorphism $\tau: k\to k$ such that the $k$-matroids $[\Delta']$ and $[\tau\circ\Delta]$ are rescaling equivalent.
(Matroids of rank $2$ behave exceptionally in this approach due to the lack of rigidity of $\P^1$ as an incidence geometry.)
In view of this rephrasing, we see that equivalence of representations over $k$ in the sense of \cite{Oxley92} is weaker than the notion of rescaling equivalence of $k$-matroids.
\end{rem}

\begin{rem}
From the point of view of complex algebraic geometry, rescaling classes are related to the natural action of the diagonal torus $({\mathbb C}^\times)^n$ on the Grassmannian $\Gr(r,n)$.  
The closure $X_p$ of the torus orbit of a point $p \in \Gr(r,n)$ is a toric variety which depends only on the matroid $M_p$ whose bases correspond to the non-zero Pl\"ucker coordinates of $p$, and the polytope corresponding to $X_p$ under the moment map is the {\em matroid polytope} of $M_p$, cf.\ \cite{Gelfand-Goresky-MacPherson-Serganova87}.
\end{rem}

\begin{rem}
 Note that $t.\Delta$ is a (strong) Grassmann-Pl\"ucker function if $\Delta$ is so. Thus the action of $T(B)$ restricts to an action on the matroid space $\Mat(r,E)(B)$. This action is, in fact, defined on the level of ordered blue schemes.
  
To see this, let us (for notational purposes) identify $E$ with $\{1,\dotsc,n\}$. The ordered blue scheme $\G_{m,\Funpm}^n=\Spec\Funpm[T_1^{\pm1},\dotsc, T_n^{\pm1}]$ is a group object in $\OBSch_\Funpm$ with respect to the comultiplication given by $T_i\mapsto T_i\otimes T_i$. It acts on $\Mat(r,E)$ via the coaction given by $x_I\mapsto (\prod_{i\in I}T_i)\otimes x_I$. The action of $T(B)$ on $\Mat(r,E)(B)$ results from applying $\Hom(\Spec B,-)$ to the morphism $T\times\Mat(r,E)\to\Mat(r,E)$.
\end{rem}

Note that a morphism $f:F\to F'$ of idylls induces a map $f_\ast:\cX^w(F)\to\cX^w(F')$ since $f\circ(t.\Delta)=(f\circ t).(f\circ\Delta)$ for all $t\in T(F)$ and all weak Grassmann-Pl\"ucker functions $\Delta:\binom Er\to F$. This defines a functor $\cX^f:\OBlpr_\Funpm\to\Sets$ that sends an $\Funpm$-algebra to the set $\cX^f(F)$ of rescaling classes in $\Mat^w(r,E)(F)$.

\begin{lemma}\label{lemma: cross ratios are well-defined on rescaling classes}
 Let $\Delta$ and $\Delta'$ be rescaling equivalent Grassmann-Pl\"ucker functions in $B$. Then they correspond to the same $B$-matroid $M$ and $\Cr_\Delta(\cI)=\Cr_{\Delta'}(\cI)$ for all $\cI\in\Omega_M$.
\end{lemma}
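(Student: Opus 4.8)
\textbf{Proof plan for Lemma \ref{lemma: cross ratios are well-defined on rescaling classes}.}

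The plan is to observe that rescaling changes a Grassmann-Pl\"ucker function only by a ``multiplicative gauge transformation'' that is constant on each Pl\"ucker coordinate, and that the cross ratio is built so that these gauge factors cancel. First I would recall that $\Delta$ and $\Delta'$ rescaling equivalent means, by definition, that there exist $t\in T(B)$ and $a\in B^\times$ with $\Delta'(I) = a\cdot t_I\cdot\Delta(I)$ for all $I\in\binom Er$, where $t_I=\prod_{i\in I}t(i)$. (The scalar $a$ comes from the fact that rescaling equivalence is defined on the level of equivalence classes $[\Delta]$, so the two functions need only agree up to the ambient equivalence $\Delta\sim a\Delta$; one should fold this into the argument, or simply first reduce to the case $a=1$ since multiplying a Grassmann-Pl\"ucker function by a global unit does not change the underlying $B$-matroid and, as already noted in the text, does not change the cross ratio either.) In particular $t_I\in B^\times$ for every $r$-subset $I$, so $\Delta'(I)\neq 0$ exactly when $\Delta(I)\neq 0$; hence $\Delta$ and $\Delta'$ have the same support, the same set of bases, and therefore represent the same $B$-matroid $M$ and have the same index set $\Omega_M$. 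This settles the first assertion.

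Next I would compute the cross ratio directly. Fix $\cI=(I,i_1,i_2,i_3,i_4)\in\Omega_M$ with $I_{k,l}=I\cup\{i_k,i_l\}$. Then
\[
 \Cr_{\Delta'}(\cI) \ = \ \frac{\Delta'(I_{1,2})\cdot\Delta'(I_{3,4})}{\Delta'(I_{2,3})\cdot\Delta'(I_{4,1})} \ = \ \frac{t_{I_{1,2}}\,t_{I_{3,4}}}{t_{I_{2,3}}\,t_{I_{4,1}}}\cdot\frac{\Delta(I_{1,2})\cdot\Delta(I_{3,4})}{\Delta(I_{2,3})\cdot\Delta(I_{4,1})},
\]
using that all four Pl\"ucker coordinates involved are units (since $\cI\in\Omega_M$) and that $B^\times$ is an abelian group, so the expression makes sense and the factors may be rearranged freely. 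The key combinatorial identity is $t_{I_{1,2}}\,t_{I_{3,4}} = t_{I_{2,3}}\,t_{I_{4,1}}$: each side equals $t_I^{\,2}\cdot t(i_1)t(i_2)t(i_3)t(i_4)$, because $t_{I\cup\{i_k,i_l\}}=t_I\cdot t(i_k)\cdot t(i_l)$ (the elements $i_1,i_2,i_3,i_4$ being pairwise distinct and disjoint from $I$, which is part of the definition of $\Omega_M$). Hence the gauge prefactor is $1$ in $B^\times$, and $\Cr_{\Delta'}(\cI)=\Cr_\Delta(\cI)$.

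There is essentially no serious obstacle here; the lemma is a formal consequence of the definitions. The only point requiring a word of care is the presence of the ambient scalar $a$ in the relation $\Delta'=a\,t.\Delta$ (i.e. the difference between ``$[\Delta]$ and $[\Delta']$ lie in the same $T(B)$-orbit'' and ``$\Delta'=t.\Delta$ on the nose''), which is handled by the already-recorded observation that $\Cr_{a\Delta}(\cI)=\Cr_\Delta(\cI)$ for $a\in B^\times$; one therefore reduces without loss of generality to the case $a=1$ before performing the computation above. I would present the argument in this reduced form to keep it short.
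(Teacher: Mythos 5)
Your proof is correct and follows essentially the same route as the paper's: both note that $\Delta'=at.\Delta$ with $at_I\in B^\times$ to identify the underlying matroids, and both reduce the second claim to the combinatorial identity $t_{I_{1,2}}t_{I_{3,4}}=t_{I_{2,3}}t_{I_{4,1}}$, which makes the gauge prefactor cancel. The only cosmetic difference is that you reduce to $a=1$ first, while the paper keeps the scalar $a$ in the display and lets the four copies cancel; either presentation is fine.
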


\begin{proof}
 Since $\Delta$ and $\Delta'$ are rescaling equivalent, there are elements $a\in B^\times$ and $t\in T(B)$ such that $\Delta'=a(t.\Delta)$. Since $at_I\in B^\times$ for all subsets $I$ of $E$, we have $\Delta(I)=0$ if and only if $\Delta'(I)=a(t_I\Delta(I))=0$. This shows that $\Delta$ and $\Delta'$ correspond to the same matroid $M$.
 
 Let $\cI=(I,i_1,i_2,i_3,i_4)\in\Omega_M$. Then for $I_{k,l}=I\cup\{i_k,i_l\}$, we have $t_{I_{1,3}}t_{I_{2,4}}=t_{I_{1,4}}t_{I_{2,3}}$ and thus
 \[
  \Cr_{\Delta'}(\cI) \quad = \quad\frac{\Delta'_{I,1,3}\Delta'_{I,2,4}}{\Delta'_{I,1,4}\Delta'_{I,2,3}} \quad = \quad\frac{at_{I_{1,3}}at_{I_{2,4}}}{at_{I_{1,4}}at_{I_{2,3}}} \ \cdot \ \frac{\Delta_{I,1,3}\Delta_{I,2,4}}{\Delta_{I,1,4}\Delta_{I,2,3}} \quad = \quad\Cr_{\Delta}(\cI) 
 \]
 as desired, where $\Delta'_{I,k,l}=\Delta'\big(I\cup\{i_k,i_l\}\big)$ and $\Delta_{I,k,l}=\Delta\big(I\cup\{i_k,i_l\}\big)$.
\end{proof}

Let $M$ be a matroid. Recall from section \ref{subsection: the inner Tutte group} that the group homomorphism $\deg_E:\T_M\to\Z^E$ sends an element $\prod X_I^{e_I}$ of the Tutte group $\T_M$ to $\sum e_I\delta_I$, where $\delta_I$ is the characteristic function of $I$. For the proof of Theorem \ref{thm: the foundation represents rescaling classes}, we require the following fact, which follows from Theorem 1.5 in \cite{Dress-Wenzel89}. For completeness, we include a short proof.

\begin{lemma}\label{lemma: the cokernel of deg_E is free}
 Let $c$ be the number of connected components of $M$. The cokernel of $\deg_E:\T_M\to\Z^E$ is a free abelian group of rank $c$. 
\end{lemma}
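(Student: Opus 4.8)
The statement is that $\coker(\deg_E : \T_M \to \Z^E)$ is free abelian of rank $c$, where $c$ is the number of connected components of $M$. The plan is to identify the image of $\deg_E$ explicitly. First I would recall that the generators of $\T_M$ (after Theorem~\ref{thm: the unit group of the weak universal pasture is the Tutte group} and Definition~\ref{def: Tutte group}) are $\epsilon$ and products $X_I X_J^{-1}$ with $I,J\in\cB$, so that $\deg_E$ sends such a product to $\delta_I - \delta_J \in \Z^E$. Hence the image of $\deg_E$ is the subgroup $L$ of $\Z^E$ generated by all differences $\delta_I - \delta_J$ for $I,J$ bases of $M$. The task reduces to computing $\Z^E / L$.

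The key combinatorial input is the basis exchange graph: for a connected matroid, any two bases are joined by a sequence of single-element exchanges, so $L$ is generated by the smaller set of differences $\delta_i - \delta_j$ where $B - i + j$ is again a basis (with $i \in B$, $j \notin B$). More precisely, I would show that $L$ equals $\{ v \in \Z^E \mid \sum_{i \in E_k} v(i) = 0 \text{ for each connected component } E_k \text{ of } M\}$. The inclusion $L \subseteq$ this subgroup is immediate since $\#I = \#J = r$ and, for bases of a connected matroid, $I$ and $J$ meet each component in the same number of elements (the rank of the component). For the reverse inclusion, fix a component $E_k$; the restriction $M|E_k$ is connected, so its basis exchange graph is connected, and single-element exchanges within $M|E_k$ extend to exchanges of global bases (take a fixed basis on the other components). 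This shows $L$ contains all $\delta_i - \delta_j$ with $i,j$ in the same component, which generate the claimed subgroup. Therefore $\coker(\deg_E) \cong \Z^E / L \cong \Z^c$ via the map sending (the class of) $v$ to $(\sum_{i \in E_1} v(i), \ldots, \sum_{i \in E_c} v(i))$, which is visibly surjective with kernel exactly this subgroup.

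The main obstacle is the connectivity claim for the basis exchange graph and, slightly more delicately, the fact that a single-element exchange $B' = B - i + j$ \emph{within} a component $E_k$ — taking a basis $B$ of $M$ and forming $B - i + j$ — indeed yields another basis of $M$. This follows because if $B$ is a basis of $M$, then $B \cap E_k$ is a basis of $M|E_k$ and $B \setminus E_k$ is a basis of $M / E_k$ (using that $E_k$ is a separator, hence $M = M|E_k \oplus M/E_k$), so exchanging within $E_k$ preserves basis-hood. I would invoke the standard fact (e.g. from Oxley's book) that the basis exchange graph of a connected matroid is connected. Once these structural facts are in place, the rest is the bookkeeping of identifying $L$ and reading off the cokernel; since the paper says ``For completeness, we include a short proof'' and cites Theorem~1.5 of \cite{Dress-Wenzel89}, I would keep the argument brief, essentially as sketched above, and note that freeness is automatic once $\coker$ is exhibited as a subgroup of $\Z^c$ (indeed all of $\Z^c$).
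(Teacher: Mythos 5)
There is a genuine (though fixable) gap in the reverse inclusion. You write that since $M|E_k$ is connected ``its basis exchange graph is connected'' and conclude that $L$ contains $\delta_i - \delta_j$ for all $i,j$ in the same component. But the basis exchange graph---whose vertices are \emph{bases}, with edges given by single-element exchanges---is connected for \emph{every} matroid, by the basis exchange axiom; its connectivity says nothing that distinguishes connected matroids from disconnected ones. Concretely: for $M = U_{1,2}\oplus U_{1,2}$ on $E=\{1,2,3,4\}$ the basis exchange graph is connected, yet $\delta_1-\delta_3\notin L$, exactly as it should be since $1$ and $3$ lie in different components. What your argument actually needs is connectivity of the graph on the ground set $E_k$ whose edges are the \emph{exchange pairs} $(i,j)$, i.e.\ pairs with bases $I,J$ satisfying $I-J=\{i\}$ and $J-I=\{j\}$. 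This is a different statement, and it is precisely the relation $\sim$ the paper uses to define connected components; with that definition (or the standard characterization of connected matroids via circuits, with which it agrees) the claim that all $i,j\in E_k$ are joined by a chain of exchange pairs is immediate. Replacing the basis-exchange-graph invocation with this fact closes the gap.

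Apart from this, your approach is sound and is essentially the one the paper takes, with one organizational difference: the paper first proves that $\T_M$ decomposes as a direct sum of the Tutte groups of the components (modulo identifying $\epsilon$) and reduces to the connected case, whereas you compute the image $L=\im(\deg_E)$ directly over all components at once. Your route is marginally more elementary in that it sidesteps the Tutte-group decomposition; the first containment $L\subseteq\{v:\sum_{i\in E_k}v(i)=0\text{ for all }k\}$ via constancy of $\#(B\cap E_k)$ over bases $B$ is correct, and once the reverse inclusion is repaired as above, the conclusion $\coker(\deg_E)\cong\Z^c$ follows as you describe.
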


\begin{proof}
 As a first step, we claim that the Tutte group $\T_M$ is the direct sum of the Tutte groups of the connected components of $M$, modulo the identification of the weak inverses $\epsilon$ of all summands (also cf.\ \cite[Prop.\ 5.1]{Dress-Wenzel89}). Recall that a connected component of $M$ is the restriction of $M$ to an equivalence class of the equivalence relation $\sim$ on $E$ generated by the relations $i\sim j$ whenever there are bases $I$ and $J$ such that $I-J=\{i\}$ and $J-I=\{j\}$. Let $(I,i_1,i_2,i_3,i_4)\in\Omega_M$ be degenerate and $\epsilon^{\mu(i_1,i_2;i_3,i_4)}X_{I,1,3}X_{I,2,4}X_{I,1,4}^{-1} X_{I,2,3}^{-1}=1$ the corresponding relation of the Tutte group. The bases involved in this relation imply that $i_1\sim i_2\sim i_3\sim i_4$. Thus the relation $\epsilon^{\mu(i_1,i_2;i_3,i_4)}X_{I,1,3}X_{I,2,4}X_{I,1,4}^{-1} X_{I,2,3}^{-1}=1$ comes from the restriction of $M$ to the equivalence class of $i_1$, $i_2$, $i_3$ and $i_4$. This establishes the claim.

 Thus the morphism $\deg_E:\T_M\to\Z^E$ is the direct sum of its restrictions to the support of the connected components of $M$ and the rank of the cokernel is the sum of the ranks for each summand. Therefore we may assume that $M$ is connected, and we are left to show that under this assumption the cokernel of $\deg_E$ is free of rank $c=1$.
 
 Given bases $I$ and $J$ with $I-J=\{i\}$ and $J-I=\{j\}$, the quantity $\delta_i-\delta_j=\deg_E(X_I/X_J)$ is in the image of $\deg_E$. By the multiplicativity of $\deg_E$ and the connectedness of $M$, we conclude that $\delta_i-\delta_j$ is in the image of $\deg_E$ for all $i,j\in E$. These elements generate the degree-$0$ hyperplane in $\Z^E$, and thus the quotient of $\Z^E$ by the image of $\deg_E$ is $\Z$. This completes the proof of the lemma.
\end{proof}

This lemma leads to the following strengthening of Corollary \ref{cor: the universal pasture as Laurent seris over the foundation}.

\begin{cor}\label{cor: the universal pasture as Laurent seris over the foundation with s=n-c}
 Let $M$ be a matroid of rank $r$ on $E$ with $c$ connected components. Then $k_M^w\simeq k_M^f[T_1^{\pm1},\dotsc,T_s^{\pm1}]$ for $s=\# E-c$.
\end{cor}

\begin{proof}
 We know from Corollary \ref{cor: the universal pasture as Laurent seris over the foundation} that $k_M^w\simeq k_M^f[T_1^{\pm1},\dotsc,T_s^{\pm1}]$ for some $s\geq0$. By Theorem \ref{thm: the unit group of the universal pasture is the Tutte group}, $(k_M^w)^\times\simeq \T_M$ and by Corollary \ref{cor: the inner Tutte group is the unit group of the foundation}, $(k_M^f)^\times\simeq\T_M^{(0)}$. By Lemma \ref{lemma: the cokernel of deg_E is free}, the quotient $\T_M/\T_M^{(0)}$ is a free group of rank $\# E-c$. Thus $s=\# E-c$.
\end{proof}

\begin{thm}\label{thm: the foundation represents rescaling classes}
 Let $F$ be an idyll and $M$ a matroid. Let $\Delta,\Delta':\binom Er\to F$ be two weak Grassmann-Pl\"ucker functions representing $M$ with respective characteristic morphisms $\chi_{[\Delta]},\chi_{[\Delta']}:k_M^w\to F$. Then the following assertions are equivalent.
 \begin{enumerate}
  \item \label{equiv1} $\Delta$ and $\Delta'$ are rescaling equivalent.
  \item \label{equiv2} $\Cr_\Delta$ and $\Cr_{\Delta'}$ are equal as functions $\Omega_M\to F^\times$.
  \item \label{equiv3} The restrictions of $\chi_{[\Delta]}$ and $\chi_{[\Delta']}$ to the foundation $k_M^f$ are equal.
 \end{enumerate}
\end{thm}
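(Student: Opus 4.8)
The plan is to prove the cycle of implications \eqref{equiv1}$\Rightarrow$\eqref{equiv2}$\Rightarrow$\eqref{equiv3}$\Rightarrow$\eqref{equiv1}. The first implication \eqref{equiv1}$\Rightarrow$\eqref{equiv2} is already essentially done: it is exactly the content of Lemma~\ref{lemma: cross ratios are well-defined on rescaling classes}, so there is nothing new to do here except cite it.

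For \eqref{equiv2}$\Rightarrow$\eqref{equiv3}, I would use the explicit description of the foundation $k_M^f$ afforded by Lemma~\ref{lemma: the foundation of a matroid is generated by the universal cross ratios}, which says that $k_M^f$ is generated over $\Funpm$ by the universal cross ratios $\Cr_M^\univ(\Omega_M)$. A morphism out of $k_M^f$ is therefore determined by its values on these generators. Now I would unwind the definitions: if $\Delta:\binom Er\to F$ represents $M$, then under the bijection of Proposition~\ref{prop: weak realization space as morphism set from the weak universal pasture}, the characteristic morphism $\chi_{[\Delta]}:k_M^w\to F$ sends the image of $x_I/x_{I_0}$ to $\Delta(I)/\Delta(I_0)$ (after normalizing a fixed basis $I_0$), and hence sends the universal cross ratio $\Cr_M^\univ(\cI)$ to $\Cr_\Delta(\cI)$ for each $\cI\in\Omega_M$ — this is immediate from the formula for the cross ratio since the scalar factors cancel. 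Thus the restriction $\chi_{[\Delta]}|_{k_M^f}$ is completely determined by the function $\Cr_\Delta:\Omega_M\to F^\times$, and likewise for $\Delta'$. If $\Cr_\Delta=\Cr_{\Delta'}$, these restrictions agree on a generating set, hence agree.

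The implication \eqref{equiv3}$\Rightarrow$\eqref{equiv1} is the main obstacle and is where the real work lies. The idea is to exploit the structural result of Corollary~\ref{cor: the weak universal pasture as Laurent seris over the foundation with s=n-c}, namely $k_M^w\simeq k_M^f[T_1^{\pm1},\dotsc,T_s^{\pm1}]$ with $s=\#E-c$, together with the identification of $\T_M/\T_M^{(0)}$ with a subgroup of $\Z^E$ via $\deg_E$ (Lemma~\ref{lemma: the cokernel of deg_E is free}). Given two morphisms $\chi_{[\Delta]},\chi_{[\Delta']}:k_M^w\to F$ agreeing on $k_M^f$, the difference between them is captured by a group homomorphism measuring how the free variables $T_1,\dots,T_s$ are sent to $F^\times$. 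The key point is that the quotient $(k_M^w)^\times/(k_M^f)^\times\cong\T_M/\T_M^{(0)}$ is (via $\deg_E$) a subgroup of $\Z^E$, and the assignment $t\in T(F)\mapsto$ (the rescaling action on $\Delta$) induces precisely the dual action: rescaling by $t:E\to F^\times$ multiplies $\Delta(I)$ by $t_I=\prod_{i\in I}t(i)$, which at the level of the Tutte group corresponds to pairing with $\delta_I\in\Z^E$. So I would argue: the two morphisms $\chi_{[\Delta]}$ and $\chi_{[\Delta']}$ differ by a homomorphism $\T_M/\T_M^{(0)}\to F^\times$; since $\T_M/\T_M^{(0)}$ embeds in $\Z^E$ as the subgroup generated by the $\delta_i-\delta_j$ over each connected component (and $\delta_i-\delta_j=\deg_E(X_I/X_J)$), such a homomorphism extends to $\Z^E\to F^\times$, i.e.\ is given by some $t:E\to F^\times$; and then $\Delta'$ is rescaling equivalent to $\Delta$ via this $t$ (up to a global unit). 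The care needed here is: (i) checking that an arbitrary homomorphism on the subgroup $\deg_E(\T_M)\subseteq\Z^E$ extends to all of $\Z^E$ — this uses that $\Z^E$ is free and the quotient $\Z^E/\deg_E(\T_M)$ is free of rank $c$ by Lemma~\ref{lemma: the cokernel of deg_E is free}, so the subgroup is a direct summand; and (ii) verifying that the extended $t$ actually realizes the rescaling at the level of Grassmann--Pl\"ucker functions, which is a direct computation comparing $\chi_{[t.\Delta]}$ with $\chi_{[\Delta']}$ on both the foundation (where they agree by hypothesis and by Lemma~\ref{lemma: cross ratios are well-defined on rescaling classes}) and the free part (where they agree by construction of $t$), and then invoking the injectivity of $[\Delta]\mapsto\chi_{[\Delta]}$ from Proposition~\ref{prop: weak realization space as morphism set from the weak universal pasture}. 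I would close by remarking that one may also wish to record, as a corollary, an explicit description of the stabilizer and orbit of a weak $F$-matroid under the $T(F)$-action, as referenced in the text.
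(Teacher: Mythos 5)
Your proposal is correct and follows essentially the same path as the paper: \eqref{equiv1}$\Rightarrow$\eqref{equiv2} by citing Lemma~\ref{lemma: cross ratios are well-defined on rescaling classes}, \eqref{equiv2}$\Rightarrow$\eqref{equiv3} by observing $\Cr_\Delta=\chi_{[\Delta]}\circ\Cr_M^\univ$ and that $k_M^f$ is generated by universal cross ratios, and \eqref{equiv3}$\Rightarrow$\eqref{equiv1} by extending the homomorphism $\im\deg_E\to F^\times$ measuring the discrepancy to all of $\Z^E$ using that $\coker\deg_E$ is free (Lemma~\ref{lemma: the cokernel of deg_E is free}), hence $\deg_E(\T_M)$ is a direct summand. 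The brief detour through Corollary~\ref{cor: the weak universal pasture as Laurent seris over the foundation with s=n-c} is not needed but does no harm, since the decisive point is exactly the splitting of $\Z^E$ that you invoke.
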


\begin{proof}
 The implication \eqref{equiv1}$\Rightarrow$\eqref{equiv2} follows from Lemma \ref{lemma: cross ratios are well-defined on rescaling classes}. We continue with \eqref{equiv2}$\Rightarrow$\eqref{equiv3}. Let $\Cr_M^\univ:\Omega_M\to k_M^f$ be the universal cross ratio function, cf.\ Definition \ref{def: foundation}. Since $\chi_{[\Delta]}$ sends $\prod x_I^{e_I}$ to $\prod\Delta(I)^{e_I}$, and similarly for $\chi_{[\Delta']}$, we have $\Cr_\Delta=\chi_{[\Delta]}\circ \Cr_M^\univ$ and $\Cr_{\Delta'}=\chi_{[\Delta']}\circ \Cr_M^\univ$. Thus for every $\cJ\in\Omega_M$ we have
 \[
  \chi_{[\Delta]}\vert_{k_M^f}\big(\Cr_M^\univ(\cI)\big) \ = \ \Cr_\Delta(\cI) \ = \ \Cr_{\Delta'}(\cI) \ = \ \chi_{[\Delta']}\vert_{k_M^f}\big(\Cr_M^\univ(\cI)\big).
 \]
 By definition, $k_M^f$ is generated by the cross ratios in $k_M$, i.e.\ by the image of $\Cr_M^\univ$. Thus $\chi_{[\Delta]}\vert_{k_M^f}=\chi_{[\Delta']}\vert_{k_M^f}$, which establishes the implication \eqref{equiv2}$\Rightarrow$\eqref{equiv3}.
 
 We conclude with \eqref{equiv3}$\Rightarrow$\eqref{equiv1}. Recall that the inner Tutte group $\T_M^{(0)}$ is defined as the kernel of $\deg_E:\T_M\to \Z^E$. By the assumption from \eqref{equiv2}, the restrictions of $\chi_{[\Delta]}$ and $\chi_{[\Delta']}$ to $\T_M^{(0)}=(k_M^f)^\times$ are equal. Thus there is a group homomorphism $t:\im\deg_E\to F^\times$ such that 
 \[
  \chi_{[\Delta]}(x) \ = \ t\big(\deg_E(x)\big) \, \cdot \, \chi_{[\Delta']}(x)
 \]
 for every element $x\in\T_M$. Since the cokernel of $\deg_E$ is free, $\Z^E$ is the direct sum of the image of $\deg_E$ with a free abelian group. Therefore we can extend $t$ to a group homomorphism $t:\Z^E\to F^\times$. Then we have
 \[\textstyle
  \prod \Delta(I)^{e_I} \ = \ \chi_{[\Delta]}\big( \prod x_I^{e_I}\big) \ = \ t\big(\sum e_I\delta_I\big) \ \chi_{[\Delta']}\big( \prod x_I^{e_I}\big) \ = \ \prod t_I^{e_I} \,  \cdot \, \prod \Delta'(x_I)^{e_I} \ = \ (t.\Delta')(I)^{e_I}
 \]
 for all $e_I$ with $\sum e_I=0$, where $I$ varies through the bases of $M$. This shows that $\Delta$ and $t.\Delta'$ are proportional, i.e.\ $[\Delta]=[t.\Delta']$, and establishes the implication \eqref{equiv3}$\Rightarrow$\eqref{equiv1}.
\end{proof}

\begin{cor}\label{cor: stabilizer and orbit of a matroid under the torus action}
 Let $M$ be a matroid with $c$ connected components, $F$ an idyll and $\Delta:\binom Er\to F$ a Grassmann-Pl\"ucker function representing $M$. Then the stabilizer of $[\Delta]$ in $T(F)$ is isomorphic to $(F^\times)^c$ and the rescaling class of $[\Delta]$ is in bijection with $(F^\times)^{n-c}$ where $n=\# E$.
 \end{cor}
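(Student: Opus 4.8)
The plan is to deduce both assertions directly from the structural description of the Tutte group and its inner subgroup together with the equivalence established in Theorem \ref{thm: the foundation represents rescaling classes}. First I would fix the Grassmann-Pl\"ucker function $\Delta$ representing $M$ and consider the characteristic morphism $\chi_{[\Delta]}:k_M^w\to F$. For $t\in T(F)$, the rescaled function $t.\Delta$ represents the same $F$-matroid as $\Delta$ if and only if $t.\Delta$ is proportional to $\Delta$, i.e. $\prod\Delta(I)^{e_I}=\prod t_I^{e_I}\cdot\prod\Delta(I)^{e_I}$ for all integer tuples $(e_I)$ with $\sum e_I=0$ indexed by the bases of $M$; equivalently $t_I=t_J$ whenever $I,J$ are bases. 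Unravelling, the condition is that $\prod_{i\in I}t(i)$ is independent of the basis $I$, which is exactly the statement that the function $\mathbf{t}=(\log)t\in(F^\times)^E$, viewed additively, is orthogonal to the image of $\deg_E:\T_M\to\Z^E$ — here one has to be careful to phrase it multiplicatively since $F^\times$ need not be uniquely divisible, but the argument only uses that $t$ pairs trivially against the subgroup $\im\deg_E$ of $\Z^E$, i.e. $t$ factors through the cokernel $\Z^E/\im\deg_E$.

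By Lemma \ref{lemma: the cokernel of deg_E is free}, that cokernel is a free abelian group of rank $c$, the number of connected components of $M$. Hence the group of $t\in T(F)=(F^\times)^E$ which factor through $\Z^E/\im\deg_E$ is isomorphic to $\Hom(\Z^{\oplus c},F^\times)\cong(F^\times)^c$; this is the stabilizer of $[\Delta]$. (Concretely: the stabilizer consists precisely of those $t$ that are constant on each connected component of $M$, so the isomorphism with $(F^\times)^c$ records the $c$ component-values.) For the orbit, the orbit–stabilizer principle gives that the rescaling class of $[\Delta]$ — the $T(F)$-orbit — is in bijection with $T(F)/\mathrm{Stab}([\Delta])$. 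Since $T(F)=(F^\times)^E\cong(F^\times)^n$ and the stabilizer is the subgroup of functions factoring through the free quotient $\Z^E/\im\deg_E$ of rank $c$, the quotient is isomorphic to $\Hom(\im\deg_E,F^\times)$. Because $\im\deg_E$ is a subgroup of $\Z^E$ with free quotient of rank $c$, it is itself free abelian of rank $n-c$, so $\Hom(\im\deg_E,F^\times)\cong(F^\times)^{n-c}$, which is the claimed bijection.

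The main technical point to handle carefully is the passage from the multiplicative group $F^\times$ to the pairing with $\Z^E$: one should avoid any tacit use of divisibility in $F^\times$. The clean way is to note that the short exact sequence $0\to\im\deg_E\to\Z^E\to\Z^E/\im\deg_E\to0$ splits (the quotient is free of rank $c$ by Lemma \ref{lemma: the cokernel of deg_E is free}), hence $\Z^E\cong\im\deg_E\oplus\Z^{\oplus c}$ with $\im\deg_E$ free of rank $n-c$; applying the (contravariant, exact on split sequences) functor $\Hom(-,F^\times)$ then yields $T(F)\cong(F^\times)^{n-c}\times(F^\times)^c$ with the second factor the stabilizer and the first factor a system of coset representatives for the orbit. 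Finally, to connect this bookkeeping to actual rescaling equivalence I would invoke Theorem \ref{thm: the foundation represents rescaling classes}: two rescalings $t.\Delta$ and $t'.\Delta$ lie in the same equivalence class precisely when $t$ and $t'$ differ by an element of the stabilizer, which is exactly what the splitting encodes. This completes the proof.
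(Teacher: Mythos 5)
Your proof is correct and follows essentially the same route as the paper: characterize the stabilizer as the homomorphisms $t:\Z^E\to F^\times$ that kill $\im\deg_E$, invoke Lemma \ref{lemma: the cokernel of deg_E is free} to identify the cokernel as $\Z^c$, split $\Z^E\cong\im\deg_E\oplus\Z^c$ with $\im\deg_E$ free of rank $n-c$, and apply $\Hom(-,F^\times)$ together with orbit--stabilizer. The only small difference is that the paper obtains the stabilizer characterization by referring back to the computation in the proof of Theorem \ref{thm: the foundation represents rescaling classes}, whereas you derive it directly from the proportionality condition $t_I=t_J$ for bases $I,J$; both are valid, and your version is a bit more self-contained. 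Your closing appeal to Theorem \ref{thm: the foundation represents rescaling classes} is actually superfluous once the stabilizer is identified -- the assertion that $[t.\Delta]=[t'.\Delta]$ iff $t't^{-1}$ lies in the stabilizer is just the definition of stabilizer, not something requiring that theorem.
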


\begin{proof}
 An element of $T(F)$, which is a function $t:E\to F^\times$, can be extended linearly to a group homomorphism $t:\Z^E\to F^\times$, which we denote by the same symbol $t$. From the proof of Theorem \ref{thm: the foundation represents rescaling classes}, it is clear that the stabilizer of $[\Delta]$ consists of those group homomorphisms $t:\Z^E\to F^\times$ that are trivial on the image of $\deg_E$. By Lemma \ref{lemma: the cokernel of deg_E is free}, the cokernel of $\deg_E$ is a free abelian group of rank $h$. Thus the stabilizer of $[\Delta]$ is a subgroup of $T(F)$ isomorphic to $(F^\times)^c$. Consequently, the orbit of $[\Delta]$ corresponds to $(F^\times)^E/(F^\times)^c\simeq(F^\times)^{n-c}$. 
\end{proof}

\begin{df}
 Let $M$ be a matroid and $F$ be an idyll. The \emph{rescaling class space $\cX^f_M(F)$} is the set of rescaling classes of weak $F$-matroids with underlying matroid $M$.
\end{df}

Note that $\cX_M^f(F)$ is a subset of $\cX^f(F)$ and that $\cX_M^f(F)$ is functorial in $F$.

\begin{cor}\label{cor: rescaling classes over M as morphisms from the foundation}
 Let $M$ be a matroid and $F$ be an idyll. Then the map
 \[
  \begin{array}{cccc}
   \Phi: & \cX_M^f(F) & \longrightarrow & \Hom(k_M^f,F) \\
         & [\Delta]   & \longmapsto     & \chi_{[\Delta]}\vert_{k_M^f}
  \end{array}
 \]
 is a bijection that is functorial in $F$.
\end{cor}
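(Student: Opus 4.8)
\textbf{Proof proposal for Corollary \ref{cor: rescaling classes over M as morphisms from the foundation}.}

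The plan is to assemble the bijection by combining three facts already established: that weak $F$-matroids with underlying matroid $M$ correspond to morphisms $k_M^w \to F$ (Proposition \ref{prop: weak realization space as morphism set from the weak universal pasture}), that two such weak matroids are rescaling equivalent if and only if the associated characteristic morphisms agree on the subpasture $k_M^f$ (Theorem \ref{thm: the foundation represents rescaling classes}), and that $k_M^w \simeq k_M^f[T_1^{\pm1},\dotsc,T_s^{\pm1}]$ with the $T_i$ freely adjoined (Corollary \ref{cor: the weak universal pasture as Laurent seris over the foundation with s=n-c}). First I would observe that the map $\Phi$ is well-defined: by Proposition \ref{prop: weak realization space as morphism set from the weak universal pasture} a weak Grassmann-Pl\"ucker function $\Delta:\binom Er\to F$ representing $M$ gives a characteristic morphism $\chi_{[\Delta]}:k_M^w\to F$, and restricting it to $k_M^f$ yields a morphism $k_M^f\to F$; by Theorem \ref{thm: the foundation represents rescaling classes} (implication \eqref{equiv1}$\Rightarrow$\eqref{equiv3}) this restriction depends only on the rescaling class of $[\Delta]$, so $\Phi$ is a well-defined map on $\cX_M^f(F)$.

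For injectivity of $\Phi$, I would invoke the reverse implication \eqref{equiv3}$\Rightarrow$\eqref{equiv1} of Theorem \ref{thm: the foundation represents rescaling classes}: if $\chi_{[\Delta]}\vert_{k_M^f}=\chi_{[\Delta']}\vert_{k_M^f}$ then $\Delta$ and $\Delta'$ are rescaling equivalent, i.e.\ they define the same point of $\cX_M^f(F)$. For surjectivity, given a morphism $g:k_M^f\to F$, I would use Corollary \ref{cor: the weak universal pasture as Laurent seris over the foundation with s=n-c} to extend $g$ to a morphism $\tilde g:k_M^w\simeq k_M^f[T_1^{\pm1},\dotsc,T_s^{\pm1}]\to F$, for instance by sending each $T_i$ to $1$ (this is a legitimate morphism of ordered blueprints since the $T_i$ are freely adjoined and the additive relations of $k_M^w$ all come from $k_M^f$). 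By Proposition \ref{prop: weak realization space as morphism set from the weak universal pasture}, $\tilde g$ corresponds to a weak $F$-matroid $[\Delta]$ with underlying matroid $M$, and $\chi_{[\Delta]}=\tilde g$ by construction, so $\Phi([\Delta])=\tilde g\vert_{k_M^f}=g$. Functoriality in $F$ is immediate: for a morphism $h:F\to F'$ of pastures, pushing forward a weak Grassmann-Pl\"ucker function along $h$ corresponds to postcomposing the characteristic morphism with $h$, and both the rescaling action and the restriction to $k_M^f$ are compatible with this, so $\Phi$ commutes with the induced maps $\cX_M^f(F)\to\cX_M^f(F')$ and $\Hom(k_M^f,F)\to\Hom(k_M^f,F')$.

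I expect the main (minor) obstacle to be bookkeeping around the extension $\tilde g$ in the surjectivity step: one must check that sending $T_i\mapsto 1$ really does define a morphism of \emph{ordered} blueprints (order-preservation), but this is exactly guaranteed by the structure of $B[T_1^{\pm1},\dotsc,T_s^{\pm1}]$ discussed just before Corollary \ref{cor: the weak universal pasture as Laurent seris over the foundation}, where it is noted that all additive relations come from $B$. The genuinely substantive content of the corollary is entirely contained in Theorem \ref{thm: the foundation represents rescaling classes} and the structure theorem for $k_M^w$; the present statement is essentially a repackaging, so beyond these citations the proof is a short formal argument.
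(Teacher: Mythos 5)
Your proposal is correct and follows essentially the same route as the paper: well-definedness and injectivity are read off directly from Theorem~\ref{thm: the foundation represents rescaling classes}, and surjectivity is obtained by extending a given morphism $k_M^f\to F$ over the free Laurent variables of $k_M^w\simeq k_M^f[T_1^{\pm1},\dotsc,T_s^{\pm1}]$ and appealing to Proposition~\ref{prop: weak realization space as morphism set from the weak universal pasture}. The only cosmetic difference is that the paper first extends to a retraction $k_M^w\to k_M^f$ (sending the $T_i$ to units of $k_M^f$) and then composes with $\chi^f$, whereas you send the $T_i$ directly to $1\in F^\times$ — the same construction with a fixed choice.
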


\begin{proof}
 By Theorem \ref{thm: the foundation represents rescaling classes}, two Grassmann-Pl\"ucker functions $\Delta$ and $\Delta'$ that represent $M$ are rescaling equivalent if and only if $\chi_{[\Delta]}\vert_{k_M^f}=\chi_{[\Delta']}\vert_{k_M^f}$. This shows that $\Phi$ is well-defined and injective. The functoriality of $\Phi$ in $F$ is clear.
 
 To show surjectivity, consider a morphism $\chi^f:k_M^f\to F$. By Corollary \ref{cor: the universal pasture as Laurent seris over the foundation}, there is an isomorphism $k_M^w\simeq k_M^f[T_1^{\pm1},\dotsc,T_s^{\pm1}]$. Thus sending $T_1,\dotsc,T_s$ to any choice of units of $k_M^f$ defines an extension of the identity map $k_M^f\to k_M^f$ to a morphism $k_M^w\to k_M^f$. Composing this morphism with $\chi^f:k_M^f\to F$ yields a morphism $\chi:k_M^w\to F$ and thus an $F$-matroid $[\Delta]$. By construction, it is clear that $\Phi([\Delta])=\chi\vert_{k_M^f}=\chi^f$. This verifies the surjectivity of $\Phi$ and concludes the proof.
\end{proof}

The following corollary is somewhat surprising, given that the foundation of $F$ is intimately connected with {\it weak representability} but {\it a priori} has little to do with {\it strong representability}:

\begin{cor}\label{cor: strongly representable over B iff only if strongly representable over its fundament}
 Let $M$ be a matroid and $F$ an idyll. Then $M$ is strongly representable over $F$ if and only if it is strongly representable over $F^\found$.
\end{cor}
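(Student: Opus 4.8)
The statement to prove is Corollary \ref{cor: strongly representable over B iff only if strongly representable over its fundament}: a matroid $M$ is strongly representable over a pasture $F$ if and only if it is strongly representable over $F^\found$.

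The plan is to run the same argument as in the proof of Theorem \ref{thm: weakly representable over F and morphism from the fundament}, but using the strong realization space $\cX_M(F)$ in place of the weak one $\cX_M^w(F)$ and the (strong) universal pasture $k_M$ in place of $k_M^w$. The one direction is trivial: the inclusion $F^\found\to F$ induces a map $\cX_M(F^\found)\to \cX_M(F)$ (push-forward of Grassmann-Pl\"ucker functions along $F^\found\hookrightarrow F$, using the functoriality of $\cMat(r,E)$ from section \ref{subsubsection: pushforwards of matroids over ordered blueprints}), so strong representability over $F^\found$ implies strong representability over $F$. For the converse, suppose $M$ is strongly representable over $F$. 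By Theorem \ref{thm: realization space as morphism set from the universal pasture}, $\cX_M(F)\neq\emptyset$ gives a morphism $k_M\to F$. The key point is that the image of any morphism $k_M\to F$ lands inside the subpasture $F^\found$: indeed $k_M$ is generated over $\Funpm$ by fundamental elements — more precisely, by cross ratios together with $\epsilon$, by the analogue of Lemma \ref{lemma: the foundation of a matroid is generated by the universal cross ratios} applied to $k_M$ (the additive relations of $k_M$ are the Pl\"ucker relations, and each fundamental element of $k_M$ arises from a relation $0\leq a+b+\epsilon^i$), and a morphism of pastures carries fundamental elements to fundamental elements, hence carries $k_M^\found$ into $F^\found$. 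Since taking foundations is idempotent and $k_M$ is a pasture (Proposition \ref{prop: embedding of matroids into the matroid space and its image}), we need $k_M^\found=k_M$ — but actually this is \emph{not} automatic, so the argument must instead be: any morphism $k_M\to F$ factors through $k_M^\found\to F^\found$ composed with $F^\found\hookrightarrow F$, and more usefully, we want a morphism $k_M\to F^\found$.

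So the cleaner route is the following. First, there is a canonical retraction-type morphism $k_M^w\to k_M^f$ (this is exactly how \eqref{weakrep3}$\Rightarrow$\eqref{weakrep2} was handled in Theorem \ref{thm: weakly representable over F and morphism from the fundament}, using Corollary \ref{cor: the weak universal pasture as Laurent seris over the foundation}: write $k_M^w\simeq k_M^f[T_1^{\pm1},\dots,T_s^{\pm1}]$ and send each $T_i\mapsto 1$). Next, I would establish the analogous fact for the strong universal pasture: there is a surjection-like morphism $\pi:k_M\to k_M^f$ whose restriction to $k_M^f\subseteq k_M$ is the identity. One way to get this: $k_M$ is a quotient of $k_M^w$ (more Pl\"ucker relations are imposed to pass from the weak matroid space to the matroid space; concretely there is a morphism $k_M^w\to k_M$ coming from the graded morphism $\gamma^w$, compatibly with residue fields and pasteurization), and the foundation $k_M^f$ of $M$ is defined as $(k_M^w)^\found$, which — since the \emph{extra} relations imposed on $k_M^w$ to obtain $k_M$ do not identify any elements in the subpasture generated by cross ratios and $\epsilon$ — maps isomorphically onto $(k_M)^\found$. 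Granting this, compose: given a strong $F$-representation of $M$ we get $\chi_M\colon k_M\to F$; its restriction to $(k_M)^\found\cong k_M^f$ is a morphism $k_M^f\to F$ landing in $F^\found$; precomposing with the identification and postcomposing with $\pi\colon k_M\to k_M^f$ (sections of the Laurent-monomial variables chosen to map to $1$, lifted through the $k_M^w\to k_M$ quotient) produces a morphism $k_M\to F^\found$. By Theorem \ref{thm: realization space as morphism set from the universal pasture} applied over $F^\found$, this yields an element of $\cX_M(F^\found)$, i.e.\ a strong $F^\found$-matroid structure on $M$. This proves the converse.

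The main obstacle is justifying the existence of the morphism $\pi\colon k_M\to k_M^f$ splitting the inclusion $k_M^f\hookrightarrow k_M$ — equivalently, showing $k_M\simeq k_M^f[T_1^{\pm1},\dots,T_s^{\pm1}]$ (a strong-matroid analogue of Corollary \ref{cor: the weak universal pasture as Laurent seris over the foundation with s=n-c}). The weak version rested on Theorem \ref{thm: the unit group of the weak universal pasture is the Tutte group} and Lemma \ref{lemma: the cokernel of deg_E is free}, which identify $(k_M^w)^\times$ with the Tutte group and $(k_M^f)^\times$ with the inner Tutte group, with free quotient. For $k_M$ the unit group $(k_M)^\times$ is a quotient of the Tutte group, and one must check that the composite $(k_M)^\times \twoheadleftarrow (k_M^w)^\times \hookleftarrow (k_M^f)^\times$ stays injective on the inner-Tutte-group part and that the quotient $(k_M)^\times/(k_M^f)^\times$ remains free — i.e.\ that pasteurizing and imposing the higher Pl\"ucker relations does not collapse the cross-ratio subgroup. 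This should follow because all additive relations, weak or strong, in these pastures are Pl\"ucker relations whose "normalized" $3$-term consequences (with a distinguished term equal to $\epsilon^i$) are exactly the cross-ratio relations of Lemma \ref{lemma: relations for cross ratios from the 3-term Pluecker relations}, so the foundation is insensitive to whether one works with $k_M^w$ or $k_M$; but spelling this out carefully — in particular ruling out that a higher Pl\"ucker relation could force a new multiplicative relation among cross ratios not already present in $k_M^w$ — is the delicate step, and I would isolate it as a lemma (the foundation functor commutes with the quotient $k_M^w\to k_M$, i.e.\ $(k_M)^\found=(k_M^w)^\found=k_M^f$) before assembling the final argument.
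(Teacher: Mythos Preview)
Your approach has a genuine gap, and you have correctly put your finger on it: the splitting $k_M\simeq k_M^f[T_1^{\pm1},\dots,T_s^{\pm1}]$ (equivalently, the existence of a retraction $\pi\colon k_M\to k_M^f$) is not established anywhere in the paper, and it is essentially as strong as the corollary you are trying to prove. Indeed, a morphism $k_M\to k_M^f$ is, by Theorem~\ref{thm: realization space as morphism set from the universal pasture}, the same as a \emph{strong} $k_M^f$-matroid lifting $M$; so your ``lemma'' amounts to the special case $F=k_M$ (granting $(k_M)^\found=k_M^f$, which you also have not justified). The worry you raise --- that a higher Pl\"ucker relation could force new identifications among cross ratios after pasteurization --- is exactly the obstruction, and nothing in the weak theory (Corollary~\ref{cor: the weak universal pasture as Laurent seris over the foundation}, Theorem~\ref{thm: the inner Tutte group is generated by cross ratios}) rules it out directly.

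The paper avoids this entirely by arguing through rescaling classes rather than through the structure of $k_M$. Given a strong $F$-matroid $\Delta$, restrict $\chi_{[\Delta]}\colon k_M\to F$ along $k_M^f\to k_M^w\to k_M$ to get $k_M^f\to F$; its image lies in $F^\found$, so by Corollary~\ref{cor: rescaling classes over M as morphisms from the foundation} it yields a (weak) $F^\found$-matroid $\Delta'$ in the same rescaling class as $\Delta$ over $F$. Now the point you are missing: the torus action $t.\Delta$ multiplies each Pl\"ucker coordinate by a unit, so it manifestly preserves \emph{all} Pl\"ucker relations --- rescaling carries strong matroids to strong matroids. Hence $\iota\circ\Delta'$ is strong over $F$; and since $F^\found$ is a subblueprint of $F$ (so its partial order is the restriction of that of $F$), the very same inequalities $0\leq\sum\epsilon^k\Delta'(\cdots)$ hold already in $F^\found$. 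Thus $\Delta'$ is strong over $F^\found$, and no splitting of $k_M$ is needed.
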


\begin{proof}
 The inclusion $F^\found\to F$ induces a morphism $\cX_M(F^\found)\to\cX_M(F)$, which shows that $M$ is strongly representable over $F$ if it is strongly representable over $F^\found$.
 
 Conversely, assume that $M$ is strongly representable over $F$ by a Grassmann-Pl\"ucker function $\Delta:\binom Er\to F$ and let $\chi_{[\Delta]}:k_M\to F$ be the characteristic morphism. Then the composition with $k_M^f\to k_M^w\to k_M$ induces a morphism $\chi_{[\Delta]}^f:k_M^f\to F$, which corresponds to the rescaling class of $[\Delta]$ by Corollary \ref{cor: rescaling classes over M as morphisms from the foundation}. Since the image of $\chi_{[\Delta]}^f$ is contained in $F^\found$, the rescaling class of $[\Delta]$ comes from a class of an $F^\found$-matroid $[\Delta']$ that is represented by a Grassmann-Pl\"ucker function $\Delta':\binom Er\to F^\found$. 
 
 Thus $\Delta$ and $\iota\circ\Delta'$ represent the same rescaling class over $F$, where $\iota:F^\found\to F$ is the inclusion, i.e.\ $[\iota\circ\Delta']=t.[\Delta]$ for some $t\in T(F)$. Since the $T(F)$-action maps strong matroids to strong matroids, $\Delta'$ satisfies all Pl\"ucker relations. This shows that $M$ is representable over $F^\found$.
\end{proof}

A consequence of Corollaries~\ref{cor: the universal pasture as Laurent seris over the foundation} and~\ref{cor: rescaling classes over M as morphisms from the foundation} is that if $F$ is a finite idyll (e.g. a finite field $\F_q$), the number of lifts of a matroid $M$ to $F$ is determined in terms of the number of rescaling classes of $M$ over $F$.

\begin{cor}\label{cor: numer of weak matroids and rescaling classes over a finite idyll}
 Let $F$ be a finite idyll with $q$ elements and $M$ a matroid of rank $r$ on $E$ with $c$ connected components.  Let $n = \#E$. Then 
 \[
  \#\cX^w_M(F) \ = \ (q-1)^{{n-c}} \, \cdot \, \#\cX_M^f(F).
 \]
\end{cor}

\begin{proof}
 By Corollary \ref{cor: the universal pasture as Laurent seris over the foundation with s=n-c}, we have $k_M^w=k_M^f[T_1^{\pm1},\dotsc,T_s^{\pm1}]$ for some elements $T_1,\dotsc,T_s\in k_M^w$ where $s=n-c$. Therefore every morphism $f:k_M^f\to F$ has $(q-1)^s$ extensions to a morphism $g:k_M^w\to F$, corresponding to the choices of images $g(T_i)\in F^\times$. Thus by Corollary~\ref{cor: rescaling classes over M as morphisms from the foundation} and Proposition~\ref{prop: weak realization space as morphism set from the universal pasture}, we have
 \[
  \#\cX_M^w(F) \ = \ \#\Hom(k_M^w,F) \ = \ (q-1)^s \, \cdot \, \#\Hom(k_M^f,F) \ = \ (q-1)^s\, \cdot \, \#\cX_M^f(F). \qedhere
 \]
\end{proof}

\begin{rem}
 Over a field $k$, the weak realization space $\cX^w_M(k)$ is naturally identified with the $k$-rational points of a locally closed subscheme $X^w_M$ of the Grassmannian $\Gr(r,n)$.  By Corollary~\ref{cor: stabilizer and orbit of a matroid under the torus action}, the natural action of the diagonal torus $T \subset {\mathrm GL}_n$ on $X^w_M$ factors through a free action by a quotient torus $T'$.  
Thus there exists a GIT quotient $X^f_M=X^w_M/T'$ as a scheme, and this quotient satisfies $X^f_M(k)=\cX^f_M(k)$ for every field $k$.
\end{rem}



\subsection{Foundations of binary matroids}
\label{subsection: foundations of binary matroids}

A \emph{binary matroid} is a matroid that is representable\footnote{Since fields are perfect idylls, cf.\ Remark \ref{rem: perfect tracts}, it does not matter here if we talk about weak or strong representability.} over the finite field $\F_2$ with two elements. In this section, we classify the foundations of binary matroids, and draw conclusions about the representability of binary matroids. 

In the following, we consider $\F_2$ as the ordered blueprint $\F_2=\bpgenquot{\{0,1\}}{0\leq 1+1}$, which results from the embedding $(-)^\oblpr:\PartFields\to\OBlpr$. Note that by Lemma \ref{lemma: matroids for ordered blue fields are the same as matroids over the underlying tract}, a matroid is representable over $\F_2$ if and only if it is representable over $\bpgenquot{\{0,1\}}{0\= 1+1}$, which would be the alternative choice of realizing $\F_2$ as an ordered blueprint; cf.\ section \ref{subsubsection: relation to the universal pasture} for more details.

\begin{thm}\label{thm: foundations of binary matroids}
 A matroid is binary if and only if its foundation is either $\Funpm$ or $\F_2$.
\end{thm}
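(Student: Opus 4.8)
The plan is to prove both directions using the machinery of foundations developed in this section, together with the classification of cross ratios. First, suppose $M$ is binary, so there is a morphism $\F_2^\tract\to\F_2$-matroid structure on $M$; equivalently, by Theorem~\ref{thm: weakly representable over F and morphism from the fundament}, there is a morphism $k_M^f\to\F_2$. I would argue that the foundation $k_M^f$ is generated over $\Funpm$ by the universal cross ratios $\Cr_M^\univ(\Omega_M)$ (Lemma~\ref{lemma: the foundation of a matroid is generated by the universal cross ratios}). For a binary matroid, every cross ratio must map to an element of $\F_2^\times=\{1\}$ under the induced map, and more importantly, I claim that in $k_M^f$ itself every universal cross ratio lies in $\{1,\epsilon\}$. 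The key point is that when $M$ is binary, every non-degenerate quadrangle in $\Omega_M$ actually forces a degeneracy-like collapse: if a cross ratio $T=\Cr_M^\univ(\cI)$ is non-degenerate, then by Lemma~\ref{lemma: relations for cross ratios from the 3-term Pluecker relations} we have $0\leq T+\epsilon^p T'+\epsilon^q$ where $T'=\Cr_M^\univ(\sigma.\cI)$; but binariness of $M$ (via the $\F_2$-structure) means the underlying matroid has no $U_{2,4}$ minor on the relevant ground set, which translates into the statement that every such $\cI\in\Omega_M$ is degenerate. Hence all universal cross ratios lie in $\{1,\epsilon\}$, so $k_M^f$ is generated over $\Funpm$ by $\epsilon$ alone, i.e.\ $k_M^f=\Funpm$. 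Conversely, if some cross ratio is non-trivial in $k_M^f$ we get an additive relation forcing $\F_2$ to receive a non-trivial fundamental element; the only pasture extension of $\Funpm$ that still admits a morphism to $\F_2$ and contains such an element is $\F_2$ itself, since in $\F_2$ the relation $0\leq 1+1$ is the only non-trivial additive relation.

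For the ``only if'' direction in the sharper form, I would proceed as follows. If $k_M^f=\Funpm$, then since $\Funpm$ is initial among pastures there is a (unique) morphism $\Funpm\to\F_2$, so by Theorem~\ref{thm: weakly representable over F and morphism from the fundament} the matroid $M$ is (weakly, hence strongly, since $\F_2$ is perfect) representable over $\F_2$, i.e.\ $M$ is binary. If $k_M^f=\F_2$ (as an ordered blueprint $\bpgenquot{\{0,1\}}{0\leq 1+1}$), then the identity morphism $k_M^f\to\F_2$ exhibits $M$ as binary. So in either case $M$ is binary. This gives one implication essentially for free from the representability theorem.

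The bulk of the work, and the main obstacle, is the reverse: showing that a binary matroid has foundation \emph{exactly} $\Funpm$ or $\F_2$ and nothing larger. The heart of this is a structural fact about binary matroids that I would need to invoke or reprove: in a binary matroid, for every $\cI=(I,i_1,i_2,i_3,i_4)\in\Omega_M$, either $\cI$ is degenerate, or the three ``partial'' bases on $\{i_1,i_2,i_3,i_4\}$ over $I$ behave in a constrained way. Concretely, I expect to use the fact that a matroid is binary if and only if it has no $U_{2,4}$ minor (a classical theorem of Tutte), combined with the observation that a non-degenerate $\cI$ with the cross ratio not equal to $1$ or $\epsilon$ would, after restriction and contraction to the rank-$2$ minor on $\{i_1,i_2,i_3,i_4\}$ supported over $I$, produce a $U_{2,4}$ minor carrying a cross-ratio that is genuinely a free variable in the foundation. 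Thus for binary $M$ all non-trivial cross ratios collapse to $\{1,\epsilon\}$. To pin down whether the foundation is $\Funpm$ versus $\F_2$, I would note that the degenerate cross ratios, which always lie in $\{1,\epsilon\}$ by Lemma~\ref{lemma: relations for cross ratios from the 3-term Pluecker relations}, only impose the relation $\epsilon=1$ (giving $\F_2$) or impose nothing beyond $\Funpm$, depending on the parity factor $p$; which of the two occurs is controlled by whether $M$ is regular, and indeed one expects $k_M^f=\Funpm$ precisely when $M$ is regular and $k_M^f=\F_2$ precisely when $M$ is binary but not regular --- consistent with the statement in Section~\ref{subsubsection: intro - foundations of binary and regular matroids}. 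The subtle point I would have to be careful about is the bookkeeping of signs $\epsilon^p$, $\epsilon^q$ in the $3$-term relations and making sure that the collapse of cross ratios to $\{1,\epsilon\}$ genuinely forces $k_M^f$ to be a quotient of $\Funpm[\text{nothing}]$ rather than hiding a free multiplicative parameter; this is where Lemma~\ref{lemma: the foundation of a matroid is generated by the universal cross ratios} is indispensable, since it guarantees that once the cross ratios are trivial-valued there are no other generators to worry about.
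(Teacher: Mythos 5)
Your overall strategy is sound, and both implications eventually land in the right place, but your route for the forward direction (binary $\Rightarrow$ foundation is $\Funpm$ or $\F_2$) differs from the paper's and is a bit more circuitous. The paper argues directly from the $\F_2$-valued Grassmann--Pl\"ucker function: a $3$-term Pl\"ucker relation $0 \leq a + b + c$ holds in $\F_2 = \bpgenquot{\{0,1\}}{0\leq 1+1}$ exactly when an even number (so $0$ or $2$) of the terms $a,b,c$ are nonzero; this immediately rules out any non-degenerate $\cI \in \Omega_M$, since non-degeneracy is precisely the condition that all three products are nonzero. You instead appeal to the classical characterization of binary matroids as those without a $U_{2,4}$ minor and then observe that a non-degenerate $\cI$ would produce a $U_{2,4}$ minor of $M/I$ restricted to $\{i_1,\dots,i_4\}$. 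Your observation is correct (and uses only the elementary direction of Tutte's theorem), but the parity argument is cleaner, shorter, and completely self-contained; it is also the key mechanism that later gives the new proof of Tutte's regularity characterization without invoking his homotopy theorem, so it is worth internalizing.

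Two smaller issues with your writeup. First, the phrase ``so $k_M^f$ is generated over $\Funpm$ by $\epsilon$ alone, i.e.\ $k_M^f=\Funpm$'' is not a valid inference: generation of $k_M^f$ over $\Funpm$ by elements of $\{1,\epsilon\}$ only tells you that $k_M^f$ is a quotient of $\Funpm$ --- it could still be $\F_2$ if the Pl\"ucker relations force $1=\epsilon$ in the pasteurization. You recognize this later, but the sentence as written asserts the wrong conclusion. Second, the ``which of the two occurs is controlled by whether $M$ is regular'' is circular as a step in \emph{this} proof (it is the content of Theorem~\ref{thm: characteriztaion of regular matroids}, which is proved afterward using the present theorem); the correct standalone dichotomy is that of the paper: either every surviving $2$-term Pl\"ucker relation in $k(x_M^w)$ becomes $0\leq 1+\epsilon$ after clearing monomials, or some becomes $0\leq 1+1$, and in the latter case uniqueness of weak inverses in the pasteurization forces $\epsilon=1$, giving $k_M^f=\F_2$.
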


\begin{proof}
Let $M$ be a matroid and $k_M^f$ its foundation. If $k_M^f$ is $\Funpm$ or $\F_2$, then there is a morphism $k_M^f\to\F_2$. Thus $M$ is binary by Theorem~\ref{thm: weakly representable over F and morphism from the fundament}.
 
 Conversely, let $M$ be a binary matroid that is represented by a Grassmann-Pl\"ucker function $\Delta:\binom Er\to\F_2$, i.e.\ $\Delta(I)=1$ if $I$ is a basis of $M$ and $\Delta(I)=0$ otherwise. A $3$-term Pl\"ucker relation is satisfied over $\F_2$ if and only if an even number of its terms are nonzero. This means that either all terms are zero, and thus there are not cross ratios for the corresponding tuple of $\Omega_M$, or precisely two terms are nonzero, in which case all involved cross ratios are $1$ or $\epsilon$, cf.\ Lemma \ref{lemma: relations for cross ratios from the 3-term Pluecker relations}. By Lemma \ref{lemma: the foundation of a matroid is generated by the universal cross ratios}, the foundation of $M$ is generated by the cross ratios over $\Funpm$. 

 This leads to the following two possibilities for the foundation $k_M^f$ of $M$: all nontrivial Pl\"ucker relations are of the form $0\leq 1+\epsilon$ (up to a scalar multiple) and $k_M^f=\Funpm$; or there exists a Pl\"ucker relation that is of the form $0\leq 1+1$ (up to a scalar multiple). In the latter case, $1$ is a weak inverse of $1$ and becomes identified with $\epsilon$ in $k_M^w=k(x_M^w)^\pm$. Thus $k_M^f=\F_2$. This proves the theorem.
\end{proof}

Note that both situations of Theorem \ref{thm: foundations of binary matroids} occur. The case $k_M^f=\Funpm$ occurs for regular matroids $M$, which is investigated in more detail in section \ref{subsection: foundations of regular matroids}. And we have $k_M^f=\F_2$ if $M$ is a binary matroid that is not regular. An example of such a matroid is the Fano plane, which is not representable over any field of characteristic different from $2$, cf.\ \cite[para.\ 16]{Whitney35}.

In fact, it is a classical result that a binary matroid fails to be regular only if it contains the Fano plane or its dual as a minor, cf.\ \cite[(4.5)]{Tutte58b}. Thus binary matroids are either representable over every field or only over fields of characteristic $2$. Theorem \ref{thm: foundations of binary matroids} provides a proof of this result which does not require us to consider minors or the Fano plane.

\begin{cor}\label{cor: representability of binary matroids}
 A binary matroid is either representable over every field or it is not representable over any field of characteristic different from $2$.
\end{cor}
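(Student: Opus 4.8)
The plan is to deduce Corollary~\ref{cor: representability of binary matroids} directly from Theorem~\ref{thm: foundations of binary matroids}, which tells us that the foundation $k_M^f$ of a binary matroid $M$ is either $\Funpm$ or $\F_2$, together with Theorem~\ref{thm: weakly representable over F and morphism from the fundament}, which says that $M$ is (weakly, equivalently strongly, since fields are perfect pastures by Remark~\ref{rem: perfect tracts}) representable over a pasture $F$ if and only if there is a morphism $k_M^f \to F$.

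First I would dispose of the case $k_M^f = \Funpm$. Since $\Funpm$ is the initial object in $\OBlpr^\pm$, there is a (unique) morphism $\Funpm \to F^\oblpr$ for every field $K$ (viewed as a pasture via the embedding $\PartFields \to \OBlpr^\pm$ or $\HypFields \to \OBlpr^\pm$ of Theorem~\ref{thm: hyperfuzz to bluetracts}). Hence by Theorem~\ref{thm: weakly representable over F and morphism from the fundament}, $M$ is representable over every field, and in particular we are in the first alternative of the statement.

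Next I would treat the case $k_M^f = \F_2$. Here I claim that a morphism $\F_2 \to K^\oblpr$ of pastures exists if and only if $K$ has characteristic $2$. Recall that $\F_2 = \bpgenquot{\{0,1\}}{0 \leq 1+1}$, so a morphism $\F_2 \to K^\oblpr$ must send $0 \mapsto 0$, $1 \mapsto 1$, and must be order-preserving, which forces the relation $0 \leq 1+1$ to hold in $K^\oblpr$; since the partial order on $K^\oblpr$ is generated by the relations $0 \leq \sum a_i$ with $\sum a_i = 0$ in $K$, this means $1 + 1 = 0$ in $K$, i.e.\ $\operatorname{char} K = 2$. Conversely, if $\operatorname{char} K = 2$ then the inclusion $\{0,1\} \hookrightarrow K$ is multiplicative, preserves $0$ and $1$, and the relation $0 \leq 1+1$ is respected since $1+1 = 0$ in $K$; hence it defines a morphism $\F_2 \to K^\oblpr$. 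Combining this with Theorem~\ref{thm: weakly representable over F and morphism from the fundament} applied to $F = K^\oblpr$, we conclude that $M$ is representable over $K$ if and only if $\operatorname{char} K = 2$, so in this case $M$ is not representable over any field of characteristic different from $2$, which is the second alternative.

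Since every binary matroid falls into exactly one of these two cases by Theorem~\ref{thm: foundations of binary matroids}, the corollary follows. The only point requiring any care is the characterization of morphisms out of $\F_2$, and even that is immediate once one unwinds the definition of the partial order on the ordered blueprint associated to a field; there is no serious obstacle, as all the substantive work has been front-loaded into Theorems~\ref{thm: foundations of binary matroids} and~\ref{thm: weakly representable over F and morphism from the fundament}. I would also remark, as the paper does, that this argument bypasses the classical route through excluded minors (the Fano plane), since Theorem~\ref{thm: foundations of binary matroids} is proved purely by inspecting $3$-term Pl\"ucker relations over $\F_2$ and invoking Lemma~\ref{lemma: the foundation of a matroid is generated by the universal cross ratios}.
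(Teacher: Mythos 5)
Your proof is correct and follows essentially the same route as the paper: invoke Theorem~\ref{thm: foundations of binary matroids} to reduce to the two cases $k_M^f = \Funpm$ or $k_M^f = \F_2$, then apply Theorem~\ref{thm: weakly representable over F and morphism from the fundament} to translate representability over a field into the existence of a morphism out of the foundation. Your write-up is in fact somewhat more detailed than the paper's, which simply asserts without elaboration that no morphism $\F_2 \to k$ exists when $\operatorname{char} k \neq 2$; your unwinding of the partial order on $K^\oblpr$ (using that $0 \leq \sum a_i$ holds there precisely when $\sum a_i = 0$ in $K$) supplies the justification the paper leaves implicit.
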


\begin{proof}
 Let $M$ be a binary matroid. By Theorem \ref{thm: foundations of binary matroids}, the foundation of $M$ is either $\Funpm$ or $\F_2$. If $k_M^f=\Funpm$ and $k$ is a field, then there exists a map $\Funpm\to k$. Thus $M$ is representable over $k$ by Theorem \ref{thm: weakly representable over F and morphism from the fundament}. 
 
 If $k_M^f=\F_2$ and $k$ is a field of characteristic different from $2$, then there exists no morphism $\F_2\to k$. By Theorem \ref{thm: weakly representable over F and morphism from the fundament}, $M$ is not representable over $k$. 
\end{proof}

The following has already been observed in \cite{Brylawski-Lucas73} in the context of fields and has been extended in \cite[Thm.\ 6.9]{Wenzel91} to fuzzy rings:

\begin{cor}\label{cor: binary matroids allow at most one rescaling class over any idyll}
 A binary matroid has at most one rescaling class over every idyll.
\end{cor}

\begin{proof}
 Let $M$ be a binary matroid and $F$ an idyll. By Corollary \ref{cor: rescaling classes over M as morphisms from the foundation}, the classes in $\cX^f_M(F)$ correspond bijectively to the morphisms $k_M^f\to F$. For both possibilities of $k_M^f$, there is at most one such morphism. Thus the theorem follows.
\end{proof}


\subsection{Foundations of regular matroids}
\label{subsection: foundations of regular matroids}

Our results on binary matroids lead to a short proof of Tutte's characterization of regular matroids. While Tutte's original proof was based on his homotopy theory for matroids, cf.\ \cite{Tutte58a} and \cite{Tutte58b}, shorter and more elementary proofs were found later on, cf.\ \cite{Gerards89} and \cite[Thm.\ 3.1.6]{vanZwam09}.

Our proof has some ingredients in common with these latter approaches, but in contrast to other proofs, it is based on the observation that the universal idyll of a regular matroid is $\Funpm$.

\begin{thm}\label{thm: characteriztaion of regular matroids}
 Let $M$ be a matroid. Then the following are equivalent.
 \begin{enumerate}
  \item\label{reg1} $M$ is regular;
  \item\label{reg2} the foundation of $M$ is $\Funpm$;
  \item\label{reg3} $M$ is weakly representable over every idyll;
  \item\label{reg4} $M$ is binary and weakly representable over an idyll with $1\neq\epsilon$.
 \end{enumerate}
\end{thm}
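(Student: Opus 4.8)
The strategy is to close a cycle of implications \eqref{reg1}$\Rightarrow$\eqref{reg2}$\Rightarrow$\eqref{reg3}$\Rightarrow$\eqref{reg4}$\Rightarrow$\eqref{reg1}, using the already-established machinery (Theorem~\ref{thm: weakly representable over F and morphism from the fundament}, Theorem~\ref{thm: foundations of binary matroids}, and the cross-ratio analysis of Lemmas~\ref{lemma: relations for cross ratios from the 3-term Pluecker relations} and~\ref{lemma: the foundation of a matroid is generated by the universal cross ratios}). I would first recall that $M$ is regular if and only if $M$ is weakly representable over $\Funpm$; this is the content of section~\ref{subsubsection: relation to regular matroids} (via \cite[Prop.\ 4.3]{Semple-Whittle96} and Corollary~\ref{cor: a matroid is representable over a partial field iff it is the image of a P-matroid}), together with Remark~\ref{rem: perfect tracts} (fields and $\Funpm$ are perfect, so weak $=$ strong there).

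\emph{\eqref{reg1}$\Rightarrow$\eqref{reg2}:} Suppose $M$ is regular, so $M$ is representable over $\Funpm$ by some Grassmann-Pl\"ucker function $\Delta:\binom Er\to\Funpm$. Every entry $\Delta(I)$ lies in $\{0,1,\epsilon\}$, so every cross ratio $\Cr_\Delta(\cI)$ is a product of such elements, hence lies in $\{1,\epsilon\}\subset\Funpm$. By Theorem~\ref{thm: the foundation represents rescaling classes} (the equivalence of \eqref{equiv1} and \eqref{equiv3}) the restriction of the characteristic morphism $\chi_{[\Delta]}:k_M^w\to\Funpm$ to $k_M^f$ determines the rescaling class of $[\Delta]$, and since $k_M^f$ is generated over $\Funpm$ by the universal cross ratios (Lemma~\ref{lemma: the foundation of a matroid is generated by the universal cross ratios}), the morphism $\chi_{[\Delta]}|_{k_M^f}:k_M^f\to\Funpm$ is forced to send every generator into $\Funpm$ bijectively onto it; composed with the (unique) inclusion $\Funpm\to k_M^f$ this exhibits $k_M^f\to\Funpm\to k_M^f$ as the identity, so $k_M^f=\Funpm$. (Alternatively and more directly: the foundation is a subpasture of $k_M^w$ generated by the cross ratios, all of which become $1$ or $\epsilon$, so it equals $\Funpm$.) \emph{\eqref{reg2}$\Rightarrow$\eqref{reg3}:} If $k_M^f=\Funpm$, then for every pasture $F$ there is the unique morphism $\Funpm\to F$ (initiality of $\Funpm$ in $\OBlpr^\pm$), hence a morphism $k_M^f\to F$, so $M$ is weakly representable over $F$ by Theorem~\ref{thm: weakly representable over F and morphism from the fundament}. \emph{\eqref{reg3}$\Rightarrow$\eqref{reg4}:} Taking $F=\F_2$ shows $M$ is binary; taking $F=\S$ (the sign hyperfield, in which $1\neq\epsilon$), or any field of characteristic $\neq 2$, shows $M$ is weakly representable over a pasture with $1\neq\epsilon$.

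\emph{\eqref{reg4}$\Rightarrow$\eqref{reg1}:} This is the substantive implication. Since $M$ is binary, Theorem~\ref{thm: foundations of binary matroids} gives $k_M^f\in\{\Funpm,\F_2\}$. It suffices to rule out $k_M^f=\F_2$: if $k_M^f=\Funpm$ then $M$ is representable over $\Funpm$ (again by initiality and Theorem~\ref{thm: weakly representable over F and morphism from the fundament}), hence regular. Now suppose for contradiction that $k_M^f=\F_2$. By hypothesis $M$ is weakly representable over some pasture $F$ with $1\neq\epsilon$, so by Theorem~\ref{thm: weakly representable over F and morphism from the fundament} there is a morphism $k_M^f\to F$, i.e.\ a morphism $\F_2\to F$. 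But a morphism of pastures $\F_2\to F$ is a group homomorphism $\F_2^\times\to F^\times$ (trivial, since $\F_2^\times=\{1\}$) carrying the nullset of $\F_2$ into that of $F$; the nullset of $\F_2$ contains $1+1$, so $0\leq 1+1$ must hold in $F$, which combined with $0\leq 1+\epsilon$ and the uniqueness of weak inverses forces $1=\epsilon$ in $F$, contradicting the choice of $F$. Hence $k_M^f=\Funpm$ and $M$ is regular.

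\textbf{Main obstacle.} The only place where real work is needed is \eqref{reg4}$\Rightarrow$\eqref{reg1}, and even there the heavy lifting has been outsourced: the classification of foundations of binary matroids (Theorem~\ref{thm: foundations of binary matroids}), which rests on the cross-ratio computation and on the structural results about the inner Tutte group (ultimately Wenzel's Proposition~6.4, i.e.\ Theorem~\ref{thm: the inner Tutte group is generated by cross ratios}). Given that theorem, the argument above is essentially a formal manipulation: the key observation is simply that a morphism out of $\F_2$ into a pasture $F$ forces $1+1$ into the nullset of $F$ and hence $\epsilon=1$. I would double-check this last point carefully against the definitions of pasture and of morphism (and the uniqueness-of-weak-inverse lemma, Lemma~\ref{lemma: first properties of unique weak inverses}), since the whole implication hinges on it. I would also note explicitly, for the reader, that taking $F=\S$ in \eqref{reg4} recovers the classical statement that a matroid is regular if and only if it is both binary and orientable, since orientability means weak (equivalently strong, as $\S$ is perfect) representability over $\S$ and $1\neq\epsilon$ in $\S$.
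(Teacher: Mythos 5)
Your overall cycle structure $(1)\Rightarrow(2)\Rightarrow(3)\Rightarrow(4)\Rightarrow(1)$ is a reasonable reorganization (the paper proves $(1)\Rightarrow(2)\Rightarrow(3)\Rightarrow(1)$ together with $(3)\Rightarrow(4)\Rightarrow(2)$), and your implications $(2)\Rightarrow(3)$, $(3)\Rightarrow(4)$, and $(4)\Rightarrow(1)$ are all correct, with the $(4)\Rightarrow(1)$ argument being essentially the paper's $(4)\Rightarrow(2)$ followed by the trivial $(2)\Rightarrow(1)$.

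However, there is a genuine gap in your proof of $(1)\Rightarrow(2)$, in both the ``primary'' and the ``alternative'' versions. The cross ratios $\Cr_\Delta(\cI)$ that you show lie in $\{1,\epsilon\}$ are elements of $\Funpm$: they are the \emph{images} of the universal cross ratios $\Cr_M^\univ(\cI)\in k_M^f$ under the characteristic morphism $\chi_{[\Delta]}:k_M^w\to\Funpm$. Knowing that the images of generators lie in $\{1,\epsilon\}\subset\Funpm$ does not identify those generators inside $k_M^f$, and the composition $k_M^f\to\Funpm\to k_M^f$ you invoke has no reason to be the identity (that would already presuppose what you are trying to prove). Even the stronger fact that $\Cr_M^\univ(\cI)\in\{1,\epsilon\}$ \emph{in} $k_M^f$ itself — which is actually true, because regularity forces $M$ to be binary and hence every $\cI\in\Omega_M$ to be degenerate, after which Lemma~\ref{lemma: relations for cross ratios from the 3-term Pluecker relations} applies — would only yield $k_M^f\in\{\Funpm,\F_2\}$, since $\F_2$ is also generated over $\Funpm$ by elements of $\{1,\epsilon\}$ (there $1=\epsilon$). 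One still needs to rule out $k_M^f=\F_2$, which requires the separate observation that there is a morphism $k_M^f\to\Funpm$ (from representability of $M$ over $\Funpm$ via Theorem~\ref{thm: weakly representable over F and morphism from the fundament}) and no morphism $\F_2\to\Funpm$ — exactly the argument the paper gives and that you yourself use correctly in $(4)\Rightarrow(1)$. The cleanest repair is therefore to drop the flawed $(1)\Rightarrow(2)$ entirely, note the trivial $(1)\Rightarrow(4)$ ($M$ regular gives $M$ binary via $\Funpm\to\F_2$, and $1\neq\epsilon$ in $\Funpm$), and observe that your own $(4)\Rightarrow(1)$ argument in fact establishes $(4)\Rightarrow(2)$ along the way, which closes all four nodes.
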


\begin{proof}
 In the following, we show \eqref{reg1}$\Rightarrow$\eqref{reg2}$\Rightarrow$\eqref{reg3}$\Rightarrow$\eqref{reg1} and \eqref{reg3}$\Rightarrow$\eqref{reg4}$\Rightarrow$\eqref{reg2}. The implications \eqref{reg3}$\Rightarrow$\eqref{reg1} and \eqref{reg3}$\Rightarrow$\eqref{reg4} are trivial.

 We continue with \eqref{reg1}$\Rightarrow$\eqref{reg2}. Assume that $M$ is regular. Then $M$ is binary and its foundation is $\Funpm$ or $\F_2$ by Theorem \ref{thm: foundations of binary matroids}. The latter is not possible since there is no morphism from $\F_2$ to $\Funpm$. This establishes \eqref{reg1}$\Rightarrow$\eqref{reg2}.
 
 We continue with \eqref{reg2}$\Rightarrow$\eqref{reg3}. Assume that $k_M^f=\Funpm$. Let $F$ an idyll. Then there exists a morphism $k_M^f\to F$, and thus $M$ is weakly representable over $F$ by Theorem \ref{thm: weakly representable over F and morphism from the fundament}. This establishes \eqref{reg2}$\Rightarrow$\eqref{reg3}.

 We continue with \eqref{reg4}$\Rightarrow$\eqref{reg2}. Assume that $F$ is binary and weakly representable over an idyll $F$ with $1\neq\epsilon$. By Theorem \ref{thm: foundations of binary matroids}, the foundation of $M$ is $\Funpm$ or $\F_2$. Since $1\neq\epsilon$ in $F$, there is no morphism from $\F_2$ to $F$. Thus the foundation of $M$ must be $\Funpm$. This establishes \eqref{reg4}$\Rightarrow$\eqref{reg2} and concludes the proof of the theorem.
\end{proof}

A matroid is \emph{orientable} if it is representable over the sign hyperfield $\S$. 
Since both $\S$ and $\F_q$ (for $q$ odd) are idylls with $1\neq\epsilon$, we reobtain the following well-known consequences of Tutte's characterization of regular matroids (cf.\ 7.52 in \cite{Tutte65}) as immediate consequences of Theorem \ref{thm: characteriztaion of regular matroids}.

\begin{cor}\label{cor: regular iff. binary and orientable}
 A matroid is regular if and only if it is binary and orientable.\qed
\end{cor}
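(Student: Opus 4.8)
The statement to prove is Corollary~\ref{cor: regular iff. binary and orientable}: a matroid $M$ is regular if and only if it is binary and orientable. My plan is to deduce this directly from Theorem~\ref{thm: characteriztaion of regular matroids}, which provides the equivalence (among other things) of the assertions ``\emph{$M$ is regular}'' and ``\emph{$M$ is binary and weakly representable over some pasture $F$ with $1\neq\epsilon$}''. The entire task therefore reduces to checking that the hypothesis ``orientable'' matches up with the hypothesis ``weakly representable over a pasture with $1\neq\epsilon$'' in the presence of binariness.

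For the forward direction, suppose $M$ is regular. By Theorem~\ref{thm: characteriztaion of regular matroids}, condition \eqref{reg3} gives that $M$ is weakly representable over every pasture; in particular $M$ is weakly representable over the sign hyperfield $\S$, and since fields (hence, more relevantly here, $\S$) are perfect pastures by Remark~\ref{rem: perfect tracts}, weak and strong representability over $\S$ coincide, so $M$ is orientable. Also, regular matroids are binary: this follows either directly from condition \eqref{reg4} of Theorem~\ref{thm: characteriztaion of regular matroids}, or from Theorem~\ref{thm: foundations of binary matroids} together with the implication \eqref{reg1}$\Rightarrow$\eqref{reg2} (the foundation being $\Funpm$, which admits a morphism to $\F_2$, so $M$ is weakly, hence strongly, representable over $\F_2$).

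For the converse, suppose $M$ is binary and orientable. Orientability means $M$ is (strongly, hence weakly) representable over $\S$. The key point is that $\S$, viewed as a pasture via the functor $(-)^\oblpr\colon\HypFields\to\OBlpr^\pm$, has $1\neq\epsilon$: indeed, as recorded in section~\ref{subsubsection: oriented matroids and the MacPhersonian}, $\S=\bpquot{\{0,1,\epsilon\}}{\cR}$ with $\epsilon=-1\neq1$, since in the sign hyperfield $1\hyperplus 1=\{1\}$ does not contain $0$, so $1$ is not its own additive inverse. Therefore $M$ is binary and weakly representable over the pasture $\S$ with $1\neq\epsilon$, which is precisely condition \eqref{reg4} of Theorem~\ref{thm: characteriztaion of regular matroids}; hence $M$ is regular.

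This is essentially a formal unwinding, so there is no genuine obstacle; the only point requiring a moment's care is the verification that $1\neq\epsilon$ in $\S$, i.e.\ that the sign hyperfield (equivalently its associated pasture) really does have a nontrivial weak inverse of $1$. One should also note in passing that a field $\F_q$ with $q$ odd is another example of a pasture with $1\neq\epsilon$, so the same argument shows a binary matroid representable over $\F_q$ for some odd $q$ is regular; but for the stated corollary only the case of $\S$ is needed. Since the excerpt ends with this corollary already furnished with a \verb|\qed|, the proof is simply the two implications above, each a one-line citation of Theorem~\ref{thm: characteriztaion of regular matroids} together with the observation about $\epsilon$ in $\S$ and Remark~\ref{rem: perfect tracts} to bridge weak and strong representability.
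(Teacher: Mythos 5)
Your proof is correct and follows exactly the paper's intended argument: the paper dispatches this corollary with a \verb|\qed| after noting in the preceding sentence that $\S$ is a pasture with $1\neq\epsilon$, so the equivalence \eqref{reg1}$\Leftrightarrow$\eqref{reg4} of Theorem~\ref{thm: characteriztaion of regular matroids} applies directly. Your additional care in checking $1\neq\epsilon$ in $\S$ and invoking Remark~\ref{rem: perfect tracts} to bridge weak and strong representability over $\S$ is exactly the right unwinding of what the paper leaves implicit.
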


\begin{cor}\label{cor: regular iff. binary and ternary}
 A matroid is regular if and only if it is binary and representable over $\F_q$ for some odd prime-power $q$.\qed
\end{cor}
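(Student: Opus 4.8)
The statement to prove is Corollary \ref{cor: regular iff. binary and ternary}: a matroid $M$ is regular if and only if it is binary and representable over $\F_q$ for some odd prime-power $q$.

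The plan is to deduce this directly from Theorem \ref{thm: characteriztaion of regular matroids}, exactly in the spirit of the preceding Corollary \ref{cor: regular iff. binary and orientable}. First I would observe that one direction is immediate: if $M$ is regular, then by the implication \eqref{reg1}$\Rightarrow$\eqref{reg3} of Theorem \ref{thm: characteriztaion of regular matroids}, $M$ is weakly representable over every pasture; in particular $M$ is representable over $\F_2$ (so $M$ is binary) and over $\F_q$ for every odd prime-power $q$. (Here I use Remark \ref{rem: perfect tracts}, which tells us that fields are perfect pastures, so there is no distinction between weak and strong representability over a field.)

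For the converse, suppose $M$ is binary and representable over $\F_q$ for some odd prime-power $q$. The key point is that for $q$ odd, $\F_q$ is a pasture in which $1 \neq \epsilon$: indeed, $\epsilon = -1$ in the field $\F_q$, and $1 = -1$ in $\F_q$ would force $2 = 0$, i.e. $\mathrm{char}\,\F_q = 2$, contrary to $q$ being odd. Hence $M$ is binary and weakly representable over the pasture $\F_q$ with $1 \neq \epsilon$, which is precisely hypothesis \eqref{reg4} of Theorem \ref{thm: characteriztaion of regular matroids}. By that theorem, \eqref{reg4}$\Rightarrow$\eqref{reg1}, so $M$ is regular. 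This completes the proof.

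There is essentially no obstacle here beyond unwinding definitions; the genuine content lives in Theorem \ref{thm: characteriztaion of regular matroids} (and, through it, in Theorem \ref{thm: foundations of binary matroids} and Wenzel's Theorem \ref{thm: the inner Tutte group is generated by cross ratios}). The only thing one must be a little careful about is the reduction of representability over a field to the pasture-theoretic framework: a field $k$ is regarded here as the pasture $\bpgenquot{k^\bullet}{0\leq \sum a_i\,|\,\sum a_i = 0\text{ in }k}$, and under this identification a $k$-representation of $M$ is the same as a (strong = weak) $k$-matroid with underlying matroid $M$, by the discussion in section \ref{subsection: matro-over partial fields} and Proposition \ref{prop: relation between matroids over pasteurized ordered blueprints and tracts}. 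Once this dictionary is in place, the corollary is a one-line consequence of Theorem \ref{thm: characteriztaion of regular matroids}. $\qed$
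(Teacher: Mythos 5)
Your argument is correct and matches the paper's own (implicit) proof: the paper treats this as an immediate consequence of Theorem~\ref{thm: characteriztaion of regular matroids}, noting that $\F_q$ for $q$ odd is a pasture with $1 \neq \epsilon$, which is precisely what you use for the converse via \eqref{reg4}$\Rightarrow$\eqref{reg1} (and \eqref{reg1}$\Rightarrow$\eqref{reg3} gives the forward direction). No issues.
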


\begin{cor}\label{cor: regular iff. exactly one rescaling class over every idyll}
 A matroid is regular if and only if it has precisely one rescaling class over every idyll.
\end{cor}

\begin{proof}
 By Theorem \ref{thm: characteriztaion of regular matroids}, a regular matroid is representable over every idyll. Thus there is at least one rescaling class for every idyll. A regular matroid is binary and thus has precisely one rescaling class over every idyll by Corollary \ref{cor: binary matroids allow at most one rescaling class over any idyll}.
  
 Conversely, assume that $M$ is a matroid that has exactly one rescaling class over every idyll. Then $M$ is, in particular, representable over every idyll. Thus $M$ is regular by Theorem \ref{thm: characteriztaion of regular matroids}.
\end{proof}

\begin{cor}
 Let $M$ be a regular matroid of rank $r$ on $E$ with $c$ connected components. Then $\#\cX_M(\Funpm)=\#\cX_M^w(\Funpm)=2^{\# E-c}$.
\end{cor}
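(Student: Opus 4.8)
The plan is to combine the two preceding corollaries. First I would recall that by Theorem~\ref{thm: characteriztaion of regular matroids}, a regular matroid $M$ is weakly representable over every pasture, and in particular over $\Funpm$; moreover, since fields are perfect pastures and $\Funpm$ is a perfect pasture (see Remark~\ref{rem: perfect tracts}), weak and strong $\Funpm$-matroids coincide, so $\cX_M(\Funpm)=\cX_M^w(\Funpm)$ and this set is nonempty. This gives the first equality and reduces the problem to computing $\#\cX_M^w(\Funpm)$.

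Next I would apply Corollary~\ref{cor: numer of weak matroids and rescaling classes over a finite pasture} with $F=\Funpm$, which is a finite pasture with $q=3$ elements (namely $0$, $1$, $\epsilon$). That corollary yields
\[
 \#\cX_M^w(\Funpm) \ = \ (q-1)^{n-c}\cdot\#\cX_M^f(\Funpm) \ = \ 2^{\#E-c}\cdot\#\cX_M^f(\Funpm),
\]
where $n=\#E$ and $c$ is the number of connected components of $M$. So it remains to show $\#\cX_M^f(\Funpm)=1$, i.e. that a regular matroid has exactly one rescaling class over $\Funpm$. This is precisely the content of Corollary~\ref{cor: regular iff. exactly one rescaling class over every pasture}: since $M$ is regular, it has exactly one rescaling class over every pasture, in particular over $\Funpm$, so $\#\cX_M^f(\Funpm)=1$. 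Substituting back gives $\#\cX_M^w(\Funpm)=2^{\#E-c}$, and combined with the first paragraph this proves $\#\cX_M(\Funpm)=\#\cX_M^w(\Funpm)=2^{\#E-c}$.

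There is essentially no obstacle here; the statement is a direct synthesis of results already established. The only points requiring a line of care are (a) confirming that $\Funpm$ has exactly $3$ elements so that $q-1=2$, which is immediate from the description $\Funpm=\bpgenquot{\{0,1,\epsilon\}}{0\leq1+\epsilon}$ given in Example~\ref{ex: pasteurized ordered blueprints}; and (b) invoking perfectness of $\Funpm$ to identify the weak and strong realization spaces, which is listed explicitly among the examples of perfect tracts in Remark~\ref{rem: perfect tracts}. Alternatively, for (b) one could argue directly: since $M$ is regular it is representable over $\Funpm$ by a totally unimodular matrix, and the associated Grassmann--Pl\"ucker function into $\Funpm$ automatically satisfies all Pl\"ucker relations (not just the $3$-term ones) because they hold over $\Z$; hence every element of $\cX_M^w(\Funpm)$ is in fact strong. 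Either way the proof is short.

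\begin{proof}
 Since $\Funpm$ is a perfect pasture (Remark~\ref{rem: perfect tracts}), every weak $\Funpm$-matroid is a strong $\Funpm$-matroid, so $\cX_M(\Funpm)=\cX_M^w(\Funpm)$. As $M$ is regular, Theorem~\ref{thm: characteriztaion of regular matroids} shows that $M$ is weakly representable over $\Funpm$, so these sets are nonempty. The ordered blue field $\Funpm=\bpgenquot{\{0,1,\epsilon\}}{0\leq 1+\epsilon}$ is a finite pasture with $q=3$ elements, so Corollary~\ref{cor: numer of weak matroids and rescaling classes over a finite pasture} gives
 \[
  \#\cX_M^w(\Funpm) \ = \ (q-1)^{\#E-c}\cdot\#\cX_M^f(\Funpm) \ = \ 2^{\#E-c}\cdot\#\cX_M^f(\Funpm).
 \]
 By Corollary~\ref{cor: regular iff. exactly one rescaling class over every pasture}, the regular matroid $M$ has exactly one rescaling class over the pasture $\Funpm$, i.e.\ $\#\cX_M^f(\Funpm)=1$. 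Therefore $\#\cX_M^w(\Funpm)=2^{\#E-c}$, and combining with $\cX_M(\Funpm)=\cX_M^w(\Funpm)$ yields $\#\cX_M(\Funpm)=\#\cX_M^w(\Funpm)=2^{\#E-c}$.
\end{proof}
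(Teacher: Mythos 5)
Your proof is correct and matches the paper's own argument, which likewise combines Corollary~\ref{cor: numer of weak matroids and rescaling classes over a finite pasture}, Corollary~\ref{cor: regular iff. exactly one rescaling class over every pasture}, and the perfectness of $\Funpm$. The extra observations you include (nonemptiness via Theorem~\ref{thm: characteriztaion of regular matroids}, and the alternative argument via totally unimodular matrices) are sound but not needed; the core chain of citations is exactly the one the authors use.
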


\begin{proof}
 This follows at once from Corollaries~\ref{cor: numer of weak matroids and rescaling classes over a finite idyll} and~\ref{cor: regular iff. exactly one rescaling class over every idyll} and the fact that $\Funpm$ is perfect.
\end{proof}


\subsection{Uniqueness for rescaling classes over \texorpdfstring{$\F_3$}{GF(3)}}
\label{subsection: uniqueness for rescaling classes over F_3}

A matroid is called \emph{ternary} if it is representable over the field $\F_3$ with $3$ elements. Brylawski and Lucas show in \cite{Brylawski-Lucas73} that every ternary matroid has a unique rescaling class over $\F_3$. We find the following short proof of this result, employing the foundation of a matroid.

\begin{thm}\label{thm: uniqueness of rescaling classes over  F_3}
 Every matroid admits at most one rescaling class over $\F_3$.
\end{thm}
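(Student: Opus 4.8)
The statement to prove is: every matroid $M$ admits at most one rescaling class over $\F_3$. By Corollary~\ref{cor: rescaling classes over M as morphisms from the foundation}, the rescaling classes of weak $\F_3$-matroids with underlying matroid $M$ are in bijection with the set $\Hom(k_M^f,\F_3)$ of morphisms from the foundation of $M$ to $\F_3$. So it suffices to show that $\#\Hom(k_M^f,\F_3)\leq 1$. The plan is to show that there is at most one possible morphism by analyzing where the generators of $k_M^f$ --- namely the universal cross ratios $\Cr_M^\univ(\cI)$, which generate $k_M^f$ over $\Funpm$ by Lemma~\ref{lemma: the foundation of a matroid is generated by the universal cross ratios} --- can go. Since a morphism of pastures must fix $0$, $1$, and $\epsilon$, and since $\F_3=\{0,1,\epsilon\}$ with $\epsilon=-1$, any morphism $f:k_M^f\to\F_3$ must send each fundamental element of $k_M^f$ to an element of $\F_3^\times=\{1,\epsilon\}$. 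Thus the key point is to pin down, for each $\cI\in\Omega_M$, the value of $f(\Cr_M^\univ(\cI))$ uniquely.

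\textbf{Key steps.} First I would recall the relations satisfied by cross ratios from Lemma~\ref{lemma: relations for cross ratios from the 3-term Pluecker relations}: if $\cI$ is degenerate then $\Cr_M^\univ(\cI)\in\{1,\epsilon\}$ already in $k_M^f$, so its image under any $f$ is determined. If $\cI$ is non-degenerate, then $0\leq \Cr_M^\univ(\cI)+\epsilon^p\,\Cr_M^\univ(\sigma.\cI)+\epsilon^q$ holds in $k_M^f$ for suitable $p,q\in\{0,1\}$ and $\sigma\in V.(23)$, together with the multiplicative identity $\Cr_M^\univ(\cI)\cdot\Cr_M^\univ((123).\cI)\cdot\Cr_M^\univ((321).\cI)=1$ and the symmetry relations $\Cr_M^\univ(\sigma.\cI)=\Cr_M^\univ(\cI)$ for $\sigma\in V$, $\Cr_M^\univ(\tau.\cI)=\Cr_M^\univ(\cI)^{-1}$ for $\tau\in V.(1234)$. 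The second step: apply any hypothetical $f:k_M^f\to\F_3$ to the additive relation $0\leq a+\epsilon^p b+\epsilon^q$ with $a=f(\Cr_M^\univ(\cI))$ and $b=f(\Cr_M^\univ(\sigma.\cI))$, both forced to lie in $\{1,\epsilon\}$. In $\F_3$ one has $0\leq x+y+z$ (for $x,y,z\in\{1,\epsilon\}$) if and only if $x+y+z=0$ in $\F_3$, i.e. an odd number of the three summands equal $\epsilon$ --- concretely either all three are $\epsilon$ (so $\epsilon+\epsilon+\epsilon=\epsilon\cdot 3\equiv 0$), or exactly one equals $\epsilon$ ($1+1+1\equiv 0$? no: $3\equiv 0$, so actually $1+1+1=0$ as well). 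Let me instead just enumerate: the relation $x+y+z=0$ in $\F_3$ with $x,y,z\in\{1,2\}$ forces $x+y+z\equiv 0\pmod 3$, whose solutions are $(1,1,1)$ and $(2,2,2)$. Hence applying $f$ to $0\leq \Cr_M^\univ(\cI)+\epsilon^p\Cr_M^\univ(\sigma.\cI)+\epsilon^q$ forces $a=\epsilon^{p}b=\epsilon^{q}$ (up to the sign bookkeeping of $p,q$); combined with the constraint that $a,b\in\{1,\epsilon\}$, this pins down $f(\Cr_M^\univ(\cI))$ as a specific power of $\epsilon$ depending only on $p$ and $q$ --- that is, only on the combinatorial data of $\cI$ in $M$, not on $f$. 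Since every generator of $k_M^f$ over $\Funpm$ is such a cross ratio and is sent to a determined value, and a morphism of pastures is determined by where it sends a generating set, we conclude $\#\Hom(k_M^f,\F_3)\leq 1$.

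\textbf{Main obstacle.} The delicate point is the sign bookkeeping: in the non-degenerate case the relation from Lemma~\ref{lemma: relations for cross ratios from the 3-term Pluecker relations} involves unspecified exponents $p,q\in\{0,1\}$ and a choice of $\sigma\in V.(23)$, and one must check that for \emph{any} admissible $(p,q,\sigma)$ the resulting constraint in $\F_3$ genuinely determines $f(\Cr_M^\univ(\cI))$ rather than merely restricting it to a two-element set. The resolution is that the possible solution triples over $\F_3$ are exactly $(1,1,1)$ and $(2,2,2)$, so all three summands must be equal; hence $a$, $\epsilon^p b$, and $\epsilon^q$ all coincide, forcing $a=\epsilon^q$ and $b=\epsilon^{q-p}$ --- both uniquely. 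One subtlety worth double-checking: one must verify that $\F_3$, viewed as the pasture $\bpgenquot{\{0,1,\epsilon\}}{0\leq\sum a_i\mid\sum a_i=0\text{ in }\F_3}$, really does have $0\leq x+y+z$ \emph{only} for the purely-positive relations coming from $\sum a_i=0$ in $\F_3$ --- this is exactly the definition of the pasture associated to the field $\F_3$, so there is nothing further to prove there. A second point is handling the degenerate cross ratios and the case where a $3$-term Pl\"ucker relation has zero or one nonzero term: the latter cannot occur (it would force $k_M^f=\{0\}$, contradicting Proposition~\ref{prop: embedding of matroids into the matroid space and its image}), and the former produces only relations among images already forced to be $1$ or $\epsilon$. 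I expect the write-up to be short once these relations are assembled; the only real work is the clean $\F_3$-arithmetic of the relation $x+y+z=0$.
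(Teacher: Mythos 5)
Your proposal is correct and takes essentially the same approach as the paper: reduce via Corollary~\ref{cor: rescaling classes over M as morphisms from the foundation} to showing $\#\Hom(k_M^f,\F_3)\leq 1$, then use the $\F_3$-arithmetic fact that $x+y+z=0$ with $x,y,z\in\{1,\epsilon\}$ forces $x=y=z$ to pin down the image of each generator. The paper's write-up is marginally more direct in that it works straight from the defining property of a fundamental element ($0\leq a+b+\epsilon^i$) rather than routing through Lemma~\ref{lemma: the foundation of a matroid is generated by the universal cross ratios} and the cross-ratio relations of Lemma~\ref{lemma: relations for cross ratios from the 3-term Pluecker relations}, but the two arguments are substantively the same.
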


\begin{proof}
 Let $M$ be a matroid with foundation $k_M^f$. By Corollary \ref{cor: rescaling classes over M as morphisms from the foundation}, the rescaling classes of $M$ over $\F_3$ correspond bijectively to the morphisms $f:k_M^f\to \F_3$. Thus we aim to show that such a morphism is uniquely determined.
 
 The foundation $k_M^f$ of $M$ is defined as the subidyll of $k_M^w$ that is generated by the fundamental elements $a$ of $k_M^w$, which satisfy a relation of the form
 \[
  0 \ \leq \ a + b + \epsilon
 \]
 for some $b\in k_M^w$. Since $f$ maps nonzero elements of $k_M^f$ to $\F_3^\times$, we encounter the following possibilities. If $a=0$, then $f(a)=0$. If $b=0$, then $a=1$ and $f(a)=1$. If both $a$ and $b$ are nonzero, then $f(a)$ and $f(b)$ are units in $\F_3$ and $f(a)+f(b)=1$. This is only possible if $f(a)=f(b)=-1$. This shows that $f:k_M^f\to \F_3$ is uniquely determined if it exists and concludes the proof of the theorem.
\end{proof}

\begin{cor}
 Let $M$ be a ternary matroid of rank $r$ on $E$ with $c$ connected components. Then $\#\cX_M(\F_3)=\#\cX_M^w(\F_3)=2^{\# E-c}$.
\end{cor}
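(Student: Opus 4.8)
The statement to prove is the final corollary: for a ternary matroid $M$ of rank $r$ on $E$ with $c$ connected components, we have $\#\cX_M(\F_3) = \#\cX_M^w(\F_3) = 2^{\#E - c}$.

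The plan is to combine three results already established in the excerpt. First, since $M$ is ternary, it is representable over $\F_3$, so $\cX_M^f(\F_3)$ is nonempty; by Theorem~\ref{thm: uniqueness of rescaling classes over  F_3}, $M$ admits \emph{exactly one} rescaling class over $\F_3$, i.e. $\#\cX_M^f(\F_3) = 1$. Second, $\F_3$ is a finite field and hence a finite perfect pasture (fields are perfect, cf.\ Remark~\ref{rem: perfect tracts}) with $q = 3$ elements, so Corollary~\ref{cor: numer of weak matroids and rescaling classes over a finite pasture} applies and gives
\[
 \#\cX_M^w(\F_3) \ = \ (q-1)^{\#E - c} \cdot \#\cX_M^f(\F_3) \ = \ 2^{\#E-c} \cdot 1 \ = \ 2^{\#E-c}.
\]
Third, since $\F_3$ is perfect, Lemma~\ref{lemma: weak realization space as morphism set from the weak universal pasture} (or the perfectness discussion in Remark~\ref{rem: perfect tracts}) tells us that $\gamma^w_\ast: \cX_M(\F_3) \to \cX_M^w(\F_3)$ is a bijection, so $\#\cX_M(\F_3) = \#\cX_M^w(\F_3)$. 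Chaining these equalities yields the claim.

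Concretely, the steps in order: (i) invoke ternarity to conclude $\cX_M(\F_3) \neq \emptyset$, hence $\cX_M^f(\F_3) \neq \emptyset$; (ii) apply Theorem~\ref{thm: uniqueness of rescaling classes over  F_3} to get $\#\cX_M^f(\F_3) = 1$; (iii) apply Corollary~\ref{cor: numer of weak matroids and rescaling classes over a finite pasture} with $q = 3$ to obtain $\#\cX_M^w(\F_3) = 2^{\#E - c}$; (iv) use perfectness of $\F_3$ and Lemma~\ref{lemma: weak realization space as morphism set from the weak universal pasture} to identify $\cX_M(\F_3)$ with $\cX_M^w(\F_3)$. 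This is essentially the same pattern as the corollary following Theorem~\ref{thm: characteriztaion of regular matroids} for regular matroids over $\Funpm$, just with the uniqueness input coming from the $\F_3$-specific Theorem~\ref{thm: uniqueness of rescaling classes over  F_3} rather than from regularity.

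There is no serious obstacle here; the corollary is a formal consequence of results proven earlier in the section. The only point requiring a moment's care is the bookkeeping around weak versus strong: one must note that ternary representability (which a priori concerns $\F_3$-matroids in the strong sense, since fields are perfect) indeed guarantees the relevant realization spaces are nonempty, and that the bijection $\gamma^w_\ast$ transports the counting statement from the weak side back to the strong side. Since $\F_3$ is a perfect pasture, weak and strong $\F_3$-matroids coincide, so this distinction is harmless and the two displayed cardinalities are literally equal.
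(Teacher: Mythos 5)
Your proof is correct and follows essentially the same route as the paper: combine Theorem~\ref{thm: uniqueness of rescaling classes over F_3} (uniqueness of the rescaling class), Corollary~\ref{cor: numer of weak matroids and rescaling classes over a finite pasture} with $q=3$, and perfectness to identify weak and strong realization spaces. One small note: you correctly invoke the perfectness of $\F_3$, whereas the paper's one-line proof says ``$\Funpm$ is perfect,'' which appears to be a copy-paste slip from the analogous corollary for regular matroids; the relevant fact for the equality $\#\cX_M(\F_3)=\#\cX_M^w(\F_3)$ is that $\F_3$ is perfect, exactly as you state.
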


\begin{proof}
 This follows at once from Theorem \ref{thm: uniqueness of rescaling classes over F_3}, Corollary \ref{cor: numer of weak matroids and rescaling classes over a finite idyll} and the fact that $\Funpm$ is perfect.
\end{proof}


\subsection{The bracket ring and the universal partial field}
\label{subsection: the bracket ring and the universal partial field}

In \cite{Pendavingh-vanZwam10a}, Pendavingh and Van Zwam introduce new techniques for establishing representability theorems for matroids. Their central tools are the bracket ring and the universal partial field. We will explain in this section how these two objects relate to the universal pasture and the foundation.

\subsubsection{The bracket ring}

Pendavingh and van Zwam's bracket ring is a variation of White's bracket ring from \cite[Def.\ 3.1]{White75}. We recall the definition from \cite[Def.\ 4.1]{Pendavingh-vanZwam10a}.

\begin{df}
 Let $M$ be a matroid with representing Grassmann-Pl\"ucker function $\Delta:\binom Er\to\K$. Let
 \[\textstyle
  \cB \ = \ \big\{ \, I\in\binom Er \, \big| \, \Delta(I)\neq 0 \, \big\}
 \]
 be the set of bases of $M$. The \emph{bracket ring of $M$} is the ring $B_M=\Z[x_J^{\pm1}|J\in\cB]/I_M$ where $I_M$ is the ideal of $\Z[x_J^{\pm1}|J\in\cB]$ generated by the $3$-term Pl\"ucker relations
 \[
  x_{I,1,2} \, x_{I,3,4} \ - \ x_{I,1,3} \, x_{I,2,4} \ + \ x_{I,1,4} \, x_{I,2,3}
 \]
 for every $(r-2)$-subset $I$ of $E$ and all $i_1<i_2<i_3<i_4$ with $i_1,i_2,i_3,i_4\notin I$, where $x_{I,k,l}=x_{I\cup\{i_k,i_l\}}$ if $I\cup\{i_k,i_l\}\in\cB$ and $x_{I,k,l}=0$ otherwise.
\end{df}

In order to relate the bracket ring of a matroid to its universal pasture, we require some auxiliary definitions. The units of the bracket ring are
\[\textstyle
 B_M^\times \ = \ \big\{\pm \prod_{I\in\cB} x_I^{e_I} \, \big| \, e_I\in\Z \, \big\}.
\]
We define the \emph{partial bracket field} as the partial field $P_M=(P_M^\times,\pi_{P_M})$ where $P_M^\times=B_M^\times$ and $\pi_{P_M}:\Z[P_M^\times]\to B_M$ is the canonical projection. Note that $P_M$ is indeed a partial field if $B_M$ is nontrivial since $\ker \pi_{P_M}$ is generated by the $3$-term Pl\"ucker relations. This partial field has been considered by Pendavingh and van Zwam without being given a distinctive name, cf.\ \cite[Lemma 4.4]{Pendavingh-vanZwam10a}. Note that there are matroids with trivial bracket ring, cf.\ Remark \ref{rem: matroids with trivial bracket ring}.

Since the bracket ring $B_M$ is $\Z$-graded by putting $\deg x_J=1$, we can consider its degree-$0$ subring $B_{M,0}$, which we call the \emph{degree-$0$ bracket ring of $M$}. Analogously, we define the \emph{partial degree-$0$ bracket field of $M$} as $P_{M,0}=(B_{M,0}^\times,\pi_{P_{M,0}})$ where $\pi_{P_{M,0}}:\Z[B_{M,0}^\times]\to B_{M,0}$ is the restriction of $\pi_{P_M}$ to the degree-$0$ elements.

\subsubsection{Relation to the universal pasture}
\label{subsubsection: relation to the universal pasture}


The bracket ring can be derived from the universal pasture in a functorial way. 
The most conceptually satisfying way to see this involves first modifying the way in which we realize partial fields as ordered blueprints.
Namely, let $P=(P^\times,\pi_P)$ be a partial field, where $\pi_P:\Z[P^\times]\to R_P$ is the surjection onto the ambient ring $R_P$ of $P$. The construction used primarily in this paper associates with $P$ the idyll
\[
 P^\oblpr \ = \ \bpgenquot{P}{0\leq a+b+c \,|\,\pi_P(a+b+c)=0}.
\]
Alternatively, we can view $P$ as the $\Funpm$-algebra
\[\textstyle
 P^\blpr \ = \ \bpgenquot{P}{\sum a_i\=\sum b_j \,|\, \sum\pi_P(a_i)=\sum \pi_P(b_j)}.
\]
Both associations are functorial and define fully faithful embeddings into $\OBlpr_\Funpm$, which can be transformed into each other in the following way. 

Given an ordered blueprint $B$, we define the \emph{associated purely positive ordered blueprint} as
\[ \textstyle
 B^\ppos \ = \ \bpbiggenquot{B^\bullet}{0\leq\sum a_i \; \big| \; 0\leq\sum a_i\text{ holds in }B}.
\]
Then the identity map on $P$ induces a natural morphism $P^\oblpr\to P^\blpr$, which in turn induces isomorphisms 
\[
 P^\oblpr\otimes_\Funpm\Funsq \ \stackrel\sim\longrightarrow \ P^\blpr \quad \text{and} \quad P^\oblpr \ \stackrel\sim\longrightarrow \ (P^\blpr)^\ppos
\]
since $P^\blpr$ is an $\Funsq$-algebra and $P^\oblpr$ is purely positive. 
Note that we can recover the ambient ring of $P$ as $R_P=(P^\oblpr\otimes_\Funpm\Funsq)^+=(P^\blpr)^+$; recall from section \ref{subsection: ordered blueprints} the notation $B^+$ for the ambient semiring of an ordered blueprint $B$.

Recall from section \ref{subsection: the weak matroid space} the definition of the universal pasture $k_M^w$ of $M$ as 
\[\textstyle
 k_M^w \ = \ \big(\bpquot{\Funpm[x_J^{\pm1} \; | \; J\in\binom Er]}{\cPl^w(r,E)}\big)_0 
\]
where $\cPl^w(r,E)$ is generated by the $3$-term Pl\"ucker relations. Since these relations are satisfied in $P_{M,0}$, there is a canonical morphism of idylls
\[
 k_M^w \ \longrightarrow \ P_{M,0}^\oblpr.
\]
Note that this morphism is in general not an isomorphism. In particular, $P_{M,0}$ can be trivial while $k_M^w$ is not; cf.\ Remark \ref{rem: matroids with trivial bracket ring}.

\begin{lemma}\label{lemma: relation between the universal pasture and the partial degree 0-bracket field}
 Let $M$ be a matroid whose set of bases is $\cB$. Let $k_M^w$ be its universal pasture and $P_{M,0}$ its partial degree-$0$ bracket field. Then $k_M^w\otimes_\Funpm\Funsq\simeq P_{M,0}^\blpr$ and there are natural bijections
 \[
  \cX_M(P^\oblpr) \ \stackrel{1:1}\longleftrightarrow \ \Hom(k_M^w,P^\oblpr) \ \stackrel{1:1}\longleftrightarrow \ \Hom(k_M^w\otimes_\Funpm\Funsq,P^\blpr) \ \stackrel{1:1}\longleftrightarrow \ \Hom(P_{M,0}, P)
 \]
 for every partial field $P$.
\end{lemma}

\begin{proof}
 The first claim follows readily: since both $k_M^w\otimes_\Funpm\Funsq$ and $P_{M,0}^\blpr$ are with $-1$, the ambient semiring of both ordered blueprints is a ring and is thus trivially ordered. In both cases, the ambient ring is generated by Laurent monomials in the $x_J$ of degree $0$, and all relations between the Laurent monomials are generated by the $3$-term Pl\"ucker relations. Thus $(k_M^w\otimes_\Funpm\Funsq)^+\simeq B_{M,0} \simeq (P_{M,0}^\blpr)^+$ is the degree-$0$ bracket ring. The underlying monoids of both $k_M^w\otimes_\Funpm\Funsq$ and $P_{M,0}^\blpr$ are generated by the terms $\pm x_J^{\pm1}$. Thus $k_M^w\otimes_\Funpm\Funsq\simeq P_{M,0}^\blpr$.
 
 We turn to the proof of the second claim. A partial field is a doubly-distributive partial hyperfield and thus perfect by \cite[Cor.\ 3.3]{Baker-Bowler19}. Thus $\cX_M(P^\oblpr)=\cX_M^w(P^\oblpr)$ by Lemma \ref{lemma: weak realization space as morphism set from the universal pasture} and $\cX_M^w(P^\oblpr)=\Hom(k_M^w,P^\oblpr)$ by Proposition \ref{prop: weak realization space as morphism set from the universal pasture}. This establishes the first bijection.
 
 The second bijection is obtained by applying the functors $(-)^\ppos$ and $-\otimes_\Funpm\Funsq$, which define mutually inverse bijections between the two morphism sets in question. The last bijection follows from the isomorphism $k_M^w\otimes_\Funpm\Funsq\simeq P_{M,0}^\blpr$ and the fact that $(-)^\oblpr:\PartFields\to\OBlpr^\pm$ is fully faithful.
\end{proof}

As a consequence of Lemma \ref{lemma: relation between the universal pasture and the partial degree 0-bracket field}, we can reprove Theorem 4.6 from \cite{Pendavingh-vanZwam10a}, which is the following assertion.

\begin{cor}\label{cor: M is representable over a partial field if and only if B_M is nontrivial}
 Let $M$ be a matroid with bracket ring $B_M$. Then $M$ is representable over some partial field if and only if $B_M$ is nontrivial.
\end{cor}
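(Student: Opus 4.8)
The plan is to combine Lemma~\ref{lemma: relation between the weak universal pasture and the partial degree 0-bracket field} with two elementary observations: first, that a partial field $P$ always admits a canonical morphism $\Funpm\to P$, hence a nonempty $\Hom$-set precisely when the source is "large enough"; and second, that the bracket ring $B_M$ being trivial is equivalent to the partial degree-$0$ bracket field $P_{M,0}$ being undefined (i.e., the would-be partial field fails axioms of a partial field because $0=1$ in the ambient ring). Recall from Corollary~\ref{cor: a matroid is representable over a partial field iff it is the image of a P-matroid} that $M$ is representable over a partial field $P$ if and only if $\cX_M(P^\oblpr)\neq\emptyset$, so by the chain of natural bijections in Lemma~\ref{lemma: relation between the weak universal pasture and the partial degree 0-bracket field} this happens if and only if $\Hom(P_{M,0},P)\neq\emptyset$ for some partial field $P$.

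The key steps, in order: (1) Observe that $B_M$ is trivial if and only if the degree-$0$ bracket ring $B_{M,0}$ is trivial; this follows because $B_M\simeq B_{M,0}[x_{J_0}^{\pm1},\dots]$ in a suitable sense (the full bracket ring is obtained from the degree-$0$ part by adjoining Laurent variables coming from the grading, exactly as in Corollary~\ref{cor: the weak universal pasture as Laurent seris over the foundation}), so one is trivial iff the other is. Alternatively, use directly that $k_M^w\otimes_\Funpm\Funsq$ has ambient semiring $B_{M,0}$ by the first assertion of Lemma~\ref{lemma: relation between the weak universal pasture and the partial degree 0-bracket field}, and that $k_M^w=\{0\}$ is impossible by Proposition~\ref{prop: embedding of matroids into the matroid space and its image} --- wait, one must be careful here, since $k_M^w$ being nonzero does not immediately prevent $B_{M,0}$ from being trivial; this is precisely the phenomenon flagged in Remark~\ref{rem: matroids with trivial bracket ring}. (2) If $B_M$ is nontrivial, then $P_{M,0}$ is genuinely a partial field (its defining axioms \ref{PF1}--\ref{PF3} hold because the kernel of $\pi_{P_{M,0}}$ is generated by $3$-term Pl\"ucker relations and $0\neq 1$ in $B_{M,0}$), so taking $P=P_{M,0}$ itself, the identity morphism shows $\Hom(P_{M,0},P_{M,0})\neq\emptyset$; hence $M$ is representable over $P_{M,0}$. (3) Conversely, if $M$ is representable over some partial field $P$, then $\cX_M(P^\oblpr)\neq\emptyset$, so by Lemma~\ref{lemma: relation between the weak universal pasture and the partial degree 0-bracket field} there is a morphism $P_{M,0}\to P$ of partial fields; but a partial field morphism requires the source to satisfy the partial field axioms, in particular $0\neq 1$ in its ambient ring $B_{M,0}$, which forces $B_{M,0}$ --- and hence $B_M$ --- to be nontrivial.

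The main obstacle I anticipate is step (1): precisely pinning down the equivalence "$B_M$ trivial $\iff$ $B_{M,0}$ trivial", because the grading argument needs the set of bases $\cB$ to be nonempty (which holds since $M$ is a matroid of rank $r\le\#E$, so it has at least one basis) and needs the observation that inverting the degree-one generators $x_J$ does not collapse a nontrivial ring. Concretely: if $B_{M,0}$ is nontrivial then fixing a basis $J_0\in\cB$ and writing every $x_J=x_{J_0}\cdot(x_Jx_{J_0}^{-1})$ exhibits $B_M$ as a localization of a polynomial-type extension of $B_{M,0}$, which is nontrivial; conversely if $B_{M,0}$ is trivial then $1=0$ in $B_{M,0}\subseteq B_M$ so $B_M$ is trivial. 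Once this is in place, the rest is a direct unwinding of the bijections already established, and the circularity worry in step (1) is resolved because we never need $k_M^w\neq\{0\}$ to imply $B_{M,0}\neq\{0\}$; instead we argue entirely within the bracket-ring picture, using Lemma~\ref{lemma: relation between the weak universal pasture and the partial degree 0-bracket field} only to transport the representability statement into a statement about $\Hom(P_{M,0},P)$.
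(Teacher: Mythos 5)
Your proposal is correct and follows essentially the same route as the paper: reduce to $\cX_M(P^\oblpr)\neq\emptyset$ via Corollary~\ref{cor: a matroid is representable over a partial field iff it is the image of a P-matroid}, transport that to $\Hom(P_{M,0},P)$ via Lemma~\ref{lemma: relation between the weak universal pasture and the partial degree 0-bracket field}, then note that nontriviality of $B_M$ is exactly what makes $P_{M,0}$ (equivalently $P_M$) a genuine partial field. The only cosmetic differences are that the paper witnesses representability over the full partial bracket field $P_M$ via the composite $k_M^w\to P_{M,0}\to P_M$ rather than over $P_{M,0}$ via the identity, and for the converse the paper argues that a morphism $P_{M,0}\to P$ would induce a ring morphism $B_{M,0}\to R_P$ with nontrivial target, rather than your phrasing that the morphism's existence forces $P_{M,0}$ to satisfy the partial-field axioms; both amount to the same observation that the relevant Hom-set is empty when $B_{M,0}=\{0\}$. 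Your care with step (1) ($B_M$ trivial iff $B_{M,0}$ trivial, via the Laurent/localization picture and the inclusion $B_{M,0}\subseteq B_M$) is a detail the paper uses implicitly and that you handle correctly.
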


\begin{proof}
 By Corollary \ref{cor: a matroid is representable over a partial field iff it is the image of a P-matroid} and Theorem \ref{thm: moduli space of matroids}, $M$ is representable over a partial field $P$ if and only if $\cX_M(P^\oblpr)$ is nonempty.
 
 Assume that $\cX_M(P^\oblpr)$ is nonemtpy. By Lemma \ref{lemma: relation between the universal pasture and the partial degree 0-bracket field}, $\cX_M(P^\oblpr)=\Hom(P_{M,0}, P)$, i.e.\ there is a morphism $P_{M,0}\to P$, where $P_{M,0}$ is the partial degree-$0$ bracket field of $M$. This induces a morphism $B_{M,0}\to R_P$ between the respective ambient rings. This shows that the degree-$0$ bracket ring $B_{M,0}$ of $M$ is nontrivial, and as a consequence $B_M$ is nontrivial.
 
 If $B_M$ is nontrivial, then $P_M$ is a partial field and the canonical morphism $k_M^w\to P_{M,0}\to P_M$ yields a point in $\cX_M(P^\oblpr)$ by Lemma \ref{lemma: relation between the universal pasture and the partial degree 0-bracket field}. Thus $\cX_M(P^\oblpr)$ is nonempty and $M$ is representable over the partial field $P_M$.
\end{proof}

The following fact shows that the class of representable matroids does not change if we ask for representability over fields or partial fields. This was already observed in \cite[Cor.\ 5.2]{Pendavingh-vanZwam10b}. Note that for a partial field, strong and weak matroids coincide, so we do not have to distinguish these two classes.

\begin{lemma}\label{lemma: a matroid representable over a partial field is representable over a field}
 Let $M$ be a matroid. Then $M$ is representable over a partial field if and only if $M$ is representable over a field.
\end{lemma}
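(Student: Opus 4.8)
The backward direction is immediate: a field $K$ is in particular a partial field (via the embedding $\PartFields\to\OBlpr^\pm$ discussed in Example~\ref{ex: partial fields} and Theorem~\ref{thm: hyperfuzz to bluetracts}), so if $M$ is representable over $K$ it is representable over a partial field. For the forward direction, suppose $M$ is representable over a partial field $P=(P^\times,\pi_P)$ with ambient ring $R_P$. The plan is to produce a field $K$ together with a ring homomorphism $R_P\to K$, and then argue that this homomorphism carries a $P$-representation of $M$ to a $K$-representation.

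\textbf{Key steps.} First, by Corollary~\ref{cor: M is representable over a partial field if and only if B_M is nontrivial} (equivalently, by Corollary~\ref{cor: a matroid is representable over a partial field iff it is the image of a P-matroid} together with Theorem~\ref{thm: moduli space of matroids}), $M$ being representable over \emph{some} partial field is equivalent to the bracket ring $B_M$ being nontrivial, and this in turn means the partial bracket field $P_M$ is a genuine partial field with a morphism $P_M\to P$ realizing the representation. So without loss of generality I may work with $B_M$ itself. Second, since $B_M$ is a nonzero commutative ring, it has a maximal ideal $\fm$, and $K:=B_M/\fm$ is a field. The composition $\Z[x_J^{\pm 1}\mid J\in\cB]\to B_M\to K$ is a ring homomorphism, and since each $x_J$ is a unit in $B_M$, its image in $K$ is nonzero; thus we get a morphism of partial fields $P_M\to K$ (here $K$ is regarded as a partial field in the usual way). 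Composing the canonical morphism $k_M^w\to P_{M,0}^\oblpr\to P_M^\oblpr$ from section~\ref{subsubsection: relation to the weak universal pasture} with $P_M\to K$ — or, more directly, noting that a morphism of partial fields $P_M\to K$ sends the Grassmann--Pl\"ucker function $\Delta:\binom Er\to P_M$ (with $\Delta(I)=x_I$ if $I\in\cB$ and $0$ otherwise) to a Grassmann--Pl\"ucker function $\binom Er\to K$ — produces a $K$-matroid whose underlying matroid is $M$, because the bases are preserved ($x_I\mapsto$ a nonzero element of $K$ for $I\in\cB$, and $0\mapsto 0$ otherwise). Finally, invoking Proposition~\ref{prop: P-matroids from P-matrices} (or simply Corollary~\ref{cor: a matroid is representable over a partial field iff it is the image of a P-matroid} applied to $K$ viewed as a partial field), this $K$-matroid corresponds to a $K$-matrix representing $M$ in the classical sense, so $M$ is representable over the field $K$.

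\textbf{Main obstacle.} The only subtle point is the passage from a partial field $P$ to its partial bracket field $P_M$: I need to know that a representation of $M$ over $P$ forces $B_M$ (hence $P_M$) to be nontrivial, so that the "$B_M$ has a maximal ideal" argument has content. This is exactly the direction of Corollary~\ref{cor: M is representable over a partial field if and only if B_M is nontrivial} that says representability implies $B_M\neq 0$, and its proof in turn rests on Lemma~\ref{lemma: relation between the weak universal pasture and the partial degree 0-bracket field} identifying $\cX_M(P^\oblpr)$ with $\Hom(P_{M,0},P)$; a morphism into $P$ induces a ring map $B_{M,0}\to R_P$, witnessing nontriviality. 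Given those results, everything else is a routine chain of functorial observations, so the argument is short. (An alternative, avoiding the bracket ring: take any representation $A$ of $M$ over $P$, view its entries inside $R_P$, pick a maximal ideal $\fm$ of $R_P$, reduce mod $\fm$ to get a matrix over the field $R_P/\fm$, and check — using that the $r\times r$ minors of $A$ land in $P=P^\times\cup\{0\}$ and are nonzero exactly on the bases — that reduction neither creates nor destroys a nonzero basis minor; this is the essentially content-free version of the same idea and could be substituted if one prefers to sidestep invoking Corollary~\ref{cor: M is representable over a partial field if and only if B_M is nontrivial}.)
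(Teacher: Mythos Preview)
Your proof is correct. Your main route detours through the bracket ring $B_M$ (invoking Corollary~\ref{cor: M is representable over a partial field if and only if B_M is nontrivial} to get $B_M\neq 0$, then quotienting $B_M$ by a maximal ideal), whereas the paper's proof is the short version you sketch parenthetically at the end: given a representation over $P=(P^\times,\pi_P)$, the ambient ring $R_P$ is itself nonzero, so it admits a homomorphism $f:R_P\to k$ to a field, and the composite $k_M\to P\to R_P\to k$ (viewed in $\OBlpr^\pm$) is a representation of $M$ over $k$. The point you flag as ``essentially content-free'' --- that nonzero minors stay nonzero --- is exactly that $P^\times\subset R_P^\times$ maps into $k^\times$. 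Your bracket-ring route buys nothing extra here and leans on heavier machinery; the paper's direct argument is preferable, and you already have it in hand.
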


\begin{proof}
 Since a field is a partial field, one implication is trivial. Assume that $M$ is representable over a partial field $P=(P^\times,\pi_P)$, i.e.\ there is a morphism $\chi:k_M\to P$. Then the ambient ring $R_P$ of $P$ is nontrivial and admits therefore a morphism $f:R_P\to k$ into a field $k$. The composition 
 \[
  k_M \ \stackrel\chi\longrightarrow \ P^\blpr \ \longrightarrow \ \Z[P^\times] \ \stackrel{\pi_P}\longrightarrow \ R_P \ \stackrel f\longrightarrow k
 \]
 yields a representation of $M$ over $k$ where we consider all objects as ordered blueprints. This proves the reverse implication.
\end{proof}

\begin{rem}\label{rem: matroids with trivial bracket ring}
 By Lemma \ref{lemma: a matroid representable over a partial field is representable over a field}, a matroid $M$ that is not representable over any field is not representable over any partial field. By Corollary \ref{cor: M is representable over a partial field if and only if B_M is nontrivial}, such matroids are characterized by the property that their bracket ring is trivial. Since such matroids exist, for instance the V\'amos matroid, this means that there are matroids $M$ with trivial bracket ring $B_M$. 
 
On the other hand, the (weak) universal idyll is always nontrivial. This shows, in particular, that the canonical morphism $k_M^w\to P_{M,0}$ from the universal pasture of $M$ to the partial degree-$0$ bracket field is not injective in general.
\end{rem}

\subsubsection{The universal partial field}
\label{subsubsection: the universal partial field}

The universal partial field of a matroid $M$ was introduced by Pendavingh and van Zwam in \cite{Pendavingh-vanZwam10a} as a device for proving representability theorems for matroids over partial fields. The definition in \cite{Pendavingh-vanZwam10a} is somewhat technical, which is perhaps an inevitable consequence of their approach to matroid representations over partial fields in terms of concrete matrix manipulations. Unravelling their definitions leads to the following short characterization of the universal partial field. The interested reader will find in Remark \ref{rem: equivalence with PvZ's definition of the universal partial field} an outline of the equivalence of our definition with that of \cite{Pendavingh-vanZwam10a}.

\begin{df}
 Let $M$ be a matroid and $P_M$ its partial bracket field. Let $\Delta:\binom Er\to P_M$ be the weak Grassmann-Pl\"ucker function defined by $\Delta(I)=x_I$ for $I\in\cB$ and $\Delta(I)=0$ otherwise. The \emph{universal partial field of $M$} is the partial subfield $\P_M$ of $P_M$ generated by the cross ratios $\Cr_\Delta(\cI)$ for $\cI\in\Omega_M$.
\end{df}

Note that since all cross ratios are expressions in the $x_I$ of degree $0$, the universal partial field $\P_M$ is contained in the partial degree-$0$ bracket field $P_{M,0}$. Recall from Lemma \ref{lemma: relation between the universal pasture and the partial degree 0-bracket field} that the canonical morphism $k_M^w\to P_{M,0}^\oblpr$ induces an isomorphism $k_M^w\otimes_\Funpm\Funsq\to P_{M,0}^\blpr$. The following lemma recovers \cite[Cor.\ 4.12]{Pendavingh-vanZwam10a} in a more concise form.

\begin{lemma}\label{lemma: relation between the foundation and the universal partial field}
 The isomorphism $k_M^w\otimes_\Funpm\Funsq\to P_{M,0}^\blpr$ restricts to an isomorphism $k_M^f\otimes_\Funpm\Funsq\to \P_M^\blpr$ and there are natural bijections
 \[
  \cX_M^f(P^\oblpr) \ \stackrel{1:1}\longleftrightarrow \ \Hom(k_M^f,P^\oblpr) \ \stackrel{1:1}\longleftrightarrow \ \Hom(k_M^f\otimes_\Funpm\Funsq,P^\blpr) \ \stackrel{1:1}\longleftrightarrow \ \Hom(\P_{M}, P)
 \]
 for every partial field $P$.
\end{lemma}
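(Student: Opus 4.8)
\textbf{Proof plan for Lemma~\ref{lemma: relation between the foundation and the universal partial field}.}
The plan is to deduce this statement from Lemma~\ref{lemma: relation between the weak universal pasture and the partial degree 0-bracket field} by restricting the already-established isomorphism $k_M^w\otimes_\Funpm\Funsq\xrightarrow{\ \sim\ }P_{M,0}^\blpr$ to the appropriate sub-objects on each side. The two sides are, respectively, the foundation $k_M^f$ (the subpasture of $k_M^w$ generated over $\Funpm$ by fundamental elements, which by Lemma~\ref{lemma: the foundation of a matroid is generated by the universal cross ratios} is generated by the universal cross ratios $\Cr_M^\univ(\Omega_M)$) and the universal partial field $\P_M$ (the partial subfield of $P_M$ generated by the cross ratios $\Cr_\Delta(\cI)$ of the tautological Grassmann-Pl\"ucker function $\Delta(I)=x_I$, which in fact lies inside $P_{M,0}$ since all cross ratios are degree-$0$ expressions in the $x_I$). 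The first step is to check that the isomorphism $\Theta:k_M^w\otimes_\Funpm\Funsq\to P_{M,0}^\blpr$ carries the image of $\Cr_M^\univ(\cI)$ to $\Cr_\Delta(\cI)$ for every $\cI\in\Omega_M$; this is immediate from the explicit form of $\Theta$ on units (it sends $\pm\prod x_I^{e_I}$ to $\pm\prod x_I^{e_I}$) together with the definitions of both cross-ratio functions as the same ratio $\Delta(I_{1,2})\Delta(I_{3,4})/\big(\Delta(I_{2,3})\Delta(I_{4,1})\big)$.

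Given this, the second step is the identification of subblueprints. On the left, $k_M^f\otimes_\Funpm\Funsq$ is the sub-$\Funsq$-algebra of $k_M^w\otimes_\Funpm\Funsq$ generated by the (images of the) fundamental elements, equivalently by $\Cr_M^\univ(\Omega_M)$ together with $\epsilon$; on the right, $\P_M^\blpr$ is the sub-$\Funsq$-algebra of $P_{M,0}^\blpr$ generated by $\Cr_\Delta(\Omega_M)$ together with $-1$ (here one uses that $\P_M$, being a partial field, is realized by $(-)^\blpr$, and that the ambient ring of $P_{M,0}$ restricts to the ambient ring of $\P_M$). Since $\Theta$ matches the two generating sets and is an isomorphism of ordered blueprints with $-1$, its restriction is an isomorphism $k_M^f\otimes_\Funpm\Funsq\xrightarrow{\ \sim\ }\P_M^\blpr$; one should also note that $\Theta$ is injective on $k_M^f\otimes_\Funpm\Funsq$ because $\Theta$ is globally injective, which guarantees that the restricted map is an isomorphism onto its image and that the image is exactly $\P_M^\blpr$ and not something smaller.

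The third step is the chain of natural bijections. The first bijection $\cX_M^f(P^\oblpr)\leftrightarrow\Hom(k_M^f,P^\oblpr)$ is exactly Corollary~\ref{cor: rescaling classes over M as morphisms from the foundation}, which represents the rescaling-class functor by the foundation. The second bijection $\Hom(k_M^f,P^\oblpr)\leftrightarrow\Hom(k_M^f\otimes_\Funpm\Funsq,P^\blpr)$ follows from the same adjunction machinery used in Lemma~\ref{lemma: relation between the weak universal pasture and the partial degree 0-bracket field}: the functors $(-)^\ppos$ and $-\otimes_\Funpm\Funsq$ are mutually inverse on the relevant subcategories (recall $(P^\blpr)^\ppos\simeq P^\oblpr$ and $k_M^f\otimes_\Funpm\Funsq$ is an $\Funsq$-algebra with $(k_M^f\otimes_\Funpm\Funsq)^\ppos\simeq k_M^f$ since $k_M^f$ is purely positive), so $\Hom_{\OBlpr^\pm}(k_M^f,P^\oblpr)\cong\Hom_{\OBlpr^\pm}(k_M^f\otimes_\Funpm\Funsq,P^\blpr)$. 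The last bijection $\Hom(k_M^f\otimes_\Funpm\Funsq,P^\blpr)\leftrightarrow\Hom(\P_M,P)$ is then immediate from the isomorphism $k_M^f\otimes_\Funpm\Funsq\simeq\P_M^\blpr$ established in step two together with full faithfulness of $(-)^\blpr:\PartFields\to\OBlpr^\pm$ (from section~\ref{subsubsection: alternative realization of partial fields as blueprints}). Naturality in $P$ is built into each of these three identifications. The main obstacle is step two: one must be careful that ``generated by'' means the same thing on both sides --- i.e.\ that passing to $-\otimes_\Funpm\Funsq$ does not enlarge the subblueprint generated by a set beyond what one expects, and that $\P_M$ as defined via $P_M$ really sits inside $P_{M,0}$ with its induced structure --- but once the generating sets are matched under $\Theta$ this is a routine check, since $\Theta$ being an isomorphism forces the generated subblueprints to correspond.
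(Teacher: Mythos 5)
Your proof is correct and takes essentially the same route as the paper's proof: the first claim is the restriction of the isomorphism from Lemma~\ref{lemma: relation between the weak universal pasture and the partial degree 0-bracket field} to the subobjects generated by cross ratios, and the chain of bijections is obtained by citing Corollary~\ref{cor: rescaling classes over M as morphisms from the foundation}, then applying $(-)^\ppos$ and $-\otimes_\Funpm\Funsq$, then using the just-established isomorphism plus full faithfulness. The only difference is that you spell out more explicitly what the paper dismisses as ``immediate from the definitions'': you verify that $\Theta$ carries universal cross ratios to the cross ratios $\Cr_\Delta(\cI)$ and hence matches the two generating sets, which is indeed the content hiding behind the paper's terse first sentence; you also appeal to full faithfulness of $(-)^\blpr$ rather than $(-)^\oblpr$ for the last bijection, which is a cleaner justification than the paper's phrasing and in any case both routes are valid.
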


\begin{proof}
 The former claim is immediate from the definitions of $k_M^f$ and $\P_M$ as the subobjects of $k_M^w$ and $P_M$, respectively, that are generated by the cross ratios.
  
 We turn to the proof of the latter claim. The first bijection is established in Corollary \ref{cor: rescaling classes over M as morphisms from the foundation}. The second bijection comes from applying $-\otimes_\Funpm\Funsq$ and $(-)^\ppos$ to the morphism sets, cf.\ the proof of Lemma \ref{lemma: relation between the universal pasture and the partial degree 0-bracket field}. The third bijection follows from $k_M^f\otimes_\Funpm\Funsq\simeq \P_{M}^\blpr$ and the fact that $(-)^\oblpr:\PartFields\to\OBlpr^\pm$ is fully faithful.
\end{proof}

\begin{cor}\label{cor: a matroid is representable over a partial field iff it receives a morphism from the universal partial field}
 Let $P$ be a partial field and $M$ a matroid. Then $M$ is representable over $P$ if and only if there is a morphism $\P_M\to P$.
\end{cor}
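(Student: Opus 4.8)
The plan is to combine Corollary~\ref{cor: a matroid is representable over a partial field iff it is the image of a P-matroid} (which characterizes representability over a partial field $P$ in terms of the nonemptiness of the realization space $\cX_M(P^\oblpr)$) with the chain of natural bijections established in Lemma~\ref{lemma: relation between the foundation and the universal partial field}. First I would recall that, by Corollary~\ref{cor: a matroid is representable over a partial field iff it is the image of a P-matroid}, a matroid $M$ is representable over $P$ if and only if $M$ lies in the image of the pushforward along $P^\tract\to\K$; equivalently (via Theorem~\ref{thm: moduli space of matroids} and the definition of the realization space), if and only if $\cX_M(P^\oblpr)$ is nonempty. So it suffices to show that $\cX_M(P^\oblpr)\neq\emptyset$ if and only if $\Hom(\P_M,P)\neq\emptyset$.

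The second step is to observe that for a partial field $P$, the associated pasture $P^\oblpr$ is perfect: a partial field is a doubly-distributive partial hyperfield, hence perfect by the results cited in Remark~\ref{rem: perfect tracts} (see also the proof of Lemma~\ref{lemma: relation between the weak universal pasture and the partial degree 0-bracket field}). Therefore every weak $P^\oblpr$-matroid is a strong $P^\oblpr$-matroid, so $\cX_M(P^\oblpr)=\cX_M^w(P^\oblpr)$, and moreover the rescaling class space $\cX_M^f(P^\oblpr)$ is nonempty precisely when $\cX_M(P^\oblpr)$ is nonempty, since rescaling classes are nonempty $T(P^\oblpr)$-orbits inside $\cMat^w(r,E)(P^\oblpr)=\cMat(r,E)(P^\oblpr)$.

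The third step is simply to invoke Lemma~\ref{lemma: relation between the foundation and the universal partial field}, which gives the natural bijection
\[
 \cX_M^f(P^\oblpr) \ \stackrel{1:1}\longleftrightarrow \ \Hom(\P_M,P).
\]
Combining the three steps: $M$ is representable over $P$ $\iff$ $\cX_M(P^\oblpr)\neq\emptyset$ $\iff$ $\cX_M^f(P^\oblpr)\neq\emptyset$ $\iff$ $\Hom(\P_M,P)\neq\emptyset$, which is the desired statement.

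I do not expect any serious obstacle here: the corollary is essentially a bookkeeping consequence of results already proved. The one point requiring a little care is the passage between nonemptiness of $\cX_M(P^\oblpr)$ and nonemptiness of $\cX_M^f(P^\oblpr)$ — one must note that the $T(P^\oblpr)$-action on $\cMat^w(r,E)(P^\oblpr)$ restricts to the subset of (strong) $P^\oblpr$-matroids with underlying matroid $M$ (which holds because rescaling preserves supports and, $P^\oblpr$ being perfect, preserves strongness), so that $\cX_M(P^\oblpr)$ is a union of rescaling classes and is nonempty exactly when $\cX_M^f(P^\oblpr)$ is. Everything else is a direct citation of Corollary~\ref{cor: a matroid is representable over a partial field iff it is the image of a P-matroid}, Lemma~\ref{lemma: weak realization space as morphism set from the weak universal pasture}, and Lemma~\ref{lemma: relation between the foundation and the universal partial field}.
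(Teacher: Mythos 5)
Your proposal is correct and follows essentially the same route as the paper: reduce representability over $P$ to nonemptiness of $\cX_M^f(P^\oblpr)$, then conclude via the bijection $\cX_M^f(P^\oblpr)\leftrightarrow\Hom(\P_M,P)$ from Lemma~\ref{lemma: relation between the foundation and the universal partial field}. The paper compresses the first reduction into one sentence; your version usefully spells out the intermediate passage $\cX_M(P^\oblpr)\neq\emptyset\iff\cX_M^w(P^\oblpr)\neq\emptyset\iff\cX_M^f(P^\oblpr)\neq\emptyset$ with the perfectness of $P^\oblpr$ and the partition into $T$-orbits, but the argument is the same.
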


\begin{proof}
 The matroid $M$ is representable over $P$ if and only if there is a rescaling class over $P$, i.e.\ $\cX_M^f(P^\oblpr)$ is nonempty. By Lemma \ref{lemma: relation between the foundation and the universal partial field}, this is equivalent with $\Hom(\P_M,P)$ being nonempty.
\end{proof}

\begin{cor}\label{cor: if the foundation is a partial field then it is the universal partial field}
 Let $M$ be a matroid with foundation $k_M^f$ and universal partial field $\P_M$. If $k_M^f$ is a partial field, then the canonical morphism $k_M^f\to k_M^f\otimes_\Funpm\Funsq\to\P_M$ is an isomorphism.
\end{cor}

\begin{proof}
 The latter morphism $k_M^f\otimes_\Funpm\Funsq\to\P_M$ is the isomorphism from Lemma \ref{lemma: relation between the foundation and the universal partial field}. Since $B\otimes_{\Funpm}\Funsq=\bpgenquot B{1+\epsilon\=0}$ and since $1+\epsilon\=0$ holds in every partial field, the canonical inclusion $k_M^f\to\bpgenquot{k_M^f}{1+\epsilon\=0}=k_M^f\otimes_{\Funpm}\Funsq$ is an isomorphism, and so is the composition $k_M^f\to \P_M$ of these two isomorphisms.
\end{proof}

Let $P=(P^\times,\pi_P)$ be a partial field with quotient map $\pi_P:\Z[P^\times]\to R_P$. Then we define $P[T_1^{\pm1},\dotsc,T_s^{\pm1}]$ as the partial field $(P^\times\times\{\prod T_i^{e_i}\}_{e_i\in\Z}, \hat\pi_P)$ where 
\[\textstyle
 \hat\pi_P: \ \Z[P^\times][T_1^{\pm1},\dotsc,T_s^{\pm1}] \ \longrightarrow \ R_P[T_1^{\pm1},\dotsc,T_s^{\pm1}]
\]
is the extension of $\pi_P$ that maps $T_i$ to $T_i$. Note that $P[T_1^{\pm1},\dotsc,T_n^{\pm1}]^\blpr=P^\blpr[T_1^{\pm1},\dotsc,T_s^{\pm1}]$.

\begin{cor}\label{cor: the bracket partial field as Laurent series over the universal partial field}
 Let $M$ be a matroid. Then $P_{M,0}\simeq \P_M[T_1^{\pm1},\dotsc,T_s^{\pm1}]$ for some $s\geq0$.
\end{cor}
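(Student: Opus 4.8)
\textbf{Proof proposal for Corollary \ref{cor: the bracket partial field as Laurent series over the universal partial field}.}

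The plan is to deduce this statement from the corresponding result for pastures, namely Corollary \ref{cor: the weak universal pasture as Laurent seris over the foundation}, by applying the functor $-\otimes_\Funpm\Funsq$ and using the dictionary between partial fields and their associated ordered blueprints established in section \ref{subsubsection: alternative realization of partial fields as blueprints} and Lemma \ref{lemma: relation between the weak universal pasture and the partial degree 0-bracket field}. First I would recall that, by Corollary \ref{cor: the weak universal pasture as Laurent seris over the foundation}, there is an isomorphism of pastures $k_M^w\simeq k_M^f[T_1^{\pm1},\dotsc,T_s^{\pm1}]$ for some $s\geq0$. Tensoring with $\Funsq$ over $\Funpm$ and using the fact (noted in section \ref{subsection: the bracket ring and the universal partial field}) that this operation commutes with adjoining Laurent variables, i.e. $B[T_1^{\pm1},\dotsc,T_s^{\pm1}]\otimes_\Funpm\Funsq\simeq (B\otimes_\Funpm\Funsq)[T_1^{\pm1},\dotsc,T_s^{\pm1}]$, we obtain
\[
 k_M^w\otimes_\Funpm\Funsq \ \simeq \ (k_M^f\otimes_\Funpm\Funsq)[T_1^{\pm1},\dotsc,T_s^{\pm1}].
\]

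Next I would invoke the two key identifications already at our disposal: Lemma \ref{lemma: relation between the weak universal pasture and the partial degree 0-bracket field} gives $k_M^w\otimes_\Funpm\Funsq\simeq P_{M,0}^\blpr$, and Lemma \ref{lemma: relation between the foundation and the universal partial field} gives $k_M^f\otimes_\Funpm\Funsq\simeq \P_M^\blpr$. Substituting these into the displayed isomorphism yields
\[
 P_{M,0}^\blpr \ \simeq \ \P_M^\blpr[T_1^{\pm1},\dotsc,T_s^{\pm1}] \ = \ \P_M[T_1^{\pm1},\dotsc,T_s^{\pm1}]^\blpr,
\]
where the last equality is the formula $P[T_1^{\pm1},\dotsc,T_s^{\pm1}]^\blpr=P^\blpr[T_1^{\pm1},\dotsc,T_s^{\pm1}]$ recorded just before the statement of the corollary. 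Finally, since the embedding $(-)^\blpr:\PartFields\to\OBlpr^\pm$ is fully faithful (this is part of section \ref{subsubsection: alternative realization of partial fields as blueprints}, using that partial fields inherit their structure from an ambient ring), an isomorphism $P_{M,0}^\blpr\simeq \P_M[T_1^{\pm1},\dotsc,T_s^{\pm1}]^\blpr$ in $\OBlpr^\pm$ descends to an isomorphism $P_{M,0}\simeq \P_M[T_1^{\pm1},\dotsc,T_s^{\pm1}]$ in $\PartFields$, which is exactly the assertion.

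The one point requiring a little care — and the main obstacle, though a mild one — is making precise that $-\otimes_\Funpm\Funsq$ is compatible with the Laurent construction in the form $B[T_i^{\pm1}]\otimes_\Funpm\Funsq\simeq(B\otimes_\Funpm\Funsq)[T_i^{\pm1}]$. I would establish this from the canonical isomorphism $B[T_1^{\pm1},\dotsc,T_s^{\pm1}]\simeq B\otimes_\Funpm\Funpm[T_1^{\pm1},\dotsc,T_s^{\pm1}]$ (the localization of a free algebra, cf.\ Example \ref{ex: Laurent monomials as localization of monomial blueprints} and Example \ref{ex: tensor products of ordered blueprints}), together with associativity and commutativity of the tensor product over $\Funpm$, reducing everything to the (tautological) identity $\Funpm[T_i^{\pm1}]\otimes_\Funpm\Funsq\simeq\Funsq[T_i^{\pm1}]$. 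With this in hand the argument is a formal chain of isomorphisms, and the value of $s$ is inherited verbatim from Corollary \ref{cor: the weak universal pasture as Laurent seris over the foundation} (and hence, via Corollary \ref{cor: the weak universal pasture as Laurent seris over the foundation with s=n-c}, equals $\#E-c$, although this refinement is not needed for the present statement).
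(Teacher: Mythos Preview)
Your proposal is correct and follows essentially the same route as the paper: start from Corollary~\ref{cor: the weak universal pasture as Laurent seris over the foundation}, tensor with $\Funsq$ over $\Funpm$, identify both sides via Lemmas~\ref{lemma: relation between the weak universal pasture and the partial degree 0-bracket field} and~\ref{lemma: relation between the foundation and the universal partial field}, and then descend via the full faithfulness of $(-)^\blpr$. The paper also uses the compatibility $B[T_i^{\pm1}]\otimes_\Funpm\Funsq\simeq(B\otimes_\Funpm\Funsq)[T_i^{\pm1}]$ without further comment, so your extra justification of this step is a harmless refinement.
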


\begin{proof}
 By Corollary \ref{cor: the universal pasture as Laurent seris over the foundation}, we have $k_M^w\simeq k_M^f[T_1^{\pm1},\dotsc,T_s^{\pm1}]$ for some $s\geq 0$. Using the isomorphisms $k_M^w\otimes_\Funpm\Funsq\to P_{M,0}^\blpr$ from Lemma \ref{lemma: relation between the universal pasture and the partial degree 0-bracket field} and $k_M^f\otimes_\Funpm\Funsq\to \P_M^\blpr$ from Lemma \ref{lemma: relation between the foundation and the universal partial field}, we obtain a sequence of isomorphisms
 \begin{multline*}
  P_{M,0}^\blpr \ \simeq \ k_M^w\otimes_\Funpm\Funsq \ \simeq \ k_M^f[T_1^{\pm1},\dotsc,T_s^{\pm1}]\otimes_\Funpm\Funsq \\
  \ \simeq \ (k_M^f\otimes_\Funpm\Funsq)[T_1^{\pm1},\dotsc,T_s^{\pm1}] \ \simeq \ \P_M^\blpr[T_1^{\pm1},\dotsc,T_s^{\pm1}] \ \simeq \ \P_M[T_1^{\pm1},\dotsc,T_n^{\pm1}]^\blpr.
 \end{multline*}
 Since $(-)^\blpr$ is fully faithful, this yields the desired isomorphism $P_{M,0}\simeq \P_M[T_1^{\pm1},\dotsc,T_s^{\pm1}]$.  
\end{proof}

\begin{rem}\label{rem: equivalence with PvZ's definition of the universal partial field}
 We indicate how it can be seen that our definition of the universal partial field is equivalent to that of Pendavingh and van Zwam in \cite{Pendavingh-vanZwam10a}. This equivalence is not hard to establish, but requires unravelling a series of definitions. This remark is meant as a guide for the reader who wants to do this exercise.
 
 Let $P$ be a partial field and $M$ a matroid of rank $r$ on $E$. The matrix representations $A$ of $M$ considered in \cite{Pendavingh-vanZwam10a} are assumed to be normalized in the sense that they contain a square submatrix of maximal size that is an identity matrix, which corresponds to fixing a canonical affine open subset of the matroid space. Note that in the case of a field, the matroid space is nothing else than a Grassmann variety, which might give the reader some geometric intuition. Moreover, the identity matrix is omitted from $A$ and only the truncated part of $A$ is considered.
 
 Strong equivalence of two such truncated normalized matrices $A$ and $A'$ is defined by three elementary operations: pivoting, permuting rows and columns, and scaling rows and columns by nonzero elements of $P$. We explain the effect on the corresponding Pl\"ucker coordinates. Pivoting incorporates the effect of a change of the affine open of the Grassmannian on the truncated matrix $A$, but has no effect on the Pl\"ucker coordinates, except for possible sign changes. Exchanging rows and columns has no effect on the Pl\"ucker coordinates except for sign changes. Scaling rows corresponds to multiplying the truncated columns with the inverse scalar. Thus all operations that generate the strong equivalence relation come from scaling columns of a (non-truncated and possibly non-normalized) matrix representation $A$ of $M$. This corresponds to the torus action appearing in the definition of rescaling classes.

 Let $A$ be a truncated and normalized matrix representation of $M$. With the help of column and row scaling, every $2\times 2$-submatrix of $A$ with nonzero entries can be brought into the shape $\binom{1\ 1}{p\ 1}$. The element $p$ is called the cross ratio of the submatrix. In \cite{Pendavingh-vanZwam10a}, the universal partial field is defined as the subfield of $P_M$ that is generated by all cross ratios $p$ that occur in submatrices of the form $\binom{1\ 1}{p\ 1}$ for some matrix $A'$ that is strongly equivalent to $A$. 
 
 Since $A$ is normalized, all entries in $A$ are Pl\"ucker coordinates of $A$. A $2\times 2$-submatrix with nonzero entries corresponds to a tuple $\cI = (I,i_1,i_2,i_3,i_4)\in\Omega_M$ where $i_1,i_3$ label the rows and $i_2,i_4$ label the columns of $\binom{1\ 1}{p\ 1}$. Thus $p$ is the cross ratio $\Cr_\Delta(\cI)$, up to a possible difference in signs. Since cross ratios are invariant under the action of $T(P)$ and since every $2\times2$-submatrix can be brought into the form $\binom{1\ 1}{p\ 1}$ by scaling rows and columns, this establishes a bijective correspondence between cross ratios in the sense of \cite{Pendavingh-vanZwam10a} and the cross ratios $\Cr_\Delta(\cI)$ for $\cI \in \Omega_M$. The difference in signs that occur do not affect the universal partial field $\P_M$ since it contains all (weak) inverses.
\end{rem}

\subsubsection{The universal partial fields of binary and regular matroids}
\label{subsubsection: the universal partial fields of binary and regular matroids}

Our classification of binary and regular matroids in terms of their foundation, cf.\ section \ref{subsection: foundations of binary matroids} and \ref{subsection: foundations of regular matroids}, yields a classification of such matroids in terms of their universal partial fields.

\begin{cor}\label{cor: classification of binary and regular matroids by their universal partial fields}
 A matroid is binary if and only if its universal partial field is $\Funpm$ or $\F_2$. A matroid is regular if and only if its universal partial field is $\Funpm$.
\end{cor}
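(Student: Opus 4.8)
The statement is Corollary~\ref{cor: classification of binary and regular matroids by their universal partial fields}, which asserts that a matroid $M$ is binary iff $\P_M \in \{\Funpm, \F_2\}$ and regular iff $\P_M = \Funpm$. The plan is to deduce this from the already-established classification of binary and regular matroids in terms of their \emph{foundations} (Theorem~\ref{thm: foundations of binary matroids} and Theorem~\ref{thm: characteriztaion of regular matroids}) together with the tight relationship between the foundation $k_M^f$ and the universal partial field $\P_M$ recorded in Lemma~\ref{lemma: relation between the foundation and the universal partial field}, namely the isomorphism $k_M^f \otimes_\Funpm \Funsq \simeq \P_M^\blpr$.

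First I would recall the mechanism translating between the two worlds. By Lemma~\ref{lemma: relation between the foundation and the universal partial field} we have $\P_M^\blpr \simeq k_M^f \otimes_\Funpm \Funsq$, and conversely (by the general discussion of the two embeddings of partial fields into $\OBlpr^\pm$ in section~\ref{subsubsection: relation to the weak universal pasture}) we recover $\P_M^\oblpr \simeq (\P_M^\blpr)^\ppos \simeq (k_M^f \otimes_\Funpm \Funsq)^\ppos$. Since $k_M^f$ is purely positive and pasteurized, one checks that $(k_M^f \otimes_\Funpm \Funsq)^\ppos \simeq k_M^f$; this is exactly the statement that $(-)^\ppos$ and $-\otimes_\Funpm\Funsq$ are mutually inverse on the relevant subcategory, as noted in section~\ref{subsubsection: the reversability axiom}. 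Hence $\P_M^\oblpr \simeq k_M^f$ as ordered blueprints, i.e. the functors $(-)^\blpr$ and $(-)^\oblpr$ produce the same ``rescaling data'', and in particular $\P_M = \Funpm$ (as a partial field) if and only if $k_M^f = \Funpm$, and $\P_M = \F_2$ if and only if $k_M^f = \F_2$. One must be slightly careful here: the relevant check is that the only partial fields $P$ with $P^\oblpr \in \{\Funpm, \F_2\}$ are $\U_0$ and $\F_2$ themselves, which is immediate from the fact that $(-)^\oblpr : \PartFields \to \OBlpr^\pm$ is fully faithful (Theorem~\ref{thm: hyperfuzz to bluetracts}\eqref{part1}) and that $\Funpm = \U_0^\oblpr$.

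With this dictionary in hand the corollary is a direct transport of Theorems~\ref{thm: foundations of binary matroids} and~\ref{thm: characteriztaion of regular matroids}. For the binary case: $M$ is binary iff $k_M^f \in \{\Funpm, \F_2\}$ (Theorem~\ref{thm: foundations of binary matroids}); under the identification $\P_M^\oblpr \simeq k_M^f$ this is equivalent to $\P_M \in \{\Funpm, \F_2\}$. For the regular case: $M$ is regular iff $k_M^f = \Funpm$ (this is the equivalence \eqref{reg1}$\Leftrightarrow$\eqref{reg2} of Theorem~\ref{thm: characteriztaion of regular matroids}), hence iff $\P_M = \Funpm$. One should also note, for the binary statement, that $\P_M$ exists as a genuine partial field in these two cases because $\Funpm$ and $\F_2$ are partial fields (equivalently, the partial bracket field $P_M$ is nontrivial since a binary matroid is representable over $\F_2$, cf.\ Corollary~\ref{cor: M is representable over a partial field if and only if B_M is nontrivial}), so the statement ``its universal partial field is $\Funpm$ or $\F_2$'' is not vacuous.

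\textbf{Main obstacle.} The only genuine content beyond bookkeeping is verifying that $(k_M^f \otimes_\Funpm \Funsq)^\ppos \simeq k_M^f$, i.e. that passing to the ``blueprint'' realization $(-)^\blpr$ of the partial field and back via $(-)^\ppos$ returns the original purely positive pasture $k_M^f$ — one has to know that $k_M^f$ is purely positive (true, since by Lemma~\ref{lemma: the foundation of a matroid is generated by the universal cross ratios} its partial order is generated by relations of the form $0 \leq \sum a_i$) and that tensoring with $\Funsq$ followed by $(-)^\ppos$ is the identity on such objects, which is precisely the assertion about the endofunctors $(-)^\rev$, $(-)^\ppos$ being mutually inverse on the subcategory containing all ordered blueprints associated to partial fields. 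This is stated (without full proof) in section~\ref{subsubsection: the reversability axiom}, and I would simply cite it. Everything else is a formal chase through the equivalences of Theorem~\ref{thm: hyperfuzz to bluetracts} and the two theorems on foundations.
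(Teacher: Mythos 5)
Your overall plan — transport Theorems~\ref{thm: foundations of binary matroids} and~\ref{thm: characteriztaion of regular matroids} across the foundation/universal-partial-field dictionary — is the right idea, and the forward direction of your argument is in fact exactly how the paper proceeds. But there is a genuine gap in the key step that drives both directions of your proof.

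The problem is the claim that $(k_M^f \otimes_\Funpm \Funsq)^\ppos \simeq k_M^f$ holds \emph{because $k_M^f$ is purely positive and pasteurized}, and the resulting identification $\P_M^\oblpr \simeq k_M^f$ in general. This is false. You are conflating the pair $\bigl((-)^\rev,(-)^\ppos\bigr)$, which section~\ref{subsubsection: the reversability axiom} asserts are mutually inverse on a large subcategory including all tracts, with the pair $\bigl(-\otimes_\Funpm\Funsq,(-)^\ppos\bigr)$, which section~\ref{subsubsection: alternative realization of partial fields as blueprints} establishes as mutually inverse \emph{only on partial fields}. These are different operations: $(-)^\rev$ merely adds reversibility relations and never collapses elements, whereas $-\otimes_\Funpm\Funsq$ forces $1+\epsilon\=0$ and can drastically collapse a pasture. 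The paper itself flags this explicitly in the same passage: $\K\otimes_{\F_1^\pm}\Funsq=\{0\}$, even though $\K$ is a purely positive pasteurized ordered blue field. For a concrete matroid-theoretic counterexample, take the V\'amos matroid: it is not representable over any field, so by Corollary~\ref{cor: M is representable over a partial field if and only if B_M is nontrivial} its bracket ring is trivial, hence $\P_M$ is trivial and $\P_M^\blpr=\{0\}$, so $(\P_M^\blpr)^\ppos=\{0\}$; but $k_M^f$ is a nontrivial pasture, so $\P_M^\oblpr\not\simeq k_M^f$.

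The forward direction can be rescued: Theorem~\ref{thm: foundations of binary matroids} first tells you $k_M^f\in\{\Funpm,\F_2\}$, both of which \emph{are} partial fields, and for partial fields the identification you want \emph{is} valid. This is precisely what the paper does, citing Lemma~\ref{lemma: relation between the foundation and the universal partial field} only after establishing the foundation is a partial field. The converse direction, as you have written it, is broken: given only $\P_M\in\{\Funpm,\F_2\}$ you do not a priori know $k_M^f$ is a partial field, so you cannot invoke the identification to conclude $k_M^f\in\{\Funpm,\F_2\}$. The paper's converse instead runs through Corollary~\ref{cor: a matroid is representable over a partial field iff it receives a morphism from the universal partial field}: if $\P_M=\Funpm$, then $\Funpm$ admits a morphism to every field, so $M$ is representable over every field, hence binary, and hence regular by Theorem~\ref{thm: characteriztaion of regular matroids}\,(4). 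You could repair your proof by replacing the object-level isomorphism with the Hom-set bijection $\Hom(\P_M,P)\simeq\Hom(k_M^f,P^\oblpr)$ from Lemma~\ref{lemma: relation between the foundation and the universal partial field}, which does hold unconditionally; that route is essentially equivalent to the paper's.
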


\begin{proof}
 Let $M$ be a binary matroid. By Theorem \ref{thm: foundations of binary matroids}, its foundation is $\Funpm$ or $\F_2$. Since both of these idylls are partial fields, the foundation is isomorphic to the universal partial field by Corollary \ref{cor: if the foundation is a partial field then it is the universal partial field}. If $M$ is regular, then its foundation, and hence its universal partial field, is $\Funpm$ by Theorem \ref{thm: characteriztaion of regular matroids}.

 Conversely, assume that $M$ is a matroid with universal partial field $\Funpm$. Since there is a morphism from $\Funpm$ to every field, it follows from Corollary \ref{cor: a matroid is representable over a partial field iff it receives a morphism from the universal partial field} that $M$ is regular and binary. If the universal partial field of $M$ is $\F_2$, on the other hand, then clearly $M$ is binary.
\end{proof}


\begin{small}
 \bibliographystyle{plain}
 \bibliography{matroid}
\end{small}

\end{document}